\renewcommand*{\backref}[1]{}
\renewcommand*{\backrefalt}[4]{%
    \ifcase #1 (Not cited.)%
    \or        (Cited on page~#2.)%
    \else      (Cited on pages~#2.)%
    \fi}
\newcommand{\showcomments}{yes}
 \renewcommand{\showcomments}{no}
\newsavebox{\commentbox}
\numberwithin{equation}{section}
\newcounter{cl}
\newtheorem{theorem}[equation]{Theorem}
\newtheorem*{theorem'}{Main Theorem}
\newtheorem{question}[equation]{Question}
\newtheorem{prop}[equation]{Proposition}
\newtheorem{claim}[equation]{Claim}
\newtheorem{cor}[equation]{Corollary}
\newtheorem{lemma}[equation]{Lemma}
\newtheorem{thmi}{Theorem}
\newtheorem*{namedtheorem}{\theoremname}
\newcommand{\theoremname}{testing}
\newenvironment{named}[1]{\renewcommand{\theoremname}{#1}\begin{namedtheorem}}{\end{namedtheorem}}
\theoremstyle{definition}
\newtheorem{definition}[equation]{Definition}
\newtheorem{remark}[equation]{Remark}
\newtheorem{convention}[equation]{Convention}
\newtheorem{example}[equation]{Example}
\renewcommand{\~}[1]{\overline{#1}}
\renewcommand{\geq}{\geqslant}
\renewcommand{\leq}{\leqslant}
\newcommand{\Aut}{\text{Aut}}
\newcommand{\C}{\mathbb{C}}
\renewcommand{\Cup}[2]{\underset{#1}{\overset{#2}{\cup} }}
\newcommand{\eye}{\sphericalangle}
\newcommand{\frakH}{\mathfrak H}
\newcommand{\I}{\mathcal{I}}
\newcommand{\J}{\mathcal{J}}
\renewcommand{\int}{\varint}
\newcommand{\M}{\mathcal{M}}
\newcommand{\N}{\mathbb{N}}
\newcommand{\nerve}{\mathcal N}
\newcommand{\onto}{\twoheadrightarrow}
\newcommand{\R}{\mathbb{R}}
\renewcommand{\Re}{\mathfrak{R}}
\newcommand{\SL}{\textnormal{SL}}
\newcommand{\stab}{\mathrm{stab}}
\newcommand{\supp}{\mathrm{supp}}
\newcommand{\U}{\mathcal{U}}
\newcommand{\V}{\mathcal{V}}
\newcommand{\W}{\mathcal{W}}
\newcommand{\Z}{\mathbb{Z}}
\newcommand{\UR}{\operatorname{R^U}}
\newcommand{\RU}{\operatorname{U^R}}
\newcommand{\URtri}{\operatorname{R^U_\triangle}}
\newcommand{\RUtri}{\operatorname{U^R_\triangle}}
\newcommand{\RT}{\operatorname{T^R}}
\newcommand{\ST}{\operatorname{T^S}}
\newcommand{\SR}{\operatorname{R^S}}
\newcommand{\join}{{\star}}
\newcommand{\id}{{\operatorname{id}}}
\newcommand{\hull}{\operatorname{CH}} % Convex hull 
\newcommand{\insep}[1]{\ensuremath{\langle\!\langle#1\rangle\!\rangle}} % Inseparable closure
\renewcommand{\emptyset}{\varnothing}
\renewcommand{\diameter}{\operatorname{diam}}
\newcommand{\tits}{\partial_T}
\newcommand{\guralnik}{\mathfrak R}
\newcommand{\absimp}{\mathfrak R_\triangle}
\newcommand{\visual}{\partial_\eye}
\newcommand{\dist}{\mathrm{d}}
\newcommand{\dimension}{\operatorname{dim}}
\newcommand{\naturals}{\mathbb N}
\newcommand{\visi}{\mathrm{Vis}}
\newcommand{\simp}{\partial_\triangle}
\newcommand{\roller}{{\partial_R}}
\newcommand{\reals}{\mathbb R}
\newcommand{\maxvis}{\mathrm{MaxVis}}
\newcommand{\vispart}{\mathfrak R^{\mathrm{\eye}}_\triangle}
\newcommand{\thickvispart}{{\mathfrak S}}
\newcommand{\integers}{\mathbb Z}
\newcommand{\simpubs}{\simp^{\textup{UBS}}}
\renewcommand{\setminus}{\smallsetminus}
\newcommand{\bdy}{{\partial}}
\newcommand{\from}{\colon}
\newcommand{\circum}{\chi}
\newcommand{\gate}{\pi}
\newcommand{\Ram}{\operatorname{Ram}}
\newcommand{\Ghom}{$G \!\!\sim$}
\newcommand{\Authom}{$\Aut(X) \!\!\sim$}
\title{Homotopy equivalent boundaries of cube complexes}
\author{Talia Fern\'os}
\address{Mathematics and Statistics Dept., University of North Carolina at Greensboro, Greensboro, NC 27412, USA}
\email{t\_fernos@uncg.edu}
\author{David Futer}
\address{Dept. of Mathematics, Temple University, Philadelphia, PA 19122, USA}
\email{dfuter@temple.edu}
\author{Mark Hagen}
\address{School of Mathematics, University of Bristol, Bristol BS8 1UG, UK}
\email{markfhagen@posteo.net}
\begin{document}
\date{\today}
\subjclass[2020]{51F99; 20F65}
\keywords{CAT(0) cube complex, Tits boundary, Roller boundary, nerve theorem}

\begin{abstract}
A finite-dimensional CAT(0) cube complex $X$ is equipped with several well-studied boundaries. These include the 
\emph{Tits boundary} $\tits X$ (which depends on the CAT(0) metric), the \emph{Roller boundary} $\roller X$ (which 
depends only on the combinatorial structure), and the \emph{simplicial boundary} $\simp X$ (which also depends only on the combinatorial structure). We use a partial order on a certain quotient of 
$\roller X$ to define a simplicial Roller boundary  $\absimp X$. Then, we show that $\tits X$, $\simp X$, and $\absimp X$ are all homotopy equivalent, $\Aut(X)$--equivariantly up to 
homotopy. As an application, we deduce that the perturbations of the CAT(0) metric introduced by Qing do not affect the equivariant homotopy type of the Tits boundary. Along the way, we develop a 
self-contained exposition providing a dictionary among different perspectives on cube complexes.
\end{abstract}

\maketitle
\tableofcontents

\section{Introduction}\label{Sec:Intro}

CAT(0) cube complexes, which exist in many guises in discrete mathematics (see e.g. \cite{BC}), were introduced into 
group theory by Gromov \cite{Gromov} and have since taken on a central role in that field.  As 
combinatorial objects, CAT(0) cube complexes are ubiquitous due to their flexible, functorial construction from set-theoretic data \cite{Sageev_95,Roller, Chatterji-Niblo,Nica}.  This has led to an 
industry of \emph{cubulating} groups, i.e. 
constructing group actions on CAT(0) cube complexes in order to transfer  information from the highly organized cubical structure to the group.  Probably the best known application of this method is the resolution of the virtual 
Haken and virtual fibering conjectures in 3--manifold theory \cite{Agol:VH,Wise:QCH}.

The utility of CAT(0) cube complexes comes from the fact that they simultaneously exhibit several types of structures.  They have an organized combinatorial structure coming from their \emph{hyperplanes/half-spaces}; this is closely 
related to the very tractable geometry of their $1$--skeleta, which are \emph{median graphs} \cite{Chepoi}.  On the other 
hand, endowing the complex with the piecewise-Euclidean metric in which cubes are Euclidean unit cubes, one gets a CAT(0) 
space.  So, in studying CAT(0) cube complexes, and groups acting on them, one has a wide variety of tools.

This paper is about the interplay between the CAT(0) and combinatorial structures, at the level of \emph{boundaries}.    
We construe the term ``boundary'' broadly: we include not just  spaces arising as frontiers of injections into compact spaces, 
 but also other spaces encoding some sort of behavior at infinity or large-scale asymptotic structure, such as the Tits boundary.

\subsection{A plethora of boundaries} 
CAT(0) cube complexes have several natural boundaries, each encoding different information, and each defined in terms 
of either the CAT(0) metric structure or the combinatorial structure coming from the hyperplanes.  Fixing a 
finite-dimensional CAT(0) cube complex $X$, one has the following list of boundaries:
\begin{enumerate}
     \item The \emph{visual boundary} $\visual X$. 
      The visual boundary can be defined for any CAT(0) space; see 
\cite{BridsonHaefliger}.  Points are asymptotic equivalence 
classes of CAT(0) geodesic rays. The visual topology is defined in such a way 
that, roughly speaking, rays that fellow-travel for a long time are close.  When $X$ is locally finite, $X\cup\visual X$ is 
a compactification of $X$.  While it is a very useful object, we do not study the visual boundary in this paper.

\medskip

     \item The \emph{Tits boundary} $\tits X$.  As with $\visual X$, the points in $\tits X$ correspond to asymptotic equivalence classes of CAT(0) 
geodesic rays, but the topology is finer than the visual topology.  Specifically, we equip $\tits X$ with the \emph{Tits 
metric}: by taking a supremum of angles between rays
 one obtains the \emph{angle metric}, and the 
induced length metric is the Tits metric, which is CAT(1); see \cite[Theorem II.9.13]{BridsonHaefliger}.  The Tits boundary encodes much of the 
geometry of a CAT(0) space; for example, spherical join decompositions of the Tits boundary correspond to product 
decompositions of the space \cite[Theorem II.9.24]{BridsonHaefliger}.  Although $\tits X$ is in general not compact even when $X$ is 
proper, it is  of interest for  other reasons, such as encoding ``partial flat regions'' in $X$. For example, endpoints 
of axes of rank-one isometries are isolated points, while flats in $X$ yield spheres in $\tits X$.  We recall the definition 
of $\tits X$ in Definition~\ref{Def:tits_metric}.

\medskip

     \item The \emph{simplicial boundary} $\simp X$, from \cite{Hagen,Hagen:corr}, is an analogue 
of the Tits boundary depending on the hyperplane structure, rather than on the CAT(0) metric.  The idea is that certain sets 
of hyperplanes --- termed \emph{unidirectional boundary sets} (hereafter, \emph{UBSes}) --- identify ``ways of approaching infinity in $X$.''  Containment of UBSes (modulo finite differences) gives a partial order on 
UBSes, and this order gives rise to a simplicial complex $\simp X$ (see Definition~\ref{Def:simplicial_boundary}).  Here are three helpful examples. First, the simplicial boundary of a tree (or, more generally, of a $\delta$--hyperbolic cube complex) is a discrete set 
of $0$--simplices. Second, the simplicial boundary of the standard square tiling of $[0,\infty)^2$ is a $1$--simplex, whose $0$--simplices correspond to the sub-UBSes consisting of the vertical hyperplanes 
and the horizontal hyperplanes. Third,  the \emph{staircase} obtained from this square tiling by considering only the cubes below some increasing, unbounded function also has simplicial boundary a 
$1$--simplex. (See Figure~\ref{Fig:Staircases}.) The maximal simplices of $\simp X$ encode such ``generalized orthants'' (namely, convex hulls of $\ell^1$--geodesic rays) in $X$ in roughly the same way that the Tits boundary encodes ``partial flats.''

The 
simplicial boundary has been used to study quasi-isometry invariants like \emph{divergence} \cite{Hagen} and 
\emph{thickness} \cite{BehrstockHagen} for groups acting on cube complexes, and has been generalized in the context of 
\emph{median spaces} \cite{Fioravanti} as a tool for proving a Tits alternative for groups acting on such spaces.

\medskip

     \item The \emph{Roller boundary} $\roller X$ gives another way of compactifying $X$, using the half-space structure associated to the hyperplanes.
  Each hyperplane $\hat h$ of $X$ has two complementary components, called \emph{half-spaces}, $h$ and $h^*$, which induce a two-sided partition of 
$X^{(0)}$.  Each vertex $x\in X$ is completely determined by specifying the collection of half-spaces that contain it.  This gives an injective map $X^{(0)}\to 2^{\mathfrak H}$, where 
$\mathfrak H$ denotes the set of half-spaces.  The closure of the image of $X^{(0)}$ in the Tychonoff topology is the \emph{Roller compactification} $\overline X$, and $\roller X=\overline X \setminus X^{(0)}$ is the 
\emph{Roller boundary} of $X$. (See Definition~\ref{Def:Roller}.)

Of the boundaries of $X$ that are defined in terms of the cubical structure only, the Roller boundary is perhaps the most well-studied. It has been used to prove a variety of results about CAT(0) 
cube complexes and groups acting on them.

For example, the Roller boundary plays a key role in the proof that irreducible lattices in $\SL_2\C\times\SL_2\C$ always 
have global fixed points in any action on a CAT(0) cube complex, while reducible actions always admit proper cocompact 
actions on CAT(0) cube complexes \cite{CFI}. 

Nevo and Sageev identified the Roller boundary as a model for the Furstenberg--Poisson boundary of a cocompact lattice in 
$\Aut(X)$  \cite{NevoSageev}.  Fern\'os generalized this result  
 to groups acting properly and non-elementarily on finite dimensional $X$, and also proved a Tits alternative 
\cite{Fernos:Poisson}.   The same paper identifies an important subset of the Roller boundary: the \emph{regular points}, 
which 
correspond to ``hyperbolic'' directions in various senses.  

In particular,  the set of regular points with the induced 
topology is $\Aut(X)$--equivariantly homeomorphic with the boundary of the (hyperbolic) \emph{contact graph} 
\cite{FLM:CentralLimit}.  The set of regular points is used in~\cite{FLM:RandomWalks} to find rank-one isometries under more 
general conditions than in previous 
work~\cite{CapraceSageev}.  The method in \cite{FLM:RandomWalks} is to use convergence of random walks to regular points to 
deduce the existence of regular elements without assuming that the ambient group is a lattice.

The set of regular points also features in the proof of marked length spectrum rigidity~\cite{BeyrerFioravanti,beyrer2018rm}, 
and was independently identified by Kar and Sageev, who used it to study property $P_{\mathrm{naive}}$ and hence 
$C^*$--simplicity for cubulated 
groups \cite{KarSageev:Pnaive}.  Finally, the Roller boundary has recently been generalized in the context of median spaces 
\cite{Fioravanti}.

  \medskip

     \item The \emph{simplicial Roller boundary} $\absimp X$ is constructed as follows.
     The Roller boundary $\roller X$ carries a natural equivalence relation, first introduced 
 by Guralnik \cite{Guralnik}, where two points 
are equivalent if they differ on finitely many half-spaces.  The equivalence classes, called \emph{Roller classes}, play a fundamental role in this paper.  Part of the reason for this is that the set of Roller 
classes carries a natural partial order, which admits several equivalent useful characterizations (see Lemma~\ref{POSET Roller}).  The \emph{simplicial Roller boundary} $\absimp X$ is the simplicial 
complex realizing this partial order (see Definition~\ref{Def:SimplicialRoller}).  

In this paper, we build on earlier work relating Roller classes to CAT(0) geodesic rays.  Specifically, Guralnik in~\cite{Guralnik} recognized that each equivalence class of CAT(0) geodesic rays  determines a 
Roller class.  Conversely, in~\cite{FLM:RandomWalks}, it is shown that each Roller class determines a subset of the Tits boundary which admits a canonical circumcenter.  These observations 
are crucial for our arguments: Definition~\ref{Def:Qcircum} depends on the latter and Definition~\ref{Def:Psi} is based on the former.

We also note that $\absimp X$ is isomorphic to the 
 \emph{combinatorial boundary} defined and studied by Genevois \cite{Genevois:ContractingIsometries}. In that work, he explains that the combinatorial boundary is isomorphic to the 
face-poset of a naturally-defined subcomplex of $\simp X$ \cite[Proposition A.1]{Genevois:ContractingIsometries} and is isomorphic to $\absimp X$ \cite[Proposition 
A.5]{Genevois:ContractingIsometries}.  (More precisely, Genevois' work yields the map $\RUtri$ from Corollary~\ref{Cor:UBStoRollerSimp} below, although \cite{Genevois:ContractingIsometries} does not 
explicitly mention a homotopy equivalence.)  
\end{enumerate}

For each of these boundaries, the action of the group $\Aut(X)$ of cubical automorphisms of $X$ extends to an action on the 
boundary preserving its structure. The actions on $\visual X$ and $\roller X$ are by homeomorphisms, the action on $\tits 
X$ is by isometries of the Tits metric, and the actions on $\simp X$ and $\absimp X$ are by simplicial automorphisms.

The definitions of the simplicial boundary and the simplicial Roller boundary are conceptually similar. Part of the work 
in this paper is making that similarity precise, by establishing a correspondence between Roller classes and equivalence 
classes of UBSes. This line of work culminates in Corollary~\ref{Cor:UBStoRollerSimp}, which gives explicit maps between $\absimp X$ and a 
natural subcomplex of $\simp X$. These maps are $\Aut(X)$--equivariant homotopy equivalences, although in general they are 
not simplicial isomorphisms.  Then, in Proposition~\ref{prop:simplicial_boundary}, we upgrade this to a homotopy equivalence 
between $\absimp X$ and the whole of $\simp X$.

One glimpse of a relationship between $\simp X$ and $\tits X$ comes from UBSes associated to geodesic rays. Given a geodesic ray $\alpha \to X$, in either the CAT(0) metric or the combinatorial metric, the set of hyperplanes crossing $\alpha$ (denoted $\W(\alpha)$) is always a UBS. A UBS $\U$ is called \emph{$\ell^1$--visible} (resp. \emph{$\ell^2$--visible}) if it is has finite symmetric difference with $\W(\alpha)$ for a combinatorial (resp. CAT(0)) geodesic $\alpha$. Figure~\ref{Fig:Staircases} shows examples of both $\ell^1$--invisible and $\ell^2$--invisible UBSes. Despite those caveats, $\ell^2$--visible UBSes provide a way to map points of $\tits X$ to classes in $\simp X$; a similar construction also provides a map to $\absimp X$. 

\begin{figure}
\includegraphics[width=2.5in]{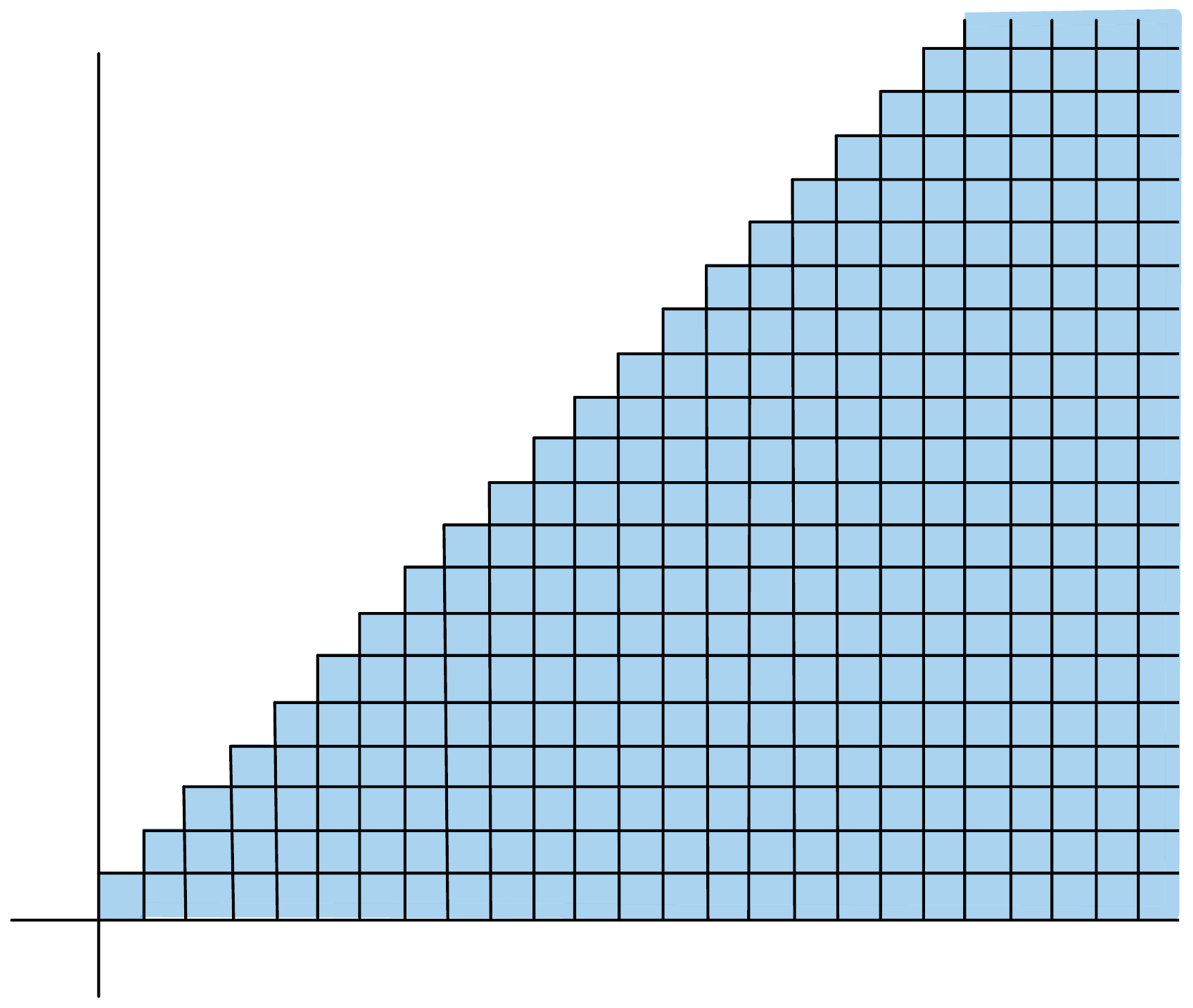}
\hspace{0.25in}
\includegraphics[width=2.5in]{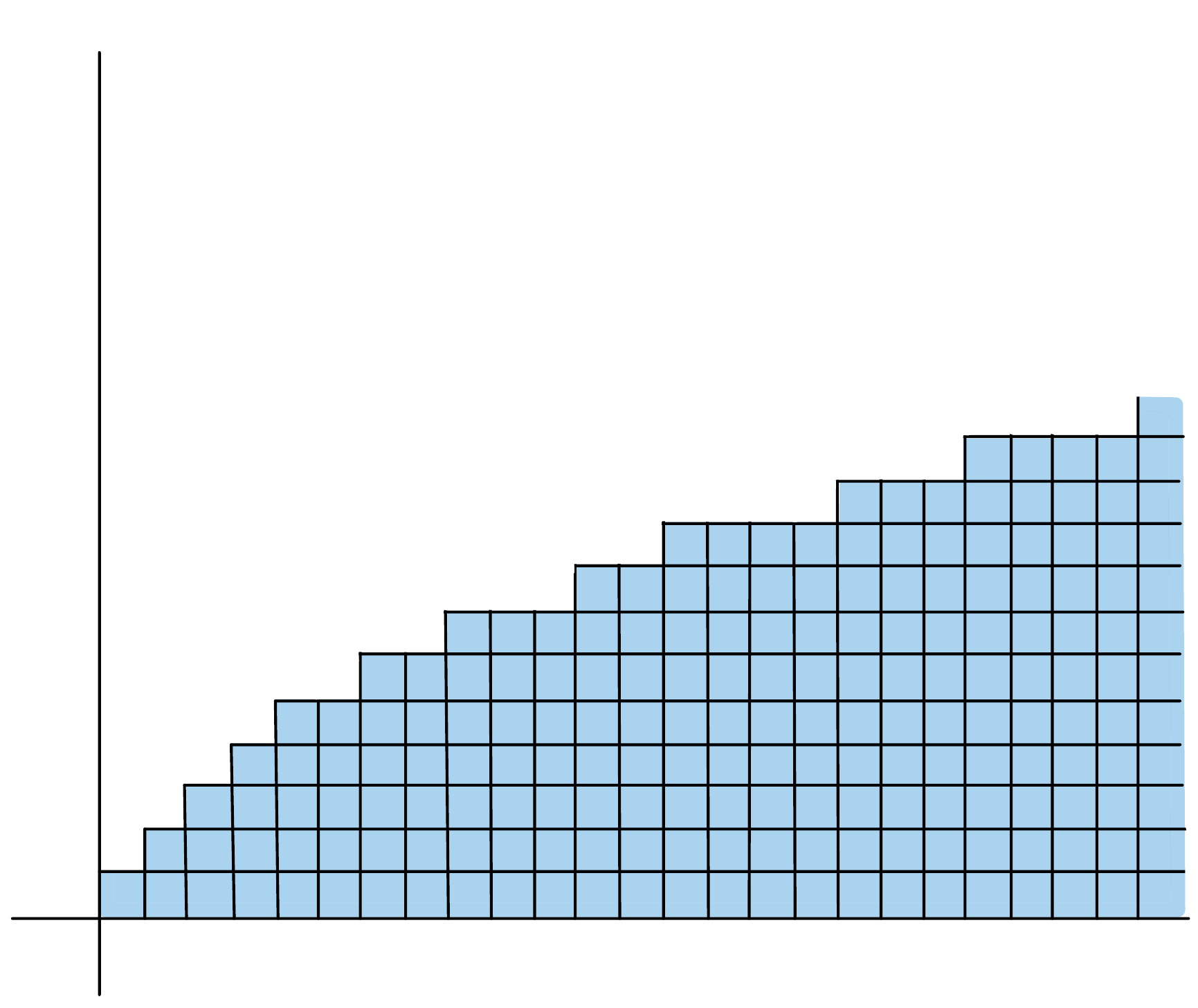}
\caption{Left: a linear staircase. Right: a sublinear staircase. For both staircases, the simplicial boundary is a $1$--simplex, whose vertices correspond to the UBS $\V$ of vertical hyperplanes and the UBS $\mathcal H$ of horizontal hyperplanes. In both staircases, $\mathcal H$ is not visible (in either the $\ell^1$ or the $\ell^2$ metric), because any geodesic ray crossing $\mathcal H$ must also cross $\mathcal V$. In the sublinear staircase, $\mathcal V \sqcup \mathcal H$ is also not $\ell^2$--visible.}
\label{Fig:Staircases}
\end{figure}

\subsection{Main result}  Our main theorem relates $\tits X$, $\simp X$, and  $\absimp X$ via maps that are $\Aut(X)$--equivariant up to homotopy.  
It helps to introduce the following terminology.  

\begin{definition}\label{Def:G-homotopy}
Suppose 
that $A,B$ are topological spaces with a group $G$ acting on both spaces by homeomorphisms.  A map $f \from A \to B$ is called a
a \emph{\Ghom homotopy equivalence} 
if $f$ is a homotopy equivalence, and furthermore  $g\circ f$ and $f\circ g$ are 
homotopic for every $g\in G$.  If such an $f$ exists, we say that $A,B$ are \emph{\Ghom homotopy equivalent}.
\end{definition}

It is immediate to check that the property of being \Ghom homotopy equivalent is an equivalence relation. 
Indeed, the composition of two \Ghom homotopy equivalences is a \Ghom homotopy equivalence. Furthermore, any homotopy inverse of a \Ghom homotopy equivalence is itself a
\Ghom homotopy equivalence.

\renewcommand*{\thethmi}{\Alph{thmi}}

\begin{thmi}\label{Thm:HE-triangle}
Let $X$ be a finite-dimensional CAT(0) cube complex.  Then 
we have the following commutative diagram of \Authom homotopy equivalences between boundaries of $X$:
\begin{diagram}
%        && \simp^\eye X   &&\\
%    &\ldInto  &&\rdInto^{Lem\ref{Lem:embedding_simpvis_in_tits}~Cor\ref{cor:tits_simpvis_explicit_HE}} &\\
 \simp X &&\rTo^{\ST}  
 &     & \tits X  \\
&\rdTo_{\SR} & &\ruTo_{\RT}&\\
        && \Re_\triangle X&&
\end{diagram}
In particular, the spaces $\simp X,\absimp X,\tits X$ are all \Authom homotopy equivalent, where $\tits X$ is equipped with the metric topology and $\simp X,\absimp X$ are equipped with either the 
metric or the weak topology.
\end{thmi}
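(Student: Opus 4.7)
The overall strategy is to exploit the equivalence-relation nature of \Ghom homotopy equivalence: once the triangle commutes up to $\Aut(X)$-equivariant homotopy and any two of its three edges are \Ghom homotopy equivalences, the third is as well. The three maps $\SR$, $\RT$, $\ST$ are constructed earlier in the paper: $\SR$ from the UBS--to--Roller-class correspondence established in Corollary~\ref{Cor:UBStoRollerSimp} and Proposition~\ref{prop:simplicial_boundary}; $\RT$ by sending each Roller class $\rho$ to its canonical Tits circumcenter (using the construction of~\cite{FLM:RandomWalks}) and extending simplex-by-simplex via Tits-geodesic barycentric combinations; and $\ST$ in a parallel fashion using $\ell^2$-visible UBSes. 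All three constructions are canonical with respect to the cubical structure, hence $\Aut(X)$-equivariant on vertices, and equivariant on simplices whenever the Tits-geodesic interpolation is well-defined.

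Homotopy commutativity of the triangle is the first step. On vertices, $\RT \circ \SR$ and $\ST$ agree by construction: both send a UBS class to the Tits circumcenter of the corresponding Roller class. Extended to higher simplices, the two maps need not coincide pointwise, but their images lie in a common Tits-convex region of diameter less than $\pi$ (ensured by the circumcenter existence lemma), so the CAT(1) structure of $\tits X$ supplies a canonical straight-line Tits-geodesic homotopy between them. Since every ingredient is canonical, this homotopy is automatically $\Aut(X)$-equivariant.

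That $\SR$ is a \Ghom homotopy equivalence is immediate from Proposition~\ref{prop:simplicial_boundary}, so the main remaining task is to prove the same for $\RT$. The natural approach, consistent with the keyword ``nerve theorem'', is to build an $\Aut(X)$-invariant cover $\{U_\rho\}$ of $\tits X$ indexed by Roller classes, arranged so that a nonempty intersection $U_{\rho_0} \cap \cdots \cap U_{\rho_k}$ is contractible precisely when $\rho_0 < \cdots < \rho_k$ is a simplex in $\absimp X$, and then apply an equivariant nerve theorem. A natural candidate for $U_\rho$ is a Tits-open neighborhood of the image under $\RT$ of the open star of $\rho$ in $\absimp X$; contractibility of intersections follows from Tits-convexity given the diameter-less-than-$\pi$ control.

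The main obstacle will be precisely this nerve-theoretic step. The Tits boundary is in general non-compact and its Tits topology is delicate, so one must be careful in citing or proving an appropriate equivariant nerve theorem and must verify contractibility of every relevant intersection. Simultaneously, the cover must be $\Aut(X)$-invariant and matched to the partial order on Roller classes via their Tits circumcenter locations --- this is where the fine structural information about Roller classes and circumcenters developed earlier in the paper is essential. Once these pieces are in place, the combination of homotopy-commutativity with $\SR$ and $\RT$ being \Ghom homotopy equivalences upgrades, via transitivity, to the conclusion that $\ST$ is also a \Ghom homotopy equivalence, completing the theorem.
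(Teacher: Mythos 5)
Your high-level architecture (nerve theorems applied to covers of $\tits X$ governed by Roller classes, with Tits-convexity and the diameter bound $\pi/2$ supplying contractibility) matches the paper's, and you correctly isolate the nerve-theoretic step for $\RT$ as the crux. One structural simplification you miss: the paper \emph{defines} $\ST := \RT\circ\SR$, so commutativity of the triangle is automatic and no equivariant straight-line homotopy is needed. But there are two genuine gaps in your plan for $\RT$.

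First, the cover you propose cannot exist. You want open sets $U_\rho$, one per Roller class, with $U_{\rho_0}\cap\cdots\cap U_{\rho_k}\neq\emptyset$ exactly when $\rho_0<\cdots<\rho_k$ is a chain, so that the nerve is $\absimp X$ itself. However, the realizations $Q(v)$ of \emph{incomparable} Roller classes routinely intersect: in the cubical octant $[0,\infty)^3$, the classes $v_1=[(\infty,\infty,0)]$ and $v_2=[(\infty,0,\infty)]$ are incomparable, yet $Q(v_1)\cap Q(v_2)\supset Q([(\infty,0,0)])\neq\emptyset$. Any cover built from neighborhoods of the $Q(v)$ or of circumcenter images inherits these overlaps, so its nerve is not $\absimp X$. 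The paper instead covers $\tits X$ only by $\{Q(v):v\in\maxvis(X)\}$ (maximal $\ell^2$--visible classes), obtaining a nerve $\nerve_T$ that is \emph{not} $\absimp X$; it then covers the visible subcomplex $\vispart X\subset\absimp X$ by finite subcomplexes $\Sigma_v$ and proves the two nerves are isomorphic (Lemma~\ref{Lem:omega_nerve}), applying the simplicial nerve theorem on one side and the open nerve theorem (after the delicate thickening $Q(v)\rightsquigarrow U(v)$ of Proposition~\ref{Prop:ThickNeighborhoodU}, which needs Lemmas~\ref{Lem:disjoint_Q} and~\ref{Lem:close_to_intersection} to preserve the intersection pattern) on the other.

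Second, you never address $\ell^2$--invisible Roller classes, and your circumcenter-based construction breaks on them: if $v$ is invisible then $Q(v)=Q(M_v)$ for a strictly smaller class $M_v<v$, so distinct vertices of $\absimp X$ are sent to the same realization, and the intersection pattern cannot see $v$ at all. The paper handles this with a separate \Authom deformation retraction $\absimp X\to\vispart X$ (Proposition~\ref{prop:vis_all_equivalent}), proved by induction on an ``invisibility depth,'' whose key step (contractibility of downward links) relies on Proposition~\ref{Prop:build_CAT(0)} to build diagonal CAT(0) rays realizing unions of UBSes. Relatedly, the bare circumcenter $\circum(v)$ is a poor basepoint — it can lie in $Q(w)$ for $w<v$ (Example~\ref{Ex:CircumcenterOnBoundary}) — which is why the paper perturbs it to a pseudocenter $\varphi(v)$ at the cost of equivariance; your claim that everything canonical is automatically $\Aut(X)$--equivariant fails exactly here, and this is why the final statement is only \Authom equivalence rather than equivariant equivalence.
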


In the above diagram, the letter S stands for \emph{Simplicial}, the letter R for \emph{Roller-simplicial}, and the letter T 
for \emph{Tits}.  The map $\ST$ is an \Authom homotopy equivalence from the simplicial boundary to the Tits 
boundary, and similarly for the other two.  The map $\RT$ is provided by Corollary~\ref{Cor:RollerSimpTitsHomotopy}, and the 
map $\SR$ comes from Proposition~\ref{prop:simplicial_boundary}.  The map $\ST$ is just the composition $\ST=\RT\circ \SR$.

Some intuition behind the theorem can be gleaned from the following simplified situation. A cube complex $X$ is called \emph{fully visible} if every UBS is $\ell^1$--visible. 
 When $X$ is fully visible, UBSes correspond to combinatorial 
orthants in $X$ (i.e. products of combinatorial geodesic rays) \cite{Hagen}, and Theorem~\ref{Thm:HE-triangle} can be understood as relating 
combinatorial orthants to CAT(0) orthants. See~\cite[Section 3]{Hagen}, where a proof is sketched that the simplicial and 
Tits 
boundaries are homotopy equivalent under the restrictive hypothesis of full visibility.  However, 
 full visibility is a very strong and somewhat mysterious hypothesis: it is not known to hold even if $X$ 
admits a proper cocompact group action \cite{HagenSusse}. Thus 
Theorem~\ref{Thm:HE-triangle} is more satisfying (and difficult) because it does not assume this restrictive hypothesis. In order to prove the theorem, we have 
to understand combinatorial convex hulls of CAT(0) geodesic rays, which is a much more delicate affair when one is not 
simply assuming that these hulls can be taken to contain Euclidean orthants.

We observe that Theorem \ref{Thm:HE-triangle} does not have a direct analogue in which the Tits boundary is replaced by the visual boundary.  For example, if $X$ admits a proper, cocompact, essential action by a group $G$, and $X$ is irreducible, then $G$ contains a rank-one isometry of $X$ \cite{CapraceSageev}, and therefore $\simp X$ has two distinct isolated points. More generally, even without a cocompact group action, $\simp X$ has isolated points corresponding to regular points in $\roller X$, by \cite[Proposition 7.4]{Fernos:Poisson} and \cite[Corollary 3.20]{Hagen}.  On the other hand, if $X$ is one-ended, then the visual boundary of $X$ is connected.

Theorem~\ref{Thm:HE-triangle} also motivates the introduction of several new technical tools for relating the geometry of CAT(0) geodesic rays to the combinatorics of hyperplanes. We believe that these tools, explained in Section~\ref{Sec:ToolsIngredients}, are of independent interest.

  \subsection{Invariance of CAT(0) boundaries for cubulated groups}\label{subsec:perturb_intro}
A group $G$ is called \emph{CAT(0)} if there is a proper CAT(0) space $X$ on which $G$ acts 
geometrically (properly and cocompactly).  Multiple CAT(0) spaces might admit a geometric $G$--action and witness that $G$ is a CAT(0) group.  Since we are often 
interested in invariants of the group itself, it is natural to look for features of the geometry of $X$ that  depend 
only on $G$.  For instance, in the context of a Gromov hyperbolic group $G$, recall that all hyperbolic groups with a geometric $G$--action have Gromov boundaries that are $G$--equivariantly homeomorphic. So one might wonder whether a similar result might hold for CAT(0) groups that are not hyperbolic.

A famous result of Croke and Kleiner  \cite{CrokeKleiner,CrokeKleiner:Equiv}, on 
which we elaborate more below, shows that this is not the case.  Explicitly, they considered the right-angled Artin group
$$G=\langle a,b,c,d : [a,b],[b,c],[c,d]\rangle,$$
whose Cayley 2-complex $X_{\pi/2}$ is a CAT(0) square complex. The subscript $\pi/2$ emphasizes the angles at the corners of the $2$--cells. Viewing the $2$--cells as Euclidean squares with 
side length $1$, we obtain a CAT(0) space where $G$ acts geometrically.  Croke 
and Kleiner deformed the squares of $X$ into rhombi that have angles $\alpha$ and $\pi - \alpha$, producing a perturbed 
CAT(0) metric $X_\alpha$,
and showed that this perturbation changes the homeomorphism type of the visual boundary $\visual X_\alpha$. Subsequently, Wilson
\cite{Wilson} showed that the visual boundaries of $X_\alpha$ and $X_\beta$ are homeomorphic only if the angles satisfy 
$\alpha=\beta$, hence these perturbations produce  uncountably many homeomorphism types of boundaries.  Further examples are 
provided by Mooney \cite{Mooney2008}, who exhibited CAT(0) knot groups $G$ 
admitting uncountably many geometric actions on CAT(0) spaces, all with different visual boundaries.  Hosaka \cite{Hosaka} 
has given some conditions on $G,X,Y$ implying that there is an equivariant homeomorphism $\visual X\to\visual Y$ 
continuously extending the quasi-isometry $X\to Y$ coming from orbit maps, but the preceding examples show that any such 
condition will be hard to satisfy.

So, one has to look for weaker results or less refined invariants, or ask slightly different questions.  This has stimulated 
a great deal of work in various directions.  

In the context of $2$--dimensional CAT(0) complexes, there are more positive results obtained by replacing the visual 
boundary $\visual X$ by the Tits boundary $\tits X$, and passing to a natural subspace.  Specifically, the \emph{core} of $\tits X$ is the union of 
all embedded circles in $\tits X$. 
 Xie \cite{Xie} showed that if $G$ acts 
geometrically on CAT(0) $2$--complexes $X$ and $Y$, then the Tits boundaries $\tits X$ and $\tits Y$ have homeomorphic cores.
(This is weaker than Xie's actual statement; compare \cite[Theorem 6.1]{Xie}.)   In the Croke--Kleiner example, the core is a $G$--invariant connected 
component, while the rest of the Tits boundary consists of uncountably many isolated points and arcs.  

There is a closely related result due to Qing, which inspired us to consider \emph{cuboid complexes} in the present paper.  
Another way to perturb the CAT(0) metric on a CAT(0) cube complex $X$ is to leave the angles alone, but to vary the lengths 
of the edges in such a way that 
edges intersecting a common  hyperplane are given equal length. In this perturbation, each cube becomes a Euclidean box 
called a \emph{cuboid}.  The resulting path metric is still CAT(0), at least when there are uniform upper and lower bounds 
on 
the edge-lengths. In particular, this happens when some group $G$ acts on $X$ with 
finitely many orbits of hyperplanes, and the edge-lengths are assigned $G$--equivariantly.   We explain the details in 
Section~\ref{subSec:cuboid_defs},  
where we define a \emph{$G$--admissible hyperplane rescaling} in Definition~\ref{Def:hyp_rescaling} and  show in 
Lemma~\ref{Lem:cat_0_rescaling} that the resulting path-metric space $(X,\dist^\rho_X)$ 
is CAT(0) and $G$ continues to act by isometries.  Cuboid complexes have been studied by various authors, including Beyrer 
and Fioravanti \cite{BeyrerFioravanti}.

Qing \cite{Qing:thesis} studied the visual boundary of the CAT(0) cuboid metric under perturbation of the edge-lengths. She showed that 
the visual boundaries of the CAT(0) cuboid complexes obtained from the  Croke--Kleiner complex $X_{\pi/2}$ are all 
homeomorphic. More germane to the present paper, Qing also showed that the Tits boundaries of the rescaled cuboid complexes 
are homeomorphic, but that no  equivariant homeomorphism exists.  (Roughly, the cores of any two such Tits boundaries are 
homeomorphic by the result of Xie mentioned above. Via a cardinality argument, Qing extends the 
homeomorphism over the whole boundary by sending isolated points/arcs to isolated points/arcs.  But this extension cannot be 
done equivariantly because equivariance forces some isolated points to be sent to isolated arcs and vice versa.)  We will 
return to cuboid complexes shortly.

There are other results about homeomorphism type as a CAT(0) group invariant for certain classes of CAT(0) groups.  For 
example, Bowers and Ruane \cite{BowersRuane} showed that if $G$ is a product of a hyperbolic CAT(0) group and 
$\integers^D$, then the visual boundaries of the CAT(0) spaces $X$ and $Y$ with a geometric $G$--action are equivariantly homeomorphic. However, this equivariant homeomorphism need not come from a continuous extension of a quasi-isometry $X \to Y$.  Later, Ruane  \cite{Ruane} extended this result to the case where $G$ is 
a CAT(0) direct product of two non-elementary hyperbolic groups. 

Around the same time, Buyalo studied groups of the form $G=\pi_1(S)\times\mathbb Z$, where $S$ is a hyperbolic surface \cite[Section 14]{Buyalo}. He constructed two distinct geometric $G$--actions on $\mathbb H^2\times\reals$ and showed that while the two copies of $\mathbb H^2\times\reals$ are $G$--equivariantly quasi-isometric, the Tits boundaries do not admit any $G$--equivariant quasi-isometry. 

Since we are interested in $G$--equivariant results, it makes sense to seek more general positive results by replacing homeomorphism with a coarser equivalence relation.  This 
has been a successful idea: Bestvina \cite{Bestvina1996} proved that torsion-free CAT(0) groups have a well-defined visual 
boundary up to shape-equivalence. 
This was generalized by Ontaneda \cite{Ontaneda}, whose result removes the ``torsion-free'' 
hypothesis.

Homotopy equivalence is a finer equivalence relation than shape equivalence.  Might there be some result guaranteeing that the 
homotopy type of the Tits boundary of $X$ is to some extent independent of the choice of the CAT(0) space $X$, even if we 
allow only homotopy equivalences between boundaries that respect the $G$--action in the appropriate sense? One precise version of this question appears below as Question~\ref{Ques:GhomotopyTits}.

Motivated by Qing's results about perturbing the metric on a CAT(0) cube complex by changing edge lengths, we consider the 
situation where $G$ acts on the finite-dimensional CAT(0) cube complex $X$, and study the Tits boundaries of the CAT(0) spaces 
that result from $G$--equivariantly replacing cubes by cuboids.  Now, the hyperplane combinatorics, and hence the simplicial 
and Roller boundaries, are unaffected by this change.  And, for the unperturbed $X$, Theorem~\ref{Thm:HE-triangle} tells us 
that the homotopy type of the Tits boundary is really a feature of the hyperplane combinatorics.  So, in order to conclude 
that the homotopy type of the Tits boundary is unaffected by such perturbations of the CAT(0) metric, we just need to know 
that Theorem~\ref{Thm:HE-triangle} holds for cuboid complexes as well as cube complexes.  
Indeed, we show:

\begin{thmi}\label{Thm:MainCuboids}
 Let $X$ be a finite-dimensional CAT(0) cube complex and let $G$ be a group acting by automorphisms on $X$. Let $(X,\dist^\rho_X)$ be the CAT(0) cuboid complex obtained 
from a $G$--admissible hyperplane rescaling of $X$.  Then the original Tits boundary $\tits X$, the perturbed Tits boundary $\tits(X,\dist^\rho_X)$,  the simplicial boundary $\simp X$, and the simplicial Roller boundary $\absimp X$ are 
all \Ghom homotopy equivalent. 
\end{thmi}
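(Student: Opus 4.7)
The plan is to reduce Theorem~\ref{Thm:MainCuboids} to two applications of Theorem~\ref{Thm:HE-triangle} --- one for the standard cubical CAT(0) metric on $X$ and one for the cuboid metric $\dist^\rho_X$ --- and then combine them using transitivity of \Ghom homotopy equivalence, which was noted immediately after Definition~\ref{Def:G-homotopy}. The starting observation is that the simplicial boundary $\simp X$ and the simplicial Roller boundary $\absimp X$ depend only on the hyperplane combinatorics of $X$. The hyperplanes, their half-spaces, the set of UBSes, the Roller classes, and the partial orders governing both $\simp X$ and $\absimp X$ are all unchanged when cubes are replaced by cuboids via a $G$--admissible rescaling. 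Hence $\simp X$ and $\absimp X$ are literally the same simplicial complexes in both settings, the $\SR$ map between them is the same combinatorially defined map, and $G$ continues to act on them by the same simplicial automorphisms.

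The main work is to verify that Theorem~\ref{Thm:HE-triangle} continues to hold with $\dist^\rho_X$ in place of the standard cubical CAT(0) metric. By Lemma~\ref{Lem:cat_0_rescaling}, $(X,\dist^\rho_X)$ is a finite-dimensional CAT(0) space obtained by equipping the same cube complex $X$ with a different piecewise-Euclidean metric, and $G$ still acts on it by isometries. Each hyperplane remains a convex separating subspace isometric to a product of Euclidean cuboids of one lower dimension. The $\RT$ map from Corollary~\ref{Cor:RollerSimpTitsHomotopy} is built from (i) Guralnik's correspondence sending CAT(0) geodesic rays to Roller classes (underlying Definition~\ref{Def:Psi}), (ii) the canonical Tits-boundary circumcenter of the subset determined by a Roller class (Definition~\ref{Def:Qcircum}), and (iii) CAT(1) geometry of the Tits boundary. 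All of these ingredients are intrinsic to a finite-dimensional CAT(0) cube complex equipped with a $G$--equivariant CAT(0) piecewise-Euclidean metric in which each cube is a Euclidean box of uniformly bounded diameter and each hyperplane is a convex separating subspace. Since these features persist under the rescaling, the proof of Theorem~\ref{Thm:HE-triangle} should go through essentially verbatim in the cuboid setting, producing a commutative diagram of \Ghom homotopy equivalences relating $\simp X$, $\absimp X$, and $\tits(X,\dist^\rho_X)$.

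Combining this diagram with the original one from Theorem~\ref{Thm:HE-triangle} --- which share $\simp X$ and $\absimp X$ as common vertices by the first paragraph --- and invoking transitivity of \Ghom homotopy equivalence yields pairwise \Ghom homotopy equivalences among $\tits X$, $\tits(X,\dist^\rho_X)$, $\simp X$, and $\absimp X$. The main obstacle is the audit described in the previous paragraph: one must go through every geometric input into the proof of Theorem~\ref{Thm:HE-triangle} --- existence and uniqueness of CAT(0) geodesics, convexity of hyperplanes and their carriers, gate projections, angular estimates underlying the circumcenter construction, and the CAT(1) structure of the Tits boundary --- and confirm that none of them secretly depend on edges having unit length rather than the rescaled lengths prescribed by $\rho$. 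This is essentially bookkeeping rather than new mathematics, but the bookkeeping is nontrivial: some geometric constants (such as cube diameters) change under rescaling, so one must verify that the arguments continue to work given only the uniform upper and lower bounds on cuboid edge lengths supplied by a $G$--admissible rescaling.
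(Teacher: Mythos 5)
Your proposal is correct and follows essentially the same route as the paper: the paper proves the cuboid analogue of Theorem~\ref{Thm:HE-triangle} by auditing each geometric ingredient (convexity of half-spaces, product structure of carriers, the quasi-isometry constants of Lemma~\ref{Lem:CuboidWallQI}, the circumcenter and nerve constructions) for the metric $\dist^\rho_X$ in Sections~\ref{Sec:TitsCuboid}, \ref{Sec:CuboidCover}, and~\ref{Subsec:FinalProofCuboids}, and then, since $\simp X$ and $\absimp X$ are unchanged by the rescaling, concludes by transitivity of \Ghom homotopy equivalence exactly as you describe.
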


The proof of this result only requires minor modifications to the proof of Theorem~\ref{Thm:HE-triangle},
essentially because convexity of 
half-spaces and the 
metric product structure 
of hyperplane carriers persist in the CAT(0) cuboid 
metric $\dist^\rho_X$.
These small modifications  are described in 
Sections~\ref{Sec:TitsCuboid}, \ref{Sec:CuboidCover},  and \ref{Subsec:FinalProofCuboids}. 

\begin{remark}\label{rem:borel-intro}
The proofs of Theorem~\ref{Thm:HE-triangle} and \ref{Thm:MainCuboids} yield a slightly stronger conclusion than \Ghom 
homotopy equivalence.  We say that $G$--spaces $A,B$ are \emph{Borel \Ghom homotopy equivalent} if there is a map $f:A\to B$ 
that factors as a composition of finitely many homotopy equivalences, each of which is either $G$--equivariant or a homotopy 
inverse of a $G$--equivariant map.   (This definition is somewhat reminiscent of the notion of cell-like equivalence. Compare Guilbault and Mooney~\cite{GuilbaultMooney2012}, specifically the discussion Bestvina's Cell-like Equivalence Question on page 120.)

In the proof of both theorems, the homotopy equivalence from $\tits X$ (or 
$\tits(X,\dist^\rho_X)$) to $\absimp X$ is constructed as an explicit composition of maps, each of which is either a 
$G$--equivariant homotopy equivalence, a deformation retraction homotopy inverse to a $G$--equivariant inclusion map (see 
Proposition~\ref{prop:tits_visible_homotopic}), or a homotopy equivalence coming from one of the nerve theorems; the latter 
are compositions of $G$--equivariant maps or homotopy inverses of $G$--equivariant maps by Remark~\ref{rem:borel}.  The 
homotopy equivalence $\absimp X\to \simp X$ similarly proceeds by composing $G$--equivariant maps and their homotopy 
inverses, in view of the same remark about the nerve theorems.

From the definitions, Borel \Ghom homotopy equivalence is a stronger notion than \Ghom homotopy equivalence.  Furthermore, 
letting $EG$ be a classifying space for $G$, Theorem~\ref{Thm:HE-triangle} and the preceding observations imply that the 
homotopy quotients $EG\times_G\tits X$ and $EG\times_G\absimp X$ are homotopy equivalent and therefore the $G$--equivariant 
(Borel) cohomology of $\tits X$ is isomorphic to that of $\absimp X$ and $\simp X$.  
\end{remark}

Theorem~\ref{Thm:MainCuboids} implies, in particular, that if $X$ admits a geometric action by a group $G$, then the \Ghom 
homotopy type of 
the Tits boundary is unaffected by replacing the standard CAT(0) metric (where all edge lengths are $1$) with a 
cuboid metric obtained by rescaling edges $G$--equivariantly. This is perhaps evidence in favor of the possibility that the 
results of Bestvina and Ontaneda about shape equivalence of visual boundaries \cite{Bestvina1996, Ontaneda} can be strengthened to results about \Ghom 
homotopy equivalence if one instead uses the Tits boundary:

\begin{question}\label{Ques:GhomotopyTits}
For which CAT(0) groups $G$ is it the case that any two CAT(0) spaces $X,Y$ on which $G$ acts geometrically have \Ghom 
homotopy equivalent Tits boundaries?

If $G$ acts geometrically on CAT(0) cube complexes $X,Y$, are the simplicial boundaries of $X,Y$ \Ghom 
homotopy equivalent?
\end{question}

\subsection{Quasiflats}\label{subsubsec:quasiflats}
A remarkable theorem of Huang \cite[Theorem 1.1]{Huang:quasiflats} says that if $X$ is a $d$--dimensional CAT(0) cube complex, then any quasi-isometric embedding $\reals^d\to X$ has image that is 
Hausdorff-close to a finite union of \emph{cubical orthants}, i.e. a convex subcomplex that splits as the product of rays.  This statement is important for the study of quasi-isometric rigidity in cubical groups, and it is natural to want to strengthen the 
statement to cover quasi-isometric embeddings $\reals^d\to X$, where $d$ is the largest dimension for which such a map exists, but $\dimension X$ is allowed to be larger than $d$ (although still 
finite).  Natural examples where this strengthened form of Huang's theorem is useful include right-angled Coxeter groups.

We believe that Theorem~\ref{Thm:HE-triangle} could be used as an ingredient in proving such a result.  Very roughly, the homotopy equivalence $\tits X\to\simp X$ can be used to 
produce, given a singular $d$--cycle $z$ in $\tits X$, a simplicial $d$--cycle $z'$ in $\simp X$ represented by a finite collection of standard orthants in $X$.  By the latter, we mean there is a 
finite collection of $d$--simplices in $\simp X$ whose union $S$ carries $z'$.  The proof of Theorem~\ref{Thm:HE-triangle},
specifically Proposition~\ref{Prop:diameter} and the nerve arguments in Sections~\ref{Sec:covering_tits_boundary} and~\ref{Sec:visible_part},
should provide enough metric control on the map $S\to \tits X$ 
to invoke a result of Kleiner--Lang~\cite{KleinerLang} to deduce the strengthened quasiflats theorem.  
Given that the 
strengthened quasiflats theorem has recently been established by 
Bowditch~\cite[Theorem 1.1]{Bowditch} and independently Huang--Kleiner--Stadler~\cite[Theorem 1.7]{HKS}, we have decided not 
to pursue the matter in this paper.

\subsection{Ingredients of our proof}\label{Sec:ToolsIngredients}

As mentioned above, our primary goal is to elucidate the relationships among three boundaries: the simplicial boundary $\simp X$, the simplicial Roller boundary $\absimp X$, and the Tits boundary $\tits X$. 

The first two boundaries that we study are combinatorial in nature. The primary difference is that the central objects in $\simp X$ are sequences of hyperplanes, whereas the central objects in $\absimp X$ are sequences of half-spaces. After developing a number of lemmas that translate between the two contexts, we prove the following result:

\begin{named}{Corollary~\ref{Cor:UBStoRollerSimp}}
The barycentric subdivision of $\simp X$ contains a canonical, $\Aut(X)$--invariant subcomplex $\simpubs X$. There are $\Aut(X)$--equivariant 
 simplicial maps $\URtri \from \simpubs  X\to \absimp X$ and $\RUtri \from  \absimp X \to \simpubs  X$, with the following properties:
\begin{enumerate}[$(1)$ ]
\item $\URtri \from \simpubs  X\to \absimp X$ is  surjective.
\item $\RUtri \from  \absimp X \to \simpubs  X$  is an injective section of $\URtri$.  
\item $\URtri$ is a homotopy 
equivalence with homotopy inverse  $\URtri$.
\end{enumerate}
\end{named}

 In fact, all of $\simp X$ deformation retracts to $\simpubs X$ in an 
 $\Aut(X)$--equivariant 
 way. See Remark~\ref{rem:alternate_strategy}.

In contrast to the the two combinatorial boundaries, the Tits boundary $\tits X$ is  inherently linked to the geometry of the CAT(0) metric on $X$.
To relate the Tits boundary to the other two boundaries, we need to combinatorialize it: that is, we need to cover $\tits X$ by a certain collection of open sets, and then study the nerve of the 
corresponding covering. Similarly, we cover each of $\simp X$ and $\absimp X$ by simplicial subcomplexes, and study the resulting nerves. We will use two different versions of the nerve theorem to 
show that a topological space (such as one of our boundaries) is homotopy equivalent to the nerve of a covering. The Open Nerve Theorem \ref{Thm:EquivariantOpenNerve} deals with open coverings and is 
originally due to Borsuk \cite{Borsuk}. The Simplicial Nerve Theorem \ref{Thm:EquivariantSimplicialNerve} deals with coverings by simplicial complexes and is originally due to Bj\"orner 
\cite{Bjorner:book}. In fact, since Theorem~\ref{Thm:HE-triangle} is a $G$--equivariant statement, we need equivariant versions of both theorems, which have not previously appeared in the literature 
to our knowledge. Consequently, Section~\ref{Sec:simplicial_complex} contains self-contained proofs of both theorems.

A central object in our construction of nerves is the Tits boundary realization of a Roller class, or point in  the Roller boundary (see Definitions~\ref{Def:Qy}, \ref{Def:Q'y} \ref{Def:Qcircum} 
). In \cite{FLM:RandomWalks}, a Roller class $v$, yields a convex, visually compact subset of the Tits boundary $ \tits X$. Employing the work of Caprace--Lytchak \cite{CapraceLytchak} and 
Balser--Lytchak \cite{BalserLytchak}, this set has radius at most $\pi/2$ and
a canonical circumcenter $\circum(v)$. A first approach might be to use these compacta to provide the $0$--skeleton of a nerve for $\tits X$ that will be homotopy equivalent to $\absimp X$. However, 
several issues arise. First, these associated convex closed sets must be made smaller so that their overlaps can be controlled. This is achieved by considering points in the Roller boundary versus 
their classes.

Secondly, the issue of \emph{$\ell^2$--visibility}, or rather invisibility must be addressed. A Roller class, (respectively a UBS) is $\ell^2$--visible if it is the intersection (respectively union) of the deep half-spaces (respectively hyperplanes) naturally associated to a CAT(0) geodesic ray $\alpha$.  As Figure~\ref{Fig:Staircases} shows, some Roller classes are not $\ell^2$--visible, and invisible classes cause headaches when trying to connect this data to the Tits boundary. In particular, if $v$ is an invisible Roller class, then we will have $Q(v) = Q(w)$ for any Roller class $w < v$.
To confront this challenge, we have to find a single  class which is maximal among the visible Roller classes represented by rays with endpoints in $Q(v)$. Proving the existence of such a maximal class requires finding a single geodesic ray that is diagonal, in the sense that its convex hull is the union of the convex hulls of two geodesic rays.
In particular, we need the following statement, which is of independent interest:

 \begin{named}{Proposition~\ref{Prop:build_CAT(0)}}
Let $\alpha$, $\beta$ be CAT(0) geodesic rays with $\alpha(0) = \beta(0) \in X^{(0)}$.
Suppose that $\W(\alpha)\cup\W(\beta)$ is commensurate 
with a UBS.  Then $a=\alpha(\infty)$ and $b=\beta(\infty)$ are joined by a unique geodesic 
$g$ in $\tits X$. Furthermore, any interior point $c$ of $g$ is represented by a CAT(0) geodesic ray $\gamma$ 
such that $\W(\gamma)  = \W(\alpha)\cup\W(\beta)$.
\end{named}

Next, 
we consider maps between $\tits X$ and $\guralnik X$ that  relate these boundaries.
 The first of these maps, called $\psi \from \tits X \to \guralnik X$, 
 is fairly easy to describe. A Tits point $a \in \tits X$ is represented by a CAT(0) geodesic ray $\alpha$ and  the intersection of half-spaces which are ``deep" yields a (principal) Roller class $\psi(a)$.  
 See Definition~\ref{Def:Psi} and Lemma~\ref{Lem:PsiAlternate}. The reverse map  $\varphi \from \guralnik X \to \tits X$ is somewhat more delicate: given a Roller class $v$, we start with the circumcenter $\circum(v) \in Q(v)$ and then perturb $\circum(v)$ to a nearby point $\varphi(v)$ that has slightly better properties. 
 See Definition~\ref{Def:phi_map} and Proposition~\ref{prop:phi_facts} for details, and note that the perturbation uses Proposition~\ref{Prop:build_CAT(0)} in a crucial way. The upshot is that $\varphi$ is a section of $\psi$ on exactly the $\ell^2$--visible classes: we have $v = \psi(\varphi(v))$ if and only if $v$ is $\ell^2$--visible (Lemma~\ref{Lem:L2Visibility}).

We can now construct nerves and prove results about them. We begin by defining the set $\maxvis(X)$ of all visible Roller classes that are maximal among all $\ell^2$--visible classes. Then, we construct a simplicial complex $\nerve_T$ whose vertex set is $\maxvis(X)$, with simplices corresponding to collections of Roller classes $v_i$ whose Tits boundary realizations $Q(v_i)$ all intersect. We check that this is indeed the nerve of a cover of $\tits X$ (Corollary~\ref{Cor:MtNerve}). Then, we prove:

\begin{named}{Theorem~\ref{Thm:first_HE}}
There is an \Authom homotopy equivalence from  the simplicial complex $\nerve_T$ to $\tits X$. \end{named}

The proof of Theorem~\ref{Thm:first_HE} requires an open thickening. The sets $Q(v)$ are closed (in fact, compact), and we do not have a version of the Nerve Theorem for closed covers. Thus we thicken up each compact set $Q(v)$ to an open set $U(v)$, in such a way that the intersection pattern of the open cover $\{U(v) :v\in\maxvis(X) \}$ is the same as that of the closed cover $\{Q(v):v\in\maxvis(X)\}$. The thickening procedure involves some delicate CAT(0) geometry; see Proposition~\ref{Prop:ThickNeighborhoodU}. As a result, we obtain an open cover of $\tits X$ whose nerve is isomorphic to the nerve of the closed cover, namely $\nerve_T$. Now, the Equivariant Open Nerve Theorem~\ref{Thm:EquivariantOpenNerve} gives an \Authom homotopy equivalence $\nerve_T \to \tits X$.
 
There are two reasons why the \Authom homotopy equivalence in Theorem~\ref{Thm:first_HE} is not $\Aut(X)$--equivariant. First, the Equivariant Open Nerve Theorem~\ref{Thm:EquivariantOpenNerve} does not provide $\Aut(X)$--equivariance on the nose. Second, while the collection of Tits boundary realizations $Q(v_i)$ is $\Aut(X)$--equivariant, the perturbed circumcenter map $\varphi$ might not be. For these reasons, \Authom homotopy equivalence is the strongest form of invariance that we can guarantee.
 
 In  Section~\ref{Sec:visible_part}, we use all of the above results  to complete the proof of Theorem~\ref{Thm:HE-triangle}. We already have an open cover of $\tits X$ whose nerve is $\nerve_T$. 
Working in $\absimp X$, we focus attention on a subcomplex $\vispart X$ whose vertex set corresponds to the $\ell^2$--visible Roller classes, and then construct a cover of $\vispart X$ by finite 
simplicial complexes $\{ \Sigma_v : v \in \maxvis X\}$. It is not too hard to check that the nerve $\nerve_\triangle$ of this cover is isomorphic to $\nerve_T$, which implies that $\vispart X$ and 
$\tits X$  are \Authom homotopy equivalent (Proposition~\ref{prop:tits_visible_homotopic}). To complete the construction of an \Authom homotopy equivalence $\RT 
\from \absimp X \to \tits X$, we build an \Authom deformation retraction  $\absimp^\eye X \to \absimp X$; see Proposition~\ref{prop:vis_all_equivalent}. While the construction of this retraction is 
mostly combinatorial, it relies on CAT(0) geometry and the perturbed circumcenter map $\varphi$ at one crucial step.

Finally,  the homotopy equivalence $\SR \from \simp X \to \absimp X$ is morally very similar to the simplicial map $\URtri \from \simpubs  X\to \absimp X$ constructed in
 Corollary~\ref{Cor:UBStoRollerSimp}. To make the argument precise, we construct isomorphic nerves of simplicial covers of 
 $\simp X$ and $\absimp X$ and apply the Equivariant Simplicial Nerve Theorem~\ref{Thm:EquivariantSimplicialNerve} one final time.

\begin{table}
  \begin{center}
    \caption{Table of notation.}
    \label{Tab:Notation}
    \begin{tabular}{| c | c|c|} 
 \hline
\textbf{Symbol} & \textbf{Meaning} & \textbf{Where}\\
 \hline
 $X$ & cube complex & Section~\ref{subsec:CAT(0)_cube_complex_defn} \\
 $d_1$ & $\ell^1$, combinatorial metric & Section~\ref{subsec:CAT(0)_cube_complex_defn} \\
 $d_X$ & $\ell^2$, CAT(0) metric & Section~\ref{subsec:CAT(0)_cube_complex_defn} \\
 $d_X^\rho$ & rescaled (cuboid) CAT(0) metric & Definition~\ref{Def:hyp_rescaling} \\
 $\frakH = \frakH(X)$ & set of half-spaces of $X$ & Section~\ref{Sec:half-spaces}  \\
  $\frakH^+_x$ & set of half-spaces containing $x$ & Section~\ref{Sec:Roller}  \\
$\W = \W(X)$ & set of hyperplanes (walls) of $X$ & Section~\ref{Sec:half-spaces} \\
$\W(x,y) $ & hyperplanes separating $x$ from $y$ & Section~\ref{Sec:half-spaces} \\    
$\I(x,y) $ & vertex interval from $x$ to $y$ & Definition~\ref{Def:Convex} \\
$m(x,y,z)$ & median in $\~X$ & Equation~\eqref{Eqn:Median} \\ 
$\hull(S)$ & cubical convex hull of set $S$ & Definition~\ref{Def:ConvexInX}  \\
$\J(x,y)$ & cubical interval from $x$ to $y$ & Definition~\ref{Def:IntervalInX}, \ref{Def:Q'y} \\
$\gamma$ & geodesic ray in $X$, either $\ell^1$ or $\ell^2$ & Definition~\ref{Def:CombGeodesic} \\
$\gamma(\infty)$ & endpoint of $\gamma$ at infinity & Lemma~\ref{Lem:CombLimit} \\
\hline
$\~X$ & Roller compactification of $X$ & Definition~\ref{Def:Roller}  \\
$\roller X$ & Roller boundary, $\~X \setminus X$ & Definition~\ref{Def:Roller}  \\
$x \sim y$ & finite distance in $\~X$ & Definition~\ref{Def:RollerEquivalence} \\
$\guralnik X$ & Guralnik quotient, $\roller X/ \sim$ &  Definition~\ref{Def:RollerEquivalence} \\
$\absimp X$ & simplicial Roller boundary & Definition~\ref{Def:SimplicialRoller} \\
%    $m_\guralnik(Y,Z,W)$ & median in $\guralnik X \cup \{X\}$ & Proposition~\ref{Prop:BoundaryMedian} \\ 
\hline
$\insep{S}$ & inseparable closure of a set of hyperplanes & Section~\ref{Subsec:MinimalDominant} \\
$\U, \V$ & unidirectional boundary sets (UBS) & Definition~\ref{Def:UBS} \\
$\U \sim \V$ & finite symmetric difference & Definition~\ref{Def:PreceqMinimal} \\
$\simp X$ & simplicial boundary of $X$ & Definition~\ref{Def:simplicial_boundary} \\
$\~Y_\U$ & umbra of a UBS in $\roller X$ & Definition~\ref{Def:Umbra} \\
$Y_\U$ & principal class of umbra & Lemma~\ref{Lem:UmbraClass} \\
$\U_Y$ & UBS representing a Roller class $Y$ & Definition~\ref{Def:UBSofRoller} \\
$\W(\gamma)$ & UBS associated to a geodesic $\gamma$ & Definition~\ref{Def:L1visible} \\
% The next three terms can probably be omitted from the table, if we are short on space
$\mathcal{UBS}(X)$ & equivalence classes of UBSes & Theorem~\ref{Thm:UBStoRoller} \\
%    $\UR$ & function, $\mathcal{UBS}(X) \to \guralnik X$ & Theorem~\ref{Thm:UBStoRoller} \\
%    $\RU$ & function, $\guralnik X \to \mathcal{UBS}(X)$ & Theorem~\ref{Thm:UBStoRoller} \\
% ---------------
\hline
$\tits X$ & Tits boundary of $X$ & Definition~\ref{Def:tits_metric} \\
$D_a$ & Deep set of half-spaces for $a \in \tits X$ & Definition~\ref{Def:DeepSet} \\
$\psi(a)$ & Roller class associated to $a \in \tits X$ & Definition~\ref{Def:Psi} \\
$Q(y)$ & Tits boundary realization of $y \in \roller X$ & Definition~\ref{Def:Qy}, \ref{Def:Q'y} \\
$Q(v)$ & Tits boundary realization of $v \in \guralnik X$ & Definition~\ref{Def:Qcircum} \\
$\circum(v)$ & circumcenter of $Q(v)$ & Definition~\ref{Def:Qcircum} \\
$\varphi(v)$ & pseudocenter, perturbed circumcenter & Definition~\ref{Def:phi_map} \\
$M_v$ & maximal Roller class in $\psi(Q(v))$ & Lemma~\ref{Lem:m_v} \\
$\visi(X)$ & $\ell^2$--visible Roller classes, $\psi(\tits X)$ & Definition~\ref{Def:L2visible} \\
$\maxvis(X)$ & maximal visible Roller classes & Definition~\ref{Def:MaxVis} \\
\hline
$U(v)$ & open neighborhood of $Q(v)$ &  Proposition~\ref{Prop:ThickNeighborhoodU} \\
$\nerve_T$ & nerve of cover of $\tits X$ by $Q(\maxvis(X))$ & Corollary~\ref{Cor:MtNerve} \\
$\mathcal{L}_T$ & nerve of cover of $\tits X$ by $U(\maxvis(X))$ & Theorem~\ref{Thm:first_HE} \\
$\vispart X$ & visible subcomplex of $\absimp X$ & Definition~\ref{Def:VisiPart} \\
$\Sigma_v$ & subcomplex of $\vispart X$ for $v \in \visi(X)$ & Definition~\ref{Def:VisiPart} \\
$\nerve_\triangle$ & nerve of simplicial cover of $\vispart X$ by $\Sigma_v$'s & Definition~\ref{Def:SimplicialNerveVisi} \\
\hline
\end{tabular}
  \end{center}
\end{table}

\subsection{Expository content}
Part of our goal in this paper is to provide exposition of varying aspects of cubical theory. 
 CAT(0) cube complexes are ubiquitous objects that have been well-studied in many different guises. Accordingly, there are several different 
viewpoints, and various important technical statements are stated and proved in a variety of different ways throughout the literature.  Therefore, we have endeavored to give a self-contained 
discussion of CAT(0) cube complexes combining some of these viewpoints.  

Also, throughout the paper, we make heavy use of the \emph{nerve theorem}, for both open and simplicial covers.  Results of 
this sort were originally proved by Borsuk~\cite{Borsuk}, and are now widely used in many slightly different forms.  The 
versions in the literature closest to what we need here are for open covers of paracompact spaces \cite{Hatcher} and for 
(possibly locally infinite) covers of simplicial complexes by subcomplexes \cite{Bjorner:homotopy,Bjorner:book}.  The 
recent paper~\cite{Ramras:nerves} contains generalizations of both statements of the nerve theorem.
% To the 
% best of our knowledge, the existing literature does not contain a version of the nerve theorem for covers of simplicial 
% complexes by locally infinite families of subcomplexes, with a conclusion that provides equivariance with respect to group 
% actions on the input data.
Since we could not find a written account of these results incorporating an additional conclusion about 
\Ghom equivalences in the presence of a group action, we have given self-contained proofs 
(based on arguments in~\cite{Hatcher,Bjorner:book}) in Section~\ref{Sec:simplicial_complex}.  

\subsection{Section Breakdown}
Section~\ref{Sec:simplicial_complex} establishes some language about simplicial complexes and proves equivariant versions of two nerve theorems.  Section~\ref{Sec:Background} is devoted to background on CAT(0) cube complexes and the Roller boundary.  In 
Section~\ref{subSec:cuboid_defs}, we discuss CAT(0) cuboid complexes coming from an admissible rescaling of the hyperplanes.  Section~\ref{Sec:SimplicialRoller} introduces the simplicial Roller 
boundary. In Section~\ref{Simplicial Boundary}, we introduce UBSes and the simplicial boundary, and relate these to Roller classes and the simplicial Roller boundary.  In Section~\ref{Sec:Tits}, 
we introduce the Tits boundary and prove some technical results relating CAT(0) geodesic rays to Roller classes and UBSes.  We apply these in Section~\ref{Sec:TitsRollerConnections} to analyze the 
realizations of Roller classes in the Tits boundary.  These results are in turn used in Section~\ref{Sec:covering_tits_boundary} to prove that the Tits boundary is homotopy equivalent to a 
simplicial complex $\nerve_T$ arising as the nerve of the covering by open sets associated to certain Roller classes.  In Section~\ref{Sec:visible_part}, we realize this nerve as the nerve of a covering of the 
simplicial Roller boundary by subcomplexes associated to Roller classes, and deduce Theorems~\ref{Thm:HE-triangle} and~\ref{Thm:MainCuboids}.  

See Table~\ref{Tab:Notation} for a summary of the notation used in this paper.

\subsection*{Acknowledgments}
We are grateful to Craig Guilbault, Jingyin Huang, Dan Ramras, and Kim Ruane for some helpful 
discussions. In particular, we thank Ramras for a correction, for pointing out the reference \cite{Bjorner:homotopy}, and 
for the observation about Borel \Ghom homotopy equivalence (Remark~\ref{rem:borel-intro}). 
We thank the organizers of the conference ``Nonpositively curved groups on the Mediterranean'' in May of 2018,  where the three of us began collaborating as a unit.
Fern\'os was partially supported by NSF grant DMS--2005640.
Futer was partially supported by NSF grant DMS--1907708.
Hagen was partially supported by EPSRC New Investigator Award EP/R042187/1.

\section{Simplicial complexes and nerve theorems}\label{Sec:simplicial_complex}
Throughout the paper, we will make use of simplicial complexes and assume that the reader is familiar with these objects.  
For clarity, we recall the definition and describe two topologies on a simplicial complex.
Then, in Section~\ref{Sec:equivariance}, we prove two group-equivariant homotopy equivalence theorems about nerves of covers.

A \emph{$k$--simplex} is the set 
$$\sigma=\left\{a_0e_0+\cdots+a_ke_k \: \Bigg\vert \: \sum_{i=0}^ka_k=1\ \text{and}\ a_i\geq 0\ \text{for all}\ i\right\},$$
where $e_0,\ldots,e_k$ are the standard basis vectors in $\reals^{k+1}$.  A \emph{face} of $\sigma$ is a $j$--simplex 
obtained by restricting all but $j+1$ of the $a_i$ to $0$.  Note that $\sigma$ 
has a CW complex structure where the $0$--cells 
are the $0$--dimensional faces and, more generally, the $j$--cells are the $j$--dimensional faces.

A \emph{simplicial complex} is a CW complex $\nerve$ whose closed cells are simplices, such that
\begin{itemize}
     \item each (closed) simplex is embedded in $\nerve$, and
     \item if $\sigma,\tau$ are simplices, then $\sigma\cap \tau$ is either empty or a face of both $\sigma$ and 
$\tau$.  In particular, simplices with the same $0$--skeleton are equal.
\end{itemize}
We often refer to $0$--simplices of $\nerve$ as \emph{vertices} and $1$--simplices as \emph{edges}.   

As a CW complex, $\nerve$ is endowed with the weak topology:

\begin{definition}[Weak topology]\label{Def:WeakTopology}
    Let $\nerve$ be a simplicial complex. The \emph{weak topology} $\mathcal T_w$ on  $\nerve$ is characterized by the property that a set $C \subset \nerve$ is closed if and only if $C \cap \sigma$ 
is closed for every simplex $\sigma \subset \nerve$.  
\end{definition}

In some of the arguments in this section, it will be more convenient to work with the metric topology on the simplicial complex $\nerve$.

\begin{definition}[Metric topology]\label{Def:metric_topology}
 Let $\nerve$ be a simplicial complex.  Let $V$ be the real vector space consisting of functions $f \from \nerve^{(0)}\to\reals$ such that $f(v)\neq 0$ for finitely many $v\in\nerve^{(0)}$.  
 Then there is a 
canonical inclusion $\nerve^{(0)}  \hookrightarrow V$, where every vertex $v \in \nerve^{(0)}$ maps to the corresponding Dirac function $\delta_v \in V$.  This map extends affinely over 
simplices endowed with barycentric coordinates to give an inclusion $\nerve \hookrightarrow V$. 

Equip $V$ with the $\ell^2$ norm $$\|f\|^2=\sum_{v\in\nerve^{(0)}}f^2(v),$$ which is well-defined since every $f\in V$ is finitely supported.  The restriction of the resulting metric topology on $V$ 
to the subspace $\nerve$ is the \emph{metric topology on $\nerve$}, denoted $\mathcal T_m$.
\end{definition}

The metric topology is coarser than the weak topology, hence the identity map $\id_{\nerve} \from (\nerve,\mathcal T_w)\to(\nerve,\mathcal T_m)$ is always continuous. When a simplicial complex 
$\nerve$ is locally infinite, the inverse map $(\nerve,\mathcal T_m)\to(\nerve,\mathcal T_w)$ is not continuous.  However, Dowker proved  that $\id_{\nerve}$ is a homotopy equivalence even when 
$\nerve$ is locally infinite~\cite{Dowker}. 
 Since our interest is in the homotopy type of $\nerve$, Dowker's theorem will be very useful.

\begin{convention}\label{Conv:WeakTopology}
Unless stated otherwise, a simplicial complex $\nerve$ is presumed to have the weak topology $\mathcal T_w$. The metric topology will only be needed in Lemma~\ref{Lem:homotopic_maps} and Theorem~\ref{Thm:EquivariantOpenNerve}, and never afterward.
\end{convention}

Some of the simplicial complexes used later will  arise from partially ordered sets, as follows.

\begin{definition}[Simplicial realization]\label{Def:simplicial_realisation}
Given a partially ordered set $(P,\leq)$, there is a simplicial complex $S$ in which the $k$--simplices are 
the $(k+1)$--chains in $(P,\leq)$, and the face relation is determined by containment of chains.  We call $S$ the \emph{simplicial realization} of the partially ordered set 
$(P,\leq)$. 
\end{definition}

In several other places, we will work with simplicial complexes arising as \emph{nerves} of coverings of topological spaces:

\begin{definition}[Nerve]\label{Def:nerve}
 Let $Y$ be a topological space and let $\mathcal U=\{Y_i\}_{i\in I}$ be a covering of $Y$, i.e. a family of subsets with $Y=\bigcup_{i\in I}Y_i$.  The \emph{nerve} of $\mathcal U$ is the simplicial 
complex $\nerve$ with a vertex $v_i$ for each $Y_i$, and with 
an $n$--simplex spanned by $v_{i_0},\ldots,v_{i_n}$ whenever
$\bigcap_{j=0}^nY_{i_j}\neq\emptyset$.  
\end{definition}

Note that we have defined nerves for arbitrary coverings of arbitrary topological spaces.  In practice, we will restrict $Y$ and $\mathcal U$: either $Y$ is a paracompact space and 
$\mathcal U$ is an open covering, or $Y$ is itself a simplicial complex and $\mathcal U$ is a covering by subcomplexes.  These assumptions provide the settings for the nerve theorems, which relate the homotopy type of $Y$ to that of the nerve of $\mathcal U$.

\subsection{Equivariant nerve theorems}\label{Sec:equivariance}

In this section, we prove two flavors of nerve theorem that are needed in our proofs of homotopy equivalence. Theorem~\ref{Thm:EquivariantOpenNerve} is a group-equivariant version of the classical nerve theorem for open coverings, originally due to Borsuk \cite{Borsuk}. Similarly, Theorem~\ref{Thm:EquivariantSimplicialNerve} is an equivariant version of the nerve theorem for simplicial complexes, which is due to Bj\"orner \cite[Theorem 10.6]{Bjorner:book}.

We need the following standard fact:

\begin{lemma}[Recognizing homotopic maps]\label{Lem:homotopic_maps}
 Let $Y$ be a topological space and $\nerve$  a simplicial complex  endowed with the metric topology. 
 Let 
$f_0,f_1 \from Y\to (\nerve,\mathcal T_m)$ be continuous maps.  Suppose that, for all $y\in Y$, 
there exists a closed simplex $\sigma$ of $\nerve$ such that 
$f_0(y), f_1(y)\in\sigma$.  Then $f_0$ and $f_1$ are homotopic via a straight line homotopy.
\end{lemma}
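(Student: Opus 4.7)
The plan is to construct the straight-line homotopy directly, using the linear structure on the ambient vector space $V$ from Definition~\ref{Def:metric_topology}, and then verify both that the image lies in $\nerve$ (using the hypothesis) and that the map is continuous (using only the $\ell^2$ estimate on $V$).

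Concretely, viewing $\nerve \hookrightarrow V$ as in Definition~\ref{Def:metric_topology}, I define
\[
H \from Y \times [0,1] \longrightarrow V, \qquad H(y,t) = (1-t)\, f_0(y) + t\, f_1(y).
\]
The first step is to show that $H$ actually lands in $\nerve$. For each fixed $y$, the hypothesis provides a closed simplex $\sigma$ of $\nerve$ containing both $f_0(y)$ and $f_1(y)$. Since $\sigma$ is convex in $V$ (it is a convex combination of finitely many Dirac functions on vertices), the segment from $f_0(y)$ to $f_1(y)$ is entirely contained in $\sigma \subset \nerve$. Thus $H(y, t) \in \nerve$ for every $(y,t)$.

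The second step is to verify continuity of $H \from Y \times [0,1] \to (\nerve, \mathcal T_m)$. Since $\nerve$ carries the subspace metric topology from $V$, it is enough to show that $H$ is continuous as a map into $(V, \|\cdot\|)$. For any $(y_0, t_0) \in Y \times [0,1]$, the triangle inequality gives
\[
\bigl\| H(y,t) - H(y_0, t_0) \bigr\| \leq (1-t)\,\|f_0(y) - f_0(y_0)\| + t\,\|f_1(y) - f_1(y_0)\| + |t - t_0|\,\|f_1(y_0) - f_0(y_0)\|.
\]
As $(y,t) \to (y_0, t_0)$, the first two terms tend to $0$ by continuity of $f_0, f_1$ into $(\nerve, \mathcal T_m) \subset V$, and the third tends to $0$ because $\|f_1(y_0) - f_0(y_0)\|$ is a fixed finite constant.

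Thus $H$ is a continuous map $Y \times [0,1] \to \nerve$ with $H(\cdot, 0) = f_0$ and $H(\cdot, 1) = f_1$, namely the desired straight-line homotopy. I don't anticipate any real obstacle: the only point that requires the hypothesis at all is the inclusion $H(y,t) \in \nerve$, and this is immediate from convexity of closed simplices in $V$. It is worth emphasizing that this argument genuinely needs the metric topology $\mathcal T_m$ rather than the weak topology $\mathcal T_w$, since the straight-line homotopy may pass through infinitely many simplices as $y$ varies, and continuity in $\mathcal T_w$ would be delicate; working in $(V, \|\cdot\|)$ sidesteps this issue entirely.
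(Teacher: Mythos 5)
Your proof is correct and follows essentially the same route as the paper: both define the affine combination $(1-t)f_0(y)+tf_1(y)$ in the ambient vector space $V$, use convexity of the closed simplex containing $f_0(y)$ and $f_1(y)$ to see that the image stays in $\nerve$, and conclude continuity from the linear structure on $V$. Your explicit triangle-inequality verification of joint continuity is merely a more detailed version of the step the paper asserts directly.
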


\begin{proof}
By abuse of notation, we identify $\nerve$ with the embedding $\nerve \hookrightarrow V$ described in Definition~\ref{Def:metric_topology}. Then $f_0, f_1 \from Y \to V$ are continuous functions. For $y \in Y$ and $t \in [0,1]$, define the affine combination $f_t(y) = 
(1-t)f_0(y) + t f_1(y)$. Then 
$(y,t) \mapsto f_t(y)$ is a continuous mapping from $Y \times [0,1]$ to $V$. Furthermore, for every $y \in Y$, the image $f_t(y) = (1-t)f_0(y) + t f_1(y)$ belongs to the same simplex $\sigma \subset \nerve$ that contains $f_0(y)$ and $f_1(y)$. Thus we get a straight line homotopy $f_t \from Y \to \nerve$.
\end{proof}

\begin{theorem}[Equivariant open nerve theorem]\label{Thm:EquivariantOpenNerve}
Let $Y$ be a paracompact space, on which a group $G$ acts by homeomorphisms.  Let $ I$ be a set equipped with a left $G$--action, and let 
$\U=\{Y_i\}_{i\in I}$ be an open covering of $Y$ with the following properties:
\begin{itemize}
     \item $gY_i=Y_{gi}$ for all $g\in G$ and $i\in I$;
     \item for any finite $ F\subset I$, the intersection $\bigcap_{i\in F}Y_i$ is either empty or 
contractible.
\end{itemize}
Let $\nerve$ be the nerve of the covering $\U$.  

Then $G$ acts on $\nerve$ by simplicial automorphisms, and there is 
a \Ghom homotopy equivalence $f \from Y\to(\nerve,\mathcal T_m)$.
Consequently, there is also a 
 \Ghom homotopy equivalence $ Y\to (\nerve,\mathcal T_w)$.
\end{theorem}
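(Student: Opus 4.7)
The plan is to construct a map $f \from Y \to (\nerve, \mathcal{T}_m)$ using a partition of unity subordinate to $\U$, then to prove separately that $f$ is a homotopy equivalence (the classical Borsuk nerve theorem for good covers of paracompact spaces) and that $f$ is \Ghom equivariant, the latter by verifying that $f \circ g$ and $g \circ f$ always land in the same closed simplex of $\nerve$. First, the vertex map $v_i \mapsto v_{gi}$ extends to a simplicial automorphism of $\nerve$ because $\bigcap_{i \in F} Y_i$ is nonempty if and only if $g \bigcap_{i \in F} Y_i = \bigcap_{i \in gF} Y_i$ is, so a finite set $F \subseteq I$ spans a simplex precisely when $gF$ does. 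By paracompactness of $Y$, choose a partition of unity $\{\varphi_i\}_{i \in I}$ with $\mathrm{supp}(\varphi_i) \subseteq Y_i$, and define $f(y) = \sum_i \varphi_i(y)\, v_i$, viewed as a point in the vector space $V$ of Definition~\ref{Def:metric_topology}. At each $y \in Y$, the nonzero coefficients are indexed by a subset of $\{i : y \in Y_i\}$, which spans a simplex of $\nerve$ since $y \in \bigcap_{i : y \in Y_i} Y_i$; hence $f(y) \in \nerve$, and continuity of $f$ into $(\nerve, \mathcal{T}_m)$ follows from local finiteness of $\{\varphi_i\}$.

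To show $f$ is a homotopy equivalence, we adapt the classical Borsuk nerve theorem for good open covers of paracompact spaces (see, e.g., Hatcher's Corollary~4G.3, or \cite{Bjorner:homotopy}). The standard approach is to form the blow-up
\[
Z = \bigl\{(y,p) \in Y \times \nerve : p \in \sigma_y\bigr\},
\]
where $\sigma_y$ denotes the closed simplex spanned by $\{i : y \in Y_i\}$. The projection $\pi_Y \from Z \to Y$ has contractible (simplicial) fibers and admits the continuous section $y \mapsto (y, f(y))$; combined with Lemma~\ref{Lem:homotopic_maps} applied fiberwise, this makes $\pi_Y$ a homotopy equivalence with $f$ as a homotopy inverse. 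The projection $\pi_\nerve \from Z \to \nerve$ has, above the interior of each simplex $[v_{i_0}, \ldots, v_{i_k}]$, the fiber $\bigcap_j Y_{i_j}$, which is contractible by the good-cover hypothesis; a standard partition-of-unity construction on $\nerve$ supplies a homotopy inverse. Composing, we conclude that $f$ is a homotopy equivalence.

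To establish \Ghom equivariance, apply Lemma~\ref{Lem:homotopic_maps} to the maps $f \circ g$ and $g \circ f$. For $g \in G$,
\[
f(gy) = \sum_i \varphi_i(gy)\, v_i, \qquad (g \circ f)(y) = \sum_i \varphi_i(y)\, v_{gi} = \sum_j \varphi_{g^{-1}j}(y)\, v_j.
\]
The supports of the coefficients are $\{i : gy \in Y_i\}$ and $\{j : y \in Y_{g^{-1}j}\} = \{j : gy \in Y_j\}$, respectively, and these sets coincide. Hence $f(gy)$ and $(g \circ f)(y)$ both lie in the closed simplex of $\nerve$ spanned by $\{j : gy \in Y_j\}$, so Lemma~\ref{Lem:homotopic_maps} provides a straight-line homotopy from $f \circ g$ to $g \circ f$. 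This furnishes the desired \Ghom homotopy equivalence $Y \to (\nerve, \mathcal{T}_m)$. To pass to $(\nerve, \mathcal{T}_w)$, invoke Dowker's theorem: the $G$-equivariant identity $(\nerve, \mathcal{T}_w) \to (\nerve, \mathcal{T}_m)$ is a homotopy equivalence, and any homotopy inverse is then automatically \Ghom by the remark following Definition~\ref{Def:G-homotopy}, so composing with $f$ produces the required \Ghom homotopy equivalence to $(\nerve, \mathcal{T}_w)$.

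The principal technical obstacle is the second step, establishing that $\pi_\nerve$ is a homotopy equivalence when $I$ is infinite and $\nerve$ may be locally infinite. Paracompactness of $Y$ is essential here to produce the partition of unity, and some care is required in the analogous construction on $\nerve$ (which is paracompact in either topology, but possibly not locally finite); the argument is standard but not entirely trivial, and follows the inductive-over-skeleta strategy in \cite{Bjorner:homotopy} or Hatcher's proof for good covers.
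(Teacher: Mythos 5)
Your proposal is correct and follows essentially the same route as the paper: the Hatcher-style blow-up $Z$ (the paper's $\triangle Y$) with its two projections, a partition-of-unity section, the straight-line-homotopy lemma to compare maps landing in a common simplex, and Dowker's theorem to pass to the weak topology. One small imprecision: when the cover is not point-finite, $\{i : y \in Y_i\}$ may be infinite and spans no single closed simplex, so the fiber of $\pi_Y$ should be described as the convex subcomplex of all simplices $\sigma(F)$ with $y \in \bigcap_{i\in F} Y_i$ (the paper's $\Sigma_y$); your fiberwise argument goes through unchanged with that correction.
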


\begin{proof}
If the $G$--action is trivial, this result is the classical nerve theorem, proved for instance in Hatcher~\cite[Proposition 4G.2, Corollary 4G.3]{Hatcher}.
We adapt Hatcher's line of argument, accounting for the $G$--action where necessary.  Until we say otherwise, at the very end of the proof, the nerve $\nerve$ will be equipped with the metric 
topology $\mathcal T_m$.

\smallskip
\textbf{Simplices comprising the nerve:}  Let $\mathcal F$ be the set of 
finite subsets $ F\subset I$ 
such that $\bigcap_{i\in F}Y_i\neq\emptyset$.  Then, by Definition~\ref{Def:nerve}, every finite set $F \in \mathcal F$ can be canonically identified with the set of vertices of a simplex $\sigma(F) \subset \nerve$.
The partial order on $\mathcal F$ given by set inclusion corresponds to the face relation on $\nerve$.
The action of $G$ on $\U$  induces an action on $\mathcal F$, hence a simplicial action on $\nerve$.

\smallskip
\textbf{The space $\triangle Y$ and its quotient maps:}  Let $\sigma= \sigma(F)$ be a simplex 
of $\nerve$, corresponding to a finite set $F \in \mathcal F$. 
Let $U_F = \bigcap_{i \in F} Y_i$, a contractible open set, and consider the product $U_F \times \sigma(F) \subset Y \times \nerve$. Then, define
$$ \triangle Y = \bigcup_{F \in \mathcal F} \big( U_F \times \sigma(F) \big) \: \subset \: Y \times \nerve.$$
 The diagonal $G$--action on $Y \times \nerve$ induces a $G$--action on $\triangle Y$.
The projections from $Y \times \nerve$ to its factors restrict to $G$--equivariant projection maps
$$
p \from \triangle Y \to Y, \qquad q \from \triangle Y \to \nerve. $$

\smallskip
\textbf{The homotopy equivalence $\triangle Y \to \nerve$:}  
 Given a point $x \in \nerve$ contained in the interior of a simplex $\sigma(F)$, the fiber  $q^{-1}(x) = U_F \times \{x\}$ is contractible by hypothesis.  Thus, by \cite[Proposition 4G.1]{Hatcher}, 
the projection $q \from \triangle Y \to \nerve$ is a homotopy equivalence.

\smallskip
\textbf{The fiber $p^{-1}(y)$:}
Fix $y\in Y$, and consider the fiber 
$p^{-1}(y)$. Let $(y,x) \in p^{-1}(y) \subset Y \times \nerve$. Let $\sigma(F)$ be a simplex of $\nerve$ containing $x = q(y,x)$. The vertices of $\sigma$ correspond to the elements of $F$, hence to subsets $Y_i$ for $i \in F$.
We endow $\sigma(F)$ with barycentric coordinates, so that
 $x = \sum_{i \in F} \beta_i Y_i$ where $\sum \beta_i = 1$. Thus
 $$p^{-1}(y) = \left\{ \Big(y, \, \sum_{i \in F} \beta_i Y_i \Big)  \: \Bigg\vert \:\: y \in U_F , F \in \mathcal F \right\} = \{y\} \times \Sigma_y
  $$
for a simplicial subcomplex $\Sigma_y \subset \nerve$. Since the projection $p \from \triangle Y \to Y$ is $G$--equivariant, we have
$$ \{gy\} \times g \Sigma_y \: = \: g \big(  \{y\} \times \Sigma_y \big) \: = \: g p^{-1}(y) = p^{-1}(gy) \: = \: \{gy\} \times  \Sigma_{gy},$$
and in particular $g \Sigma_y = \Sigma_{gy}$.

For any pair of points $x,x' \in \Sigma_y$, belonging to simplices $\sigma(F), \sigma(F')$ of $\nerve$, we have $y \in U_F \cap U_{F'} = U_{F \cup F'}$. Thus there is also a simplex $\sigma(F \cup F')$ containing both $\sigma(F)$ and $\sigma(F')$ as faces. It follows that $x,x'$ are connected by an affine line segment in the common simplex $\sigma(F \cup F')$. 
In other words, $\Sigma_y$ is convex.

\smallskip
\textbf{The section $s \from Y \to \triangle Y$:}  
Since $Y$ is paracompact, there is a partition of unity $\{\phi_\alpha\}_{\alpha \in A}$ subordinate to $\U = \{Y_i\}_{i \in I}$. That is,
 for each $\alpha \in A$, we have $\supp(\phi_\alpha)  \subset Y_{i(\alpha)}$ for some $i(\alpha) \in I$.
  Given $y\in Y$, let $A(y) = \{ \alpha \in A : y \in \supp(\phi_\alpha) \}$, a finite set.
Then, define
$$f(y) = \sum_{\alpha \in A(y)} \phi_\alpha (y) Y_{i (\alpha)} \: \in \: \Sigma_y
\qquad \text{and} \qquad
s(y) = \big( y, f(y) \big)
 \: \in \: 
p^{-1}(y) .$$ 
We will eventually show that $f \from Y \to \nerve$ is the homotopy equivalence claimed in the theorem statement. For now, we focus on $s$.

\smallskip
\textbf{Checking that $s$ is a homotopy inverse of $p$:}
Since $s(y) \in p^{-1}(y)$, we have $p \circ s = id_Y$ by construction.
We claim that the other composition $s \circ p$ is homotopic to $id_{\triangle Y}$. 
To prove this, consider the following pair of functions from  $\triangle Y \to \nerve$:
$$q_0(y,x) = q(y,x) = x, \qquad q_1(y,x) = f(y).$$
For every pair $(y, x) \in \triangle Y$, the image points $x = q_0(y,x)$ and $x' = q_1(y,x)  = f(y)$ are both contained in $\Sigma_y$. Above, we have checked both image points $x,x'$ are contained in a common simplex $\sigma(F \cup F') \subset \Sigma_y \subset \nerve$. Recall that we are working with the metric topology on $\nerve$. Thus, by Lemma~\ref{Lem:homotopic_maps}, $q_0$ is homotopic to $q_1$ via a straight line homotopy $q_t$.  For every $y$, this straight-line homotopy runs through $\Sigma_y$.

We can now define a homotopy $h_t \from \triangle Y \to Y \times \nerve$ as follows:
$$
h_t(y,x) = (y, q_t(y,x)).
$$
Observe that $p \circ h_t$ is continuous because $p \circ h_t(y,x) = y$, and $q \circ h_t$ is continuous because $q \circ h_t(y,x) = q_t(y,x)$. Thus, by the universal property of the product topology, $h_t$ is continuous.
Restricting $t$ to be $0$ or $1$ gives
$$
h_0(y,x) = (y, q_0(y,x)) = (y,x), \quad h_1(y,x) = (y, q_1(y,x)) = (y, f(y)) = s\circ p(y,x).
$$
In fact, for every $(y,x)$, the path $t \mapsto q_t(y,x)$ has image in $\Sigma_y$, hence
$h_t(y,x)$ has image in $\{y\} \times \Sigma_y \subset \triangle Y$. Thus $h_t$ is a homotopy from $s\circ p$ to $id_{\triangle Y}$, and $s$ is a homotopy inverse of $p$.

\smallskip
\textbf{A \Ghom homotopy equivalence, for both topologies on $\nerve$:}  Thus far, we have homotopy equivalences $s \from Y\to 
\triangle Y$ and $q \from \triangle Y\to\nerve$.  Hence the composition $f = q\circ s \from Y \to \nerve$ is a homotopy equivalence as well.  

It remains to check that $f$ is a \Ghom homotopy equivalence.  We saw above that $s$ is a homotopy inverse for $p$; in particular $p$ is an isomorphism in the homotopy category of spaces, with 
inverse $s$.  Moreover, each $g\in G$ acts as a homeomorphism, and in particular an isomorphism in the homotopy category, on both $Y$ and $\triangle Y$.  By construction, we have $g\circ p = p\circ 
g$.  So, letting $\simeq$ denote homotopy of maps, and using that $s\circ p\simeq Id_{\triangle Y}$ and $p\circ s\simeq Id_{Y}$, we have 
$$s\circ g\simeq s\circ g\circ(p\circ s) \simeq 
s\circ(p\circ g)\circ s\simeq g\circ s,$$
which is to say that $s$ is a \Ghom homotopy equivalence.  Since $q$ is a $G$--equivariant homotopy equivalence, it is a \Ghom homotopy equivalence, and hence $f = q\circ s \from Y \to \nerve$ is as 
well.  Since $\id_{\nerve} \from (\nerve,\mathcal 
T_w)\to(\nerve,\mathcal T_m)$ is a $G$--equivariant homotopy equivalence by~\cite{Dowker}, $f$ is thus a \Ghom homotopy equivalence for either topology on $\nerve$.
% 
% 
% 
%  Recall from above that for every $y \in Y$ and  $g \in G$, we have $f(y) \in \Sigma_y$, hence $gf(y) \in g\Sigma_y = \Sigma_{gy}$. Similarly, we have $f(gy) \in \Sigma_{gy}$. Thus both $f(gy)$ and 
% $gf(y)$ lie in $\Sigma_{gy}$, and we have already checked that every pair of points of $\Sigma_{gy}$ belong to a common simplex of $\nerve$. 
% Thus Lemma~\ref{Lem:homotopic_maps} says that the maps $gf$ and $fg$ are homotopic for every $g \in G$, hence $f  \from Y \to (\nerve,\mathcal T_m)$ is a \Ghom homotopy equivalence, as claimed in the statement of the theorem.
% 
% \smallskip
% \textbf{Conclusion for the weak topology:}  This follows since $\id_{\nerve} \from (\nerve,\mathcal T_w)\to(\nerve,\mathcal T_m)$ is $G$--equivariant and is a homotopy equivalence by~\cite{Dowker}.
\end{proof}

We are now finished with having the metric topology on our nerves, 
and work only with the weak topology in the remainder of the paper.

The following result is stated (without the group action) as 
\cite[Theorem 10.6]{Bjorner:book}.  The proof given there assumes that $\V = \{\Sigma_i\}$ is a locally finite cover, and a 
proof in full generality is given in \cite[Lemma 1.1]{Bjorner:homotopy}.  (See \cite{Ramras:nerves} for a more general 
result.)  Since we need to adapt the proof slightly to account for the group action, we write down a proof using 
Theorem~\ref{Thm:EquivariantOpenNerve}, without any assumptions on local finiteness of the cover $\{\Sigma_i\}$. 
% might be 
% locally infinite, and since we need to adapt the proof slightly to account for the group action, .  

\begin{theorem}[Equivariant simplicial nerve theorem]\label{Thm:EquivariantSimplicialNerve}
Let $Y$ be a simplicial complex, and let $G$ be a group 
acting on $Y$ by simplicial automorphisms.   Let $ I$ be a set equipped with a left $G$--action, and let 
$\V =\{\Sigma_i\}_{i\in I}$ be a covering of $Y$ by subcomplexes, with the following properties:
\begin{itemize}
     \item $g \Sigma_i = \Sigma_{gi}$ for all $g\in G$ and $i\in I$;
     \item for any finite $ F\subset I$, the intersection $\bigcap_{i\in F} \Sigma_i$ is either empty or 
contractible.
\end{itemize}
Let $\nerve$ be the nerve of the covering $\V$. 
Then there is a 
\Ghom homotopy equivalence $f \from Y\to (\nerve,\mathcal T_w)$.
\end{theorem}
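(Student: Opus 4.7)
The plan is to deduce Theorem~\ref{Thm:EquivariantSimplicialNerve} from the Equivariant Open Nerve Theorem~\ref{Thm:EquivariantOpenNerve} by equivariantly thickening each subcomplex $\Sigma_i$ to an open set $U_i \subset Y$. The thickening must preserve finite intersections on the nose, so that the nerve of $\{U_i\}$ is canonically identified with $\nerve$, and each such intersection must be empty or contractible so that the hypotheses of Theorem~\ref{Thm:EquivariantOpenNerve} are satisfied.

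The construction uses the barycentric subdivision $Y'$ of $Y$. Recall that the open simplices $\mathring{\tau}$ of $Y'$ are in bijection with chains $\sigma_0 < \sigma_1 < \cdots < \sigma_k$ of simplices of $Y$; write $\min(\tau) = \sigma_0$. For any subcomplex $\Sigma \subset Y$, set
$$
U_\Sigma \;=\; \bigcup \{ \mathring{\tau} \,:\, \tau \in Y',\ \min(\tau) \in \Sigma \}.
$$
Passing to a coface of $\tau$ in $Y'$ only inserts simplices into the chain and therefore can only decrease $\min(\tau)$ in the face poset of $Y$; since $\Sigma$ is downward closed, $U_\Sigma$ is closed under taking cofaces in $Y'$, hence open in $Y$. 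Equivalently, $U_\Sigma = \bigcup_{\sigma \in \Sigma} \mathrm{st}_{Y'}(b_\sigma)$ is the open star in $Y'$ of the $0$--skeleton of the barycentric subdivision of $\Sigma$. From this description one deduces that $\Sigma \subset U_\Sigma$ and obtains an explicit straight-line deformation retraction $U_\Sigma \to \Sigma$: on each $\mathring{\tau}$ with chain $\sigma_0 < \cdots < \sigma_k$, the simplices in $\Sigma$ form an initial segment $\sigma_0, \ldots, \sigma_m$ (by downward closure of $\Sigma$), and one rescales the barycentric coordinates of $\mathring{\tau}$ to kill the weights on $b_{\sigma_{m+1}}, \ldots, b_{\sigma_k}$; continuity across faces follows from a short case analysis on whether the dropped coordinate corresponds to an index $\leq m$ or $> m$.

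The assignment $\Sigma \mapsto U_\Sigma$ is manifestly $G$--equivariant, since $G$ acts termwise on chains and so $g U_{\Sigma_i} = U_{g\Sigma_i} = U_{\Sigma_{gi}}$. The crucial combinatorial identity is
$$
\bigcap_{i \in F} U_i \;=\; U_{\bigcap_{i \in F} \Sigma_i}
\qquad \text{for every finite } F \subset I,
$$
which holds because the open simplices $\mathring{\tau}$ partition $Y$ and $\min(\tau) \in \bigcap_i \Sigma_i$ iff $\min(\tau) \in \Sigma_i$ for every $i \in F$. Consequently, $\{U_i\}_{i \in I}$ is a $G$--equivariant open cover of $Y$ whose finite intersections are either empty (when the corresponding simplicial intersection is empty) or deformation retract onto a contractible set, and the nerve of $\{U_i\}$ coincides with $\nerve$. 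Since $Y$ is a CW complex with the weak topology, it is paracompact, so Theorem~\ref{Thm:EquivariantOpenNerve} applies and produces a \Ghom homotopy equivalence $Y \to (\nerve, \mathcal T_w)$.

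The main obstacle is arranging for the exact identity $\bigcap_{i \in F} U_i = U_{\bigcap_{i \in F} \Sigma_i}$, rather than just an equality up to homotopy. The naive candidate --- the open star in $Y$ of the vertex set of $\Sigma$ --- fails this identity, since two disjoint subcomplexes joined by a single edge of $Y$ already have overlapping open stars. Passing to the barycentric subdivision and filtering open simplices of $Y'$ by the condition $\min(\tau) \in \Sigma$ reduces membership in $U_\Sigma$ to a condition on a single simplex of $Y$, which respects intersection cleanly and makes equivariance automatic.
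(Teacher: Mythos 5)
Your proposal is correct and follows essentially the same route as the paper: thicken each $\Sigma_i$ to an open neighborhood in the barycentric subdivision $Y'$ (your $U_\Sigma$, characterized by $\min(\tau)\in\Sigma$, coincides with the paper's $Y\setminus T_\Sigma$, since for a chain $\sigma_0<\cdots<\sigma_k$ the closure of $\tau$ meets $\Sigma$ exactly when $\sigma_0\in\Sigma$), verify that finite intersections and hence the nerve are preserved, deformation retract $U_\Sigma$ onto $\Sigma$, and invoke Theorem~\ref{Thm:EquivariantOpenNerve} together with paracompactness of $Y$. The only cosmetic difference is that you prove the identity $\bigcap_{i\in F}U_i=U_{\bigcap_{i\in F}\Sigma_i}$ directly from the partition into open simplices, while the paper argues via the complementary subcomplexes $T_\Sigma$ and cites Munkres for the retraction.
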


\begin{proof}
Let $Y'$ be the barycentric subdivision of $Y$. We think of $Y$ and $Y'$ as the same underlying topological space, with different simplicial structures. 

Every subcomplex $\Sigma \subset Y$ can be viewed as a subcomplex of $Y'$. Indeed, $\Sigma \subset Y$ is a \emph{full} subcomplex of $Y'$, in the following sense: for every simplex $\tau \subset Y'$ 
whose vertices belong to $\Sigma$, it follows that $\tau \subset \Sigma$. This holds because the vertices of $\tau$ correspond to simplices of $Y$; if these vertices belong to $\Sigma$, then so do the 
corresponding cells of $Y$, hence $\tau \subset \Sigma$.

\smallskip
\textbf{Complementary complexes and open neighborhoods:} 
For a subcomplex $\Sigma \subset Y$, 
define the complementary complex $T_\Sigma$ to be the union of all closed simplices of $Y'$ disjoint from $\Sigma$. Then $T_\Sigma$ is also a full subcomplex of $Y'$. Indeed, consider a simplex $\tau \subset Y'$ whose vertices are disjoint from $\Sigma$. Then $\tau \cap \Sigma$ cannot contain any faces of $\tau$, hence $\tau \cap \Sigma = \emptyset$. Since $T_\Sigma$ is a subcomplex of $Y'$, it is closed (in the weak topology).

Given a subcomplex $\Sigma \subset Y$, we define an open neighborhood $U_\Sigma = Y \setminus T_\Sigma$. Thus $U_\Sigma$ is the union of all the open simplices of $Y'$ whose closures intersect $\Sigma$.

For every subcomplex $\Sigma_i$, where $i \in I$, we write $U_i = U_{\Sigma_i}$.  Observe that $U_{g i}=gU_i$ for all $g\in G$ and $i \in I$. We will prove the theorem by replacing the subcomplex cover $\V = \{ \Sigma_i \}_{i \in I}$ by the open cover
$\U = \{ U_i \}_{i \in I}$.

\smallskip
\textbf{Same nerve:} Let $\Sigma_\alpha$ and $\Sigma_\beta$ be a pair of subcomplexes of $Y$, not necessarily belonging to $\V$. We claim that $\Sigma_\alpha \cap \Sigma_\beta = \emptyset$ if and only if $U_{\Sigma_\alpha} \cap U_{\Sigma_\beta} = \emptyset$. The ``if'' direction is obvious. For the ``only if'' direction, suppose that $\Sigma_\alpha \cap \Sigma_\beta = \emptyset$. Then $T_{\Sigma_\alpha} \!\cup T_{\Sigma_\beta} = T_{\Sigma_\alpha \cap \Sigma_\beta} = T_\emptyset = Y'$, hence $U_{\Sigma_\alpha} \!\cap U_{\Sigma_\beta} = \emptyset$, as desired.

Now, let $F \subset I$ be a finite set. By induction on $|F|$, combined with the argument of the above paragraph, we see that 
$\bigcap_{i\in F} \Sigma_i = \emptyset$ if and only if $\bigcap_{i\in F} U_{\Sigma_i} = \emptyset$. Hence the open cover $\U = \{ U_i \}_{i \in I}$ and the subcomplex cover $\V = \{ \Sigma_i \}_{i \in I}$ have the same nerve $\nerve$.

\smallskip
\textbf{Same homotopy type:} For every subcomplex $\Sigma \subset Y$, we claim that the open neighborhood $U_\Sigma$ deformation retracts to $\Sigma$. This is a standard fact about simplicial complexes, proved e.g.\ in Munkres \cite[Lemma 70.1]{Munkres:AlgebraicTopology}. The proof constructs a straight-line homotopy in every simplex $\sigma \subset Y'$ that belongs to neither $\Sigma$ nor $T_\Sigma$. The proof uses the fullness of $\Sigma$ and $T_\Sigma$ in $Y'$, but does not depend on any finiteness properties of these complexes.

Now, let $F \subset I$ be a finite set such that $\bigcap_{i\in F} \Sigma_i \neq \emptyset$. Define $\Sigma_F = \bigcap_{i\in F} \Sigma_i$, and recall that by hypothesis, $\Sigma_F$ is contractible. Since $U_{\Sigma_F} =  \bigcap_{i\in F} U_{\Sigma_i}$, and $U_{\Sigma_F}$ deformation retracts to $\Sigma_F$, it follows that $\bigcap_{i\in F} U_i = \bigcap_{i\in F} U_{\Sigma_i}$ is also contractible.

\smallskip
\textbf{Conclusion:} By a theorem of Bourgin \cite{Bourgin}, the simplicial complex $Y$ is paracompact (with the weak topology $\mathcal T_w$).
We have shown that the open cover $\U = \{ U_i \}_{i \in I}$ and the subcomplex cover $\V = \{ \Sigma_i \}_{i \in I}$ have the same nerve $\nerve$. We have checked that $U_{g i}=gU_i$ for all $g\in G$ and $i \in I$. Furthermore, for every finite set $F \subset I$, the intersection $\bigcap_{i\in F} U_i $ is either empty or contractible. Thus, by Theorem~\ref{Thm:EquivariantOpenNerve}, we have a \Ghom homotopy equivalence $f \from Y\to (\nerve,\mathcal T_w)$.
\end{proof}

\begin{remark}\label{rem:borel}
The proof of Theorem~\ref{Thm:EquivariantOpenNerve} yields a slightly stronger conclusion than \Ghom homotopy equivalence.  
Indeed, we produced $G$--equivariant homotopy equivalences $\triangle Y\to Y$ and $\triangle Y\to\nerve$, and \Ghom homotopy 
equivalences between $Y$ and $\nerve$ were then obtained by composing one with a homotopy inverse of the other.  The same is 
true for Theorem~\ref{Thm:EquivariantSimplicialNerve}, since it is proved by applying Theorem~\ref{Thm:EquivariantOpenNerve} 
to a $G$--invariant open cover.  Hence, under the hypotheses of either theorem, we have actually shown that $Y$ and $\nerve$ 
are Borel \Ghom homotopy equivalent (see Remark~\ref{rem:borel-intro}).     
\end{remark}

\section{Cube complexes and the Roller boundary}
\label{Sec:Background}

This section recalls some background about CAT(0) cube complexes and their Roller boundaries. Many facts about CAT(0) cube 
complexes, median graphs, wallspaces, and related structures occur in various 
 places in the literature, in many equivalent 
guises. We have endeavored to connect some of the perspectives, in part because we will need to use these connections in the sequel. We refer the reader to \cite{Sageev_95} and \cite{Wise:Riches2Raags} for more background.  

\subsection{CAT(0) cube complexes and metrics}\label{subsec:CAT(0)_cube_complex_defn}
A \emph{cube} is a copy of a Euclidean unit cube $[-\frac12,\frac12]^n$ for $0\leq n<\infty$.  A \emph{face} of 
$[-\frac12,\frac12]^n$ is a subspace obtained by restricting some of the coordinates to $\pm \frac12$.  A \emph{cube 
complex} is a CW complex whose cells are cubes and whose attaching maps restrict to isometries on faces.  A cube complex 
with embedded cubes is \emph{nonpositively curved} if, for all vertices $v$ and all edges $e_1,\ldots,e_k$ incident to $v$ such that 
$e_i,e_j$ span a $2$--cube for all $i\neq j$, we have that $e_1,\ldots,e_k$ span a unique $k$--cube. 
 If $X$ is 
nonpositively-curved and simply connected, then $X$ is a \emph{CAT(0)} cube complex.  The \emph{dimension} of $X$, denoted $\dimension X$, is the 
supremum of the dimensions of cubes of $X$.

Throughout this paper, $X$ will denote a finite-dimensional CAT(0) cube complex.   We let $\Aut(X)$ denote the group of 
cellular automorphisms of $X$.  We do not assume that $X$ is locally finite. 

The cube complex $X$ carries a metric $\dist_X$ such that $(X,\dist_X)$ is a CAT(0) space, the restriction of $\dist_X$ to 
each cube is the Euclidean metric on a unit cube, and each cube is geodesically convex (see~\cite{Gromov,BridsonHaefliger}). 
 We refer to $\dist_X$ as the \emph{$\ell^2$ metric} or the \emph{CAT(0) metric} on $X$.  
 
One can view $X^{(1)}$ as a graph whose vertices are the $0$--cubes and whose edges are the $1$--cubes.  (We 
often refer to $0$--cubes as vertices of $X$, and $1$--cubes as edges.)  We let $d_1$ denote the \emph{$\ell^1$ metric} on the 
vertex set $X^{(0)}$, which is the restriction of the usual graph metric on $X^{(1)}$. We refer to $d_1$ as the \emph{combinatorial metric} on $X^{(0)}$. A \emph{combinatorial geodesic} between $x,y \in X^{(0)}$ is an edge path in $X^{(1)}$ that realizes $d_1(x,y)$.

\subsection{Hyperplanes, half-spaces, crossing, and separation}\label{Sec:half-spaces}

Let $[- \frac{1}{2}, \frac{1}{2}]^n$ be an $n$-dimensional cube. A \emph{midcube} of $[- \frac{1}{2}, \frac{1}{2}]^n$ is the subset obtained by restricting one coordinate to be $0$.

A \emph{hyperplane} of a CAT(0) cube complex $X$ is a connected subset whose intersection with each cube is either a midcube 
of that cube, or empty. The open $1/2$--neighborhood of a hyperplane $\hat h$ in 
the metric $d_X$ is denoted $N(\hat h)$ and is called the 
 \emph{open 
hyperplane carrier} of $\hat h$. It is known that every open carrier $N(\hat h)$ is geodesically convex in $d_X$. Furthermore, $X \setminus N(\hat 
h)$ consists of two connected components, each of which is also convex
 \cite[Theorem 4.10]{Sageev_95}.
 
 A component of $X \setminus N(\hat 
h)$ is called a \emph{CAT(0) half-space}. 
 The intersection of one of these components with the vertex set $X^{(0)}$ is called a \emph{vertex half-space}. 
 The two vertex half-spaces associated to a hyperplane $\hat h$ are denoted 
$h,h^*$. Note that $\hat{h^*}= \hat h$.
 Given a vertex half-space $h$,  the corresponding CAT(0) half-space is the union of all cubes of $X$ whose vertices lie in $h$.
 We will use the unmodified term \emph{half-space} to mean either a CAT(0) half-space or a vertex half-space when the meaning is clear from context.

 The collection of all vertex half-spaces is denoted by $\frakH$, or $\frakH(X)$ if we wish to specify the space $X$.    
 If $h\in\frakH$, then $h^* = X^{(0)} \setminus h$ is exactly the complementary half-space associated to the same hyperplane 
$\hat h$.

The collection of all hyperplanes of $X$ is denoted $\W$ (for ``walls''). 
 Generally speaking, calligraphic letters denote collections of hyperplanes, while gothic letters denote collections of  
half-spaces.
There is a two-to-one map $\frakH \to \W$, namely $h \mapsto \hat h$. Given a subset $\mathcal{S}\subset \W$, an \emph{orientation} on $\mathcal S$ is a choice of a lift $\mathcal S \to \frakH$.

A  pair of half-spaces $h,k\in\frakH$ are called \emph{transverse} (denoted $h\pitchfork k$) if the four intersections $h\cap 
k$, $h\cap k^*$, $h^*\cap  k$ and $h^*\cap k^*$ are nonempty. In terms of the underlying hyperplanes $\hat h$ and $\hat k$, 
transversality is equivalent to the condition that $\hat h \neq \hat k$ and $\hat h\cap \hat k\neq\emptyset$. In this case, 
we also write $\hat h\pitchfork \hat k$.  We sometimes say that transverse hyperplanes \emph{cross}.  

Given subsets $A,B\subset X$, we say that $\hat h$ \emph{separates} $A,B$ if $A$ 
is contained in one CAT(0) half-space associated to $\hat h$, and $B$ is 
contained in the other.  We will usually be interested in situations where $A,B$ 
are subcomplexes, sets 
of vertices, or hyperplanes, in which case this notion is equivalent to another notion of separation:  namely that $A,B$ lie in distinct components of $X \setminus \hat h$.  Later in the paper, we will occasionally use the 
latter notion when we need to talk about points in $N(\hat h)$ being separated by $\hat h$.  (Elsewhere in the literature, 
e.g.~\cite{Sageev_95}, the half-spaces associated to $\hat h$ are defined to be the components of $X \setminus \hat h$, or sometimes 
their closures.  This small difference in viewpoint is usually irrelevant.)

If $e$ is an edge, i.e. a $1$--cube,  of $X$, then there is a unique hyperplane $\hat h$ separating the two vertices of $e$.  
We say that $\hat h$ is \emph{dual to} $e$, and vice versa.  Note that  $\hat h$ is the unique hyperplane intersecting $e$.  
More generally, if $k\geq 0$ and $c$ is a $k$--cube, then there are exactly $k$ hyperplanes intersecting $c$.  These 
hyperplanes pairwise cross, and their intersection contains the barycenter of $c$.  We say that $c$ is \emph{dual to} this 
family of hyperplanes.  Any set of $k$ pairwise-crossing hyperplanes is dual to at least one $k$--cube.  In particular, 
$\dimension X$ bounds the cardinality of any set of pairwise crossing hyperplanes in $X$.

For any points $x,y \in X$, define $\W(x,y)$ to be the set of hyperplanes separating $x$ from $y$.  If $x,y\in X^{(0)}$, 
then an edge path $\gamma$ from $x$ to $y$ is a geodesic in $X^{(1)}$ if and only if $\gamma$ never crosses the same hyperplane twice. Consequently,
 $$\dist_1(x,y)=|\W(x,y)|.$$
In view of this, the following standard lemma relates the metrics $\dist_X$ and $\dist_1$. See Caprace and 
Sageev~\cite[Lemma 2.2]{CapraceSageev} or Hagen~\cite[Lemma 3.6]{Hagen:FacingTuples} for proofs.

\begin{lemma}\label{Lem:WallQI}
There are constants $\lambda_0 \geq 1, \lambda_1 \geq 0$, depending on $\dim X$, such that the following holds. For any pair 
of points $x, y \in X$,
$$
\frac{1}{\lambda_0} d_X(x,y) - \lambda_1 \leq |\W(x,y)| \leq \lambda_0 d_X(x,y) + \lambda_1.
$$
\end{lemma}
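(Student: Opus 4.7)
My plan is to establish the quasi-isometric comparison between $d_1$ and $d_X$ first on the vertex set $X^{(0)}$, where the combinatorial identity $|\W(x,y)| = d_1(x,y)$ holds, and then bootstrap to arbitrary points by a bounded vertex approximation. Throughout set $n = \dim X$.

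For the easy direction on vertices, $d_X(x,y) \leq d_1(x,y)$ follows by following any combinatorial geodesic edge path, whose $\ell^2$ length equals its combinatorial length. For the harder inequality $d_1(x,y) \leq \sqrt{n}\, d_X(x,y)$, I take the CAT(0) geodesic $\gamma$ from $x$ to $y$ and decompose it into maximal subsegments $\gamma_i$ lying in individual cubes $c_i$ of dimension $k_i \leq n$. Within each $c_i$, identified with a unit cube in $\R^{k_i}$, the pointwise norm comparison $\|v\|_1 \leq \sqrt{k_i}\|v\|_2 \leq \sqrt{n}\|v\|_2$ applied to velocity vectors yields
\[ L_{\ell^1}(\gamma_i) \leq \sqrt{n}\, L_{\ell^2}(\gamma_i), \]
and summing gives $L_{\ell^1}(\gamma) \leq \sqrt{n}\, d_X(x,y)$. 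One further verifies $d_1(x,y) \leq L_{\ell^1}(\gamma)$: each hyperplane $\hat h \in \W(x,y)$ forces total variation at least $1$ in the coordinate transverse to $\hat h$ as $\gamma$ crosses it, and contributions from distinct hyperplanes add because within each cube they correspond to orthogonal coordinate directions.

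To pass to arbitrary $x, y \in X$, pick vertices $x', y'$ in the (smallest) closed cubes $c_x, c_y$ containing $x, y$. Then $d_X(x,x'), d_X(y,y') \leq \sqrt{n}/2$, and $|\W(x,x')|, |\W(y,y')| \leq n$ since any hyperplane separating $x$ from $x'$ must cross $c_x$, which has at most $n$ dual hyperplanes. Applying the triangle inequality for $d_X$, together with the evident subadditivity $\W(a,c) \subset \W(a,b) \cup \W(b,c)$ (any hyperplane separating $a$ from $c$ must separate $a$ from $b$ or $b$ from $c$), promotes the vertex estimates to the desired inequalities with $\lambda_0 = \sqrt{n}$ and $\lambda_1$ of order $n$.

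The main technical point is justifying $d_1(x, y) \leq L_{\ell^1}(\gamma)$ for vertices: this reflects the fact that the $\ell^1$ path metric on $X$ restricts to $d_1$ on $X^{(0)}$, and requires the hyperplane-counting argument sketched above, exploiting orthogonality of distinct hyperplanes within a single cube so their contributions to $L_{\ell^1}$ add. Once this is in hand, the rest of the argument is routine norm comparison and triangle inequality bookkeeping.
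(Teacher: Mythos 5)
Your proof is correct, and it is essentially the standard argument: the paper does not prove this lemma itself but cites Caprace--Sageev and Hagen, and those proofs run along the same lines as yours (the $\sqrt{n}$--bilipschitz comparison of the $\ell^1$ and $\ell^2$ norms on each cube, plus the fact that the $\ell^1$ path length of any path joining two vertices is at least $|\W(x,y)|$, which is the content of Remark~\ref{Rem:L1MetricOnX}). Your hyperplane-counting justification of $d_1(x,y)\leq L_{\ell^1}(\gamma)$ — summing, over $\hat h\in\W(x,y)$, the total variation of the transverse coordinate of the carrier $N(\hat h)$ along $\gamma$ — is the right way to make that step precise.

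One small imprecision to fix: with the paper's convention that a hyperplane separates two points only if each lies in one of the two CAT(0) half-spaces (the components of $X\setminus N(\hat h)$), the subadditivity $\W(a,c)\subset\W(a,b)\cup\W(b,c)$ can fail when $b$ lies in the open carrier of some $\hat h\in\W(a,c)$, since then $\hat h$ separates $b$ from nothing. This does not matter where you use vertex intermediate points, but it does arise when you bound $|\W(x',y')|$ in terms of $|\W(x,y)|$ for the lower bound. The fix is immediate: any such exceptional $\hat h$ must cross the minimal cube containing $b$, so there are at most $\dim X$ of them, and the discrepancy is absorbed into $\lambda_1$.
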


For an arbitrary set $B \subset X$, let $\W(B)$ be the union of all $\W(x,y)$ for all $x,y \in B$.  In practice, we will 
often be interested in the following special cases.  First, if $B$ is a hyperplane, then $\W(B)$ coincides with the set of hyperplanes
$\hat h$ that cross $B$.  If $B$ is a convex subcomplex (see Definition~\ref{Def:ConvexInX}), then $\W(B)$ coincides with the set of $\hat h$ with $\hat h\cap 
B\neq\emptyset$.  If $B$ is a combinatorial geodesic in $X^{(1)}$, then, because of the above characterization of $\dist_1$, the set 
$\W(B)$ is the set of hyperplanes dual to edges of $B$, or equivalently the set of hyperplanes intersecting $B$, or 
equivalently the set of hyperplanes separating the endpoints of $B$.

\begin{remark}\label{Rem:L1MetricOnX}
One can extend the $\ell^1$ metric  on $X^{(0)}$ to all of $X$ as follows. On a single cube $c$, let $d_1 \vert_{c \times c} \to \R$ be the standard $\ell^1$ metric. This can be extended to a path-metric on all of $X$ by concatenating paths in individual cubes. Miesch~\cite{Miesch} showed that this procedure gives a geodesic path metric on $X$ that extends the graph metric on $X^{(1)}$. With this extended definition, $d_1$ becomes bilipschitz to $d_X$, with the Lipschitz constant $\lambda_0$ depending only on $\dim(X)$.
\end{remark}

\subsection{The Roller boundary $\roller X$}\label{Sec:Roller}
Next, we define the \emph{Roller boundary} and \emph{Roller compactification} of $X$.

Every vertex $v\in X^{(0)}$ defines the collection $\frakH_v^+= \{h\in\frakH: v\in h\}$ of half-spaces containing $v$.  Going 
in the opposite direction, the collection $\frakH_v^+$ uniquely determines $v$:
$$
\bigcap_{h \in \frakH_v^+}{} h = \{v\}.
$$

We endow the set $2^\frakH$ with the product topology, which is compact by Tychonoff's theorem. Recall that a basis for this 
topology consists of \emph{cylinder sets} defined by the property that finitely many coordinates (i.e. finitely many half-spaces)  take a specified value ($0$ 
or $1$). Cylinder sets are both open and closed.

There is a continuous one-to-one map $X^{(0)} \hookrightarrow 2^\frakH$ defined by $v\mapsto \frakH_v^+$. (In fact,  
$X^{(0)}$ is homeomorphic to its image if and only if $X$ is locally compact. We will not need this fact.)

\begin{definition}[Roller compactification, Roller boundary]\label{Def:Roller}
The \emph{Roller Compactification} of $X^{(0)}$, denoted by $\~X$ or $\~X(\frakH)$, 
is the closure of the image of $X^{(0)}$ in $2^\mathfrak H$. 
The \emph{Roller Boundary} of $X$ is  $\roller X = \~X \setminus X^{(0)}$.

Recall that $\Aut(X)$ is the group of cubical automorphisms of $X$. We observe that the action of $\Aut(X)$ extends to a 
continuous action on $\~X$ and therefore on $\roller X$.
\end{definition}

\begin{definition}[Extended half-spaces]
Let $h \in \frakH(X)$. The \emph{extension of $h$} to $\~X$ is defined to be the intersection in $2^\frakH$ between $\~X$ and 
the cylinder set corresponding to $h$. 
 It is straightforward to check that the extensions of $h$ and $h^*$ form a partition 
of  $\~X$. We think of the extensions of $h$ and $h^*$ as complementary (vertex) half-spaces in $\~X$. By a slight abuse of 
notation, we use the same symbol $h$ to refer to both a half-space in $X$ and its extension in $\~X$.

By the above discussion of the topology on $2^\mathfrak H$, the basic open sets in $\~X$ are intersections between $\~X$ and 
cylinder sets. Therefore, every basis set is a finite intersections of (extended) half-spaces.
\end{definition}

The duality between points and half-spaces in $X$, described above, extends to all of $\~X = X^{(0)} \cup \roller X$.
Let $y \in \~ X$, and let $\frakH_y^+$ be the set of extended half-spaces that contain $y$. Then $y$ and $\frakH_y^+$ determine 
one another:
\begin{equation}\label{Eqn:SRDuality}
\bigcap_{h \in \frakH_y^+}{} h = \{ y \}.
\end{equation}

Chatterji and Niblo \cite{Chatterji-Niblo}, and independently Nica \cite{Nica}, extended Sageev's construction 
\cite{Sageev_95} to prove the following.

\begin{theorem}\label{Thm:SageevDuality}
Let $\frakH' \subset \frakH(X)$ be an involution-invariant collection of half-spaces. Then $\frakH'$ determines a CAT(0) cube 
complex $X(\frakH')$ and the Roller compactification $\~X(\frakH')$.  In particular, $X(\frakH(X)) = X$.
\end{theorem}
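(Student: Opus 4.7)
The plan is to invoke the Sageev construction \cite{Sageev_95}, as generalized by Chatterji--Niblo \cite{Chatterji-Niblo} and Nica \cite{Nica}, which produces a CAT(0) cube complex from any discrete pocset. First, I would verify that $\frakH'$ inherits the structure of such a pocset from $\frakH(X)$: inclusion gives the partial order, the involution $h \mapsto h^*$ is simply restricted, and since $\frakH(X)$ already satisfies the requisite finite-interval and nesting conditions (coming from the CAT(0) cube complex $X$), so does its involution-invariant subset $\frakH'$.

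Next, I would define a \emph{consistent orientation} of $\frakH'$ to be a subset $\sigma \subset \frakH'$ that picks exactly one of $h, h^*$ for every pair, and which is upward-closed in the sense that $h \in \sigma$ and $h \subset k$ imply $k \in \sigma$. The orientation $\sigma$ satisfies the \emph{descending chain condition} (DCC) when it contains no infinite descending chain $h_0 \supsetneq h_1 \supsetneq \cdots$. The cube complex $X(\frakH')$ is then built with DCC orientations as vertices; edges connect orientations that differ on exactly one pair $\{h, h^*\}$; and $n$--cubes are filled in whenever $2^n$ vertices pairwise differ on $n$ mutually transverse pairs of half-spaces. The Chatterji--Niblo and Nica arguments establish that this complex is connected (any two DCC orientations differ on a finite set, by a standard interval argument using the finite-interval condition), simply connected, and satisfies Gromov's link condition, and is therefore CAT(0).

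The Roller compactification $\~X(\frakH')$ is then realized as the space of all consistent orientations of $\frakH'$: a point of $2^{\frakH'}$ belongs to the Tychonoff closure of the image of $X(\frakH')^{(0)}$ if and only if it is a consistent orientation, because any violation of consistency (a pair $\{h, h^*\}$ misoriented, or a nested pair $h \subset k$ with $h$ chosen but $k$ rejected) is detected by a cylinder set around the offending finite set of coordinates, hence is avoided throughout a neighborhood of any orientation coming from a vertex. Finally, for $\frakH' = \frakH(X)$, the map $v \mapsto \frakH^+_v$ is a bijection from $X^{(0)}$ onto DCC orientations of $\frakH(X)$: injectivity was already noted in Section~\ref{Sec:Roller}, and surjectivity uses that a DCC orientation has a minimal half-space $h$, whose intersection with the other chosen half-spaces singles out a unique vertex of $X$ lying in $h$. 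This bijection tautologically respects the edge and higher-cube relations, giving $X(\frakH(X)) = X$. The main technical hurdles are Gromov's link condition and the DCC-to-vertex surjection; both are now classical and can be extracted from \cite{Sageev_95, Chatterji-Niblo, Nica}.
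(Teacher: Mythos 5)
The paper offers no proof of this statement—it is quoted directly from Sageev, Chatterji--Niblo, and Nica—and your sketch is precisely the classical construction those references carry out (DCC consistent orientations as vertices, cubes spanned by pairwise-transverse flips, and the Tychonoff closure of the vertex set as the set of all consistent orientations). One small caution: the finiteness of the symmetric difference of two DCC orientations, hence connectivity of $X(\frakH')$, relies on finite width (inherited from $\dimension X<\infty$) and not merely the finite-interval condition, since an infinite family of pairwise transverse pairs admits DCC orientations differing in infinitely many coordinates.
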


If  $x \in X^{(0)}$, the corresponding set $\frakH_x^+ \in 2^\frakH$ satisfies the \emph{descending chain condition}: any 
decreasing sequence of elements of $\frakH_x^+$ is eventually constant. On the other hand, the collection of half-spaces $\frakH_y^+$ corresponding to a boundary point $y 
\in \roller X$ will fail the descending chain condition. Namely, for $y\in \~X$ we have that $y\in \roller X$ if and only if 
there exists a sequence $\{h_n\}_{n\in \N}\subset \frakH_y^+$ such that $h_{n+1} \subsetneq h_n$ for each $n\in \N$.

\subsection{Convexity, intervals, and medians}
Here, we discuss the related notions of convexity and intervals in $X$ and $\~X$. Since we are interested in two different metrics on $X$, there are several distinct definitions of convexity ($\ell^1$ geodesic convexity, cubical convexity, interval convexity) that turn out to be equivalent for cubical subcomplexes of $X$.

We then introduce a median operation on $\~X$ and its restriction to $X$.

\begin{definition}[Convexity in $X^{(0)}$, convex subcomplexes]\label{Def:ConvexInX}
A set $A \subset X^{(0)}$ is called \emph{vertex-convex} if it is the intersection of vertex half-spaces in 
$X$. For a set $S \subset X^{(0)}$, the \emph{vertex convex hull} of $S$ is the intersection of all vertex-convex sets containing $S$, or equivalently the  intersection of all vertex half-spaces containing $S$.

A subcomplex $C \subset X$ is called \emph{cubically convex} 
if it is the intersection of CAT(0) half-spaces. For a set $S \subset X$, the \emph{cubical convex hull}, denoted $\hull(S)$, is the intersection of all cubically convex sets containing $S$, or equivalently the  intersection of all CAT(0) half-spaces containing $S$. We remark that for subcomplexes of $X$, this definition coincides with geodesic convexity in the CAT(0) metric $d_X$; see \cite[Remark 2.10]{Haglund} and  \cite[Theorem 4.10]{Sageev_95}.
\end{definition}

Observe that if $C$ is a full subcomplex of $X$ (meaning, $C$ contains a cube $c$ whenever it contains the $0$--skeleton of $c$), then $C$ is cubically convex whenever $C^{(0)}$ is vertex--convex. For this reason, we can use the two notions of convexity interchangeably when referring to full subcomplexes. 

\begin{definition}[Intervals in $X$]\label{Def:IntervalInX}
Given $x,y \in X^{(0)}$, define the \emph{(vertex) interval} $\I(x,y)$ to be vertex convex hull of $\{x, y\}$. Define the \emph{cubical interval} $\J(x,y)$ to be the cubical convex hull of $\{x, y\}$. 
\end{definition}

Observe that $\J(x,y)$ is a full, convex subcomplex, and is the union of all the cubes whose vertex sets are contained in $\I(x,y)$. In addition, $\J^{(1)}(x,y)$ is the union of all of the combinatorial geodesics from $x$ to $y$. In fact, intervals can be used to characterize convexity as follows:

\begin{lemma}\label{Lem:CharacterizeConvex}
A set $A \subset X^{(0)}$ is vertex-convex if and only if the following holds: for all $x,y \in A$, the interval $\I(x,y) \subset A$.

A subcomplex $C \subset X$ is cubically convex if and only if the following holds: for all $x,y \in C^{(0)}$, the cubical interval $\J(x,y) \subset C$.
\end{lemma}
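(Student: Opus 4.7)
The plan is to prove the vertex-convexity characterization first, and then reduce the subcomplex version to it. In both statements, the ``only if'' direction is immediate: if $A$ (respectively $C$) is an intersection of half-spaces, then every vertex (respectively CAT(0)) half-space containing the set also contains each pair $x,y$ drawn from it, and hence contains the interval $\I(x,y)$ (respectively $\J(x,y)$), by the very definition of these intervals as intersections of half-spaces.

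For the converse in the vertex case, suppose $A\subset X^{(0)}$ has the interval property and let $B$ denote its vertex convex hull; I would argue by contradiction that $A=B$. Suppose $z\in B\setminus A$. Since $X^{(1)}$ is connected, $d_1(z,A)<\infty$, so pick $a_0\in A$ minimizing $d_1(z,a_0)$. Since $z\notin A$, we have $a_0\neq z$, so some hyperplane $\hat h$ separates them; orient so that $z\in h$ and $a_0\in h^*$. Since $z\in B$ implies that every vertex half-space containing $A$ also contains $z$, we get $A\not\subset h^*$, so there exists $a'\in A\cap h$. Now form the median $m=m(z,a_0,a')$. The defining property of the median---$m\in k$ iff at least two of $z,a_0,a'$ lie in $k$---yields the standard identity $\W(z,m)=\W(z,a_0)\cap\W(z,a')$. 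Because $z,a'\in h$ and $a_0\in h^*$, the hyperplane $\hat h$ lies in $\W(z,a_0)\setminus\W(z,a')$, and therefore $\W(z,m)\subsetneq\W(z,a_0)$, giving $d_1(z,m)<d_1(z,a_0)$. On the other hand, the interval hypothesis gives $m\in\I(a_0,a')\subset A$, contradicting minimality of $d_1(z,a_0)$. Hence $A=B$ and $A$ is vertex-convex.

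For the subcomplex version, the converse direction requires one preliminary observation: the interval hypothesis forces $C$ to be a full subcomplex. Indeed, if $c$ is any cube with $c^{(0)}\subset C^{(0)}$, choose opposite corners $v_0,v_k$ of $c$; then $\J(v_0,v_k)=c$, which lies in $C$ by hypothesis. Thus $C$ is the union of all cubes in $X$ whose vertex set lies in $C^{(0)}$, so $C$ is determined by $C^{(0)}$. Since $\J(x,y)\subset C$ implies $\I(x,y)\subset C^{(0)}$ for all $x,y\in C^{(0)}$, the first half of the lemma shows $C^{(0)}$ is vertex-convex; expressing $C^{(0)}$ as an intersection of vertex half-spaces and taking the intersection of the corresponding CAT(0) half-spaces then recovers $C$ (using fullness), so $C$ is cubically convex.

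The only possibly delicate step is the median identity $\W(z,m)=\W(z,a_0)\cap\W(z,a')$, but this is a direct consequence of the defining property of the median and is standard; no additional CAT(0) geometry is required.
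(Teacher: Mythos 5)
Your proof is correct, but it follows a different route from the paper's. The paper argues directly with hyperplanes and edge-geodesics: it first shows that if a hyperplane $\hat h$ crosses $C$ and is dual to an edge $e$ with one endpoint in $C$, then the interval hypothesis forces the other endpoint of $e$ into $C$ as well (by routing a combinatorial geodesic through a parallel edge of $C$ dual to $\hat h$); it then takes a shortest combinatorial geodesic from a vertex $z\notin C$ to $C$ and concludes that the hyperplane dual to its last edge is disjoint from $C$ and separates $z$ from $C$. Your argument instead picks a closest point $a_0\in A$ to a putative $z$ in the hull, uses membership of $z$ in the hull to find $a'\in A$ on the far side of a hyperplane separating $z$ from $a_0$, and applies the median identity $\W(z,m)=\W(z,a_0)\cap\W(z,a')$ to contradict minimality; this is essentially the standard gate-projection argument, and it is sound. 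Two remarks. First, your argument invokes the median, which in the paper's exposition is only introduced (via \cite{Roller}) just after this lemma; since the median property of $X^{(0)}$ is independent of the lemma there is no circularity, but the paper's proof is deliberately median-free. Second, your reduction of the subcomplex statement to the vertex statement correctly isolates the needed fullness of $C$ (via $\J(v_0,v_k)=c$ for opposite corners of a cube $c$ with $c^{(0)}\subset C^{(0)}$) and the passage from $C^{(0)}=\bigcap_i h_i$ to $C=\bigcap_i\hull(h_i)$; the paper goes the other way, proving the subcomplex case and observing the vertex case is identical. Both approaches are complete; the paper's is more elementary, yours is shorter once the median is available.
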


\begin{proof}
Suppose $C$ is a cubically convex subcomplex and $x,y \in C$. Since $\J(x,y)$ is the intersection of all convex subcomplexes containing $x,y$, and $C$ is one such set, we have $\J(x,y) \subset C$. 

Toward the converse, suppose that $C$ is interval-convex, meaning $\J(x,y) \subset C$ for every pair $x,y$. We wish to prove that $C$ is convex.

We claim the following: for every  hyperplane $\hat h$ that intersects $C$ and is dual to an edge $e$ with endpoints $x,y$ such that $x \in C$, the other endpoint $y$ belongs to $C$ as well. This can be shown as follows. Since $\hat h$ intersects $C$, there is an edge $e' \subset C$ that is dual to $\hat h$. Let $x', y'$ be the endpoints of $e'$, such that $x$ and $x'$ are on the same side of $\hat h$. Then $\W(x,y') = \W(x,x') \sqcup \hat h = \hat h \sqcup \W(y,y')$. In particular, there exists a combinatorial geodesic from $x$ to $y'$ whose initial edge is $e$. Since $C$ is interval-convex, this proves the claim.

We can now show that $C$ is cubically convex. Suppose $z \in X^{(0)} \setminus C$. Let $\gamma \subset X^{(0)}$ be a shortest combinatorial geodesic from $z$ to $C$, and let $x$ be the terminus of $\gamma$, and let $e$ be the last edge of $\gamma$. Since the next-to-last vertex of $\gamma$ is not in $C$, the above claim implies that the hyperplane $\hat h$ dual to $e$ is disjoint from $C$, and separates $C$ from $z$. Thus $C$ is contained in a CAT(0) half-space disjoint from $z$. Since $z$ was arbitrary, it follows that $C$ is cubically convex.

The proof for vertex-convex subsets is identical, up to replacing all the appropriate sets by their $0$--skeleta.
\end{proof}

Our next goal is to extend the notion of convexity to $\~X$. 
It turns out that in $\~X$, the correct definition of convexity begins with intervals. After developing some definitions and tools, we will eventually prove  a generalization of Lemma~\ref{Lem:CharacterizeConvex} to $\~X$, using a very different argument.

\begin{definition}[Intervals and convexity in $\~X$]\label{Def:Convex}
Given $x,y \in \~X$, define the \emph{interval} $\I(x,y)$ to be the intersection of all (extended, vertex) half-spaces that 
contain both $x$ and $y$.  Equivalently, 
$$\I(x,y) = \big\{ z \in \~X \: : \:  \frakH^+_z \supset ( \frakH^+_x\cap \frakH^+_y) \big\}.$$
A nonempty set $A \subset \~ X$ is called \emph{convex} if it contains $\I(x,y)$ for all $x, y 
\in A$.  
\end{definition}

As a very particular case, half-spaces and hence also their intersections are convex in $\~X$.

Observe that for $x,y \in X^{(0)}$, the interval $\I(x,y)$ will in fact be contained in $X^{(0)}$, and coincides with the previous definition of $\I(x,y)$. Consequently, $X^{(0)}$ is convex in $\~X$. By Lemma~\ref{Lem:CharacterizeConvex}, any vertex-convex subset of $X^{(0)}$ is also convex in $\~X$.

Since $\~X \subset 2^{\frakH(X)}$ is by definition a set of vertices, we do not define convex subcomplexes of $\~X$.

As in \cite{Roller}, the fact that $\~X$ is a \emph{median space} is captured by the following property: for every $x,y,z \in 
\~X$ there is a unique point $m = m(x,y,z) \in \~X$ such that 
\begin{equation}\label{Eqn:Median}
\{m\}= \I(x,y) \cap \I(y,z) \cap \I(x,z).
\end{equation}
This unique point $m = m(x,y,z)$ is called the {\em median} of $x$, $y$, 
and $z$. 
We will not need the general definition of an (extended) median metric space; 
the only property of medians that we will need is  that $m \from \~X\times\~X\times\~X\to\~X$ satisfies \eqref{Eqn:Median}. 
In terms of half-spaces, we have:
\begin{equation*}%\label{Eqn:Medianhalf-space}
\frakH^+_m = (\frakH^+_x \cap  \frakH^+_y) \cup (\frakH^+_y\cap  \frakH^+_z)\cup ( \frakH^+_x \cap  \frakH^+_z).
\end{equation*}

\begin{remark}[Median in $X^{(0)}$]
Since intervals between points in $X^{(0)}$ are contained in $X^{(0)}$, we see that $m(x,y,z)\in X^{(0)}$ if 
$x,y,z\in X^{(0)}$.  So, the median $m$ on $\~X$ restricts to a median $m$ on $X^{(0)}$.  The point $m(x,y,z)$ is the unique 
vertex $m$ for which any two of $x,y,z$ can be joined by a geodesic in $X^{(1)}$ passing through $m$. In other words, we also get an analogue of  \eqref{Eqn:Median} with $\I(\cdot,\cdot)$ replaced by $\J(\cdot, \cdot)$:
$$
\{m\}= \J(x,y) \cap \J(y,z) \cap \J(x,z)
\quad
\text{for}
\quad x,y,z\in X^{(0)}.
$$

 In fact, a graph with 
this property --- called a \emph{median graph} --- is always the $1$--skeleton of a uniquely determined CAT(0) cube 
complex~\cite{Chepoi}.  
\end{remark}

\subsection{Lifting decompositions and convexity}\label{Sec:DualityLifting}

A set of half-spaces $\mathfrak{s} \subset \frakH$ is called \emph{consistent} if the following two conditions hold: if $h\in 
\mathfrak{s}$ then $h^*\notin \mathfrak{s}$, and if $k \supset h \in \mathfrak{s}$ then $k\in \mathfrak{s}$. Given a 
subset $\frakH' \subset \frakH$, a \emph{lifting decomposition} for $\frakH'$ is a consistent set of half-spaces 
$\mathfrak{s}  \subset \frakH \setminus \frakH'$ such that $\frakH = \frakH'\sqcup \mathfrak s\sqcup \mathfrak s^*$. 
Lifting decompositions do not necessarily exist, and are not necessarily unique.
 
Lifting decompositions naturally occur in the following way. Consider a set $A \subset \~X$. Analogous to $\frakH_x^+$,  we 
define a set $\frakH_A^+ = \{h\in \frakH : A\subset h\}$ of half-spaces that contain $A$, and observe that $\frakH_A^+$ is 
consistent. Define $\frakH_A^- = (\frakH_A^+)^*$ to be the set of half-spaces disjoint from $A$, and finally a set  
$$
\frakH_A = \{h\in \frakH : A\cap h \neq \emptyset, \, A\cap h^*\neq \emptyset \}
$$
 of half-spaces that cut $A$. Then we get a lifting decomposition $\frakH = \frakH_A \sqcup \frakH_A^+ \sqcup \frakH_A^-$, 
where $\mathfrak{s} = \frakH_A^+$.

\begin{remark}\label{SubsetReverseInclusion}
 Observe that if $A\subset B$, then $\frakH_B^+ \subset \frakH_A^+$. 
\end{remark}

Given points $x,y \in \~X$, let $\frakH(x,y)= \frakH_x^+\triangle\, \frakH_y^+$ denote 
the set of half-spaces that separate $x$ from $y$. 
 More generally, given two disjoint convex sets $A,B\subset \~X$, let $\frakH(A,B)= \frakH_A^+\triangle\, \frakH_B^+$ denote 
the set of half-spaces that separate $A$ from $B$. 

In an analogous fashion, we generalize the definition of $\W(x,y)$ from Section~\ref{Sec:half-spaces} to convex subsets $A, 
B \subset\~X$. For subsets of this form, $\W(A,B) = \widehat{\frakH(A,B)}$ consists of all the hyperplanes that 
separate $A$ from $B$. 
We also extend the
 $\ell^1$ metric on $X^{(0)}$ to $\~X$, where 
$d_1(x,y) = | \W(x,y) | = \frac{1}{2} | \frakH_x^+ \triangle \frakH_y^+| $ is allowed to take the value $\infty$.

 The following result says that lifting decompositions are in one-to-one correspondence with Roller-closed subcomplexes of 
$\~X$.

\begin{prop}\label{LiftingDecomp} 
The following are true:
\begin{enumerate}[$(1)$ ]
  \item\label{LiftingEmbedding} Suppose that $\frakH'\subset \frakH(X)$. If there exists a lifting decomposition 
$\mathfrak{s}$ for $\frakH'$, then there is a $d_1$--isometric
 embedding $\~X(\frakH') \hookrightarrow \~X$ induced from the map $2^{\frakH'} \hookrightarrow 2^{\frakH(X)}$ where $U 
\mapsto U\sqcup \mathfrak{s}$. The image of this embedding is 
$$\bigcap_{h\in \mathfrak{s}}{} h\subset \~X.$$
\item\label{ConsistentEmbedding} Similarly, if $\mathfrak{s}\subset \frakH(X)$ is a consistent set of half-spaces, then, 
setting $\frakH_\mathfrak{s} = 
\frakH(X)\setminus(\mathfrak{s}\sqcup \mathfrak{s}^*)$ we get an isometric embedding 
$\~X(\frakH_\mathfrak{s}) \hookrightarrow \~X$, obtained as above, onto
$$\bigcap_{h\in \mathfrak{s}}{} h\subset \~X.$$
\item\label{DCClifting} The set $\mathfrak{s}$ satisfies the descending chain condition if and only if the image of 
$X(\frakH_\mathfrak{s})$ is in $X$. 
\end{enumerate}
Furthermore, if the set $\mathfrak{s}$ is $G$--invariant, for some group $G\leq\Aut(X)$, then, with the restricted action on 
the image, the above natural embeddings are  $G$--equivariant.
\end{prop}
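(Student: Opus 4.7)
The plan is to use the standard duality (from the Chatterji--Niblo/Nica construction, applicable because $X$ is finite-dimensional) that identifies $\~X$ with the set of consistent orientations of $\frakH$, with $X^{(0)}$ corresponding to those orientations additionally satisfying DCC; the analogous identification applies to $\~X(\frakH')$ and $\frakH'$. Under this identification, the proposed map $U \mapsto U \sqcup \mathfrak{s}$ takes an orientation of $\frakH'$ to a subset of $\frakH$ that selects exactly one element from each pair $\{h, h^*\}$ (using the given decomposition $\frakH = \frakH' \sqcup \mathfrak{s} \sqcup \mathfrak{s}^*$, which forces involution-invariance of $\frakH'$ as well). To prove (1) it therefore suffices to check that $U \sqcup \mathfrak{s}$ is consistent in $\frakH$, that the image equals $\bigcap_{h \in \mathfrak{s}} h$, and that the map is a $d_1$-isometry onto its image.

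The crux is the upward-closedness part of consistency. Given $h \in U \sqcup \mathfrak{s}$ and $k \supset h$, we split into cases according to which of $\frakH', \mathfrak{s}, \mathfrak{s}^*$ contains $k$. The key observation, which eliminates the potentially problematic mixed cases, is that consistency of $\mathfrak{s}$ forces $k \in \mathfrak{s}$ directly whenever $h \in \mathfrak{s}$, so neither $k \in \frakH'$ nor $k \in \mathfrak{s}^*$ can occur in that subcase. When instead $h \in U \subset \frakH'$, the problematic case $k \in \mathfrak{s}^*$ is ruled out by applying the same upward-closedness to $h^* \supset k^*$, which would force $h^* \in \mathfrak{s}$, contradicting $h^* \in \frakH'$. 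The remaining subcases reduce to consistency of $U$ inside $\frakH'$. The no-pair condition is immediate from disjointness. Image identification follows since $y \supset \mathfrak{s}$ is equivalent to $y \in \bigcap_{h \in \mathfrak{s}} h$, with inverse $y \mapsto y \cap \frakH'$ (which is easily verified to be a consistent orientation of $\frakH'$); the $d_1$-isometry is trivial because $\mathfrak{s}$ cancels in $(U \sqcup \mathfrak{s}) \triangle (V \sqcup \mathfrak{s}) = U \triangle V$; and the topological embedding statement follows from closedness of $\bigcap_{h \in \mathfrak{s}} h$ in the compact Hausdorff space $\~X$.

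Part (2) then follows immediately from (1) by taking $\frakH' = \frakH_\mathfrak{s}$, which is involution-invariant by construction and for which $\mathfrak{s}$ is a lifting decomposition. For (3), we analyze when $U \sqcup \mathfrak{s}$ satisfies DCC, where $U$ is a DCC consistent orientation of $\frakH_\mathfrak{s}$. The same upward-closedness of $\mathfrak{s}$ used above shows that once a descending chain in $U \sqcup \mathfrak{s}$ contains an element of $U$, all subsequent (smaller) elements must also lie in $U$; hence any descending chain splits into an initial piece in $\mathfrak{s}$ followed by a tail in $U$. Since $U$ is already DCC, such a chain stabilizes iff its initial segment in $\mathfrak{s}$ does, yielding the claimed iff with $\mathfrak{s}$ being DCC (the reverse direction uses any single $U$ and restricts chains to $\mathfrak{s}$). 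Equivariance is immediate: $G$-invariance of $\mathfrak{s}$ implies the same for $\frakH_\mathfrak{s}$, the $G$-action descends to $\~X(\frakH_\mathfrak{s})$, and $g(U \sqcup \mathfrak{s}) = gU \sqcup g\mathfrak{s} = gU \sqcup \mathfrak{s}$. The only genuinely nontrivial step is the case analysis for consistency in step two; the rest is bookkeeping.
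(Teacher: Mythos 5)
Your proof is correct, but it takes a genuinely different route from the paper. The paper's own proof is very short: it observes that the existence of a lifting decomposition forces $\frakH'$ to be involution-invariant and \emph{tightly nested} (if $h\subset\ell\subset k$ with $h,k\in\frakH'$, then $\ell\in\frakH'$ — otherwise, say, $\ell\in\mathfrak s$ would force $k\in\mathfrak s$ by consistency), and then quotes \cite[Lemma 2.6]{CFI} for all three numbered conclusions, adding only the equivariance remark at the end. You instead reprove the content of that lemma directly, via the identification of $\~X$ and $\~X(\frakH')$ with the sets of consistent total orientations (ultrafilters) of $\frakH$ and $\frakH'$; your case analysis for the upward-closedness of $U\sqcup\mathfrak s$, the computation of the image as $\bigcap_{h\in\mathfrak s}h$ via $y\mapsto y\cap\frakH'$, the cancellation $(U\sqcup\mathfrak s)\triangle(V\sqcup\mathfrak s)=U\triangle V$, and the chain-splitting argument for the DCC statement are all correct, and equivariance is handled the same way as in the paper. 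What each approach buys: the paper's is shorter but opaque (everything is in the citation, and the tightly-nested verification is needed only to meet the cited lemma's hypotheses — your direct argument never uses it, which is consistent since it is automatic anyway); yours is self-contained and makes visible exactly which structural features of $\mathfrak s$ are used where. The one point where you are leaning on a nontrivial black box is the opening identification $\~X(\frakH')=\{\text{consistent total orientations of }\frakH'\}$: since $\~X(\frakH')$ is \emph{defined} as the closure of $X(\frakH')^{(0)}$ in $2^{\frakH'}$, this requires knowing that the DCC orientations are dense in all orientations of $\frakH'$, which in turn uses the finite width and finite interval condition inherited from $\frakH(X)$. That is exactly the content packaged in Theorem~\ref{Thm:SageevDuality} and \cite[Lemma 2.6]{CFI}, so invoking it as "the standard duality" is on the same footing as the paper's citation, but a fully self-contained write-up would need a sentence or two justifying that density.
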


This result is a slightly strengthened version of  \cite[Lemma 2.6]{CFI}. See also \cite[Proposition 2.11]{Fernos:Poisson} for a 
very similar statement.

We remark that  
Proposition~\ref{LiftingDecomp} includes the possibility that $\mathfrak{s}=\emptyset$ and hence $\frakH\sqcup\emptyset \sqcup \emptyset$ is a legitimate lifting decomposition of $\frakH$. In this case, recall that an intersection over an empty collection of sets is everything, i.e. $\bigcap_{h\in \varnothing}{} h= \~X$. 

\begin{proof}
Suppose that there is a lifting decomposition $\mathfrak{s}$ for  $\frakH'$. Then, since $\frak s \cap \frak s^* = \emptyset$ 
and $\frakH = \frakH'\sqcup \frak s \sqcup \frak s^*$, we deduce that $\frakH' = \frakH \setminus(\frak s\sqcup \frak s^*)$ 
is involution invariant. 

Next, we claim that $\frakH' \subset \frakH$ has a property called \emph{tightly nested}, meaning that  for every pair $h,k\in \frakH'$, 
and for every $\ell\in \frakH$ with $h\subset \ell \subset k$, we have $\ell \in \frakH'$. Assume, for a contradiction, that 
$h,k\in \frakH'$, and $\ell\in \frakH$ with $h\subset \ell \subset k$, but 
$\ell \notin \frakH'$. Then (up to replacing the three half-spaces with their complements) we may assume that $\ell \in 
\mathfrak{s}$. But then $k\in \mathfrak{s}$, a contradiction. 

With the verification that $\frakH'$ is involution invariant and tightly nested, our hypotheses imply those of  \cite[Lemma 
2.6]{CFI}. 
Now, conclusions \ref{LiftingEmbedding}--\ref{DCClifting} follow from that lemma.

Finally, the conclusion about $G$--invariance follows because the embeddings are canonically determined by the associated 
half-spaces.
\end{proof}

\begin{remark}\label{Rem:EmbeddingImage}
In Proposition~\ref{LiftingDecomp}, the  isometric embeddings of $\~X(\frakH')$ and $\~X(\frakH_\mathfrak s)$ restrict to 
isometric embeddings of $X(\frakH')$ and $X(\frakH_\mathfrak s)$, respectively. 
Although $\roller X$ is typically not closed, the images under the isometric embedding of  $\roller X(\frakH')$ and $\roller 
X(\frakH_\mathfrak s)$  always lie in $\roller X$. Indeed, recall from the discussion following Theorem 
\ref{Thm:SageevDuality} that a point belongs to the Roller boundary if and only if there is an infinite descending chain  of 
half-spaces containing the point in question. Hence, if $U$ has an infinite descending chain, then so does $U\sqcup \frak 
s$. 
\end{remark}

As a first application of Proposition~\ref{LiftingDecomp}, we will prove a generalization of Lemma~\ref{Lem:CharacterizeConvex} to $\~X$. Observe that a naive generalization of  Lemma~\ref{Lem:CharacterizeConvex} does not hold, because $X$ is (interval) convex in $\~X$ by Definition~\ref{Def:Convex}, but is not an intersection of extended half-spaces. However, the following lemma says that a \emph{closed}, convex set in $\~X$ is always an intersection of extended half-spaces.

\begin{lemma}\label{Lem:ConvexClosure}
Let $A \subset \~X$ be a convex set. Then the Roller closure $\overline{A}$ in $\~X$
 is the image of $\~X(\frakH_A)$ under the embedding of Proposition~\ref{LiftingDecomp}, and
 $$ \overline{A} = \bigcap_{h \in \frakH_A^+} h = \bigcap_{A \subset h} h.$$
\end{lemma}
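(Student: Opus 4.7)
The plan is to first unpack the statement, then verify the main equality $\overline A = \bigcap_{h \in \frakH_A^+} h$, and finally invoke Proposition~\ref{LiftingDecomp} to identify this intersection as the image of $\~X(\frakH_A)$. The second equality in the displayed formula is essentially definitional: $\frakH_A^+ = \{h \in \frakH : A \subset h\}$, so $\bigcap_{h \in \frakH_A^+} h = \bigcap_{A \subset h} h$.

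For the inclusion $\overline A \subset \bigcap_{h \in \frakH_A^+} h$, the strategy is to note two things. First, by definition of $\frakH_A^+$, each such (extended) $h$ contains $A$, hence $A \subset \bigcap_{h \in \frakH_A^+} h$. Second, each extended half-space $h \subset \~X$ is the intersection of $\~X$ with a cylinder set in $2^{\frakH}$, and cylinder sets are closed in the Tychonoff topology. Therefore $\bigcap_{h \in \frakH_A^+} h$ is closed in $\~X$, which contains $\overline A$.

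For the reverse inclusion, I fix $y \in \bigcap_{h \in \frakH_A^+} h$ and show that every basic open neighborhood of $y$ meets $A$. The key reformulation, obtained by taking contrapositives, is: \emph{every (extended) half-space containing $y$ meets $A$}. Since a basic open neighborhood of $y$ in $\~X$ has the form $h_1 \cap \cdots \cap h_n$ with $y \in h_i$ for each $i$, it suffices to produce a single point of $A$ in $h_1 \cap \cdots \cap h_n$. I proceed by induction on $n$. The base case $n=1$ is the reformulation above. For the inductive step, suppose $a \in A \cap h_1 \cap \cdots \cap h_{n-1}$ has been produced; if $a \in h_n$ we are done, otherwise pick $b \in A \cap h_n$ and form the median $m = m(a,b,y) \in \~X$. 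Convexity of $A$ gives $m \in \I(a,b) \subset A$, and the formula
\begin{equation*}
\frakH_m^+ = (\frakH_a^+ \cap \frakH_b^+) \cup (\frakH_b^+ \cap \frakH_y^+) \cup (\frakH_a^+ \cap \frakH_y^+)
\end{equation*}
shows $h_i \in \frakH_m^+$ for $i \leq n-1$ (using $a, y \in h_i$) and $h_n \in \frakH_m^+$ (using $b, y \in h_n$). Hence $m \in A \cap h_1 \cap \cdots \cap h_n$, as needed.

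Finally, with the equality $\overline A = \bigcap_{h \in \frakH_A^+} h$ established, I apply Proposition~\ref{LiftingDecomp}\ref{LiftingEmbedding} to the lifting decomposition $\frakH = \frakH_A \sqcup \frakH_A^+ \sqcup \frakH_A^-$, taking $\frakH' = \frakH_A$ and $\mathfrak{s} = \frakH_A^+$; the image of the resulting embedding $\~X(\frakH_A) \hookrightarrow \~X$ is precisely $\bigcap_{h \in \frakH_A^+} h = \overline A$. The only potentially delicate step is the median induction, where one must keep track of which of $\frakH_a^+$, $\frakH_b^+$, $\frakH_y^+$ each $h_i$ lies in; everything else is essentially bookkeeping or a direct citation.
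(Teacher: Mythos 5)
Your proof is correct and follows essentially the same route as the paper's: both identify $\bigcap_{h\in\frakH_A^+}h$ as the image of the embedding from Proposition~\ref{LiftingDecomp} applied to the lifting decomposition $\frakH=\frakH_A\sqcup\frakH_A^+\sqcup\frakH_A^-$, and both prove the nontrivial inclusion by combining convexity of $A$ with the median identity for $\frakH_m^+$. The only difference is organizational: you run a direct induction on the number of half-spaces in a basic neighborhood of $y$, applying the median repeatedly, whereas the paper argues by contradiction with a minimal separating family and a single median $m(y,a_1,a_2)$ — the underlying idea is identical.
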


\begin{proof}
Observe that $\frakH_A^+$ is a consistent set of half-spaces. Thus Proposition~\ref{LiftingDecomp}.\ref{ConsistentEmbedding} gives an embedding $\~X(\frakH_A) \hookrightarrow \~X$ whose image is
 $$ W =  \bigcap_{A \subset h} h = \bigcap_{h \in \frakH_A^+} h =  \bigcap_{h \in \frakH_{\~A}^+} h.$$
Here, the first equality is the definition of $W$, the second equality is the definition of $\frakH_A^+$, and the third equality holds because half-spaces are closed, meaning $\frakH_{A}^+=\frakH_{\~A}^+$.
 
It remains to show that $\overline{A} = W$. The inclusion $\overline{A} \subset W$ is obvious, because $\overline{A} \subset h$ for every $h \in \frakH_{\~A}^+$. To prove the reverse inclusion, suppose for a contradiction that there is a point  $y \in W \setminus \~A$. Since $\~X \setminus \~A$
is open, there is a basic open neighborhood of $y$ of the form $\bigcap_{i=1}^{n} h_i$, such that 
$$y \in \Bigg( \bigcap_{i=1}^{n} h_i \bigg) \subset \~X\setminus \~A.
$$
Therefore, $A \subset h_1^*\cup\cdots \cup h_n^*$. Without  loss of generality, assume that $n$ is minimal  in the sense that 
for each $j =1, \dots, n$ there is a point 
$$a_j \in A\setminus \Bigg(\bigcup_{j \neq i=1}^{n} h_i^* \Bigg) = A \cap \Bigg(\bigcap_{j \neq i=1}^n h_i \Bigg).$$
One crucial observation is that $n\geq 2$: otherwise, if $A \subset h_1^*$, then $h_1 \cap W = \emptyset$, but we know that $y \in h_1 \cap W$.

Now, define $p = m(y,a_1, a_2)$. Recall that $\I(a_1,a_2)\subset A$, because $A$ is convex. By the definition of medians, we have 
$$
p \in \I(a_1,a_2)\subset A, \qquad
p \in \I(y, a_1) \subset \Bigg(\bigcap_{j =2}^n h_i \Bigg),
\qquad 
p \in \I(y, a_2) \subset h_1.
$$
Consequently, we have $p \in A \cap  \big( \bigcap_{i =1}^{n} h_i \big)= \emptyset$, a contradiction.
Thus $\overline A = W$.
\end{proof}

\subsection{Gates and bridges}\label{subsec:gate_bridge}
A very useful property of a convex subset $A\subset X^{(0)}$ is the existence of a \emph{gate map} $\gate_A \from X\to A$; this is 
a well-known notion, see e.g.~\cite[Section 2.2]{Hagen:FacingTuples} and the citations therein.  Metrically, 
$\gate_A \from X^{(0)}\to A^{(0)}$ is just closest-point projection, but gates can be characterized entirely in terms of the median 
(see e.g.~\cite[Section 2.1]{Fioravanti:roller} and citations therein).  Since we saw that $\~X$ has a median satisfying \eqref{Eqn:Median}, convex 
subsets of $\~X$ do similarly admit gates.  This is the content of  
Proposition~\ref{Prop:Proj}, which has the advantage of showing an important relationship between 
gates and walls.

Recall that $\W(x,Y)$ is the set of hyperplanes separating $x$ from $Y$, where $Y\subset\~X$ is convex.

\begin{prop}[Gate projection in $\~X$]\label{Prop:Proj}
 Let $C\subset \~X$ be closed and convex. There is a unique projection $\gate_C\from \~X\onto C$ such that for any $x\in \~X$ we have 
 $$\W(x,\gate_C(x)) = \W(x,C).$$ 
\end{prop}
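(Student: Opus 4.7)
My plan is to construct $\gate_C(x)$ explicitly by specifying the consistent set of half-spaces that describes it, leveraging the duality \eqref{Eqn:SRDuality} between points of $\~X$ and consistent half-space collections, together with Lemma~\ref{Lem:ConvexClosure}. Recall the partition $\frakH = \frakH_C^+ \sqcup \frakH_C^- \sqcup \frakH_C$ into half-spaces containing $C$, disjoint from $C$, and cutting $C$, respectively, from Section~\ref{Sec:DualityLifting}. For each $x \in \~X$, I would define
$$\frakH_y^+ \; := \; \frakH_C^+ \: \cup \: (\frakH_x^+ \cap \frakH_C).$$
Heuristically, $y$ agrees with $C$ for half-spaces that commit one way about $C$, and agrees with $x$ for those still cutting $C$.

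The first step is a routine verification that $\frakH_y^+$ is a consistent set of half-spaces: $\frakH_C^+$ is consistent, and a half-space properly containing one from $\frakH_x^+ \cap \frakH_C$ either still cuts $C$ (while containing $x$) or contains $C$ outright, so consistency of the union is easily checked. By duality, $\frakH_y^+$ determines a unique point $y \in \~X$. Since $\frakH_C^+ \subset \frakH_y^+$, Lemma~\ref{Lem:ConvexClosure} yields $y \in \overline{C} = C$. (Alternatively, one could invoke Proposition~\ref{LiftingDecomp}\ref{ConsistentEmbedding} with $\mathfrak{s} = \frakH_C^+$, obtaining an isometric embedding $\~X(\frakH_C) \hookrightarrow \~X$ onto $C$; then $y$ is the image of the point of $\~X(\frakH_C)$ determined by $\frakH_x^+ \cap \frakH_C$.)

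Next, I would verify $\W(x,y) = \W(x,C)$ by comparing orientations. If $\hat h$ separates $x$ from $C$, orient so $x \in h$ and $C \subset h^*$; then $h^* \in \frakH_C^+ \subset \frakH_y^+$, so $y \in h^*$, placing $\hat h \in \W(x,y)$. Conversely, if $\hat h \in \W(x,y)$, orient so $x \in h$ and $y \in h^*$; then $h \notin \frakH_y^+$ rules out both $h \in \frakH_C^+$ and $h \in \frakH_x^+ \cap \frakH_C$, and since $h \in \frakH_x^+$, the only remaining option is $h \in \frakH_C^-$, i.e., $C \subset h^*$, so $\hat h \in \W(x,C)$.

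Finally, uniqueness follows by casework on half-spaces $h \in \frakH$, using the partition above. Any candidate $y' \in C$ with $\W(x,y') = \W(x,C)$ must satisfy: if $h \in \frakH_C^+$ then $y' \in C \subset h$; if $h \in \frakH_C^-$ then $y' \in C \subset h^*$; and if $h \in \frakH_C$ then $\hat h \notin \W(x,C) = \W(x,y')$ forces $y'$ to lie on the same side of $\hat h$ as $x$. In every case $\frakH_{y'}^+$ is forced to equal $\frakH_y^+$, so $y' = y$ by \eqref{Eqn:SRDuality}. The only mildly subtle point is the crossing case, which uses that a half-space $h \in \frakH_C$ necessarily meets $C$ on both sides and therefore cannot separate $x$ from $C$, regardless of where $x$ sits; once this is noted, the rest of the argument is essentially bookkeeping.
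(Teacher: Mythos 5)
Your proposal is correct and follows essentially the same route as the paper: the paper defines $\gate_C$ by composing the restriction $\~X \onto \~X(\frakH_C)$ with the lifting embedding of Proposition~\ref{LiftingDecomp}, which produces exactly your formula $\frakH_{\gate_C(x)}^+ = \frakH_C^+ \cup (\frakH_x^+ \cap \frakH_C)$, and verifies $\W(x,\gate_C(x)) = \W(x,C)$ by the same orientation chase. Your explicit uniqueness argument via casework on the partition $\frakH = \frakH_C^+ \sqcup \frakH_C^- \sqcup \frakH_C$ is a correct (and welcome) addition that the paper leaves implicit.
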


\begin{proof}
Let $\frakH = \frakH_C\sqcup \frakH_C^+ \sqcup \frakH_C^-$ be the lifting decomposition associated to $C$.  The map 
$2^\frakH\to 2^{\frakH_C}$ defined by $U\mapsto U\setminus (\frakH_C^+ \sqcup \frakH_C^-) $ restricts to a surjection $\~X \onto 
\~X(\frakH_C)$. 
Next, Proposition~\ref{LiftingDecomp} gives an embedding  $\~X(\frakH_C) \to \~X$, namely $V \mapsto V \sqcup \frakH_C^+$.
Composing the surjection $\~X \onto 
\~X(\frakH_C)$ with the embedding $\~X(\frakH_C) \to \~X$ yields a map $\gate_C \from \~X \to \~X$. The image $\gate_C(\~X)$ is $C$ since, by Lemma~\ref{Lem:ConvexClosure}, the image of the embedding $\~X(\frakH_C)\to\~X$ is exactly $C=\overline{C}$.

To prove the equality in the statement, first observe that $ \W(x,C) \subset \W(x,\gate_C(x))$ because $\gate_C(x) \in C$. For the opposite inclusion, consider a hyperplane $\hat h \in \W(x,\gate_C(x))$, and choose an orientation $h \in \frakH_x^+$. By the definition of $\gate_C$, we have
$$\frakH_{\gate_C(x)}^+ = (\frakH_x^+ \setminus \frakH_C^-) \cup \frakH_C^+.$$
Since $h \in \frakH_x^+ \setminus \frakH_{\gate_C(x)}^+$, we must have $h \in \frakH_C^-$, hence $\hat h \in \W(x,C)$. Thus $ \W(x,C) = \W(x,\gate_C(x))$.

Finally, for $x \in C$, we must have $x = \gate_C(x)$ because $\W(x,\gate_C(x)) = \W(x,C) = \emptyset$. Thus $\gate_C$ is indeed a projection to $C$.
\end{proof}

Note that $\gate_C$ restricts to the identity on $C$.  We call $\gate_C$ the \emph{gate map to $C$}, which is consistent 
with the terminology used in the theory of median spaces because of the following lemma:

\begin{lemma}\label{Lem:Gate_and_Median}
Let $\gate_C$ be the map from Proposition~\ref{Prop:Proj}.  Then for all $x\in\~X$ and $y\in C$, we have 
$m(x,y,\gate_C(x))=\gate_C(x)$.  In particular, $\gate_C(x) \in \bigcap_{y \in C} \I(x,y)$.
\end{lemma}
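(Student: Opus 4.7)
The plan is to reduce the claim to showing that $g := \gate_C(x)$ belongs to the three intervals $\I(x,y)$, $\I(y,g)$, and $\I(x,g)$; then, by the uniqueness clause in \eqref{Eqn:Median}, this will force $m(x,y,g)=g$. The ``in particular'' statement then falls out immediately, because $m(x,y,g)=g$ implies $g\in\I(x,y)$ for the arbitrary $y\in C$.

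Two of the three memberships are free. Using the description of intervals via half-spaces from Definition~\ref{Def:Convex}, we have $\I(x,g)=\{z:\frakH^+_z\supset\frakH^+_x\cap\frakH^+_g\}$ and similarly for $\I(y,g)$, and $g$ itself trivially satisfies both containments. So the real content is to prove
\[
g\in\I(x,y),\quad\text{i.e.,}\quad \frakH^+_x\cap\frakH^+_y\subset\frakH^+_g.
\]

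The key step uses Proposition~\ref{Prop:Proj}, which asserts $\W(x,g)=\W(x,C)$. Fix $h\in\frakH^+_x\cap\frakH^+_y$. Because $y\in C$ lies in $h$, the hyperplane $\hat h$ does not separate $x$ from \emph{every} point of $C$; in other words, $\hat h\notin\W(x,C)$. By Proposition~\ref{Prop:Proj} this means $\hat h\notin\W(x,g)$, so $x$ and $g$ lie on the same side of $\hat h$. Since $h\in\frakH^+_x$, we conclude $h\in\frakH^+_g$, which is what we wanted. (Equivalently, one could invoke the explicit formula $\frakH^+_g=(\frakH^+_x\setminus\frakH^-_C)\cup\frakH^+_C$ established inside the proof of Proposition~\ref{Prop:Proj}: the condition $y\in h\cap C$ rules out $h\in\frakH^-_C$, so $h\in\frakH^+_x\setminus\frakH^-_C\subset\frakH^+_g$.)

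There is no real obstacle here; the only subtlety is being careful that the two equivalent reformulations of ``$\hat h$ does not separate $x$ from $C$'' and ``$h\notin\frakH^-_C$'' are used consistently with the orientation $h\in\frakH^+_x$. Once Step 2 is done, the uniqueness of medians in $\~X$ gives $m(x,y,g)=g$, and the ``in particular'' assertion $g\in\bigcap_{y\in C}\I(x,y)$ is immediate from Step 2, which held for every $y\in C$.
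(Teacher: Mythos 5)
Your proof is correct and follows essentially the same route as the paper's: both arguments reduce to the containment $\W(x,\gate_C(x))=\W(x,C)\subset\W(x,y)$ from Proposition~\ref{Prop:Proj} (you state it in the equivalent contrapositive form for half-spaces containing both $x$ and $y$), and then conclude via the uniqueness of the median in \eqref{Eqn:Median}. No gaps.
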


\begin{proof}
By Proposition~\ref{Prop:Proj} and the fact that $y\in C$, we have $\W(x,\gate_C(x))\subset\W(x,y)$, so 
$\I(x,\gate_C(x))\subset \I(x,y)$.  So, $\gate_C(x)\in \I(x,y)\cap \I(x,\gate_C(x))\cap \I(y,\gate_C(x))$, hence $m=\gate_C(x)$.
\end{proof}

The above lemma has two corollaries. First, we can characterize medians in terms of projections:

\begin{cor}\label{Cor:MedianViaProj}
Suppose that $x,y,z \in X^{(0)}$. Then $\gate_{\I(x,y)}(z)=m(x,y,z)$. 
\end{cor}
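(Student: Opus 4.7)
The plan is to apply Lemma~\ref{Lem:Gate_and_Median} twice and then appeal to the uniqueness of the median described in equation~\eqref{Eqn:Median}. Write $p = \gate_{\I(x,y)}(z)$. Since $\I(x,y)$ is convex in $\~X$ and closed (it is an intersection of extended half-spaces), Proposition~\ref{Prop:Proj} applies to give a well-defined gate map $\gate_{\I(x,y)} \from \~X \to \I(x,y)$. In particular, $p \in \I(x,y)$ by construction.

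Next I would apply Lemma~\ref{Lem:Gate_and_Median} with $C = \I(x,y)$ in two ways. Taking the point ``$y$'' of that lemma to be $x$, which lies in $\I(x,y)$, yields $p \in \I(z,x)$. Taking ``$y$'' of that lemma to be the actual point $y$, which also lies in $\I(x,y)$, yields $p \in \I(z,y)$. Combining these two observations with $p \in \I(x,y)$ gives
\[
p \in \I(x,y) \cap \I(x,z) \cap \I(y,z).
\]

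By the defining property \eqref{Eqn:Median} of the median on $\~X$, this triple intersection is exactly the singleton $\{m(x,y,z)\}$. Therefore $p = m(x,y,z)$, which is the desired conclusion. Note that since $x,y,z \in X^{(0)}$, the median $m(x,y,z)$ lies in $X^{(0)}$ as well, which is consistent with the fact that $p \in \I(x,y) \subset X^{(0)}$ in this setting.

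There is no real obstacle here; the corollary is essentially an unpacking of Lemma~\ref{Lem:Gate_and_Median} once one observes that both endpoints of the interval $\I(x,y)$ themselves lie in the convex set being projected onto.
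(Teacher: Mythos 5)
Your proof is correct and follows exactly the paper's argument: apply Lemma~\ref{Lem:Gate_and_Median} with $C=\I(x,y)$ twice (once for $x$, once for $y$), combine with $\gate_{\I(x,y)}(z)\in\I(x,y)$, and invoke the uniqueness of the median from \eqref{Eqn:Median}. No differences worth noting.
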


\begin{proof}
By
Lemma~\ref{Lem:Gate_and_Median}, we have $\gate_{\I(x,y)}(z)  \in \I(x,z)$ and $\gate_{\I(x,y)}(z) \in \I(y,z)$.  On the 
other hand, $\gate_{\I(x,y)}(z)\in \I(x,y)$ by definition.  So, by \eqref{Eqn:Median}, we get  
$\gate_{\I(x,y)}(z)=m(x,y,z)$.
\end{proof}

Second, Lemma~\ref{Lem:Gate_and_Median} allows us to project from $X$ to convex subcomplexes of $X$.

\begin{cor}\label{Cor:ProjectionInX}
Let $A \subset X$ be a convex subcomplex. Then, for all $x \in X^{(0)}$, we have $\gate_{\overline{A}} (x) \in A$. Consequently, we get a projection $\gate_A = \gate_{\overline{A}} \from X^{(0)} \to A^{(0)}$.
\end{cor}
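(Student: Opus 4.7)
The plan is to apply the general gate-map result of Proposition~\ref{Prop:Proj} to the Roller closure $\overline{A} \subset \~X$, which is closed and convex, and then argue that when $x \in X^{(0)}$ the image $\gate_{\overline{A}}(x)$ cannot escape to infinity and must actually land in $A^{(0)}$.

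The key step uses Lemma~\ref{Lem:Gate_and_Median}. First I would pick any vertex $a \in A^{(0)}$ (which exists since $A$ is a nonempty convex subcomplex of $X$). Since $a \in \overline{A}$, Lemma~\ref{Lem:Gate_and_Median} gives $\gate_{\overline{A}}(x) \in \I(x,a)$. But $x$ and $a$ are both vertices of $X$, and the remark immediately after Definition~\ref{Def:Convex} says that $\I(x,a) \subset X^{(0)}$ whenever both endpoints lie in $X^{(0)}$. Hence $\gate_{\overline{A}}(x) \in X^{(0)}$.

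To finish, I would show that $\overline{A} \cap X^{(0)} = A^{(0)}$. By Lemma~\ref{Lem:ConvexClosure}, $\overline{A} = \bigcap_{A \subset h} h$. Since $A$ is a convex subcomplex, its $0$--skeleton $A^{(0)}$ is vertex-convex in $X^{(0)}$, so any $z \in X^{(0)} \setminus A^{(0)}$ is separated from $A^{(0)}$ by some half-space $h \in \frakH_A^+$, witnessing that $z \notin \overline{A}$. Combined with the previous paragraph, this yields $\gate_{\overline{A}}(x) \in \overline{A} \cap X^{(0)} = A^{(0)} \subset A$, which is exactly the claim. The resulting map then defines the desired $\gate_A \from X^{(0)} \to A^{(0)}$.

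There is no real obstacle here; the statement is a direct corollary of Lemma~\ref{Lem:Gate_and_Median} and Lemma~\ref{Lem:ConvexClosure}, and the only subtlety is to interpret $\overline{A}$ consistently as the Roller closure in $\~X$ so that Proposition~\ref{Prop:Proj} applies.
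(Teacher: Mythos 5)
Your proof is correct and follows essentially the same route as the paper: apply Lemma~\ref{Lem:Gate_and_Median} with a vertex of $A$ to conclude $\gate_{\overline{A}}(x)\in\I(x,a)\subset X^{(0)}$, then use $\overline{A}\cap X^{(0)}=A^{(0)}$. The only difference is that you spell out the justification of $\overline{A}\cap X^{(0)}=A^{(0)}$ via Lemma~\ref{Lem:ConvexClosure}, which the paper leaves implicit.
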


\begin{proof}
Let $x \in X^{(0)}$ and $y \in A^{(0)}$. By Lemma~\ref{Lem:Gate_and_Median}, we have $\gate_{\overline A}(x) \in \I(x,y) \subset X^{(0)}$. But $\overline A \cap X^{(0)} = A^{(0)}$.
\end{proof}

\begin{remark}\label{Rem:GateExtension}
In fact, it is possible to extend $\gate_A$ over higher-dimensional cubes to 
get a map $\gate_A \from X\to A$. This is done as follows (see also \cite[Section 2.1]{BHS}).  First let $e$ be a $1$--cube of $X$ joining $0$--cubes $x,y$ and dual to a hyperplane $\hat h$.  From 
Proposition~\ref{Prop:Proj}, one has the following: if $\hat h$ does not cross $A$, then $\gate_A(x)=\gate_A(y)$, and, otherwise, $\gate_A(x)$ and $\gate_A(y)$ are joined by an edge $\bar e\subset A$ dual 
to $\hat h$.  
In the former case, $\gate_A$ extends over $e$ by sending every point to $\gate_A(x)=\gate_A(y)$; in the latter case, we extend by declaring $\gate_A \from e\to\bar e$ to be the obvious 
isometry. 

Now, if $c$ is a $d$--cube for $d\geq 2$, then for some $s\in\{0,\ldots,d\}$, there are $1$--cubes $e_1,\ldots,e_s\subset c$ that span an $s$--cube $c'$ and have the property that the 
hyperplanes $\hat h_1,\ldots,\hat h_s$ respectively dual to $e_1,\ldots,e_s$ are precisely the hyperplanes intersecting both $c$ and $A$.  (The case $s=0$ corresponds to the situation where no 
hyperplane intersecting $c$ intersects $A$, and $c'$ is an arbitrary $0$--cube of $c$.)  So for $1\leq i\leq s$, we have that $\bar e_i=\gate_A(e_i)$ is a $1$--cube of $A$ dual to $\hat h_i$, and the 
$1$--cubes $\bar e_1,\ldots,\bar e_s$ span an $s$--cube $\bar c'$.  We extend $\gate_A$ over $c'$ using the obvious cubical isometry $c'\to\bar c'$ extending the map $\gate_A$ on $1$--cubes.  By 
construction, for each $v\in c^{(0)}$, there is a unique $0$--cube $v'\in c'$ such that $\gate_A(v)=\gate_A(v')$, the assignment $v\mapsto v'$ extends to a cubical map $c\to c'$ (collapsing the 
hyperplanes not in $\{\hat h_1,\ldots,\hat h_s\}$), and composing this with $\gate_A:c'\to A$ gives the (extended) gate map $\gate_A:c\to A$.  Note that $c'\subset c$ is only unique up to parallelism, 
but any two allowable choices of $c'$ give the same map $c\to A$.  
\end{remark}

We conclude that gates enjoy the following properties: 

\begin{lemma}\label{Lem:Gate_Lipschitz}
Let $A\subset X$ be a convex subcomplex.  Then:
\begin{enumerate}[$(1)$ ]
     \item \label{Itm:GateHyperplanes} For all $x,y\in X$, we have $\W(\gate_A(x),\gate_A(y))=\W(x,y)\cap \W(A)$, and $\W(x,\gate_A(x))=\W(x,A).$
     \item\label{Itm:GateLipschitz} The map $\gate_A$ is $1$--lipschitz on $X^{(0)}$ with the $d_1$ metric and the CAT(0) 
metric $d_X$.
\end{enumerate}
\end{lemma}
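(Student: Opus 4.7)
My plan is to derive part (1) directly from Proposition~\ref{Prop:Proj} and then deduce part (2) for the $d_1$ metric by counting hyperplanes. For the CAT(0) half of part (2), I would invoke the extension of $\gate_A$ to all of $X$ described in Remark~\ref{Rem:GateExtension} and use a length argument.

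For the second identity in \ref{Itm:GateHyperplanes}, namely $\W(x,\gate_A(x)) = \W(x,A)$, I would simply apply Proposition~\ref{Prop:Proj} to the Roller-closed convex set $C = \overline{A}$ and then observe that $\gate_A(x) = \gate_{\overline{A}}(x)$ lands in $A$ by Corollary~\ref{Cor:ProjectionInX}, while $\W(x,A)=\W(x,\overline{A})$ because hyperplanes separate a point from a set exactly when they separate the point from the Roller closure of that set. For the main identity $\W(\gate_A(x),\gate_A(y)) = \W(x,y)\cap\W(A)$, I would establish both inclusions. Any hyperplane $\hat h$ separating $\gate_A(x)$ from $\gate_A(y)$ crosses the subcomplex $A$ (since both images lie in $A$), so $\hat h \in \W(A)$; moreover the first identity forces $\hat h\notin \W(x,\gate_A(x))$ and $\hat h\notin \W(y,\gate_A(y))$, hence $\hat h\in \W(x,y)$. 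Conversely, if $\hat h\in\W(x,y)\cap\W(A)$, then because $\hat h$ crosses $A$ it cannot lie in $\W(x,A)=\W(x,\gate_A(x))$ nor in $\W(y,A)=\W(y,\gate_A(y))$, which forces $\hat h\in\W(\gate_A(x),\gate_A(y))$.

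The $d_1$ half of \ref{Itm:GateLipschitz} is now an immediate consequence of \ref{Itm:GateHyperplanes}, since
\[
d_1(\gate_A(x),\gate_A(y))=|\W(\gate_A(x),\gate_A(y))|=|\W(x,y)\cap\W(A)|\leq|\W(x,y)|=d_1(x,y).
\]
For the CAT(0) metric, I would pass to the cubical extension $\gate_A\from X\to A$ from Remark~\ref{Rem:GateExtension}. On each cube $c$, that extension factors as a cubical collapse $c\to c'$ (an orthogonal projection in the Euclidean structure, hence $1$-lipschitz) composed with a cubical isometry $c'\to\bar c'\subset A$, so $\gate_A\vert_c$ is $1$-lipschitz in the piecewise-Euclidean metric. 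Given any $x,y\in X^{(0)}$ and a CAT(0) geodesic $\gamma$ from $x$ to $y$, the image $\gate_A\circ\gamma$ is a path in $A$ whose restriction to each cube traversed has length bounded by the corresponding piece of $\gamma$; summing these bounds shows $\mathrm{length}(\gate_A\circ\gamma)\leq\mathrm{length}(\gamma)=d_X(x,y)$. Since $A$ is a convex subcomplex, distances in $A$ agree with distances in $X$, so $d_X(\gate_A(x),\gate_A(y))\leq d_X(x,y)$.

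The only subtle step is verifying that the cube-by-cube map from Remark~\ref{Rem:GateExtension} really assembles into a continuous $1$-lipschitz map of $X$; this amounts to checking that the definitions on two cubes sharing a face agree on that face, which follows from the characterisation of $\gate_A$ on vertices via Proposition~\ref{Prop:Proj} together with the uniqueness of the extension up to parallelism noted in the remark. Everything else is routine.
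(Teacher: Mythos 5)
Your proposal is correct and follows essentially the same route as the paper: part \ref{Itm:GateHyperplanes} is deduced from Proposition~\ref{Prop:Proj} (you spell out the two-inclusion argument that the paper delegates to a citation), the $d_1$ statement follows by counting separating hyperplanes, and the $d_X$ statement uses the cube-by-cube factorization of the extended gate map from Remark~\ref{Rem:GateExtension} together with decomposing a CAT(0) geodesic into subsegments lying in single cubes. No gaps.
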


\begin{proof}
For $0$--cubes, the first part of conclusion~\ref{Itm:GateHyperplanes} appears in various places in the literature; see e.g.~\cite[Lemma 2.5]{Hagen:FacingTuples}, or one can deduce it easily from 
Proposition~\ref{Prop:Proj}.  The second part of the first assertion (again for $0$--cubes) restates Proposition~\ref{Prop:Proj}.  The generalization of \ref{Itm:GateHyperplanes} to arbitrary points in $X$ 
follows immediately from the corresponding assertion for $0$--cubes, together with the construction in Remark~\ref{Rem:GateExtension}. 

Finally, \ref{Itm:GateHyperplanes} implies the first part of \ref{Itm:GateLipschitz}, since $\dist_1(x,y)=|\W(x,y)|$ for 
$0$--cubes $x,y$.  For the second part, first note that the restriction of $\pi_A$ to each cube is $1$--lipschitz for the 
CAT(0) metric, since it factors as the natural projection of the cube onto one of its faces (which is lipschitz for the 
Euclidean metric) composed with an isometric embedding.  Now, fix $x,y\in X$ and let $\gamma$ be the CAT(0) geodesic joining 
them, which decomposes as a finite concatenation of geodesics, each lying in a single cube.  Each such geodesic is mapped to 
a path whose length has not increased, so $\pi_A\circ\gamma$ is a path from $\pi_A(x)$ to $\pi_A(y)$ of length at most 
$\dist_X(x,y)$, as required.
\end{proof}

The following standard application of gates is often called the \emph{bridge lemma}.

\begin{lemma}[Bridge Lemma]\label{Lem:Bridge}
Let $I,J\subset X$ be convex subcomplexes.  Let $\gate_I \from X \to I$ and $\gate_J \from X\to J$ be the gate maps.  Then the following hold:
 \begin{enumerate}[$(1)$ ]
\item $\gate_I(J)$ and $\gate_J(I)$ are convex subcomplexes.
     \item $\W(\gate_I(J))=\W(\gate_J(I))=\W(I)\cap\W(J)$.
     \item The map $\gate_I \from \gate_J(I)\to\gate_I(J)$ is an isomorphism of CAT(0) cube complexes.
     \item The cubical convex hull  $\hull(\gate_I(J)\cup\gate_J(I))$ is a CAT(0) cube complex isomorphic to $\gate_J(I)\times 
\I(x,\gate_J(x))$ for any vertex $x\in \gate_I(J)$.
\item \label{Itm:WallSetIntersect} If $I\cap J\neq\emptyset$, then $\gate_I(J)=\gate_J(I)=I\cap J$, and $\W(I\cap J)=\W(I)\cap\mathcal 
W(J)$.
 \end{enumerate}
\end{lemma}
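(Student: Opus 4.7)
The plan is to derive all five parts from the hyperplane identity $\W(\gate_I(x),\gate_I(y)) = \W(x,y)\cap\W(I)$ of Lemma~\ref{Lem:Gate_Lipschitz}(\ref{Itm:GateHyperplanes}), together with the half-space description $\frakH^+_{\gate_I(v)} = (\frakH^+_v\setminus\frakH_I^-)\cup\frakH_I^+$ extracted from the proof of Proposition~\ref{Prop:Proj}.  For~(2), taking unions of that identity over $x,y\in J$ yields $\W(\gate_I(J)) = \W(I)\cap\W(J)$, and symmetrically for $\gate_J(I)$.  For~(1), I would verify the interval criterion of Lemma~\ref{Lem:CharacterizeConvex}: given vertices $a=\gate_I(x)$, $b=\gate_I(y)$ of $\gate_I(J)$ and any $z\in\J(a,b)\subset I$, set $z' := m(x,y,z)$, which lies in $\I(x,y)\subset J$ by convexity of $J$.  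A short half-space computation---comparing the median formula for $\frakH^+_{z'}$ with the fact that $\frakH^+_a$ and $\frakH^+_x$ agree on $\frakH_I$ (and similarly for $b,y$)---reduces $\gate_I(z')$ to $m(a,b,z) = z$, so $z\in\gate_I(J)$.

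For~(3), I would show that $\gate_I|_{\gate_J(I)}$ and $\gate_J|_{\gate_I(J)}$ are mutually inverse.  Given $x\in\gate_I(J)\subset I$, both $x$ and $\gate_I(\gate_J(x))$ lie in $\gate_I(J)$, so by~(1)--(2) every $\hat h\in\W(\gate_I(\gate_J(x)),x)$ crosses both $I$ and $J$.  But routing through $\gate_J(x)$ and applying Proposition~\ref{Prop:Proj} gives
$$\W(\gate_I(\gate_J(x)),x)\subset\W(\gate_J(x),I)\cup\W(x,J),$$
which forces each such $\hat h$ to miss either $I$ or $J$; hence the set is empty and $\gate_I(\gate_J(x))=x$.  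Since every edge of $\gate_J(I)$ is dual to a hyperplane in $\W(I)\cap\W(J)$, Remark~\ref{Rem:GateExtension} implies $\gate_I$ collapses no such edge, and this vertex bijection upgrades to a cubical isomorphism.  For~(5), if $I\cap J\neq\emptyset$ then each $w\in I\cap J$ is fixed by both gates, giving $I\cap J\subset\gate_I(J)\cap\gate_J(I)$; conversely, for $y=\gate_I(z)\in\gate_I(J)$ with $z\in J$ and any fixed $w\in I\cap J$, Lemma~\ref{Lem:Gate_and_Median} places $y\in\I(z,w)\subset J$ by convexity of $J$, so $y\in I\cap J$.  The hyperplane identity in~(5) is then immediate from~(2).

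The main work is~(4).  Writing $I'=\gate_I(J)$, $J'=\gate_J(I)$, and $H=\hull(I'\cup J')$, I would first use~(2) to decompose $\W(H) = (\W(I)\cap\W(J))\sqcup\W(I',J')$, and then verify transversality of these two families: for $\hat h\in\W(I)\cap\W(J)$ and $\hat k\in\W(I',J')$, $\hat k$ separates $I'$ from $J'$, while $\hat h$ has vertices on each side of itself inside both $I'$ and $J'$, hence in all four regions $h^{\pm}\cap k^{\pm}$, so $\hat h\pitchfork\hat k$.  A supplementary check---that any hyperplane separating $x\in I'$ from $J$ cannot cross $I$, for then it would cross $J$ by~(2)---yields $\W(x,\gate_J(x))=\W(x,J)=\W(I',J')$ for every $x\in I'$, so the bridge $\I(x,\gate_J(x))$ is independent of $x$ up to canonical identification and realizes the second factor.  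With the hyperplanes of the two factors mutually crossing and jointly exhausting $\W(H)$, a standard product-decomposition argument for CAT(0) cube complexes yields $H\cong I'\times\I(x,\gate_J(x))$; combined with~(3), this gives the stated form.  The hard step is precisely this product-decomposition: once transversality is in hand, one must still match half-space choices across the two factors (or invoke a packaged decomposition theorem) to assemble the isomorphism.
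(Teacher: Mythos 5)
Your proof is essentially correct, but it takes a genuinely different route from the paper: the paper does not prove the Bridge Lemma at all, instead assembling it from the literature (part (1) from \cite[Lemma 2.2]{Fioravanti:roller}, parts (2), (3) and (5) from \cite[Lemmas 2.1, 2.4, 2.6]{BHS}, and part (4) from \cite[Lemma 2.18]{CFI}). You instead derive everything internally from Proposition~\ref{Prop:Proj}, Lemma~\ref{Lem:Gate_and_Median}, Lemma~\ref{Lem:Gate_Lipschitz}, Remark~\ref{Rem:GateExtension} and Lemma~\ref{Lem:CharacterizeConvex}, which is a legitimate and self-contained alternative. Your individual steps check out: the union argument for (2); the median computation for (1) (which amounts to the standard fact that gate maps are median homomorphisms, verifiable exactly by the half-space comparison you describe); the triangle containment $\W(p,q)\subset\W(p,r)\cup\W(r,q)$ routed through $\gate_J(x)$ for (3), together with the observation that no edge of $\gate_J(I)$ is collapsed; and the interval argument via Lemma~\ref{Lem:Gate_and_Median} for (5). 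For (4), your decomposition $\W(H)=\W(I')\sqcup\W(I',J')$ and the transversality check are correct, and the "packaged decomposition theorem" you would need is already in the paper as Lemma~\ref{Lem:CapraceSageevProduct}: once every hyperplane of $\W(I')$ crosses every hyperplane of $\W(I',J')$, that lemma gives $H\cong X_1\times X_2$, and identifying the factors with $I'$ and $\J(x,\gate_J(x))$ follows because a convex subcomplex of a CAT(0) cube complex crossed by every hyperplane must be the whole complex. What your approach buys is independence from three external references at the cost of roughly a page of verification; what the paper's approach buys is brevity. The only places needing slightly more care in a written version are the claim that $\gate_I(J)$ is a full subcomplex (so that vertex-convexity upgrades to cubical convexity) and the explicit matching of factors after invoking the product decomposition, both of which you correctly flag and both of which go through.
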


\begin{proof}
     This can be assembled from results in the literature in various ways.  For example, the first statement is part 
of~\cite[Lemma 2.2]{Fioravanti:roller}; the second and last statements follow from~\cite[Lemmas~2.1 and 2.6]{BHS}.  The 
third follows from~\cite[Lemma 2.4]{BHS}.  For the fourth, see~\cite[Lemma 2.18]{CFI} or~\cite[Lemmas 2.4 and 2.6]{BHS}.
\end{proof}

\subsection{Facing tuples and properness}
A \emph{facing $k$--tuple} of hyperplanes is a set $\{\hat h_1,\ldots,\hat h_k\}$ of hyperplanes such that, for each $i\leq 
k$, we can choose a half-space $h_i$ such that $h_i\cap h_j=\emptyset$ for $i\neq j$.  We are often particularly interested in 
facing triples, since many of the subcomplexes of $X$ considered later in the paper will be CAT(0) cube complexes that do 
not contain facing triples.

One useful application of the notion of a facing tuple is the following lemma, which in practice will be used to guarantee 
properness of certain subcomplexes of $X$.  See~\cite[Section 3]{Hagen:FacingTuples} for a more detailed discussion. See also \cite[Lemma 2.33]{CFI} for a related result,  where $X$ is required to  isometrically  embed into $\Z^D$ for some $D$.

\begin{lemma}\label{Lem:proper}
Let $X$ be a CAT(0) cube complex of dimension $D<\infty$.   Suppose that there exists $T$ such that $X$ does not contain a 
facing $T$--tuple.  Then  $(X,\dist_X)$ is a 
proper CAT(0) space.
\end{lemma}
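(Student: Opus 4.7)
The plan is to deduce from the hypothesis that $X$ is locally finite, meaning each vertex $v\in X^{(0)}$ is incident to only finitely many $1$--cubes, and then conclude properness.  Since $\dim X = D<\infty$, each $k$--cube containing $v$ is determined by a choice of $k$ edges at $v$, so local finiteness at the vertex level promotes to the statement that $v$ lies in only finitely many cubes.  From there, any closed $d_X$--ball is contained in the union of finitely many cubes (a compact set), so $(X,d_X)$ is proper.

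To establish local finiteness, suppose for contradiction that some $v \in X^{(0)}$ is incident to infinitely many distinct edges $e_1,e_2,\ldots$ dual to distinct hyperplanes $\hat h_1,\hat h_2,\ldots$, and for each $i$ orient the associated half-space so that $v\in h_i$.  Color each pair $\{i,j\}\subset \naturals$ red if $\hat h_i \pitchfork \hat h_j$ and blue otherwise.  By Ramsey's theorem, some infinite subset $I\subset \naturals$ is monochromatic.  An infinite red subset would give arbitrarily large pairwise crossing families of hyperplanes, each dual to a cube of the corresponding dimension, contradicting $\dim X = D$.  Hence $I$ is blue: no two of the hyperplanes $\{\hat h_i\}_{i\in I}$ cross.

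For any $i\ne j$ in $I$, since $\hat h_i$ and $\hat h_j$ do not cross, exactly one of the four quadrants $h_i^{\varepsilon}\cap h_j^{\delta}$ (with $\varepsilon,\delta\in\{1,*\}$) is empty.  Let $v_i$ denote the endpoint of $e_i$ other than $v$.  Because $e_i$ is dual only to $\hat h_i$, the hyperplane $\hat h_j$ does not separate $v$ from $v_i$, so $v_i\in h_j$; combined with $v_i\in h_i^*$, this gives $v_i \in h_i^*\cap h_j$.  Symmetrically $v_j \in h_i \cap h_j^*$, and of course $v\in h_i\cap h_j$.  Therefore the empty quadrant must be $h_i^*\cap h_j^*$, showing that $\{h_i^*\}_{i\in I}$ is an infinite facing family.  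In particular, it contains a facing $T$--tuple, contradicting the hypothesis.

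The main point requiring care is the verification that pairwise non-crossing of $\{\hat h_i\}_{i\in I}$ really upgrades to pairwise disjointness of the chosen half-spaces $h_i^*$; this is the step where the hypothesis that all the $\hat h_i$ are dual to edges at the \emph{same} vertex $v$ is used in an essential way.  Everything else is a straightforward application of Ramsey's theorem plus the dimension bound.
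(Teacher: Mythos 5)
Your argument is correct, but it takes a genuinely different route from the paper's. The paper fixes a ball $B$ of radius $R$, bounds the length of any chain of hyperplanes in $\W(B)$ via Lemma~\ref{Lem:WallQI}, and then invokes \cite[Proposition 3.3]{Hagen:FacingTuples} (a Dilworth/Ramsey-type statement: absent facing $T$--tuples, any finite hyperplane family contains a chain of proportional length) to conclude $|\W(B)|$ is finite; the convex hull of $B$ is then a compact cube complex containing $B$ as a ball. You instead prove local finiteness at each vertex: infinite Ramsey applied to the hyperplanes dual to edges at $v$ yields either an infinite pairwise-crossing family (killed by $\dim X = D$) or an infinite pairwise non-crossing family, and your quadrant analysis correctly upgrades the latter to an infinite facing family (the key points — that the only hyperplane separating the endpoints of $e_i$ is $\hat h_i$, and that non-transverse distinct hyperplanes have exactly one empty quadrant — are both right). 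Your approach is more self-contained, avoiding the external combinatorial proposition; the paper's approach yields the stronger quantitative fact that only finitely many hyperplanes cross any ball, which it reuses implicitly elsewhere (e.g.\ compactness of $\hull(B)$). The one step you state too glibly is the passage from local finiteness to properness: "any closed ball is contained in finitely many cubes" is not immediate from local finiteness alone. The clean way to finish is Hopf--Rinow: $(X,d_X)$ is complete because there are finitely many isometry types of cells \cite[Theorem I.7.19]{BridsonHaefliger}, and locally compact because each point lies in finitely many closed cubes whose union is a compact neighborhood; a complete, locally compact length space is proper. With that substitution the proof is sound.
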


\begin{proof}
Let $R\geq 0$ and let $B$ be a ball of radius $R$ in $X$ (with respect to the CAT(0) metric $\dist_X$).  Let $\W(B)$ be the set of hyperplanes intersecting $B$.

We say that hyperplanes $\hat h_1,\ldots,\hat h_n$ in $\W(B)$ form a \emph{chain} if (up to relabelling), each
$\hat h_i$ separates $\hat h_{i-1}$ from $\hat h_{i+1}$ for $2\leq i\leq n-1$.

We claim that there exists $N$, depending only on $D$ and $R$,  such that any chain in $\W(B)$ has cardinality at most 
$N$. Indeed, if $\hat h_1,\ldots,\hat h_n$ is a chain, then any edge dual to $\hat h_1$ lies at $\ell^1$ distance at least 
$n-2$ from any edge dual to $\hat h_n$. Now,  Lemma~\ref{Lem:WallQI} implies there exists $N$ (depending only on $D$ and $R$) such that $n>N$ implies that 
$\dist_X(\hat h_1,\hat h_n)>2R$.  This is impossible, since $\hat h_1,\hat h_n$ intersect a common $R$--ball.

On the other hand, \cite[Proposition~3.3]{Hagen:FacingTuples} provides a constant $K=K(D,T)$ such that any finite set 
$\mathcal F\subset \W(B)$ must contain a chain of length at least $|\mathcal F|/K$.  So, $|\W(B)|\leq KN$.

Let $\hull(B)$ be the cubical convex hull of $B$. 
  The set of hyperplanes of the CAT(0) cube complex $\hull(B)$ is $\mathcal 
W(B)$, which we have just shown is finite.  Hence $\hull(B)$ is a compact CAT(0) cube complex, and hence proper.  Since $\widehat 
B\hookrightarrow X$ is an isometric embedding (with respect to CAT(0) metrics), $B$ is a ball in $\hull(B)$, and is therefore 
compact.  This completes the proof.
\end{proof}

The use of the above lemma will be the following.  Given $x\in X^{(0)}$ and $y\in\roller X$, we can consider the interval 
$\I(x,y)$ in $\~X$.  The intersection $\I(x,y)\cap X$ is a vertex-convex set, which is the $0$--skeleton of a uniquely 
determined convex subcomplex $A$.  Note that $\W(A)=\W(x,y)$.  This 
infinite set of hyperplanes cannot contain a facing triple.  Hence, Lemma~\ref{Lem:proper} shows that $A$ is 
proper.  We will need this in the proof of Lemma~\ref{Lem:close_to_intersection}, to arrange for a sequence of CAT(0) geodesic segments in $A$ to converge uniformly on compact sets to a CAT(0) 
geodesic ray.  In this way, we avoid a blanket hypothesis that $X$ is proper. 

Note that we therefore only use a special case of the lemma, namely properness of cubical intervals.  Properness of intervals follows from a much stronger statement: intervals endowed with the $\ell_1$ metric  isometrically embed into $\R^{\dimension(X)}$ (i.e. are \emph{Euclidean}). It follows that intervals are proper in the CAT(0) metric, since the two metrics are bilipschitz.  This 
embedding  into $\R^{\dimension(X)}$ goes back at least to \cite[Theorem 1.16]{BCGNW}. This embedding is closely related to \cite[Lemma 2.33]{CFI}, which bounds the cardinality of facing tuples in Euclidean CAT(0) cube 
complexes, which include, but are more general than, cubical intervals.

\subsection{Combinatorial Geodesic Rays}

Next, we develop some basic facts about combinatorial geodesics in CAT(0) cube complexes that will be needed when we relate different types of boundaries.

\begin{definition}[Combinatorial geodesic rays]\label{Def:CombGeodesic}
 A map $\gamma \from [0, \infty)\to X^{(1)}$ is said to be a \emph{combinatorial geodesic ray} if $\gamma(\N) \subset X^{(0)}$ and for each 
$n,m \in \N$ we have  $d_1(\gamma(n), \gamma(m) ) = |n-m|$ and $\gamma$ is an isometry on each interval $[n, n+1]$. 
\end{definition}

\begin{lemma}\label{Lem:CombLimit}
 Let $\gamma \from [0,\infty)\to X$ be a combinatorial geodesic ray. There exists a unique point  $y\in \roller X$ such that $\gamma(n) \to  y$
in $\~X$ as $n \to \infty$. 
\end{lemma}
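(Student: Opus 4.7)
The plan is to verify convergence directly in the product topology on $2^{\frakH}$, then show the limit lies in $\roller X$ rather than $X^{(0)}$. Recall that $\~X\subset 2^{\frakH}$ carries the Tychonoff topology, so a sequence $(\gamma(n))$ in $X^{(0)}$ converges in $\~X$ if and only if, for every half-space $h\in\frakH$, the truth value of the assertion ``$h\in\frakH^+_{\gamma(n)}$'' is eventually constant as $n\to\infty$.

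First I would fix a half-space $h\in\frakH$ with bounding hyperplane $\hat h$, and consider the set $\W(\gamma)$ of hyperplanes crossed by $\gamma$. If $\hat h\notin\W(\gamma)$, then $\gamma([0,\infty))$ lies entirely in one of the two CAT(0) half-spaces bounded by $\hat h$, so the indicator $\mathbf{1}[h\in\frakH^+_{\gamma(n)}]$ is constant in $n$. If $\hat h\in\W(\gamma)$, then because $\gamma$ is a combinatorial geodesic, the standard fact that an $\ell^1$--geodesic crosses each hyperplane at most once (used already in $d_1(x,y)=|\W(x,y)|$) implies there is a unique $n_0$ such that $\hat h$ separates $\gamma(n_0)$ from $\gamma(n_0+1)$; thus for all $n\geq n_0+1$ the vertex $\gamma(n)$ lies in a fixed side of $\hat h$, so the indicator is again eventually constant. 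Define $y\in 2^{\frakH}$ by declaring, for each $h$, whether $h\in\frakH^+_y$ according to this eventual value. Then $\gamma(n)\to y$ in $2^{\frakH}$, and since $\~X$ is closed in $2^{\frakH}$, we have $y\in\~X$. Uniqueness of $y$ follows from the Hausdorff property of the product topology.

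It remains to verify $y\in\roller X$, i.e.\ $y\notin X^{(0)}$. For this I will exhibit an infinite strictly descending chain in $\frakH^+_y$, invoking the characterization recalled after Theorem~\ref{Thm:SageevDuality}. Enumerate the edges of $\gamma$ and let $\hat h_n$ be the hyperplane dual to the edge from $\gamma(n)$ to $\gamma(n+1)$; orient $h_n$ to be the half-space containing $\gamma(k)$ for all $k\geq n+1$. Since $d_1(\gamma(n),\gamma(m))=|n-m|$, the hyperplanes $\hat h_0,\hat h_1,\ldots$ are pairwise distinct, and by construction each $h_{n+1}\subsetneq h_n$ (any hyperplane crossed later by an $\ell^1$--geodesic lies on the ``far'' side of the earlier one). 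By the eventual-constancy argument above, each $h_n\in\frakH^+_y$. This infinite descending chain forces $y\in\roller X$.

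The only mildly delicate step is confirming the nested inclusion $h_{n+1}\subsetneq h_n$; this is really the statement that $\gamma|_{[n,\infty)}$ is itself a combinatorial geodesic ray based at $\gamma(n)$, so the half-space $h_n$ on the far side of $\hat h_n$ contains the entire tail, and in particular the two points $\gamma(n+1)\in h_n\cap h_{n+1}^*$ and $\gamma(n+2)\in h_n\cap h_{n+1}$ witness proper containment. No other part of the argument requires new technology beyond the already-established duality $v\leftrightarrow\frakH^+_v$ and the no-backtracking property of combinatorial geodesics.
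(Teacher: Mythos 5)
Your argument for existence and uniqueness of the limit is correct and is essentially the paper's own argument in different clothing: the paper packages your ``eventually constant coordinate'' observation as the consistent, total family $D(\gamma)$ of eventually-deep half-spaces and intersects, which is the same as coordinate-wise convergence in the Tychonoff topology. The problem is in your last step, where you verify $y\in\roller X$. The claim that $h_{n+1}\subsetneq h_n$ for the oriented hyperplanes dual to consecutive edges of a combinatorial geodesic is false: consecutive hyperplanes crossed by an $\ell^1$--geodesic can be transverse. For instance, in the square tiling of $[0,\infty)^2$ the staircase geodesic $(0,0),(1,0),(1,1),(2,1),\dots$ crosses a vertical hyperplane and then a horizontal one, and these two hyperplanes cross each other, so neither oriented half-space contains the other. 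Your justification --- that $h_n$ contains the entire tail $\gamma([n+1,\infty))$ --- only controls the points of the ray; it says nothing about the many points of $h_{n+1}$ that lie off the ray, so it does not yield $h_{n+1}\subset h_n$.

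The gap is repairable in at least two ways. (1) For $m>n$ one always has $h_n\cap h_m\neq\emptyset$, $h_n\cap h_m^*\neq\emptyset$, and $h_n^*\cap h_m^*\neq\emptyset$ (witnessed by a tail vertex, by $\gamma(m)$, and by $\gamma(n)$ respectively), so any two of your hyperplanes are either transverse or satisfy $h_m\subsetneq h_n$. Since a pairwise-transverse family has size at most $\dimension X<\infty$, Ramsey's theorem extracts an infinite subfamily that is pairwise nested, i.e.\ an infinite descending chain in $\frakH^+_y$, and the characterization recalled after Theorem~\ref{Thm:SageevDuality} then gives $y\in\roller X$. (2) More directly, avoid chains altogether: if $y$ were a vertex $v\in X^{(0)}$, then every $\hat h_n$ would separate $\gamma(0)$ (which lies in $h_n^*$, since the geodesic crosses $\hat h_n$ exactly once) from $v$ (which lies in $h_n$); as the $\hat h_n$ are pairwise distinct, this forces $d_1(\gamma(0),v)=|\W(\gamma(0),v)|=\infty$, a contradiction. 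Either patch completes your proof.
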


In the sequel, we will write $y = \gamma(\infty)$ to mean $\gamma(n) \to  y$.

\begin{proof}
Consider the set
$$
D(\gamma) = \{h\in \frakH: \gamma([N,\infty)\cap\N)\subset h\text{ for some } N\in \N\}.
$$ 
This is clearly a consistent choice of half-spaces. We claim that for each $h\in \frakH$, either $h$ or $h^*$ belongs to 
$D(\gamma)$.
Indeed, up to replacing $h$ by $h^*$, there must be a monotonic sequence $n_k\to \infty$ such that $\gamma(n_k) \in h$. Since $h$ is 
convex, it follows that $\gamma([n_1,\infty)\cap \N) \subset h$.

Now, define $C = 
\bigcap_{ h \in D(\gamma)} h$. We claim that $C$ consists of exactly one point. Note that $C \neq \emptyset$ because 
it is a nested intersection of compact sets in $\~X$. On the other hand, if $x,x' \in C$ are distinct points, then consider 
a 
half-space $h$ such that $x \in h$ and $x' \in h^*$. This half-space must satisfy both $h \in D(\gamma)$ and $h^* \in D(\gamma)$, a contradiction. Thus 
$C = \{y\}$, as desired.
\end{proof}

\begin{lemma}\label{Lem:CombGeodesicExists}
 For every $x\in X^{(0)}$ and every $y \in \roller X$, there is a combinatorial geodesic ray $\gamma \from [0,\infty) \to X$ such that $\gamma(0) = 
x$ and $\gamma(\infty) =  y$. 
\end{lemma}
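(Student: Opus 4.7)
The plan is to work inside the convex subcomplex $A \subset X$ consisting of the cubical convex hull of $\I(x,y) \cap X^{(0)}$, and to build $\gamma$ by concatenating combinatorial geodesics between a sequence of vertices produced by iterated gate projections into shrinking half-space intersections.

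First, $A$ is convex with $\W(A) = \W(x,y)$, and $\W(x,y)$ admits no facing triple: each $\hat h \in \W(x,y)$ has one side containing $x$ and the other containing $y$, so among any three pairwise-disjoint half-spaces at most one could contain $x$ and at most one could contain $y$. Hence $A$ is locally finite by Lemma~\ref{Lem:proper}, and $\W(A)$ is countably infinite (it is infinite because $y \in \roller X$ means $\dist_1(x,y) = \infty$). Enumerate $\W(A) = \{\hat h_1, \hat h_2, \ldots\}$ with orientations $h_i$ chosen so that $y \in h_i$.

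For each $k \geq 1$, let $C_k$ be the convex subcomplex of $X$ with vertex set $\bigcap_{i \leq k}(h_i \cap X^{(0)})$. Any vertex sequence $w_n \to y$ lies eventually in each $h_i$, so $C_k$ is nonempty and $y \in \overline{C_k}$ in $\overline{X}$. Set $v_0 = x$ and, inductively, $v_k = \gate_{C_k}(v_{k-1})$. By Corollary~\ref{Cor:ProjectionInX} each $v_k$ is a vertex; by Lemma~\ref{Lem:Gate_and_Median} applied with $y \in \overline{C_k}$ we have $v_k \in \I(v_{k-1}, y)$, which iterates to $v_k \in \I(x,y) \cap X^{(0)} \subset A^{(0)}$. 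Pick any combinatorial geodesic $\tau_k \subset A^{(1)}$ from $v_{k-1}$ to $v_k$, and define $\gamma$ to be the concatenation $\tau_1 \tau_2 \cdots$ starting at $v_0 = x$.

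To verify that $\gamma$ is a combinatorial geodesic ray converging to $y$, consider any hyperplane $\hat h$ crossed by an edge of $\tau_k$. By Proposition~\ref{Prop:Proj}, $\hat h \in \W(v_{k-1}, C_k)$, so $v_{k-1} \in h^*$ and $C_k \subset h$; since $y \in \overline{C_k}$, also $y \in h$. For $j > k$, $v_j \in \I(v_k, y)$ together with $h \in \frakH_{v_k}^+ \cap \frakH_y^+$ forces $v_j \in h$, so no edge of any later $\tau_j$ can cross $\hat h$. Thus no hyperplane is repeated, $\gamma$ is a combinatorial geodesic ray, and moreover each $\hat h_i$ is crossed exactly once, since $v_k \in h_i$ for all $k \geq i$ provides a first step at which we move from $h_i^*$ into $h_i$. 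Consequently $\gamma$ has infinite length equal to $|\W(x,y)|$. Finally, any $h \in \frakH_y^+$ either satisfies $x \in h$, in which case $A \subset h$ and $\gamma \subset h$, or equals some $h_i$, in which case $\gamma$ eventually enters and stays in $h_i$; a symmetric analysis handles $\frakH_y^-$. Hence the set $D(\gamma)$ from Lemma~\ref{Lem:CombLimit} equals $\frakH_y^+$, and $\gamma(\infty) = y$.

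The main obstacle is ensuring that $\gamma$ crosses every hyperplane in the infinite set $\W(x,y)$; a naive diagonal-extraction argument on geodesics from $x$ to a vertex sequence $w_n \to y$ can drift past a hyperplane and limit to the wrong Roller point. The iterated gate construction forces every $\hat h_i$ to be crossed, at the latest at the stage where $h_i$ is first incorporated into the intersection $C_k$.
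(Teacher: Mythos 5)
Your proof is correct, but it takes a genuinely different route from the paper's. The paper constructs $\gamma$ as a Niblo--Reeves normal cube path: after reducing to $\~X=\I(x,y)$, it repeatedly flips the finite, pairwise-transverse set $M_n$ of minimal elements of $\frakH_{x_{n-1}}^+\setminus\frakH_y^+$ to produce a canonical vertex sequence $x_0=x,x_1,x_2,\dots$, and verifies the geodesic property by checking directly that $\frakH_{x_k}^+\cap\frakH_{x_m}^+\subset\frakH_{x_\ell}^+$ for $k\le\ell\le m$, i.e.\ that $x_\ell$ is the median of $x_k,x_\ell,x_m$. That argument needs only finite-dimensionality; in particular it never enumerates $\W(x,y)$, so no countability or properness input is required, and the path is canonical. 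Your construction instead fixes an enumeration of $\W(x,y)$ --- which forces you first to establish countability via the no-facing-triple observation and Lemma~\ref{Lem:proper} (strictly, that lemma gives properness rather than local finiteness, but the consequence you actually need, that every ball meets finitely many hyperplanes and hence $\W(A)$ is countable, does follow from its proof) --- and then drives the path toward $y$ by iterated gate projections onto the nested sets $C_k$, using Proposition~\ref{Prop:Proj} and Lemma~\ref{Lem:Gate_and_Median} to keep the partial geodesics inside $\I(x,y)$ and to show that a hyperplane, once crossed, separates all later vertices together with $y$ from the crossing point and so is never recrossed. This leans on the gate machinery of Section~\ref{subsec:gate_bridge} rather than on the order structure of $\frakH_x^+\setminus\frakH_y^+$, and it makes the key point --- that every hyperplane of $\W(x,y)$ is crossed, by stage $i$ at the latest --- very explicit; you also correctly identify the pitfall that a naive limit of geodesics to vertices $w_n\to y$ can converge to the wrong Roller point. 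Both arguments are valid; the paper's is leaner in its prerequisites and choice-free, while yours is a reasonable alternative given the tools already developed.
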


The proof of Lemma~\ref{Lem:CombGeodesicExists} is modeled on the construction of normal cubed paths, introduced by Niblo and Reeves in~\cite{NibloReeves}.

\begin{proof}
Without loss of generality, let us assume that $\~X= \I(x,y)$, that is, $\frakH(X) = \frakH_x^+\triangle \frakH_y^+$. 
The collection of half-spaces $\frakH_x^+\setminus \frakH_y^+$ is partially ordered by set inclusion. Consider $h, k \in 
\frakH_x^+\setminus \frakH_y^+$. Since $x\in h\cap k$ and $y\in h^*\cap k^*$, the possible relations between $h$ and $k$ are: $h\subset k$ or $k\subset 
h$ or $h\pitchfork k$. In other words, incomparable elements are transverse. In particular, all minimal elements of 
$\frakH_x^+\setminus \frakH_y^+$ must be pairwise transverse.  By finite dimensionality, the set $M_1$ of minimal elements is 
finite.  Let $x_1$ be the vertex obtained by replacing every $h \in M_1$ by $h^*$, that is, $\frakH_{x_1}^+= 
(\frakH_{x}^+\setminus M_1)\cup M_1^*$, a clearly consistent and total choice of half-spaces. In other words, $x_1$ is 
diagonally across from $x$ in the unique maximal cube containing $x$ in the vertex interval $\I(x, y)$. 

Proceeding by induction, suppose that $x_{n-1}$ has been defined. 
Let $M_n$ be the set of minimal elements of  $\frakH_{x_{n-1}}^+ \setminus \frakH_y^+$.
%     containing $x_{n-1}$.
Let $x_n$ be the vertex obtained by replacing every $h \in M_{n}$ by $h^*$, corresponding to the half-space interval $\frakH_{x_n}^+= (\frakH_{x_{n-1}}^+\setminus M_n)\cup M_n^*$.

Observing that $M_k^*\cap M_j = \emptyset $ for $j < k$,
we conclude that 
$$\frakH_{x_n}^+= \left( \frakH_{x}^+\setminus \Cup{j=1}{n-1} M_j \right) \cup \left( \Cup{j=1}{n-1} M_j^*\right).$$

Fix $k<m \in \N$. We claim that for any integer $ \ell \in  [k, m]$, $x_\ell$ lies on every geodesic path from $x_k$ to 
$x_m$.
Equivalently, we claim that
the median of $x_{k}$, $x_{\ell}$, and $x_{m}$ is $x_{\ell}$. Recall that this will be the case if every half-space 
containing $x_{k}$ and $x_{m}$ also contains $x_{\ell}$. This containment can be established as follows
\begin{align*}
\frakH_{x_{k}}^+ \cap \frakH_{x_{m}}^+ & = \left( \frakH_{x}^+ \setminus \Cup{j=1}{m-1} M_j \right)\cup \left( \Cup{j=1}{k-1}M_j^* \right) \\
& \subset \left( \frakH_{x}^+ \setminus \Cup{j=1}{\ell-1} M_j \right) \cup \left( \Cup{j=1}{\ell-1}M_j^* \right) = 
\frakH_{x_\ell}^+.
\end{align*}
This proves the claim.

Set $x_0=x$ and choose an edge-geodesic $\gamma_k \from [0, d(x_{k-1}, x_k)] \to \I(x_{k-1}, x_k)$ from $x_{k-1}$ to $x_k$. Finally, define $\gamma$ as the infinite
concatenation $\gamma_1  \gamma_2 \cdots$. Then, by construction, $\gamma(\infty) =  y$.
\end{proof}

We will discuss further properties of combinatorial geodesic rays in Section~\ref{Simplicial Boundary}.

\section{Cuboid complexes with modified CAT(0) metrics}\label{subSec:cuboid_defs}

A \emph{cuboid} is a box of the form $[0,a_1] \times \ldots \times [0,a_n] \subset \R^n$. 
In this section, we describe a way to modify the metric on a
finite-dimensional CAT(0) cube complex $X$ to produce a new CAT(0) space whose cells are cuboids rather than cubes.   Our 
results relating the Tits boundary and simplicial boundary of a CAT(0) cube 
complex will carry over to this context with small changes to some proofs, which we will indicate in the relevant places.

The reader only interested in CAT(0) cube complexes with the standard CAT(0) metric can safely skip this section, apart from the statements of Lemmas~\ref{Lem:CapraceSageevProduct} and~\ref{lem:combinatorial_fellow_travels_cat0}. 
Those lemmas are cuboid generalizations of standard results in the literature, namely~\cite[Lemma 2.5]{CapraceSageev} and~\cite[Proposition 2.8]{BeyrerFioravanti}. Thus, in the standard metric, one can substitute those results from the literature for the lemmas of this section.

Let $X$ be a finite-dimensional CAT(0) cube complex and let $G \to \Aut(X)$ be a group acting on $X$ by cubical automorphisms.  Let 
$\W$ be the set of hyperplanes in $X$. 

\begin{definition}[$G$--admissible hyperplane rescaling]\label{Def:hyp_rescaling}
A \emph{hyperplane rescaling} 
 of $X$ is a map $\rho \from \W\to (0,\infty)$. The rescaling is called \emph{$G$--admissible} 
if it is $G$--invariant, and in addition we have $m_\rho=\inf_{\hat h\in\W}\rho(\hat h)>0$ and $M_\rho=\sup_{\hat 
h\in\W}\rho(\hat h)<\infty$.
\end{definition}

Let $\rho$ be a $G$--admissible hyperplane rescaling of $X$.  Fix a cube $c$ of $X$.  For each hyperplane $\hat h_i$ 
intersecting $c$, let $\rho_i=\rho(\hat h_i)$.  Regarding $c$ as an isometric 
copy of $\prod_i[0,\rho_i]$, with the Euclidean metric, let $d_X^\rho$ be the resulting piecewise-Euclidean path metric on 
$X$.

  The following result is a special case of a theorem of Bowditch~\cite{Bowditch:Median}, but we are able to give a direct proof. 

\begin{lemma}\label{Lem:cat_0_rescaling}
Suppose that $G$ acts on $X$ by cubical automorphisms, and let $\rho$ be a $G$--admissible hyperplane rescaling of $X$.  Then 
$(X,d_X^\rho)$ is a CAT(0) space, and the 
action of $G$ on $(X,d_X^\rho)$ is by isometries.
\end{lemma}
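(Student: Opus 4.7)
Plan for the proof of Lemma~\ref{Lem:cat_0_rescaling}.

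The strategy is the classical one: show $(X, d_X^\rho)$ is locally CAT(0) via Gromov's link condition, and then globalize using the Cartan--Hadamard theorem, which requires checking simple connectivity and completeness. The key observation that makes the rescaling harmless is that right angles in a Euclidean box do not depend on the side lengths, so the link of every vertex in $(X, d_X^\rho)$ is metrically identical to the link in the standard cube metric $(X, d_X)$.

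First I would verify the link condition. Fix a vertex $v \in X^{(0)}$. For each cuboid $c$ containing $v$, the set of unit tangent vectors at $v$ pointing into $c$ is an all-right spherical simplex, whose combinatorics depends only on the cube structure and whose edge lengths (all $\pi/2$) do not depend on $\rho$. Consequently the link $\mathrm{Lk}(v, X, d_X^\rho)$ coincides as a piecewise-spherical complex with $\mathrm{Lk}(v, X, d_X)$. Since $(X, d_X)$ is CAT(0), each of these links is a flag simplicial complex with all-right angles, which is CAT(1) by Gromov's classical criterion (see Bridson--Haefliger Theorem~II.5.18). At interior points of cuboids, local CAT(0) is trivial since a single cuboid is a convex subset of Euclidean space. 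So the link condition holds throughout $(X, d_X^\rho)$.

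Next I would apply the local-to-global step. The uniform bounds $0 < m_\rho \leq \rho \leq M_\rho < \infty$ imply that every cell of $(X, d_X^\rho)$ has diameter at most $M_\rho \sqrt{\dim X}$ and shortest edge at least $m_\rho$. This uniform control on cell geometries is enough to run the proof of Bridson--Haefliger's link condition theorem (Theorem~II.5.2), guaranteeing a positive local injectivity radius at each point; hence $(X, d_X^\rho)$ is locally CAT(0). The same uniform bounds yield a bilipschitz equivalence between $d_X^\rho$ and $d_X$ on each cube and hence globally; since $(X, d_X)$ is a complete geodesic metric space, so is $(X, d_X^\rho)$. Simple connectivity of $X$ is a topological invariant, unchanged by rescaling. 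The Cartan--Hadamard theorem (Bridson--Haefliger Theorem~II.4.1) then upgrades local CAT(0) plus simple connectivity plus completeness to global CAT(0).

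Finally, for the $G$-action, $G$-invariance of $\rho$ means $\rho(g\hat h) = \rho(\hat h)$ for every $g \in G$ and $\hat h \in \W$. If $c$ is a cube of $X$ with dual hyperplanes $\hat h_1, \ldots, \hat h_n$, then $gc$ has dual hyperplanes $g\hat h_1, \ldots, g\hat h_n$ with the same rescalings, so the cubical isomorphism $g \from c \to gc$ becomes an isometry in the cuboid metric. Since these local isometries agree on overlapping cubes and $d_X^\rho$ is the associated path metric, $g$ is a global isometry of $(X, d_X^\rho)$.

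The main technical obstacle is the local-to-global step in the absence of only finitely many isometry types of cells, since $\rho$ may take infinitely many values. The resolution is that the $G$-admissibility condition provides exactly the uniform bounds needed for the standard proof of the link condition to go through unchanged; this is also essentially the content of the special case of Bowditch's theorem cited immediately before the lemma.
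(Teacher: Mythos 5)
Your proposal is correct and follows essentially the same route as the paper: verify that links in $(X,d_X^\rho)$ are isometric to links in the standard metric (so the link condition holds despite possibly infinitely many cell shapes, thanks to the admissibility bounds), apply Cartan--Hadamard, and use $G$--invariance of $\rho$ to see that cubical automorphisms carry cuboids to isometric cuboids. Your additional attention to completeness and to Gromov's flag criterion only makes explicit what the paper leaves implicit.
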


\begin{proof}
Let $x \in X$ be an arbitrary point. Then the link of $x$ in $(X,\dist^\rho_X)$ 
is isometric to the link of $x$  in $(X,d_X)$, and is hence a CAT(1) space. Since the amount of rescaling in bounded, there is a small $\epsilon$ depending on $x$ and $\rho$ such that the 
$\epsilon$--balls about $x$ in the two metrics are (abstractly) isometric. Thus $(X, d_X^\rho)$ is locally CAT(0). Recalling that $X$ is contractible (the homeomorphism type has not changed), we 
conclude that $(X, d_X^\rho)$ is globally CAT(0) by Cartan--Hadamard theorem~\cite[Theorem II.4.1]{BridsonHaefliger}. 
 Since $\rho$ is $G$--equivariant, $G$ takes cuboids to $d_X^\rho$--isometric cuboids, 
hence $G$ acts by isometries.  
\end{proof}

One can also prove Lemma~\ref{Lem:cat_0_rescaling} using a result of Bridson and Haefliger~\cite[Theorem II.5.2]{BridsonHaefliger}. Our proof is essentially the same argument, but the cubical structure allows us to assume bounded rescaling rather than
finitely many shapes.

\begin{definition}[CAT(0) cuboid metric]\label{Def:cuboid_metric}
Consider a finite-dimensional CAT(0) cube complex $X$, a group $G$ acting on $X$ by cubical automorphisms, and a $G$--admissible hyperplane rescaling 
$\rho$ of $X$. We say that the CAT(0) metric 
$d_X^\rho$ from Lemma~\ref{Lem:cat_0_rescaling} is a \emph{CAT(0) cuboid metric} on $X$, and $(X,d_X^\rho)$ is a \emph{CAT(0) 
cuboid complex}.
\end{definition}

\begin{definition}[Automorphism group of cuboid complex]\label{Def:AutXrho}
Suppose that $G$ acts on $X$ by cubical automorphisms, and let $\rho$ be a $G$--admissible hyperplane rescaling of $X$. Define $\Aut(X^\rho) \subset \Aut(X)$ to be the automorphism group of the rescaled complex $(X,d_X^\rho)$. By construction, the representation $G \to \Aut(X)$ has image in 
$ \Aut(X^\rho)$. 
\end{definition}

\begin{cor}\label{Lem:CuboidallyConvex}
Every cubically convex subcomplex $Y \subset X$ is also geodesically convex in the rescaled metric $d_X^\rho$.
In particular, every CAT(0) half-space $\hull(k)$ is convex in $d_X^\rho$.
\end{cor}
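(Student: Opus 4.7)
The plan is to adapt the argument of Lemma~\ref{Lem:Gate_Lipschitz} to the rescaled metric, and then use the existence of a $1$-Lipschitz retraction to deduce convexity.  Given a cubically convex subcomplex $Y \subset X$, I would work with the extended gate map $\gate_Y \from X \to Y$ constructed in Remark~\ref{Rem:GateExtension}.  This map is defined purely in terms of the combinatorial cube structure, so it does not depend on which Euclidean metric $X$ carries on its cubes.  The first goal would be to verify that $\gate_Y$ remains $1$-Lipschitz when $X$ is equipped with the rescaled metric $d_X^\rho$.

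To carry out this verification, I would argue cube by cube: since $d_X^\rho$ is a length metric, a cellular map is $1$-Lipschitz globally as soon as its restriction to each cube is $1$-Lipschitz.  Within a fixed cube $c$, Remark~\ref{Rem:GateExtension} factors $\gate_Y|_c$ through an intermediate face as $c \to c' \to \bar c'$, where $c \to c'$ collapses the hyperplane directions that do not meet $Y$ and $c' \to \bar c'$ is a cubical isomorphism onto a parallel face sitting inside $Y$.  In the metric $d_X^\rho$, the cube $c$ is an isometric copy of a Euclidean cuboid $[0,\rho_1] \times \cdots \times [0,\rho_d]$, and the face $c'$ arises from fixing the last $d-s$ coordinates.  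Orthogonal projection onto such a face is $1$-Lipschitz in the Euclidean metric.  Moreover, the cubical isomorphism $c' \to \bar c'$ matches the edge of $c'$ dual to $\hat h_i$ with the edge of $\bar c'$ dual to the \emph{same} hyperplane $\hat h_i$; because $\rho$ is a function on hyperplanes, corresponding edges have equal length, and so $c' \to \bar c'$ becomes a genuine Euclidean isometry.  Composing, the restriction $\gate_Y|_c$ is $1$-Lipschitz in $d_X^\rho$.

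With this Lipschitz estimate in hand, convexity follows from the standard CAT(0) retraction argument.  Let $x,y \in Y$ and let $\gamma$ be any $d_X^\rho$-geodesic between them.  Then $\gate_Y \circ \gamma$ is a path in $Y$ from $\gate_Y(x)=x$ to $\gate_Y(y)=y$ whose length is at most $\ell(\gamma) = d_X^\rho(x,y)$.  By Lemma~\ref{Lem:cat_0_rescaling}, $(X,d_X^\rho)$ is CAT(0), hence uniquely geodesic, so $\gate_Y \circ \gamma = \gamma$ and therefore $\gamma \subset Y$.  The ``in particular'' clause is immediate, since every $\hull(k)$ is by construction a cubically convex subcomplex.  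I expect the only delicate point to be the identification of the edge-lengths of $c'$ and $\bar c'$ under the isomorphism of Remark~\ref{Rem:GateExtension}; once that is settled, the remainder of the argument is routine.
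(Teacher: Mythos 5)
Your argument is correct, but it takes a genuinely different route from the paper's. The paper's proof reduces at once to a single half-space: by Definition~\ref{Def:ConvexInX}, a cubically convex subcomplex is an intersection of CAT(0) half-spaces, and an intersection of $d_X^\rho$--convex sets is $d_X^\rho$--convex; it then regards $\hull(k)$ as a CAT(0) cube complex in its own right carrying the restricted (still admissible) rescaling, applies Lemma~\ref{Lem:cat_0_rescaling} to see that the intrinsic rescaled path metric on $\hull(k)$ is CAT(0), and concludes convexity from the fact that $\hull(k)$ is then an isometrically embedded geodesic subspace of the uniquely geodesic space $(X,d_X^\rho)$. You instead show that the extended gate map of Remark~\ref{Rem:GateExtension} remains $1$--Lipschitz for $d_X^\rho$ and run the standard retraction argument. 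Your key observation --- that the cubical isomorphism $c'\to\bar c'$ is a genuine Euclidean isometry of cuboids because $\rho$ assigns a single length to all edges dual to a fixed hyperplane --- is exactly where admissibility of the rescaling as a function on hyperplanes enters, and it is correct. Your route is slightly longer but buys two things: it establishes the cuboid analogue of Lemma~\ref{Lem:Gate_Lipschitz}.\ref{Itm:GateLipschitz} along the way, and it supplies a justification for the assertion, left rather terse in the paper's proof, that the inclusion $\hull(k)\hookrightarrow X$ is isometric. One cosmetic point: from the length computation one should conclude that $\gate_Y\circ\gamma$ has length exactly $d_X^\rho(x,y)$ and hence the same image as $\gamma$ (the unique geodesic segment); that image equality, rather than equality of parametrized paths, is what gives $\gamma\subset Y$.
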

 
\begin{proof}
By Definition~\ref{Def:ConvexInX}, a cubically convex subcomplex is an intersection of CAT(0) half-spaces. Since any intersection of $d_X^\rho$--convex sets will be $d_X^\rho$--convex, it suffices to prove that CAT(0) half-spaces are $d_X^\rho$--convex.

Now, let $k \in \frakH(X)$, and let $\hull(k)$ be the the corresponding CAT(0) half-space. Observe that $\hull(k)$ is a CAT(0) cube complex in its own right with half-space structure $\frakH(k) = \{h\in \frakH: 
h\pitchfork k\}$. Let $G_k= \stab_G(k)$ which then acts by isometries on $\Aut(\hull(k))$. Clearly the restriction 
$\rho|_{\frakH(k)}$ is $G_k$--admissible. By Lemma \ref{Lem:cat_0_rescaling} we have $(\hull(k), 
d^{\rho|{\frakH(k)}}_{\hull(k)})$ is a CAT(0) space, so that
$$d_X^\rho \big\vert_{\hull(k)} = d^{\rho \vert_{\frakH(k)}}_{\hull(k)}.$$

Therefore, the identity inclusion $\hull(h) \hookrightarrow X$ is isometric with respect to this new CAT(0) metric, and in 
particular $\hull(k)$ is $d_X^\rho$--convex. 
\end{proof}
 
 The following result is a generalization of~\cite[Lemma 2.5]{CapraceSageev} for CAT(0) cuboid complexes.

\begin{lemma}\label{Lem:CapraceSageevProduct}
Let $X$ be a CAT(0) cube complex and let $\rho$ be a $G$--admissible hyperplane rescaling.  Suppose that $\W(X) =\mathcal 
W_1\sqcup\W_2$, where $\hat h,\hat v$ cross whenever $\hat 
h\in\W_1,\hat v\in\W_2$.  

Then $X=X_1\times X_2$ (as a product cube complex), where $X_1$ and $X_2$ are CAT(0) cube complexes dual to $\W_1$ and $\W_2$.  

Letting $p_i \from X\to X_i$ denote the natural projection, 
for $i\in\{1,2\}$, the map $p_i$ induces a bijection on the sets of hyperplanes such that the preimages of the hyperplanes in 
$X_i$ are the hyperplanes in $\W_i$.  

Moreover, restricting $\rho$ 
 to $\W_i$, we obtain rescaled CAT(0) metrics $\dist^\rho_{X_i}$ such that $(X_i,\dist^\rho_{X_i})$ is a CAT(0) 
cuboid complex and we have
$$\dist^\rho_X(x,y)=\sqrt{\dist^{\rho}_{X_1}(p_1(x),p_1(y))^2+\dist^{\rho}_{X_2}(p_2(x),p_2(y))^2}$$ 
 for all $x,y\in X$. 
\end{lemma}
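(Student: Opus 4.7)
\emph{The plan is} to separate the combinatorial and metric content of the statement, reducing the metric claim to a cube-by-cube computation. The first two assertions are purely combinatorial and follow from the unrescaled Caprace--Sageev decomposition \cite[Lemma 2.5]{CapraceSageev} applied to the partition $\W=\W_1\sqcup\W_2$: the hypothesis that each hyperplane in $\W_1$ crosses every hyperplane in $\W_2$ is exactly what is needed to conclude that $X=X_1\times X_2$ as a product cube complex, with projections $p_i$ realizing the hyperplane bijections onto $\W_i$.

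Next, I would set $\rho_i=\rho\vert_{\W_i}$. Since $m_\rho\leq\rho_i(\hat h)\leq M_\rho$ for every $\hat h\in\W_i$, each $\rho_i$ is admissible for the trivial group action on $X_i$, so Lemma~\ref{Lem:cat_0_rescaling} makes $(X_i,\dist^\rho_{X_i})$ into a CAT(0) cuboid complex. Now form the $\ell^2$ product
\[
Y=(X_1,\dist^\rho_{X_1})\times(X_2,\dist^\rho_{X_2}),
\]
which is itself a CAT(0) length space by the standard fact that $\ell^2$ products of CAT(0) spaces are CAT(0). The natural cellular structure on $Y$ is the product cube complex $X_1\times X_2$, which I identify with $X$ via the decomposition from paragraph one.

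It remains to show that the cellular identification $X\to Y$ is an isometry; the formula for $\dist^\rho_X(x,y)$ is then immediate from the definition of the $\ell^2$ product on $Y$. Fix a cube $c\subset X$ corresponding to $c_1\times c_2\subset Y$. The hyperplanes crossing $c$ partition, via $\W=\W_1\sqcup\W_2$, into the hyperplanes crossing $c_1$ and those crossing $c_2$, each equipped with the same scale $\rho(\hat h)$. Hence the $\rho$-rescaled Euclidean cuboid metric on $c$ coincides with the $\ell^2$ product of the rescaled Euclidean cuboid metrics on $c_1$ and $c_2$, so the identification is an isometry on each closed cube. Because both $\dist^\rho_X$ and $\dist_Y$ are piecewise-Euclidean length metrics built from identical cube-by-cube data, any rectifiable path in $X$ has the same length when computed in either metric, and taking infima over paths gives the global isometry $X\to Y$.

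The only real subtlety is this last global-from-local step: one must know that the $\ell^2$ product of length metrics on $X_1$ and $X_2$ is a length metric on $X_1\times X_2$ whose restriction to each product cube is the $\ell^2$ product of the factor Euclidean cuboids. This is standard (speeds of product paths combine in $\ell^2$), and everything else is bookkeeping against the product cube complex structure.
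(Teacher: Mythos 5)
Your proposal is correct, and the combinatorial part (the product decomposition via \cite[Lemma 2.5]{CapraceSageev} and the admissibility of the restricted rescalings feeding into Lemma~\ref{Lem:cat_0_rescaling}) matches the paper exactly. Where you genuinely diverge is the metric formula. The paper argues synthetically inside $(X,\dist^\rho_X)$: it takes the four points $(p_1(x),p_2(x))$, $(p_1(x),p_2(y))$, $(p_1(y),p_2(y))$, $(p_1(y),p_2(x))$, uses convexity of the fibers of the restriction quotients $p_i$ (Corollary~\ref{Lem:CuboidallyConvex}) to see that the sides of the resulting geodesic quadrilateral lie in fibers and meet at Alexandrov angle $\pi/2$, and then invokes the Flat Quadrilateral Theorem \cite[Theorem II.2.11]{BridsonHaefliger} to get the Pythagorean identity. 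You instead build the $\ell^2$ product $Y=(X_1,\dist^\rho_{X_1})\times(X_2,\dist^\rho_{X_2})$ and show the cellular identification $X\to Y$ is an isometry by matching the two length metrics cube by cube. Your route is more elementary in that it avoids the Flat Quadrilateral Theorem and produces the global product isometry directly, but it shifts the burden onto the local-to-global step you flag at the end: one must verify that the $\ell^2$ product of the factor length metrics is the length metric induced by the product piecewise-Euclidean structure (the inequality $\dist^\rho_X\geq \dist_Y$ requires projecting an arbitrary admissible path and applying Minkowski's inequality to the cube-by-cube speed vectors). That verification is standard and you correctly identify it as the only real content, so the argument goes through; the paper's quadrilateral argument simply packages the same convexity input into a single citation to Bridson--Haefliger.
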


\begin{proof}[Sketch.]
The statement about $X$ decomposing as a product cube complex follows from~\cite[Lemma 2.5]{CapraceSageev} and the 
discussion in~\cite{CapraceSageev} preceding it.  So, it remains to prove the 
statement about rescaled CAT(0) metrics.

Admissibility of the rescalings on $X_1,X_2$ follows from admissibility of $\rho$ and the fact that $p_i$ is a 
\emph{restriction quotient} in the sense of~\cite[Section 2.3]{CapraceSageev}, meaning it sends 
hyperplanes to hyperplanes.  So, Lemma~\ref{Lem:cat_0_rescaling} makes each $(X_i,\dist^\rho_{X_i})$ a CAT(0) cuboid 
complex.

We now verify the metric statement.  Let $x,y\in X$.  Let $\alpha$ be the $\dist^\rho_X$--geodesic from $x=(p_1(x),p_2(x))$ 
to $y=(p_1(y),p_2(y))$.  Let $\beta$ be the $\dist^\rho_X$--geodesic from 
$(p_1(x),p_2(y))$ to $(p_1(x),p_2(x))$, and let $\gamma$ be the $\dist^\rho_X$--geodesic from $(p_1(y),p_2(y))$ to $(p_1(x),p_2(y))$.  So, 
$\alpha\gamma\beta$ is a geodesic triangle.  

Note that every point on $\beta$ has the form $(p_1(x),z)$ and every point on $\gamma$ has the form $(z,p_2(y))$, because the 
$p_i$ are restriction quotients and therefore have convex fibers by 
Lemma~\ref{Lem:CuboidallyConvex}.  In particular, $\|\beta\|=\dist^\rho_{X_2}(p_2(x),p_2(y))$ and 
$\|\gamma\|=\dist^\rho_{X_1}(p_1(x),p_1(y))$ and $p_1,p_2$ are respectively isometries on $\gamma$ 
and $\beta$.  Moreover, the Alexandrov angle (see Definition \ref{Def:tits_metric}) formed by $\beta$ and $\gamma$ at their common point is $\pi/2$.  Perform the 
same construction with $(p_1(x),p_2(y))$ replaced by $(p_1(y),p_2(x))$.  This 
yields a geodesic quadrilateral in $(X, \dist^\rho_X)$, with all angles $\pi/2$, two adjacent sides having length 
$\dist^\rho_{X_2}(p_2(x),p_2(y))$ and $\dist^\rho_{X_1}(p_1(x),p_1(y))$, and $x$ and $y$ as opposite 
corners.  The Flat Quadrilateral Theorem (\cite[Theorem II.2.11]{BridsonHaefliger}) now implies that 
$$\dist^\rho_X(x,y)=\sqrt{\dist^{\rho}_{X_1}(p_1(x),p_1(y))^2+\dist^{\rho}_{X_2}(p_2(x),p_2(y))^2},$$
as required.
\end{proof}

We also need the following standard lemma.  In the context of CAT(0) cube complexes, this is~\cite[Proposition 2.8]{BeyrerFioravanti}.  We state it here for cuboid complexes.  The proof 
from~\cite{BeyrerFioravanti} goes through with tiny changes that we indicate below.

\begin{lemma}\label{lem:combinatorial_fellow_travels_cat0}
 Let $X$ be a $D$--dimensional CAT(0) cube complex and let $\rho$ be a $G$--admissible hyperplane rescaling.  Let $(X,d^\rho_X)$ be the resulting CAT(0) cuboid complex.  Let 
$\gamma \from [0,\infty)\to X$ be a geodesic ray for the metric $d^\rho_X$, with $\gamma(0)\in X^{(0)}$.  Then $X$ contains a combinatorial geodesic ray $\alpha$ in $X^{(1)}$ such that 
$\alpha(0)=\gamma(0)$ and such that $\alpha$ fellow-travels with $\gamma$ at distance depending only on $D$ and $\rho$.
\end{lemma}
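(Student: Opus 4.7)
The plan is to construct $\alpha$ explicitly by tracking, for each time $t \geq 0$, a canonical vertex $v(t) \in X^{(0)}$ lying in the smallest cube containing $\gamma(t)$, and then connecting consecutive values of $v(\cdot)$ by short edge paths. Since the $d_X^\rho$-diameter of any cube of $X$ is at most $\sqrt{D}\, M_\rho$, fellow-traveling with $\gamma$ will be automatic once the construction is carried out.

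For each hyperplane $\hat h \in \W$, both associated CAT(0) half-spaces are $d_X^\rho$-convex by Corollary~\ref{Lem:CuboidallyConvex}, so $\gamma$ meets each half-space in an interval. Hence $\gamma$ either stays in the closure of one side of $\hat h$ for all time, or it transitions from one side to the other exactly once. Let $\W(\gamma) \subset \W$ denote the set of hyperplanes of the second type; for each $\hat h \in \W(\gamma)$, write $h^+$ for the forward orientation (containing $\gamma(t)$ for all large $t$), so that $v_0 = \gamma(0) \in h^-$, and let $t_{\hat h} = \inf\{t : \gamma(t) \in \overline{h^+}\}$. Put
\begin{equation*}
S(t) \; := \; \{\hat h \in \W(\gamma) : t_{\hat h} \leq t\}.
\end{equation*}
Lemma~\ref{Lem:WallQI}, combined with the bilipschitz equivalence of $d_X$ and $d_X^\rho$ provided by $m_\rho \leq \rho \leq M_\rho$, ensures that $|S(t)|$ is finite for every $t$ and grows without bound as $t \to \infty$. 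Now define
\begin{equation*}
\frakH^+_{v(t)} \; := \; \bigl(\frakH^+_{v_0} \setminus \{h^- : \hat h \in S(t)\}\bigr) \cup \{h^+ : \hat h \in S(t)\}
\end{equation*}
and verify that this is a consistent, DCC-satisfying orientation, so defines a vertex of $X$ via Theorem~\ref{Thm:SageevDuality}. A case check on each hyperplane then shows that $v(t)$ and $\gamma(t)$ lie on the same side of every half-space, which forces $v(t)$ to be a vertex of the unique smallest cube $c_t$ containing $\gamma(t)$, and hence $d_X^\rho(v(t), \gamma(t)) \leq \sqrt{D}\, M_\rho$.

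As $t$ increases, $v(t)$ changes only at the discrete times $t_0 = t_{\hat h}$ for $\hat h \in \W(\gamma)$. When $\gamma$ crosses several hyperplanes $\hat h_1, \ldots, \hat h_k$ at some common $t_0$, the vertices $v(t_0^-)$ and $v(t_0^+)$ differ by flipping exactly these $k$ coordinates, so $d_1(v(t_0^-), v(t_0^+)) = k$ and they are joined by a combinatorial geodesic of length $k$. Concatenating all such edge paths yields a map $\alpha \from [0,\infty) \to X^{(1)}$ starting at $v_0$. Since each hyperplane of $\W(\gamma)$ is crossed exactly once and no other hyperplane is ever crossed, $\alpha$ is a combinatorial geodesic; it is a ray because $|S(t)| \to \infty$. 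Fellow-traveling follows immediately: every $v(t)$ lies on $\alpha$ with $d_X^\rho(v(t), \gamma(t)) \leq \sqrt{D}\, M_\rho$, and every point of $\alpha$ lies within $M_\rho$ of some such $v(t)$.

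The main obstacle is the verification that $\frakH^+_{v(t)}$ is consistent and that the resulting vertex actually lies in $c_t$. Both require a careful case check that the flipping prescribed by $S(t)$ respects the nesting partial order on $\frakH$; the crucial input is the observation that if $k^+ \supsetneq h^+$ and $\gamma$ has entered $\overline{h^+}$ by time $t$, then $\gamma$ has already entered $\overline{k^+}$ by time $t$ as well. Once this is secured, the edge concatenation and the fellow-traveling estimate are routine.
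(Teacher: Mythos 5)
Your construction is correct. Note, though, that the paper does not actually prove this lemma from scratch: it observes that the argument of Beyrer--Fioravanti \cite[Proposition 2.8]{BeyrerFioravanti} goes through verbatim once one knows that CAT(0) half-spaces remain convex in the cuboid metric (Corollary~\ref{Lem:CuboidallyConvex}), and that the output fellow-travels $\gamma$ at distance bounded by the diameter of a cuboid. What you have done is reconstruct, in a self-contained way, essentially the content of that citation: orient each hyperplane according to whether and when the $d_X^\rho$--geodesic crosses it, realize the resulting consistent orientations $\frakH^+_{v(t)}$ as vertices, and concatenate edge-geodesics between consecutive vertices. Your key steps all check out: convexity of $\hull(h)$, $\hull(h^*)$, and $N(\hat h)$, together with convexity of $t\mapsto d_X^\rho(\gamma(t),\hull(h^-))$, forces each hyperplane to be crossed at most once and in finite time; the nesting observation you isolate does yield consistency of $\frakH^+_{v(t)}$; and the case check that $v(t)$ agrees with $\gamma(t)$ on every hyperplane not crossing the carrier cube $c_t$ does place $v(t)$ in $c_t^{(0)}$. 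Two small points deserve explicit mention. First, for the fellow-traveling constant you need a uniform bound on the number $k$ of hyperplanes sharing a crossing time $t_0$: these hyperplanes pairwise cross (if $\hat h_1,\hat h_2$ were nested with $h_1^+\subsetneq h_2^+$, then $d_X^\rho(\hull(h_1^+),\hull(h_2^-))\geq\rho(\hat h_1)+\rho(\hat h_2)$, while $\gamma(t_0)\in\hull(h_1^+)$ lies at distance exactly $\rho(\hat h_2)$ from $\hull(h_2^-)$, a contradiction), so $k\leq D$ and the intermediate vertices of each length-$k$ segment stay within $\sqrt{D}\,M_\rho$ of $v(t_0^{\pm})$. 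Second, your final constant ``within $M_\rho$ of some $v(t)$'' should be adjusted to account for these intermediate vertices, but the bound still depends only on $D$ and $\rho$, so the statement is unaffected.
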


\begin{proof}
The proof from~\cite[Section 2.2]{BeyrerFioravanti} works with almost no change.  First, \cite[Lemma 2.9]{BeyrerFioravanti} is about the combinatorial structure of $X$ only, which does not change 
when we rescale edges to pass from the CAT(0) metric $d_X$ to the CAT(0) metric $d_X^\rho$.  The proof of~\cite[Proposition 2.8]{BeyrerFioravanti}  needs this lemma, plus CAT(0) convexity of 
half-spaces, which continues to hold in view of Corollary~\ref{Lem:CuboidallyConvex}.  The proof of~\cite[Proposition 2.8]{BeyrerFioravanti} produces a combinatorial ray $\alpha$ such that each 
point of $\alpha$ lies in a common cube as a point of $\gamma$, and vice versa.  Hence the two rays fellow travel at CAT(0) distance bounded by the diameter of a cube.  In the metric $d_X$, this is 
bounded in terms of $D$; in the metric $d_X^\rho$, this is bounded in terms of $D$ and the constant $M_\rho$ from Definition~\ref{Def:hyp_rescaling}, as required by the statement.
\end{proof}

Since $d_X$ and $d_X^\rho$ are path-metrics, and for each cube $c$ of $X$, the standard CAT(0) metric $d_X$ on $c$ is bilipschitz 
to the restriction of $d_X^\rho$ (with constant depending only on the rescaling constant), the identity map 
$(X,d_X)\to(X,d_X^\rho)$ is bilipschitz.  Combining this fact with Lemma~\ref{Lem:WallQI} yields a version of Lemma~\ref{Lem:WallQI} for the rescaled metric $d_X^\rho$:

\begin{lemma}\label{Lem:CuboidWallQI}
There are constants $\lambda_0^\rho \geq 1, \lambda_1^\rho \geq 0$, depending on $\dim X$ and $\rho$, such that the following holds. For any pair 
of points $x, y \in X$,
$$
\frac{1}{\lambda_0^\rho} \, d_X^\rho(x,y) - \lambda_1^\rho \leq |\W(x,y)| \leq \lambda_0^\rho \, d_X^\rho(x,y) + \lambda_1^\rho.
$$
\end{lemma}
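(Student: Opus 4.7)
The plan is to follow the strategy already hinted at in the discussion immediately preceding the lemma: compare the two path metrics $d_X$ and $d_X^\rho$ cube-by-cube, promote the comparison to a global bilipschitz equivalence, and then substitute into Lemma~\ref{Lem:WallQI}.

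First I would establish the cube-wise comparison. Fix a cube $c$ of $X$, and let $\hat h_1, \ldots, \hat h_n$ be the hyperplanes dual to $c$ with rescaling values $\rho_i = \rho(\hat h_i) \in [m_\rho, M_\rho]$. Choosing a cubical identification of $c$ with $[0,1]^n$, the original metric $d_X$ pulls back to the standard Euclidean metric, while $d_X^\rho$ pulls back to the Euclidean metric on the cuboid $[0,\rho_1]\times\cdots\times[0,\rho_n]$. A direct computation in coordinates gives
$$m_\rho\, d_X(x,y) \,\leq\, d_X^\rho(x,y) \,\leq\, M_\rho\, d_X(x,y)$$
for all $x,y$ lying in a common cube.

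Next I would upgrade this to a global bilipschitz inequality. Since both $d_X$ and $d_X^\rho$ are the piecewise-Euclidean path metrics built from the corresponding Euclidean metrics on individual (rescaled) cubes, the cube-wise inequality applied to each segment of a path joining $x$ to $y$ (after subdividing the path whenever it crosses between cubes) yields the same inequality $m_\rho\, d_X(x,y) \leq d_X^\rho(x,y) \leq M_\rho\, d_X(x,y)$ for all $x,y \in X$. In other words, the identity map $(X,d_X)\to(X,d_X^\rho)$ is $\max(M_\rho, 1/m_\rho)$--bilipschitz. Here the admissibility condition $0 < m_\rho \leq M_\rho < \infty$ from Definition~\ref{Def:hyp_rescaling} is essential.

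Finally, I would substitute this bilipschitz equivalence into Lemma~\ref{Lem:WallQI}. The upper bound $|\W(x,y)| \leq \lambda_0 d_X(x,y) + \lambda_1 \leq (\lambda_0/m_\rho)\, d_X^\rho(x,y) + \lambda_1$ and the lower bound $|\W(x,y)| \geq (1/\lambda_0) d_X(x,y) - \lambda_1 \geq (1/(\lambda_0 M_\rho))\, d_X^\rho(x,y) - \lambda_1$ together yield the claim with $\lambda_0^\rho = \max(\lambda_0/m_\rho,\, \lambda_0 M_\rho)$ and $\lambda_1^\rho = \lambda_1$, each depending only on $\dim X$ (through $\lambda_0,\lambda_1$) and $\rho$ (through $m_\rho, M_\rho$). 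There is no real obstacle here; the only thing to check carefully is that the cube-wise bilipschitz constants genuinely pass to the path metrics, which is immediate from the definition of a path metric as an infimum of lengths of piecewise-geodesic paths.
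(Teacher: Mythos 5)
Your proposal is correct and follows essentially the same route as the paper, which likewise derives the lemma by observing that the cube-wise comparison of $d_X$ and $d_X^\rho$ (with constants controlled by $m_\rho$ and $M_\rho$) makes the identity map $(X,d_X)\to(X,d_X^\rho)$ bilipschitz and then substitutes this into Lemma~\ref{Lem:WallQI}. The paper treats this as immediate and gives no further detail, so your explicit verification of the cube-wise bounds and the resulting constants $\lambda_0^\rho$, $\lambda_1^\rho$ is simply a fleshed-out version of the same argument.
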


In a similar fashion, by Remark~\ref{Rem:L1MetricOnX}, there is a bilipschitz relationship between $(X, d_X^\rho)$ and $(X, d_1)$.

\section{The  simplicial Roller boundary $\absimp X$}\label{Sec:SimplicialRoller}
In this section, we discuss one of the main objects in the paper: the \emph{simplicial Roller boundary}.

\begin{definition}[Roller class]\label{Def:RollerEquivalence}
Let $x,y \in \~ X$.  Recall that $\frakH(x,y) = \frakH_x^+ \triangle\,  \frakH_y^+$ is the set of half-spaces that separate $x$ from $y$. We say that $x$ is \emph{commensurate} to $y$, and write $x\sim y$, if  $|\frakH(x,y)|< \infty$.  
Observe that $X^{(0)}$ forms a single commensurability class in $\~ X$. Commensurability classes contained in  $\roller X = 
\~X \setminus X^{(0)}$ are also called \emph{Roller classes}.
\end{definition}

\begin{definition}[Guralnik quotient]\label{Def:Guralnik}
 The \emph{Guralnik quotient} of the Roller Boundary is $\guralnik X = \roller X / \!\! \sim$,  the set of commensurability 
classes of points in the Roller boundary~\cite{Guralnik}. We shall often implicitly consider elements $[x]\in \guralnik X$ as 
subsets $[x]\subset \~X$. 
\end{definition}

In most cases,  the quotient topology on $\guralnik X = \roller X / \!\sim$ is not Hausdorff (in  fact, it does not even 
satisfy the $T_1$ axiom). We think of $\guralnik X$ as only a set, without a topology. In 
Definition~\ref{Def:SimplicialRoller}, we will use a partial order on $\guralnik X$ to define a simplicial complex $\absimp 
X$ with far nicer topological properties.

Following \cite{NevoSageev},  we  say a set $S\subset \frakH$ is \emph{non-terminating} if $S$ contains no minimal elements: 
given any half-space $h\in S$, there is a half-space $k\subsetneq h$ with $k\in S$.

\begin{lemma}\label{ClassSetNonterminating}
Let $C \subset \roller X$ be a closed, convex set. The following are equivalent:
\begin{enumerate}[$(1)$ ]
\item\label{closureclass} There is a point $y\in \partial_R X$ such that $C= 
\~{[y]}$.
\item  \label{finitely many descending chains} There exists $k\leq \dim (X)$ and a family $(h^1_m)_{m\geq 0}$, $(h_m^2)_{m\geq 0},\dots, (h_m^k)_{m\geq 0}$ of descending chains of  half-spaces  such that 
$$C =\bigcap_{i=1}^k\bigcap_{m\geq 0}h_{m}^i.$$
\item \label{union of roller classes} $C$ is a union of Roller classes.
\item \label{half-spaces nonterminating} The set of half-spaces $\frakH_{C}^+$ is non-terminating.
\end{enumerate}
\end{lemma}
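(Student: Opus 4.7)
The plan is to prove the equivalence by establishing the cycle $(1)\Rightarrow(4)\Rightarrow(3)\Rightarrow(1)$ together with $(4)\Leftrightarrow(2)$. For $(1)\Rightarrow(4)$, suppose $C=\overline{[y]}$ and set $\mathfrak{s}=\frakH_C^+=\frakH_{[y]}^+$. Given $h\in\mathfrak{s}$, since $y\in\roller X$ the ultrafilter $\frakH_y^+$ fails DCC, so I can build a strict descending chain $h=h_0\supsetneq h_1\supsetneq\cdots$ in $\frakH_y^+$. If no $h_n$ belonged to $\mathfrak{s}$, then for each $n$ there would exist $y_n\in[y]$ with $y_n\in h_n^*$; because $h_m\subsetneq h_n$ for $m>n$ implies $h_m^*\supset h_n^*$, the point $y_n$ lies in $h_m^*$ for every $m\geq n$, so infinitely many half-spaces separate $y_n$ from $y$, contradicting $y_n\sim y$. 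Hence some $h_n\in\mathfrak{s}$. For $(4)\Rightarrow(3)$, given $\mathfrak{s}$ non-terminating, $z\in C$, and $w\sim z$, the set $F=\{h\in\mathfrak{s}:w\in h^*\}$ is finite, being contained in $\frakH_z^+\triangle\,\frakH_w^+$. If $F$ were nonempty, I would pick $h\in F$ and descend in $\mathfrak{s}$ to obtain $h\supsetneq h_1\supsetneq\cdots$; then $w\in h^*\subset h_n^*$ for all $n$, placing infinitely many $h_n$ in $F$, a contradiction. So $w\in C$ and $[z]\subset C$.

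The central step is $(3)\Rightarrow(1)$. Setting $\mathfrak{s}=\frakH_C^+$ and $\frakH_C=\frakH\setminus(\mathfrak{s}\cup\mathfrak{s}^*)$, Proposition~\ref{LiftingDecomp} produces an isometric embedding $\~X(\frakH_C)\hookrightarrow\~X$ whose image is the closure $\overline{C}=C$ by Lemma~\ref{Lem:ConvexClosure}. Because $C\subset\roller X$ by hypothesis (3), every vertex $V$ of $X(\frakH_C)$ must map into $\roller X$, so $V\cup\mathfrak{s}$ fails DCC. Since the CAT(0) cube complex $X(\frakH_C)$ is connected and of dimension at most $\dim X$, any two of its vertices are at finite $d_1$-distance and their images in $\~X$ differ on only finitely many half-spaces; hence the image $L$ of $X^{(0)}(\frakH_C)$ is a single commensurability subclass in $\roller X$, contained in $C$. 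Fixing $y_0\in L$, I would check that the full Roller class $[y_0]\subset\~X$ actually equals $L$: any $z\in[y_0]$ that differed from $y_0$ on some $h\in\mathfrak{s}$ would satisfy $z\in h^*$, hence $z\notin C$, contradicting $[y_0]\subset C$ (which follows from (3)). So $z$ differs from $y_0$ only on $\frakH_C$ and lies in $L$. Finally, continuity of the embedding sends points of $\roller X(\frakH_C)$ into $\overline{L}$, and combining with $L$ itself yields $C=\mathrm{image}(\~X(\frakH_C))\subset\overline{L}\subset C$, so $C=\overline{[y_0]}$.

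For $(4)\Leftrightarrow(2)$, note that any two elements of $\mathfrak{s}$ both contain $C\neq\emptyset$ and so are either nested or transverse; antichains in $\mathfrak{s}$ therefore consist of pairwise transverse half-spaces, of cardinality at most $\dim X$. By Dilworth's theorem (applicable since antichains are uniformly bounded), $\mathfrak{s}$ partitions into $k\leq\dim X$ chains $\mathfrak{c}_1,\dots,\mathfrak{c}_k$; enumerating each chain from its top element downward (and repeating the minimum if the chain is finite) yields weakly descending sequences $(h_m^i)_{m\geq 0}$ with $\bigcap_i\bigcap_m h_m^i=\bigcap_{h\in\mathfrak{s}}h=C$, giving (2). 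Conversely, for $(2)\Rightarrow(3)$, if $z\in C$ and $w\sim z$, commensurability gives $w\in h_m^i$ for all sufficiently large $m$, and then the descending property $h_M^i\subset h_m^i$ for $m\leq M$ forces $w\in h_m^i$ for every $m$ and every $i$, so $w\in C$. Combined with $(3)\Rightarrow(1)\Rightarrow(4)$, this closes the loop.

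The main obstacle will be step $(3)\Rightarrow(1)$, specifically the identification of the principal class $L$ with a full Roller class of $\~X$: a priori, $L$ only captures those points of $C$ whose $\frakH_C$-restriction satisfies DCC, and upgrading this to a complete commensurability class in $\~X$ requires both the union-of-classes hypothesis (3) and the observation that a finite modification of a DCC orientation remains DCC.
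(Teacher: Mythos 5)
Your routing $(1)\Rightarrow(4)\Rightarrow(3)\Rightarrow(1)$ differs from the paper's $(1)\Rightarrow(2)\Rightarrow(3)\Rightarrow(1)$ plus $\neg(3)\Leftrightarrow\neg(4)$, and the steps $(4)\Rightarrow(3)$ and $(3)\Rightarrow(1)$ are fine (the latter is essentially the paper's argument). But there is a genuine gap in $(1)\Rightarrow(4)$. You assert that, given $h\in\mathfrak s=\frakH_{[y]}^+$, you can ``build a strict descending chain $h=h_0\supsetneq h_1\supsetneq\cdots$ in $\frakH_y^+$'' merely because $\frakH_y^+$ fails the descending chain condition. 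Failure of DCC produces \emph{some} infinite descending chain, not one beginning at a prescribed $h$, and the stronger statement is false for general $h\in\frakH_y^+$: in the strip $[0,1]\times[0,\infty)$, let $y$ be the boundary point with $\frakH_y^+=\{h\}\cup\{k_n\}_{n\geq 0}$, where $h$ is the vertical half-space $\{0\}\times\N$ and $k_n=\{0,1\}\times\{n+1,n+2,\dots\}$; then no element of $\frakH_y^+$ is properly contained in $h$, so no such chain exists. The statement you need is that every $h\in\frakH_{[y]}^+$ (as opposed to $\frakH_y^+$) begins an infinite descending chain in $\frakH_y^+$; this is true, but proving it is essentially equivalent to the non-termination you are trying to establish. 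The standard way in is the ``flipping'' argument the paper uses for $\neg(4)\Rightarrow\neg(3)$: if $h$ were minimal in $\frakH_C^+$, then $\frakH_C^+\setminus\{h\}$ is still consistent, and one produces two points of $\~X$ differing on only the pair $\{h,h^*\}$ plus finitely many cutting half-spaces, one inside $C$ and one outside, contradicting that $C=\~{[y]}$ is a union of Roller classes. Without some version of this, nothing in your cycle ever gets you \emph{into} condition $(4)$, so the equivalence is not closed.

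There are also two secondary problems in the $(4)\Rightarrow(2)$ step. First, two half-spaces $h,k$ containing the nonempty set $C$ need not be ``nested or transverse'': the co-facing case $h^*\subset k$ (equivalently $h^*\cap k^*=\emptyset$) is possible in principle, and facing tuples in a finite-dimensional cube complex are not bounded by $\dim X$, so the antichain bound feeding Dilworth's theorem requires an argument ruling out large co-facing families inside a non-terminating $\frakH_C^+$. Second, a Dilworth chain is merely a totally ordered subset; it need not have a top element, and it can contain infinite ascending parts (e.g.\ for $C=\{(\infty,\infty)\}\subset\roller(\Z^2)$ the half-spaces $[n,\infty)\times\Z$ for $n\in\Z$ form a chain with no maximum), so ``enumerating each chain from its top element downward'' is not well defined; one must instead extract a coinitial descending $\omega$-sequence from each chain and check that the discarded elements are redundant in the intersection. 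The paper sidesteps all of this by citing \cite[Lemma 6.17]{FLM:RandomWalks} for $(1)\Rightarrow(2)$.
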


\begin{proof}
We will prove \ref{closureclass} $\Rightarrow$ \ref{finitely many descending chains} $\Rightarrow$ \ref{union of roller classes} $\Rightarrow$ \ref{closureclass} and $\neg$\ref{union of roller classes} $\Leftrightarrow \neg$\ref{half-spaces nonterminating}. 

The claim that \ref{closureclass} implies \ref{finitely many descending chains}  is a restatement of \cite[Lemma 6.17]{FLM:RandomWalks}.

\smallskip

To prove that \ref{finitely many descending chains} implies \ref{union of roller classes},
assume that $y \in C = \bigcap_{i=1}^k\bigcap_{m\geq 0}h_{m}^i$ and $y' \sim y$ %. By Definition~\ref{Def:RollerEquivalence}, 
meaning that  $|\frakH(y, y')|< \infty$. 
Since $  \frakH_y^+ \setminus \frakH_{y'}^+$ is finite, it follows that there exists an $N$ for which $h_{m}^i \in  \frakH_{y'}^+$ (i.e. $y'\in h_{m}^i$) for each $m\geq N$ and $i=1, \dots, k$.
That is to say, 
$$y'\in \bigcap_{i=1}^k\bigcap_{m\geq N}h_{m}^i= \bigcap_{i=1}^k\bigcap_{m\geq 0}h_{m}^i = C.$$

\smallskip

To prove
\ref{union of roller classes} implies \ref{closureclass},
suppose that $C$ is  a union of Roller classes. 
Let $\frakH_{C}$ be the collection of half-spaces that cut $C$.  By Theorem~\ref{Thm:SageevDuality}, there is an associated cube complex $X(\frakH_{C})$. 
Since $C$ is closed and convex, Proposition \ref{LiftingDecomp} and Lemma~\ref{Lem:ConvexClosure} give an embedding $i_C \from \~X(\frakH_{C})\hookrightarrow \~X$ whose image is $C$, and furthermore
$C= \bigcap_{h\in \frakH_C^+}h$.

Let $y_0 \in X(\frakH_{C})^{(0)}$ and note that $\~{[y_0]}= \~X(\frakH_{C})$.   Let  $y=i_C(y_0)\in C$ and note that $i_C([y_0]) \subset [y]$, hence $C=i_C(\~{[y_0]}) \subset \~{[y]}$.  Since $C$ is a union of equivalence classes, we have ${[y]}\subset C$. Since $C$ is closed, we have that $ \~{[y]}\subset C$ and hence $C= \~{[y]}$.

\smallskip

To prove $\neg$\ref{half-spaces nonterminating} implies $\neg$\ref{union of roller classes},
suppose  there exists a minimal half-space $h_0\in \frakH_{C}^+$. We claim that $\frakH_{C}^+\setminus\{h_0\}$ is consistent. Indeed, if $h\in \frakH_{C}^+\setminus\{h_0\}$ then $h^*\notin \frakH_{C}^+\setminus\{h_0\}$. If $h\subset k$ then $k\in \frakH_{C}^+$, and since $h_0$ is minimal, $h_0\neq k$ so $k\in \frakH_{C}^+\setminus\{h_0\}$. 

We may therefore consider the CAT(0) cube complex associated to $\frakH_C \sqcup\{h_0, h_0^*\}$. Fix $y, y' \in X(\frakH_C \sqcup\{h_0, h_0^*\})$ such that $y\in h_0$ and $y'\in h_0^*$. Then, by Proposition \ref{LiftingDecomp}  there is a unique point $\tilde  y\in C$ associated to the total and consistent choice of half-spaces $$\{h\in \frakH_C: y\in h\}\sqcup \frakH_C^+.$$ Similarly, there is a unique point $\tilde y'\in \roller X\setminus C$ associated to $$\{h\in \frakH_C: y'\in h\}\sqcup \{h_0^*\}\sqcup (\frakH_C^+\setminus\{h_0\}).$$
By construction, $|\frakH(\tilde y, \tilde y')| = 2$, hence $C$ is not a union of Roller classes. 

\smallskip

Finally, to prove $\neg$\ref{union of roller classes} implies $\neg$\ref{half-spaces nonterminating}, suppose that $C$ is not a union of Roller classes. Then there are commensurate elements 
$y\in C$ and  $y' \in \roller X \setminus C$. Convexity means 
$$C = \bigcap_{h\in \frakH_{C}^+}{}h,$$
hence there is a half-space $h\in \frakH_{C}^+$ with $y\in C \subset h$ and $y'\notin h$. Under the assumption that 
$|\frakH(y,y')|<\infty$, we may choose a minimal such $h$ and conclude that $\frakH_{C}^+$ fails to be non-terminating. 
\end{proof}

\begin{definition}[Principal class]\label{Def:PrincipalClass}
Given a closed, convex set $C$ satisfying 
one of the equivalent conditions of Lemma~\ref{ClassSetNonterminating}, the class $[y]$ whose closure is $C$ is called the \emph{principal class of $C$}. Compare Guralnik \cite[Section 3.1]{Guralnik}. 
\end{definition}

\begin{definition}[Partial order on $\guralnik X$]\label{Def:RollerOrder}
Let $[x],[y] \subset \roller X$ be Roller classes. 
We define a partial order on $\guralnik X$ by setting $[x] \leq [y]$ whenever $\frakH_{[x]}^+ \subset \frakH_{[y]}^+$, or equivalently $\frakH_{\~{[x]}}^+ \subset \frakH_{[y]}^+$.
\end{definition}

The following result is essentially a corollary of Lemma~\ref{ClassSetNonterminating}.

\begin{lemma}\label{POSET Roller}
 For Roller classes $[x],[y] \subset \roller X$, the following are equivalent:
 \begin{enumerate}[$(1)$ ]
\item\label{Itm:leq} $[x] \leq [y]$ 
\item\label{Itm:half-spaces} $\frakH_{[x]}^+ \subset \frakH_{[y]}^+$
\item\label{Itm:ContainedInClosure} $[y]\subset \~{[x]}$
\item\label{Itm:Intersection} $[y]\cap \~{[x]} \neq \emptyset$ %don't need
\end{enumerate}

\end{lemma}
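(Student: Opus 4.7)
The plan is to establish the equivalences via the chain $(1) \Leftrightarrow (2) \Leftrightarrow (3)$ and $(3) \Leftrightarrow (4)$, using the earlier lemmas of this section as the main inputs. The equivalence $(1) \Leftrightarrow (2)$ is immediate from Definition~\ref{Def:RollerOrder} (combined with the observation that $\frakH_{[x]}^+ = \frakH_{\~{[x]}}^+$, since half-spaces are Roller-closed).

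For $(2) \Leftrightarrow (3)$, I first verify that the Roller class $[x]$ is a convex subset of $\~X$ in the sense of Definition~\ref{Def:Convex}. Given $y, z \in [x]$ and any $w \in \I(y, z)$, the definition of intervals gives $\frakH(w, y) \subset \frakH(y, z)$; since $y \sim z$, the right side is finite, so $w \sim y \sim x$ and $w \in [x]$. Having checked convexity, Lemma~\ref{Lem:ConvexClosure} yields
\[
\~{[x]} \;=\; \bigcap_{h \in \frakH_{[x]}^+} h.
\]
From this characterization, the containment $[y] \subset \~{[x]}$ holds if and only if every $h \in \frakH_{[x]}^+$ satisfies $[y] \subset h$, i.e., $h \in \frakH_{[y]}^+$. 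This gives $(2) \Leftrightarrow (3)$.

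The implication $(3) \Rightarrow (4)$ is immediate, since the Roller class $[y]$ is by definition nonempty. The converse $(4) \Rightarrow (3)$ is where the main content lies: by Lemma~\ref{ClassSetNonterminating}, the closed convex set $\~{[x]}$ (which equals the closure of the principal class $[x]$) is a union of Roller classes. Hence if $[y] \cap \~{[x]} \neq \emptyset$, then any representative $z \in [y] \cap \~{[x]}$ forces the entire commensurability class of $z$, namely $[y]$, to lie inside $\~{[x]}$.

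There is no serious obstacle: the crucial technical fact, that closures of Roller classes are unions of Roller classes, has already been established in Lemma~\ref{ClassSetNonterminating}, and the only small point requiring care is the convexity of $[x]$ needed to invoke Lemma~\ref{Lem:ConvexClosure}. Everything else is bookkeeping with the definitions.
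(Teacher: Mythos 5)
Your proof is correct and follows essentially the same route as the paper's: $(1)\Leftrightarrow(2)$ from Definition~\ref{Def:RollerOrder}, $(2)\Leftrightarrow(3)$ via Lemma~\ref{Lem:ConvexClosure} applied to $[x]$, the trivial implication $(3)\Rightarrow(4)$, and $(4)\Rightarrow(3)$ from the fact (Lemma~\ref{ClassSetNonterminating}) that $\~{[x]}$ is a union of Roller classes. Your explicit verification that $[x]$ is convex (needed to invoke Lemma~\ref{Lem:ConvexClosure}) is a point the paper leaves implicit, and it is a worthwhile addition.
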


\begin{proof}
Conditions \ref{Itm:leq} and \ref{Itm:half-spaces} are equivalent by Definition~\ref{Def:RollerOrder}.  Since $\frakH_{[x]}^+ =\frakH_{\~{[x]}}^+ $, by Lemma~\ref{Lem:ConvexClosure}, Remark \ref{SubsetReverseInclusion} implies that \ref{Itm:half-spaces} and \ref{Itm:ContainedInClosure} are equivalent. Next, \ref{Itm:ContainedInClosure} trivially implies \ref{Itm:Intersection}. Finally, \ref{Itm:Intersection} implies \ref{Itm:ContainedInClosure} by Lemma~\ref{ClassSetNonterminating}.
\end{proof}

\begin{definition}[Simplicial Roller boundary]\label{Def:SimplicialRoller}
The \emph{simplicial Roller boundary} $\absimp X$ is the simplicial realization of the partial order of Definition~\ref{Def:RollerOrder}. That is: vertices of $\absimp X$ correspond to points of $\guralnik X$, or equivalently Roller classes in $\roller X$. Simplices in  $\absimp X$ correspond to totally ordered chains of Roller classes. 

Observe that the action of $\Aut(X)$ on $\roller X$ preserves commensurability classes and the order $\leq$, yielding a bijective action  on $\guralnik X$ and a simplicial action on $\absimp X$.
\end{definition}

\begin{remark}[Dimension of $\absimp X$]\label{Rem:AbsimpDimension}
The characterization of Lemma~\ref{ClassSetNonterminating}.\ref{finitely many descending chains} implies that any simplex in $\absimp X$ has $k \leq \dimension X$ vertices, hence $\dimension\absimp X\leq\dimension X-1$.
\end{remark}

\section{The simplicial boundary}\label{Simplicial Boundary}

In this section, we recall the definition of the simplicial boundary $\partial_\triangle X$  \cite{Hagen}; see Definition~\ref{Def:simplicial_boundary}. Then, we 
develop some connections between $\partial_\triangle X$, the Roller boundary $\roller X$, and the simplicial Roller boundary $\absimp X$. See Theorem~\ref{Thm:UBStoRoller} and Corollary~\ref{Cor:UBStoRollerSimp} for the most top-level statement. The lemmas used to prove those top-level results will be extensively used in later sections.

\subsection{Unidirectional boundary sets} 
Recall that $\frakH$ is the set of half-spaces of $X$ and $\W$ is the set of hyperplanes of $X$. In this section, we mainly focus on collections of hyperplanes.

\begin{definition}[Facing triple]\label{Def:FacingTriple}
A  triple  $h_1, h_2, h_3 \in \frakH$ is called a \emph{facing triple} if  their complements $h_1^*, h_2^*, h_3^*$ are 
pairwise disjoint. Similarly, a triple of hyperplanes $\hat  h_1, \hat  h_2, \hat  h_3 \in \W$ is called a \emph{facing 
triple} if there exists a choice of orientation such that $h_1, h_2, h_3$ form a facing triple.
\end{definition}

Caprace and Sageev  \cite{CapraceSageev} showed that  in a non-elementary
 essential CAT(0) cube complex, given any half-space $h\in \frakH$ there exist half-spaces $k, \ell \in \frakH$ such that $\{h,k,\ell\}$ is a facing triple.

\begin{definition}[Unidirectional, inseparable]\label{Def:Unidirectional}
A collection  of hyperplanes $\mathcal S \subset \W$ is called \emph{unidirectional} if for every $\hat h \in \mathcal S$, at most one side of $\hat h$ contains infinitely many other elements of $\mathcal S$. 
A collection $\mathcal S\subset \W$ is called \emph{inseparable} if whenever $\hat h, \hat k \in \mathcal S$ and $\hat \ell$ is a hyperplane that separates $\hat h$ and $\hat k$, then $\hat \ell \in \mathcal S$. This is closely related to the ``tightly nested'' condition in Proposition~\ref{LiftingDecomp}.
\end{definition}

\begin{definition}[UBS]\label{Def:UBS}
 A set $\U \subset \W$ is a \emph{unidirectional boundary set} (abbreviated \emph{UBS}) if it is infinite, unidirectional, inseparable, and does not contain a facing triple. 
 \end{definition}
 
\begin{definition}[Partial order on UBSes]\label{Def:PreceqMinimal} 
Let $\U$ and $\mathcal V$ be UBSes.  Define a relation $\preceq$ where $\U\preceq
\mathcal V$ if all but finitely many elements of $\U$ lie in
$\mathcal V$.  Say that $\U$ and $\V$ are \emph{commensurate} (denoted $\U \sim \V$)  if $\U \preceq \V$ and $\V \preceq \U$.

The relation $\preceq$ on UBSes descends to a
partial order on commensurability classes of UBSes. We say that $\V$ is \emph{minimal} if its commensurability class is minimal --- that is, if  $\U \preceq \V$ implies $\U \sim \V$. Similarly, we say that $\V$ is \emph{maximal} if $\V \preceq \V'$ implies $\V \sim \V'$.
\end{definition}

\begin{example}\label{intervalUBS}
For disjoint subsets $A, B \subset \~X$, let $\W(A, B)$ be the set of hyperplanes separating $A$ from $B$.
 It is straightforward to check that $\W(A, B)$ is inseparable and does not contain a facing triple. Furthermore, if $A\subset X$ then $\W(A, B)$ is unidirectional.  If, in addition, $B\subset \roller X$, then $\W(A, B)$ is infinite. Under these last two conditions, we conclude that $\W(A, B) = \W(A, \overline{B})$ is a UBS. Compare Lemma \ref{standardUBS}.
 \end{example}

\begin{definition}[Pruned UBS, canonical half-space]\label{Def:Pruned}
Let $\U$ be a UBS. We say that $\U$ is \emph{pruned} if every hyperplane $\hat h \in \U$ has exactly one associated half-space $h$ containing infinitely many elements of $\U$. In this situation, we call 
$h$ the  \emph{canonical orientation of $\hat h$} or the \emph{canonical half-space associated to $\hat h$}.
\end{definition}

\begin{lemma}\label{Lemma:PrunedUBS}
Let $\U$ be a UBS. Then $\U$ contains a canonical pruned sub-UBS $\U' \subset \U$ that is commensurate to $\U$.
\end{lemma}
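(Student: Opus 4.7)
The plan is to set $\U'=\U\setminus F$, where
$$F = \bigl\{\hat h\in\U : \text{both sides of }\hat h\text{ contain only finitely many elements of }\U\bigr\}.$$
Since $F$ is defined purely from $\U$, the resulting $\U'$ is canonical, and the work splits into showing that $F$ is finite and that $\U'$ inherits the UBS axioms and is pruned.

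To see $F$ is finite, I will assume otherwise and iteratively choose $\hat h_1,\ldots,\hat h_{D+1}\in F$ that pairwise cross, where $D=\dim X$. Having picked $\hat h_1,\ldots,\hat h_i\in F$, for each $j\leq i$ the set of $\hat h\in\U$ not crossing $\hat h_j$ is finite by the definition of $F$, so the set of elements of $F$ crossing all of $\hat h_1,\ldots,\hat h_i$ is cofinite in $F$, hence infinite, and I pick $\hat h_{i+1}$ there. This produces $D{+}1$ pairwise-crossing hyperplanes, contradicting the dimension bound. Once $F$ is finite, $\U'$ is commensurate to $\U$, and it routinely inherits infinitude, unidirectionality, and absence of facing triples; moreover, it is pruned, because the canonical side of each $\hat h\in\U'$ contains infinitely many elements of $\U$, hence of $\U'$.

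The main obstacle is inseparability of $\U'$. Given $\hat k,\hat\ell\in\U'$ and $\hat m\in\W$ separating them, inseparability of $\U$ yields $\hat m\in\U$, and the task is to show $\hat m\notin F$. Orient so $\hat k\subset m$, $\hat\ell\subset m^*$, and let $k_*,\ell_*$ denote the canonical half-spaces of $\hat k,\hat\ell$. If either is the outer side of its hyperplane, e.g.\ $k_*=k$, then $k_*\subset m$, placing infinitely many elements of $\U$ on one side of $\hat m$ and giving $\hat m\notin F$ immediately. The delicate case is when both $k_*$ and $\ell_*$ are inward, each containing $\hat m$ and the opposite hyperplane; the plan is to rule this possibility out entirely.

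In the inward case, unidirectionality of $\hat\ell$ gives that $k_*\setminus\ell_*\subset\ell$ contains only finitely many elements of $\U$, so the between-region $k_*\cap\ell_*$ contains infinitely many. For each such $\hat g$, the no-facing-triple axiom forces $\hat g$ to separate $\hat k$ from $\hat\ell$: otherwise both $k$ and $\ell$ lie on the same side of $\hat g$, so the opposite half-space of $\hat g$ sits inside $k_*\cap\ell_*$ and forms a facing triple with $k$ and $\ell$. But two disjoint hyperplanes $\hat k,\hat\ell$ admit only finitely many separators, since each separator lies in $\W(x,y)$ for any vertices $x,y$ chosen in the outer parts of $N(\hat k)$ and $N(\hat\ell)$, and $|\W(x,y)|=d_1(x,y)<\infty$. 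This contradicts the infinitude just produced, ruling out the inward case and completing the inseparability argument.
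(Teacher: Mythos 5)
Your construction ($\U'=\U\setminus F$) and your finiteness argument for $F$ match the paper's, and you have correctly isolated the one genuinely delicate point: inseparability of $\U'$ when the canonical half-spaces of $\hat k$ and $\hat\ell$ both face the separator $\hat m$. (The paper disposes of this step with a one-sentence assertion that does not explicitly address this ``inward'' configuration either, so locating it is a real contribution.) Unfortunately, your argument ruling the inward case out contains a false step, and the configuration cannot in fact be excluded.

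The error is the deduction ``$k_*\setminus\ell_*$ contains only finitely many elements of $\U$, so the between-region $k_*\cap\ell_*$ contains infinitely many.'' A hyperplane of $\U$ contained in $\hull(k_*)$ falls into one of \emph{three} classes: contained in $\hull(\ell_*)$, contained in $\hull(\ell_*^*)$, or \emph{crossing} $\hat\ell$; you account for the first two only, and the third class can absorb all but finitely many elements. In fact your own next observation (every element of $\U$ in $k_*\cap\ell_*$ must separate $\hat k$ from $\hat\ell$, and there are finitely many such separators) shows the between-region is \emph{always} finite, so whenever the inward case occurs it is precisely because almost all elements deep in $k_*$ cross $\hat\ell$ -- and then no contradiction arises. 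The inward case does occur. Let $X$ be the cube complex on the median subalgebra $X^{(0)}=\{(x,y,z)\in\mathbb{Z}^3 : 0\le z\le 11,\ (z=0\Rightarrow x\le 0),\ (z=11\Rightarrow y\le 0)\}$. Let $\hat c_j$ ($0\le j\le 10$) be the wall between $z=j$ and $z=j+1$, let $\hat a_n$ ($n\ge 0$) be the wall between $x=n$ and $x=n+1$, and let $\hat b_m$ ($m\ge 0$) be the wall between $y=m$ and $y=m+1$; then $\U=\{\hat c_j\}\cup\{\hat a_n\}\cup\{\hat b_m\}$ is a UBS. Each $\hat a_n$ exists only where $z\ge 1$, so it is disjoint from $\hat c_0$ and contained in its upper half-space while crossing $\hat c_{10}$ and every $\hat c_j$ with $1\le j\le 9$; symmetrically each $\hat b_m$ is contained in the lower half-space of $\hat c_{10}$ and crosses $\hat c_0$. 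Thus the canonical half-spaces of $\hat c_0$ and $\hat c_{10}$ face each other, and each intermediate $\hat c_j$ ($1\le j\le 9$) has only finitely many elements of $\U$ on either side, i.e.\ $\hat c_j\in F$, even though it separates $\hat c_0,\hat c_{10}\in\U\setminus F$.

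So in this example $\U\setminus F$ is not inseparable, which means the inseparability step cannot be completed by any local repair of your argument: the inward configuration must instead be dealt with by removing further hyperplanes (here, discarding $\hat c_0$ and $\hat c_{10}$ as well leaves the pruned commensurate sub-UBS $\{\hat a_n\}\cup\{\hat b_m\}$). I encourage you to work out whether such a modified construction can be made canonical; as it stands, the proposal does not establish the lemma.
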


\begin{proof}
Let $F_\U$ be the set the hyperplanes of $\U$ with the property that \emph{both} associated half-spaces contain finitely many elements of $\U$. We claim that $F_\U$ is finite.
To see this, let $\hat  h_1, \dots, \hat  h_D$ be a maximal family of pairwise transverse hyperplanes in $F_\U$. This family is finite because $D \leq \dim X$. For any $\hat  h \in F_\U\setminus\{\hat  h_1, \dots, \hat  h_D\}$ there is at least one $i$ for which $\hat h$ is parallel (i.e. not transverse) to $\hat  h_i$. By the definition of $F_\U$,  there are finitely many hyperplanes in $\U$ parallel to each $\hat  h_i$,
 hence $F_\U$ is finite. 
 
 Now, define $\U' = \U \setminus F_\U$. By construction, every $\hat h \in \U'$ has exactly one associated half-space containing infinitely many elements of $\U$, hence infinitely many elements of $\U'$ as well.
 
We claim that $\U'$ is a UBS. Since $F_\U$ is finite, it follows that $\U'$ is infinite. Unidirectionality and the lack of facing triples are properties that pass to subsets of $\U$, hence to $\U'$.  Next, suppose that $\hat  h, \hat  k \in \U'$ with $\hat  \ell $ in between. Since $\hat  h, \hat  k \in \U$ and $\U$ is inseparable, observe that $\hat \ell \in \U$. But
$\hat  \ell \notin F_\U$, because $\hat h, \hat k$ both bound half-spaces containing infinitely many elements of $\U$. We conclude that $\hat  \ell \in \U'$, verifying that $\U'$ is a UBS. Since $\U$ and $\U'$ differ by finitely many hyperplanes, they are commensurate.
\end{proof}

Hagen proved that every (commensurability class of) UBS $\U$ decomposes in a well-defined fashion into a disjoint union of minimal UBSes.
The following is a restatement of \cite[Theorem 3.10]{Hagen} and \cite[Theorem A]{Hagen:corr}.

\begin{lemma}\label{Lem:FinManyMinimals}
Let $\U$ be a UBS. Then there is a decomposition $\U \sim \bigsqcup_{i=1}^k \U_i$, where:
\begin{enumerate}[$(1)$ ]
  \item\label{Itm:Minimal} each $\U_i$ is a minimal UBS;
  \item\label{Itm:DisjointUniion} any UBS $\V \subset \U$ is commensurate with the disjoint union of some of the $\U_i$;
  \item\label{Itm:CrossAllBelow} if $1\leq j < i \leq k$, then every hyperplane in $\U_i$ crosses all but finitely many of the hyperplanes in $\U_j$.
 \end{enumerate}
Consequently, there are finitely many commensurability classes of UBSes $\V \preceq \U$.
\end{lemma}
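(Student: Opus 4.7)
The plan is to adapt the proof of \cite[Theorem 3.10]{Hagen}, proceeding by induction on the maximum size $d$ of a pairwise-transverse family of hyperplanes in $\U$, which is bounded by $\dim X$.

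First, by Lemma~\ref{Lemma:PrunedUBS} I replace $\U$ by a commensurate pruned sub-UBS; this does not affect any of the four conclusions since they are invariants of the commensurability class. After this reduction, every $\hat h \in \U$ has a canonical half-space $h$ containing infinitely many other canonical half-spaces. The key structural observation --- forced by inseparability, unidirectionality, and the absence of facing triples --- is a dichotomy: for any pair $\hat h, \hat k \in \U$, either $\hat h \pitchfork \hat k$, or the canonical half-spaces $h,k$ are nested one inside the other.

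Second, I extract one minimal sub-UBS at a time by stratifying $\U$ by nesting depth. Call $\hat h \in \U$ a \emph{bottom hyperplane} if no infinite descending chain of canonical half-spaces of $\U$ is properly contained in $h$. Using the dichotomy above together with inseparability, one shows that any two bottom hyperplanes either cross or have nested canonical half-spaces, and that the collection of bottom hyperplanes in a fixed nesting chain is infinite (since $\U$ itself is infinite while only finitely many pairwise-transverse hyperplanes exist at any fixed depth). This produces a sub-UBS $\U_1 \subseteq \U$ which is a chain of bottom hyperplanes; inseparability in $\U$ implies inseparability of $\U_1$, and minimality follows because any infinite sub-UBS of such a chain must be a cofinite subchain.

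Third, I iterate: let $\U' = \U \setminus \U_1$ (up to finitely many hyperplanes to preserve inseparability). Since the canonical half-space of each $\hat h \in \U_1$ is strictly interior to the canonical half-spaces of the remaining hyperplanes, every hyperplane in $\U'$ that is not transverse to some $\hat h \in \U_1$ would have to lie in $\U_1$ itself; this shows that the maximum pairwise-transverse size of $\U'$ drops by at least one, so the induction terminates after at most $\dim X$ steps with $k \leq \dim X$. The ordering condition~\ref{Itm:CrossAllBelow} is automatic from the construction: hyperplanes appearing in later stages are transverse to all but finitely many of the bottom hyperplanes of earlier stages. Property~\ref{Itm:DisjointUniion} follows from minimality, since any sub-UBS $\V \subseteq \U$ meets each $\U_i$ either finitely or cofinitely (as $\V \cap \U_i$, when infinite, is a sub-UBS of the minimal $\U_i$), and the final claim about there being finitely many commensurability classes of sub-UBSes is then immediate, with at most $2^k$ such classes.

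The main obstacle is verifying the existence of the first minimal piece $\U_1$: one must argue that the set of bottom hyperplanes, after a finite correction, is both infinite and inseparable as a subset of $\W(X)$. The facing-triple-free hypothesis is used here to prevent the bottom hyperplanes from ``branching out'' into incompatible directions, and unidirectionality ensures the chain propagates consistently to infinity. This is precisely the combinatorial heart of \cite[Theorem 3.10]{Hagen} (with the correction in \cite{Hagen:corr}), and I would cite those arguments rather than reprove them in detail.
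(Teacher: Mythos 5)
The paper does not actually prove this lemma: it is stated verbatim as a restatement of \cite[Theorem 3.10]{Hagen} and \cite[Theorem A]{Hagen:corr}, so your closing move of citing those arguments rather than reproving them is exactly what the paper does. To that extent your proposal and the paper coincide, and the citation alone would suffice.

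The sketch you wrap around the citation, however, has a concrete gap that would sink it as a standalone argument. Your definition of a \emph{bottom hyperplane} --- ``no infinite descending chain of canonical half-spaces of $\U$ is properly contained in $h$'' --- is vacuous: after pruning (Lemma~\ref{Lemma:PrunedUBS}), the canonical half-space $h$ of \emph{every} $\hat h\in\U$ contains infinitely many elements of $\U$, and the Helly/unidirectionality argument in the proof of Lemma~\ref{UBScomplex} extracts from these an infinite descending chain $h\supsetneq h_1\supsetneq h_2\supsetneq\cdots$ of canonical half-spaces; then $h_1\supsetneq h_2\supsetneq\cdots$ is properly contained in $h$, so no hyperplane is ever ``bottom'' and your first minimal piece $\U_1$ is never produced. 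Relatedly, your assertion that minimality of $\U_1$ ``follows because any infinite sub-UBS of such a chain must be a cofinite subchain'' glosses over the fact that a chain of pairwise-disjoint hyperplanes is not itself inseparable --- one must pass to its inseparable closure --- and the inseparable closure of a chain need \emph{not} be minimal; one may first have to discard a cofinite initial segment. That is precisely the subtlety that forced the correction \cite{Hagen:corr}, and it is recorded in this paper as the second assertion of Lemma~\ref{MinimalChar} and the remark following it. Finally, your ``dichotomy'' (cross, or nested canonical half-spaces) silently omits the third a priori configuration $h^*\cap k^*=\emptyset$ for disjoint $\hat h,\hat k$; ruling it out is not hard (e.g.\ via the nonemptiness of the umbra, which forces almost all canonical half-spaces to contain a common boundary point), but it is not automatic. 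Since these three points are the actual combinatorial content of the theorem, the honest form of your proof is the citation, not the sketch.
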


Lemma~\ref{Lem:FinManyMinimals} enables the following definition.

\begin{definition}[Simplicial boundary]\label{Def:simplicial_boundary}
For a UBS $\U$, the \emph{dimension} of $\U$ is the number $k$ appearing in the decomposition of Lemma~\ref{Lem:FinManyMinimals}. This is an invariant of the commensurability class.
An immediate consequence of Lemma~\ref{Lem:FinManyMinimals}.\ref{Itm:CrossAllBelow} is that $\dim (\U) \leq \dim (X)$.

The \emph{simplicial boundary} $\simp X$ is the following simplicial complex.  For each commensurability class of a UBS 
$\U$, there is a $(k-1)$--simplex $\sigma_{\U}$, where $k$ is the dimension of the class of $\U$.  The face relation is defined 
as follows: $\sigma_{\U}$ is a face of $\sigma_{\V}$ if and only if $\U \preceq \V$.  
\end{definition}

Note that $\simp X$ has a vertex for each commensurability class of a minimal UBS. Furthermore, a finite collection of 
vertices spans a maximal simplex if and only if the union of the corresponding minimal UBSes is commensurate with a maximal UBS. However, there can exist non-maximal simplices that do not correspond 
to a UBS; see Example~\ref{exmp:weird_simplex}. For this reason, $\simp X$ is \emph{not} the simplicial realization of the partial order $\preceq$.

Observe that $\Aut(X)$ acts on UBSes in a way that preserves  the $\preceq$ relation, and hence preserves commensurability. Thus $\Aut(X)$ acts simplicially on $\simp X$.

\subsection{Minimal and dominant UBSes}\label{Subsec:MinimalDominant}
Next, we establish some structural results about minimal UBSes. 
Given a set $S$ consisting of hyperplanes, the \emph{inseparable closure} of $S$, denoted $\insep{S}$,
 is the intersection of all inseparable sets containing $S$. 

\begin{lemma}\label{MinimalChar}
Let $\U\subset\W(X)$ be a minimal UBS.  Then there exists an infinite descending chain $\{h_n : n \in \naturals \} \subset 
\frakH$ with $\hat h_n \in \U$, such that $\U\sim \insep{\{\hat h_n:n\in\N\}}$.

Conversely, for every descending chain $\{h_n : n \in \naturals \} \subset \frakH$, there exists $N\geq 0$ such that $\insep{\{\hat h_n:n\geq N\}}$ is a minimal UBS.
\end{lemma}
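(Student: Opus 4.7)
The plan is to handle the two directions of the lemma in tandem, since the forward implication will invoke the converse. Both halves begin by applying Lemma~\ref{Lemma:PrunedUBS} to pass to a pruned commensurate sub-UBS, in which every hyperplane has a canonical orientation, namely the side containing infinitely many other elements.

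For the converse direction, I would start with a descending chain $h_0\supsetneq h_1\supsetneq\cdots$ and observe that the $\hat h_n$ are automatically pairwise non-crossing, since nested half-spaces are never transverse. Set $\mathcal S_N=\insep{\{\hat h_n:n\geq N\}}$. This set is always inseparable and infinite. I would check that for $N$ sufficiently large, $\mathcal S_N$ is also unidirectional and free of facing triples: any hyperplane $\hat k\in\mathcal S_N\setminus\{\hat h_n\}$ separates some pair $\hat h_i,\hat h_j$, and so inherits a natural orientation consistent with the chain; all but finitely many $\hat h_n$ lie on the inward side of $\hat k$, and any would-be facing triple must include three hyperplanes that are coherently nested relative to the backbone $\{\hat h_n\}$, which is incompatible with the facing configuration. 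Minimality of $\mathcal S_N$ would then follow because any UBS $\V\preceq\mathcal S_N$ must, by unidirectionality and inseparability, contain cofinitely many of the $\hat h_n$; its inseparable closure is then $\mathcal S_{N'}$ for some $N'$, so $\V\sim\mathcal S_N$.

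For the forward direction, let $\U$ be a minimal UBS and let $\U'\sim\U$ be its pruned version. I would inductively construct a descending chain of canonical half-spaces $h_0\supsetneq h_1\supsetneq\cdots$ with $\hat h_n\in\U'$. Given $\hat h_n$, infinitely many elements of $\U'$ lie in $h_n$, and I claim that infinitely many such elements have canonical half-space strictly contained in $h_n$. If not, then infinitely many $\hat g\in\U'$ on the $h_n$-side of $\hat h_n$ would have canonical half-space $g\supseteq h_n^*$. Any two such $\hat g_1,\hat g_2$ together with $\hat h_n$ would form a facing triple with orientations $h_n^*,g_1^*,g_2^*$, unless the $g_i^*$ pairwise overlap; this ``overlap'' condition, combined with the no-facing-triple property of $\U'$ and the $\dim X$ bound on pairwise-transverse hyperplanes, forces an infinite nested sub-family among the $g_i^*$, yielding an alternative descending chain inside $h_n$ that we use to extract $\hat h_{n+1}$. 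Iterating produces the chain $\{h_n\}$. Applying the converse direction already proved, there exists $N$ such that $\insep{\{\hat h_n:n\geq N\}}$ is a minimal UBS; since $\U'$ is inseparable and contains all $\hat h_n$, this minimal UBS is contained in $\U'$, and minimality of $\U$ then gives $\U\sim\insep{\{\hat h_n:n\geq N\}}$, which is also commensurate with $\insep{\{\hat h_n:n\in\N\}}$.

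The main obstacle is the inductive step in the forward direction: ruling out the pathology in which the canonical half-spaces of $\U'$ restricted to $h_n$ all point outward toward $h_n^*$. The case analysis there must interleave the no-facing-triple condition with finite dimensionality to show that such an outward configuration still secretly harbors an inward descending sub-chain. A parallel but more manageable subtlety arises in verifying unidirectionality and absence of facing triples for $\mathcal S_N$ in the converse direction, since one must control how interstitial hyperplanes in the inseparable closure can cross some of the backbone hyperplanes $\hat h_n$, which is what necessitates truncation at a possibly large $N$.
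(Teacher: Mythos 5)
Your forward direction is essentially the paper's argument, lightly reorganized: a minimal UBS contains a chain of hyperplanes (the paper extracts this in one step from unidirectionality, the absence of facing triples, and the bound $\dimension X$ on pairwise-crossing families, rather than via your inductive case analysis, but the content is the same), the inseparable closure of that chain is a sub-UBS of $\U$, and minimality forces commensurability. Routing this through the converse is harmless provided the converse is actually proved.

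The converse is where there is a genuine gap, and it sits exactly at the point the lemma exists to address. You assert that any UBS $\V\preceq\mathcal S_N=\insep{\{\hat h_n:n\geq N\}}$ must contain cofinitely many of the $\hat h_n$ ``by unidirectionality and inseparability.'' This is false in general and is essentially a restatement of the minimality you are trying to prove: if $\mathcal S_N$ is not minimal, then by Lemma~\ref{Lem:FinManyMinimals} it is commensurate with a disjoint union of at least two minimal UBSes, the chain $\{\hat h_n\}$ eventually lies in one of them, and the other is a sub-UBS of $\mathcal S_N$ meeting $\{\hat h_n\}$ in a finite set. Such examples exist (see the remark after the lemma and the reference to Fioravanti's Figure~1): the extra minimal piece consists entirely of interstitial separators. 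Your argument also tacitly uses $\mathcal S_{N'}\sim\mathcal S_N$ for all $N'$, which is precisely what fails in these examples and is why the statement requires passing to a tail. You correctly sense that truncation at large $N$ is needed, but you locate the need in verifying unidirectionality and the absence of facing triples; those hold for every $N$ (the inseparable closure of any chain is already a UBS). The missing mechanism is the one the paper supplies: Lemma~\ref{Lem:FinManyMinimals} gives only finitely many commensurability classes of sub-UBSes, so the nested sequence $\insep{C_m}$ stabilizes at some $M$; from this stabilization one shows $\insep{C_M}$ is commensurate with the inseparable closure of \emph{every} chain it contains, and then applies the forward direction to a minimal sub-UBS of $\insep{C_M}$ to conclude that $\insep{C_M}$ itself is minimal. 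Without some substitute for this stabilization step, your converse does not go through.
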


\begin{proof}
Let $\U$ be a minimal UBS.  Let  $\{h_n : n \in \naturals \} \subset \frakH$ be a descending chain such that ${\{\hat h_n\}}\subset\U$, which exists because $\U$ is infinite, unidirectional, contains no facing triple, and 
contains no infinite collection of pairwise-crossing hyperplanes.  Let $\U' = \insep { \{\hat h_n\} }$ be the inseparable closure of $\{\hat h_n\}$. Since $\U$ is 
inseparable, we have $\U'\subset\U$ by the definition of the inseparable closure.   Note that $\U'$ is a UBS (compare the proof of~\cite[Lemma 3.7]{Hagen} or~\cite[Caprace Lemma B.6]{CFI}).  Since $\U'\subset\U$ and $\U$ is minimal, we have $\U'\sim\U$, which 
proves the first assertion of the lemma.

Now we prove the second assertion.  For each $m\geq 0$, let $C_m=\{h_n:n\geq m\}$ and let $\insep{C_m}$ be its inseparable closure.  As before, each $\insep{C_m}$ is a UBS.  Note that $\insep{C_m} \subset 
\insep{C_{m'}}$ whenever $m\geq m'$.  In particular, $\insep{ C_m} \preceq \insep {C_{m'}}$ for $m' \geq m$.  So, by Lemma~\ref{Lem:FinManyMinimals}, there exists $M\geq 0$ such that $\insep{ C_m} \sim \insep{C_M}$ for all 
$m\geq M$.  Hence, if $m\geq M$ and $\{k_i\}$ is any descending chain of half-spaces corresponding to hyperplanes in $\insep{C_M}$, then there exists $I\geq 0$ such that $\{\hat k_i : i\geq I\}\subset 
\insep{ C_m} $.  This implies that $\insep{ C_M}$ is commensurate with $\insep{\{\hat k_i :i \geq 0\}}$, by, for example~\cite[Lemma 4.4]{Fioravanti}. See also the proof of~\cite[Proposition 4.7.(2)]{Fioravanti}.    

Now let $\V\subset \insep{ C_M} $ be a minimal UBS, which exists by~\cite[Lemma 3.7]{Hagen}.  By the first part of the lemma, $\V$ is commensurate with the inseparable closure of some chain contained in 
$\insep{ C_M}$.  By the preceding paragraph, $\insep{ C_M} \sim\V$, so $\insep{C_M}$ is minimal, as required.
\end{proof}

\begin{remark}
Lemma~\ref{MinimalChar} has been generalized to the setting of median spaces by Fioravanti~\cite[Proposition 4.7]{Fioravanti}. In fact, the same result of Fioravanti also generalizes 
many other basic facts about UBSes, such as~\cite[Lemma 3.7 and Theorem 3.10]{Hagen}.  

The second assertion of Lemma~\ref{MinimalChar} corrects a slight misstatement in~\cite[Caprace Lemma B.6]{CFI}: it is 
true that minimal UBSes are commensurate with inseparable closures of chains, but inseparable closures of chains need not be minimal; one might first need to pass to a (cofinite) sub-chain (see 
e.g.~\cite[Figure 1]{Fioravanti} and the paragraph following).  We emphasize that this does not affect the use 
of~\cite[Caprace Lemma B.6]{CFI} in~\cite{CFI}, because what's being used is the (correct) statement that, up to 
commensurability, minimal UBSes arise 
as inseparable closures of chains.

In this paper, we primarily use the first assertion of Lemma~\ref{MinimalChar}, in conjunction 
with Lemma~\ref{Lem:FinManyMinimals}, to decompose a UBS $\U$ as a disjoint union of minimal UBSes (up to commensurability), 
each of which has the additional property that it is the 
inseparable closure of a chain.
\end{remark}

The following consequence of Lemma~\ref{MinimalChar} neatly sums up the special properties of chains whose inseparable 
closures are minimal UBSes:

\begin{cor}\label{Cor:MinimalInseparable}
Let $\{h_n:n\geq 0\}$ be an infinite descending chain of half-spaces.  For each $m\geq 0$, let $\U_m$ be the inseparable closure of $\{\hat h_n:n\geq m\}$.  Then the following are equivalent:
\begin{enumerate}[$(1)$ ]
 \item The UBS $\U_m$ is minimal for all $m\geq 0$.
 \item $\U_m\sim\U_0$ for all $m\geq 0$.
\end{enumerate}
In particular, if $\U_0$ is minimal, then $\U_0\sim\U_m$ for all $m\geq 0$.
\end{cor}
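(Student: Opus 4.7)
The plan is to reduce both implications to the second part of Lemma~\ref{MinimalChar} by exploiting a simple monotonicity observation. Since $\{\hat h_n:n\geq m\}\subset\{\hat h_n:n\geq m'\}$ whenever $m\geq m'$, and taking inseparable closures is monotonic with respect to inclusion, we have $\U_m\subset\U_{m'}$ for $m\geq m'$, and in particular $\U_m\preceq\U_0$ for every $m\geq 0$. Each $\U_m$ is a UBS because it is the inseparable closure of an infinite descending chain (as noted in the proof of Lemma~\ref{MinimalChar}).

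For (1)$\Rightarrow$(2), I would assume every $\U_m$ is minimal; in particular $\U_0$ is. Since $\U_m$ is a UBS with $\U_m\preceq\U_0$, the definition of minimality (Definition~\ref{Def:PreceqMinimal}) forces $\U_m\sim\U_0$, yielding (2).

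For (2)$\Rightarrow$(1), I would invoke the second part of Lemma~\ref{MinimalChar}, which supplies an index $N\geq 0$ such that $\U_N$ is minimal. The hypothesis (2) gives $\U_m\sim\U_0\sim\U_N$ for every $m\geq 0$. Since minimality is by definition a property of the commensurability class, each $\U_m$ is minimal.

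The \emph{in particular} clause is obtained by applying the argument of (1)$\Rightarrow$(2) using only the hypothesis that $\U_0$ is minimal: for each $m$, $\U_m$ is a UBS with $\U_m\preceq\U_0$, so minimality of $\U_0$ gives $\U_m\sim\U_0$. There is no genuine obstacle in this corollary; the real content is already encoded in Lemma~\ref{MinimalChar} and in the fact that minimality is a property of the $\sim$--class, and the monotonicity observation is what bridges the two.
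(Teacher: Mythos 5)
Your proof is correct and follows essentially the same route as the paper's: the monotonicity $\U_m\preceq\U_0$ gives (1)$\Rightarrow$(2) immediately from the definition of minimality, and the second assertion of Lemma~\ref{MinimalChar} supplies a minimal $\U_N$ so that (2) propagates minimality to every $\U_m$ via commensurability. The handling of the ``in particular'' clause, using only minimality of $\U_0$, is also the intended reading.
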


\begin{proof}
Since $\U_m\preceq\U_0$ for all $m$, the first statement immediately implies the second.  For the reverse direction, Lemma~\ref{MinimalChar} provides $M\geq 0$ such that $\U_m$ is minimal for $m\geq 
M$.  If the second assertion holds, it then follows that the first must also.  
\end{proof}

\begin{definition}[Chain of hyperplanes]\label{Def:chain_of_hyperplanes}
Following Corollary~\ref{Cor:MinimalInseparable},  for the rest of this subsection we shall refer to a sequence of disjoint hyperplanes $\{ \hat h_n \}$ whose inseparable closure is a minimal UBS
 as a \emph{chain of hyperplanes}.
\end{definition}
 
 In particular, a chain of hyperplanes $\{ \hat h_n \}_{n\geq 0}$ defines a descending chain of half-spaces $ \{ h_n \}_{n\geq 0}$.  A descending chain of half-spaces $\{ h_n \}_{n\geq 0}$ has the 
property that for all sufficiently large $N$, the sequence $\{ \hat h_n \}_{n\geq N}$  is a chain of hyperplanes, in view of Lemma~\ref{MinimalChar}.

\begin{definition}[Dominant]\label{Def:dominant_minimal}
Let $\U$ be a UBS. A hyperplane $\hat h \in \U$ is called \emph{dominant for $\U$} if $\hat h = \hat h_0$ is the base of a chain of hyperplanes $\{ h_n \} \subset \U$ such that $\hat h$ 
crosses all but finitely many hyperplanes in $\U \setminus \insep{ \{ h_n \}   }$. 
\end{definition}

 Recall that by definition of a \emph{chain of hyperplanes}, $\insep{ \{h_n \}}$ is a minimal UBS. See Figure~\ref{fig:dominant} for an example. 
By Lemma~\ref{Lem:FinManyMinimals}, the inseparable closure $\insep{ \{ h_n \}   }$  is commensurate to some minimal UBS $\U_i$ in the decomposition $\U \sim \bigsqcup_{i=1}^k \U_i$ of that lemma. Furthermore, $\U_i$ 
is unique: since a dominant hyperplane $\hat h \in \U$ is the base of a chain in $\U_i$, then $\hat h$ must intersect all but finitely many hyperplanes of $\U_j$ (for $j \neq i$), hence $\hat h$ 
cannot be disjoint from a chain in $\U_j$.

We also observe that if $\U$ is minimal and pruned, then every hyperplane of $\U$ is vacuously dominant.

\begin{figure}[h]
 \includegraphics[width=0.6\textwidth]{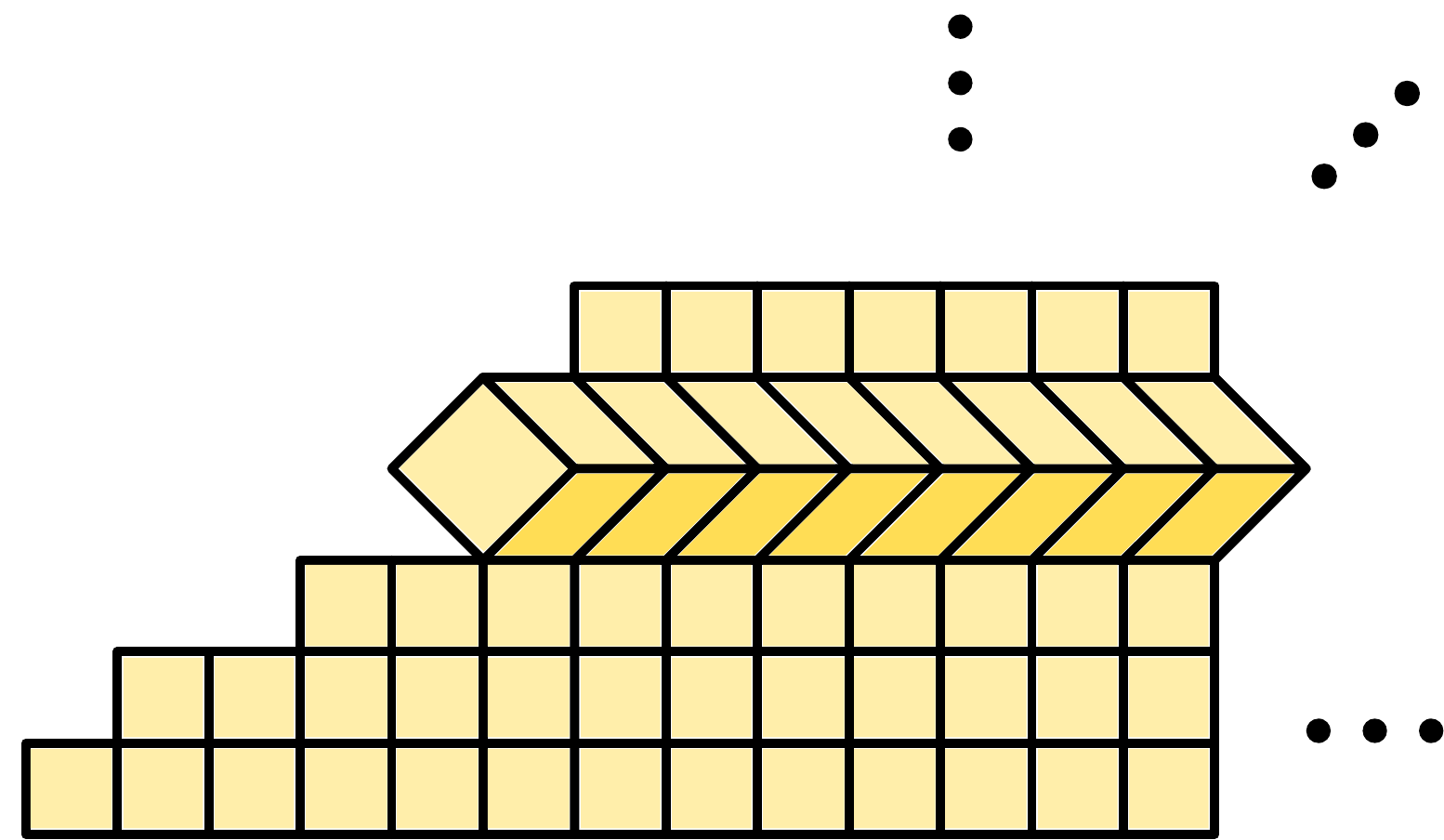}
 \caption{Part of a CAT(0) cube complex for which the set of hyperplanes is a UBS.  Each horizontal hyperplane is the base of a chain of horizontal hyperplanes whose inseparable closure consists of 
all but finitely many of the horizontal hyperplanes.  Each horizontal hyperplane crosses all but finitely many vertical ones.  So each horizontal hyperplane is dominant. The vertical hyperplanes are 
not dominant.  Indeed, if $v$ is vertical, it is the base of a chain of vertical hyperplanes whose inseparable closure consists of vertical hyperplanes, while $v$ fails to cross infinitely many 
horizontal hyperplanes.  On the other hand, $v$ is also the base of a sequence of hyperplanes consisting of $v$ and infinitely many horizontal hyperplanes, whose canonical half-spaces form a descending 
chain.  The inseparable closure of this set of hyperplanes contains all but finitely many of the hyperplanes, and therefore is not a minimal UBS, so this set is not a chain of hyperplanes.}
 \label{fig:dominant}
\end{figure}

The following lemma is only stated in the special case where $\W(X)$ is already a UBS. In practice, we will apply Lemma~\ref{Lem:DominantChain} to convex subcomplexes (such as the convex hull of a geodesic ray) where this hypothesis holds.

\begin{lemma}[Dominant hyperplane combinatorial properties]\label{Lem:DominantChain}
 Suppose that $\U = \W(X)$
  is a pruned UBS of dimension $k$.  Then $\U$ contains minimal UBSes $\mathcal D_d ,\dots, \mathcal D_k$ such that the following hold: 
 \begin{enumerate}[$(1)$ ]
  \item \label{item:dominant_hyperplanes}Each $\mathcal D_i$ is a set of dominant hyperplanes for $\U$, and all but finitely many dominant hyperplanes belong to $\bigcup_{i = d}^k \mathcal D_i$.
  \item \label{item:dominant_chain}For each $i\in \{d, \dots, k\}$ and each $\hat h_0 \in\mathcal D_i$, there is a chain $\{ \hat h_n 
\}_{n\ge0} \subset\mathcal D_i$ such that $\mathcal D_i$ is commensurate with $\insep{ \{ \hat h_n \}  }$,  the 
inseparable closure of the chain.
\item \label{item:dominant_sinks} For any such chain $\{ \hat h_n \}$, there is $N > 0$, such that for all $n \geq N$, we have 
$$\W(\hat h_{n+1})\setminus\mathcal D_i\subset\W(\hat h_n)\setminus\mathcal D_i.$$ Furthermore, for all $n 
\geq 0$, we have 
$$|\W(\hat h_n)-\W(\hat h_{n+1})|<\infty.$$
\item \label{item:dominants_cross} For every hyperplane $\hat u\in\U$, there
exists $i\in\{d,\ldots,k\}$ and a dominant hyperplane $\hat h\in\mathcal D_i$ 
such that $\hat h$ lies in the canonical 
half-space 
associated to $\hat u$. 
 \end{enumerate}
\end{lemma}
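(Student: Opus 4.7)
\bigskip
\noindent\emph{Proof plan.}
My strategy is to identify each $\mathcal D_i$ with a minimal UBS from the decomposition of $\U = \W(X)$ supplied by Lemma~\ref{Lem:FinManyMinimals}, then verify the four properties using chain manipulations together with unidirectionality of $\U$. First I apply Lemma~\ref{Lem:FinManyMinimals} to write $\U \sim \bigsqcup_{i=1}^{k} \U_i$, ordered so that $j<i$ implies every element of $\U_i$ crosses cofinitely many elements of $\U_j$. For each $i$, I use Lemma~\ref{MinimalChar} to pick a chain $\{\hat h^{(i)}_n\}_{n\geq 0}\subset \U_i$; after passing to a tail, Corollary~\ref{Cor:MinimalInseparable} lets me arrange that every tail's inseparable closure is commensurate with $\U_i$. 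Call an index $i$ \emph{dominant} if $\hat h^{(i)}_0$ is dominant with witness $\{\hat h^{(i)}_n\}$, equivalently if $\hat h^{(i)}_0$ crosses cofinitely many elements of $\U_j$ for each $j > i$ (the $j<i$ case being automatic). I choose the ordering so that dominant indices form a contiguous top block $\{d, d+1, \ldots, k\}$; this is possible because whenever $a<b$ with $a$ dominant and $b$ not, the pair $(\U_a,\U_b)$ must mutually cross cofinitely, permitting a swap. For each such $i$, set $\mathcal D_i$ to be the set of hyperplanes in $\U_i$ that are dominant for $\U$.

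The core claim is that $\U_i \setminus \mathcal D_i$ is finite for each $i \in \{d,\ldots, k\}$. Let $\hat h'\in\U_i$ be contained in the half-space $h^{(i)}_N$ for some $N$; since $\U_i$ is commensurate with $\insep{\{\hat h^{(i)}_n\}}$, all but finitely many elements of $\U_i$ satisfy this. Choose $N'$ so that $\hat h^{(i)}_{N'}$ sits inside the chain-tail half-space $h'$ of $\hat h'$; then $\hat h', \hat h^{(i)}_{N'}, \hat h^{(i)}_{N'+1},\ldots$ is a descending chain in $\U_i$ witnessing dominance of $\hat h'$. Indeed, any $\hat k\in\U\setminus\U_i$ failing to cross $\hat h'$ lies entirely on one of its two sides; by unidirectionality of $\U$ at most one side contains infinitely many elements of $\U$, and any $\hat k$ on the other side contributes only to a finite set. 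Any $\hat k$ on the chain-tail side and entirely inside $h^{(i)}_0$ fails to cross $\hat h^{(i)}_0$ and hence belongs to the finite exception set witnessing dominance of $\hat h^{(i)}_0$. The remaining possibility --- $\hat k$ on the chain-tail side of $\hat h'$ but straddling or lying outside $\hat h^{(i)}_0$ --- is controlled by unidirectionality of $\U$ applied to $\hat h^{(i)}_0$: only finitely many hyperplanes of $\U$ lie outside $h^{(i)}_0$. Hence $\hat h'$ is dominant, and $\U_i\setminus\mathcal D_i$ is finite. A parallel argument shows that for non-dominant $i$, only finitely many hyperplanes in $\U_i$ are dominant, which establishes property (1).

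Property (2) then follows from the core claim together with Corollary~\ref{Cor:MinimalInseparable}: given $\hat h_0 \in \mathcal D_i$, extract a subchain of $\{\hat h^{(i)}_n\}$ lying in the chain-tail half-space of $\hat h_0$, prepend $\hat h_0$, and apply the corollary to conclude that the resulting chain's inseparable closure is commensurate with $\U_i$, hence with $\mathcal D_i$. Property (4) follows because the pruned hypothesis forces the canonical half-space of any $\hat u\in\U$ to contain infinitely many hyperplanes of $\U$, hence by property (1) at least one element of some $\mathcal D_i$. For property (3), a hyperplane $\hat k\notin\mathcal D_i$ crossing $\hat h_{n+1}$ but not $\hat h_n$ must lie entirely in $h_n$ while straddling $\hat h_{n+1}$, confining $\hat k$ to the annular region between the two chain hyperplanes; for $n$ large, the same unidirectionality plus pruned argument forces such $\hat k$ to lie in $\mathcal D_i$, a contradiction. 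The finite-difference estimate $|\W(\hat h_n) \setminus \W(\hat h_{n+1})|<\infty$ comes from the same annular confinement combined with finite dimensionality of $X$.

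The main obstacle is the core claim. Transferring dominance from $\hat h^{(i)}_0$ to an arbitrary deep $\hat h'\in\U_i$ requires careful bookkeeping of orientations in a pruned UBS and uniform control of the various finite exception sets --- in particular handling the case where the chain-tail side of $\hat h'$ extends outside $h^{(i)}_0$, which forces one to invoke unidirectionality of $\U$ at both $\hat h'$ and $\hat h^{(i)}_0$ simultaneously. This annular-region analysis also powers property (3), so the entire argument hinges on carrying it out cleanly.
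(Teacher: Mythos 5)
Your overall strategy---decompose $\U$ via Lemma~\ref{Lem:FinManyMinimals}, realize each $\U_i$ as the inseparable closure of a chain via Lemma~\ref{MinimalChar}, isolate a top block of indices $\{d,\dots,k\}$, and transfer dominance from the chain base to deep elements using unidirectionality---is the same as the paper's, and your ``core claim'' is essentially the paper's argument for conclusions (1) and (2). One technical caveat: you define $\mathcal D_i$ to be the set of \emph{all} dominant hyperplanes in $\U_i$. A cofinite subset of an inseparable set need not be inseparable, so this $\mathcal D_i$ need not be a UBS, let alone a minimal one, as the statement requires. The paper instead sets $\mathcal D_i=\insep{\{\hat h_n\}}$ for a chain in $\U_i$, which is a minimal UBS by construction, and then proves each of its elements is dominant.

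The argument for (4) is a non sequitur. Prunedness gives that the canonical half-space $u$ of $\hat u$ contains infinitely many hyperplanes of $\U$, but conclusion (1) does not force any of them to be dominant: the non-dominant hyperplanes (those in $\U_j$ for $j<d$) can themselves form an infinite set lying inside $u$ (e.g.\ the vertical hyperplanes in Figure~\ref{fig:dominant}, for $\hat u$ vertical). The paper has to work harder here: it takes the \emph{largest} index $i$ such that $u$ contains a chain whose inseparable closure is commensurate with $\U_i$, and uses that maximality to show every element of that closure crosses all but finitely many hyperplanes of every other $\U_j$, forcing $i\ge d$ and hence putting dominant hyperplanes inside $u$.

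The contradiction claimed for (3) is also misidentified. A single $\hat k\notin\mathcal D_i$ crossing $\hat h_{n+1}$ but not $\hat h_n$ is not ``forced to lie in $\mathcal D_i$'': such a hyperplane lies in $h_n$, and (by the absence of facing triples) crosses $\hat h_m$ for all $m\ge n+1$, so it separates no two chain elements and is perfectly consistent. The actual contradiction comes from the \emph{infinitude} of violations: if the nesting fails for infinitely many $n$, one obtains infinitely many distinct hyperplanes $\hat u_n\notin\mathcal D_i$, all contained in $h_0$ and hence failing to cross $\hat h_0$, contradicting dominance of $\hat h_0$. Likewise, $|\W(\hat h_n)\setminus\W(\hat h_{n+1})|<\infty$ does not follow from ``annular confinement plus finite dimensionality'' (a hyperplane crossing $\hat h_n$ but not $\hat h_{n+1}$ is not confined between them); it follows from the fact that each $\hat h_n$ crosses only finitely many elements of $\mathcal D_i$ (because $\mathcal D_i$ is minimal and the inseparable closure of a chain) combined with dominance, which bounds $|\W(\hat h_n)\triangle\W(\hat h_{n+1})|$.
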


We will not use item \eqref{item:dominant_sinks} later in the paper, but we record it as a potentially useful fact about UBSes.

\begin{proof}[Proof of Lemma~\ref{Lem:DominantChain}]
By Lemma~\ref{Lem:FinManyMinimals}, there is a decomposition $\U \sim \bigsqcup_{i=1}^k\U_i$, with an ordering such that for $j < i$, every hyperplane in $\U_i$ crosses all but finitely many of the hyperplanes in $\U_j$. The ordering is only partially determined: if every hyperplane in $\U_i$ crosses all but finitely many hyperplanes in $\U_{i+1}$, then we can 
reverse the order of $\U_i,\U_{i+1}$ while keeping the conclusion of Lemma~\ref{Lem:FinManyMinimals}.\ref{Itm:CrossAllBelow}.  Let $d$ be the smallest index $i$ such that $\U_i$ can be placed at the top of the order in such a decomposition.

Now, fix an index $i \geq d$. 
By Lemma~\ref{MinimalChar}, $\U_i$ is commensurate with $\mathcal D_i = \insep{\{\hat h_n\}}$, where 
$\{\hat h_n\} \subset \U_i$ is a chain. Thus $\mathcal D_i \sim \U_i$ is a minimal UBS.

We claim that every $\hat h \in \mathcal D_i$ is dominant. 
 Given $\hat h\in\mathcal D_i$ and the canonical associated half-space $h$ (recall Definition~\ref{Def:Pruned}), there exists $N\geq 0$ such that $\hat h_N \subset h$. 
 Thus $\hat h$ is the base of the infinite chain $\{ \hat h,\hat h_N,\hat h_{N+1},\cdots\}$. Furthermore, $\hat h \in \mathcal D_i \subset \U_i$ intersects all but finitely many hyperplanes in $\U_j$ 
for $j \neq i$, because we have chosen an index $i \geq d$. Thus $\hat h$ is dominant by Definition~\ref{Def:dominant_minimal}.
 Thus we have found dominant minimal UBSes $\mathcal D_d,\ldots,\mathcal D_k$ satisfying conclusions~\ref{item:dominant_hyperplanes} and 
\ref{item:dominant_chain}.

Now we verify assertion \ref{item:dominant_sinks}. Fix $i \geq d$ and let $\hat h,\hat h'\in\mathcal D_i$.  Since the 
minimal UBS $\mathcal 
D_i$ is the inseparable closure of a chain, $\hat h,\hat h'$ both cross finitely many hyperplanes in $\mathcal 
D_i$.  Indeed, if $\mathcal W(\hat h)\cap\mathcal D_i$ is infinite, it must contain a chain of hyperplanes whose inseparable closure $\mathcal D'$ belongs to $\mathcal D_i$, and is contained in 
$\mathcal W(\hat h)$ since the latter is inseparable.  Since $\mathcal D_i$ is minimal, $\mathcal D'\sim\mathcal D_i$.  Hence $\mathcal W(\hat h)$ contains all but finitely many elements of $\mathcal 
D_i$;  since $\hat h\in\mathcal D_i$, this contradicts that $\mathcal D_i$ is the inseparable closure of a chain.  Thus $\mathcal W(\hat h)\cap\mathcal D_i$ is finite.  Moreover, since $\mathcal D_i$ 
is dominant, $\hat h,\hat h'$ both cross all but finitely many hyperplanes of $\U \setminus \mathcal D_i$, and we can therefore conclude that 
$$|\W(\hat h)\triangle\W(\hat 
h')|<\infty.$$

So,  it suffices to show that, for all sufficiently large $n$,  we have $\W(\hat 
h_{n+1})\setminus\mathcal D_i\subset\W(\hat 
h_n)\setminus\mathcal D_i$.
If this doesn't hold, then  for infinitely many $n$ we can find a hyperplane $\hat u_n\not\in\mathcal D_i$ that crosses 
$\hat h_{n+1}$ but not 
$\hat h_n$; note that $\hat u_n$ does not separate any two elements of our chain since $\hat u_n\not\in\mathcal 
D_i$.  Since there are no facing 
triples, $\hat u_n$ 
crosses $\hat h_m$ for 
$m\geq n+1$. 
   Since the collection of violating 
hyperplanes $\{ \hat u_n\}$ is infinite and contains no hyperplanes crossing $\hat h_0$, dominance of $\hat h_0$ is 
therefore 
contradicted.  Thus there exists $N$ such that $\W(\hat h_{n+1})\setminus\mathcal D_i\subset\W(\hat 
h_n)\setminus\mathcal D_i$ for $n\geq N$, establishing \ref{item:dominant_sinks}.

We now prove assertion~\ref{item:dominants_cross}.  Let $\hat u\in\U$ be a hyperplane, and let $u$ be the  canonical 
half-space associated to $\hat u$. Given the fixed decomposition $\U \sim \bigsqcup_{i=1}^k\U_i$, let $i$ be the 
largest index such that
$u$ contains a chain of hyperplanes in $\U$, whose inseparable closure $\U_i'$ is commensurate to some $\U_i$. Such an $i$ exists because $\U$ is pruned. We claim that $i \geq d$. This claim implies 
that $u$ contains infinitely many hyperplanes of $\mathcal D_i$, which will suffice to prove the lemma.

Suppose for a contradiction that $i < d$. Fix $\hat v \in \U_i'$, and consider a minimal UBS $\U_j$ for $j > i$. If any hyperplane in $\U_j$ fails to cross $\hat v$, then it fails to cross $\hat u$, 
hence $u$ contains a chain in $\U_j$, contradicting the maximality of $i$. See Figure~\ref{fig:maximal_i}.

\begin{figure}[h]
\begin{overpic}[width=0.3\textwidth]{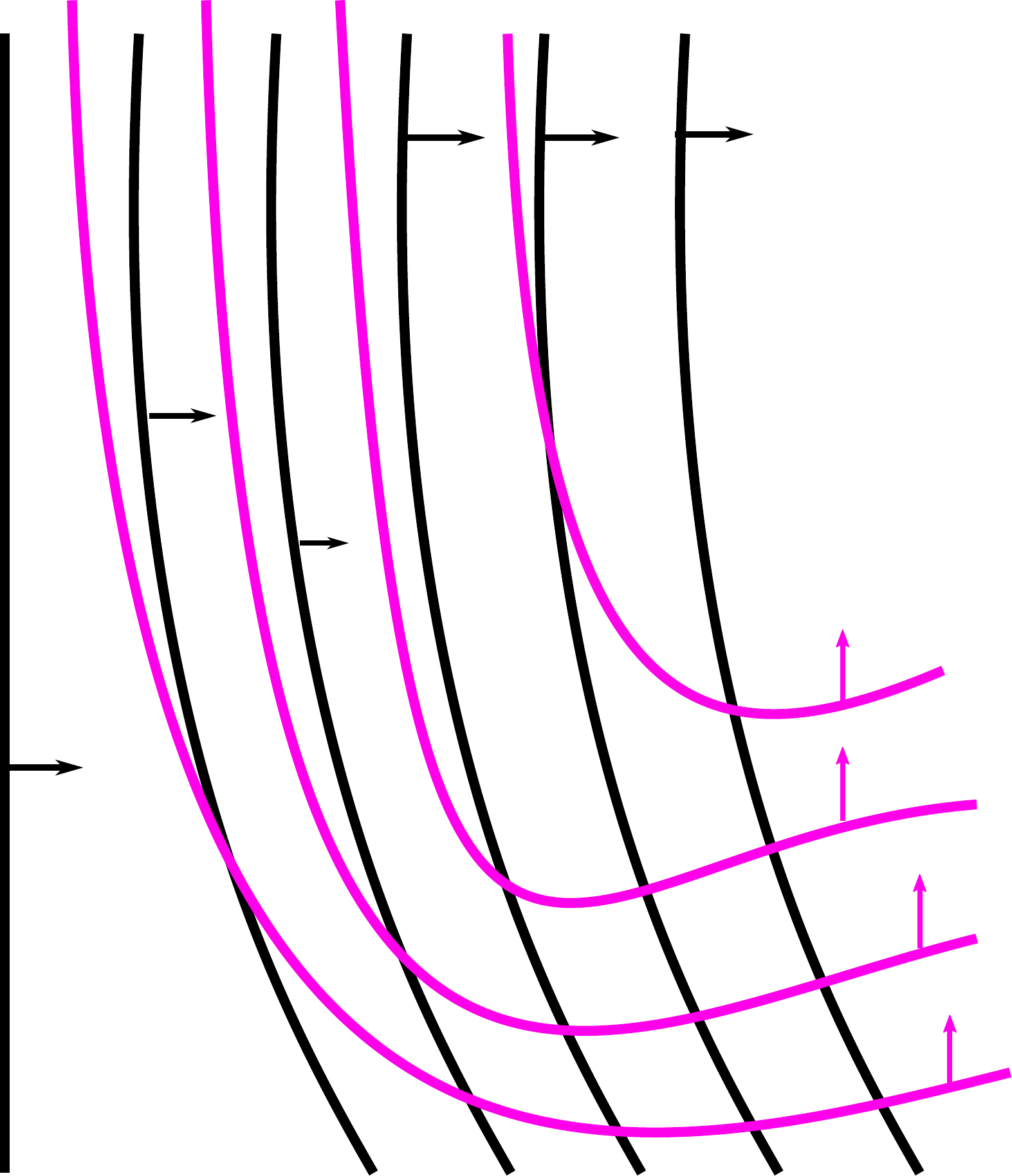}
\put(3,10){$\hat u$}
\put(50,-7){$\U_i'$}
\put(85,25){$\U_j$}
\end{overpic}

 \caption{The sets $\U'_i,\U_j$ in the proof of Lemma~\ref{Lem:DominantChain}.\ref{item:dominants_cross}.}\label{fig:maximal_i}
\end{figure}

Furthermore, since $\hat v \in \U_i$, it must cross all but finitely many hyperplanes of $\U_\ell$ for $\ell < i$ by 
Lemma~\ref{Lem:FinManyMinimals}.\ref{Itm:CrossAllBelow}. Thus $\hat v$ crosses all but finitely many hyperplanes of $\U \setminus \U_i'$, hence $\hat v$ is dominant. Since $\hat v \in \U_i'$ was 
arbitrary, this shows that $i \geq d$ by the definition of $d$.
\end{proof}

\subsection{Connections between $\simp X$ and $\roller X$}\label{Sec:RollerSimpConnections}
The next several lemmas develop tools for comparing the Roller boundary and the simplicial boundary.

\begin{definition}[Umbra of a UBS]\label{Def:Umbra}
Let $\U \subset \W(X)$ be a  UBS, and let $\U' \sim \U$ be a pruned UBS commensurate to $\U$. For every $\hat h \in \U'$, let 
$h \in \frakH(\~X)$ be the canonical orientation of $\hat h$,  that is,  the (extended, vertex) half-space  containing 
infinitely many hyperplanes 
of $\U'$. 
The \emph{umbra} $\~Y_\U$ associated to $\U$ is 
$$
\~Y_\U = \bigcap_{\hat  h \in  \U'}{} h \subset \~X.
$$
By Definition~\ref{Def:Convex}, the umbra $\~Y_\U$ is a convex subset of $\roller X$, one may think of it as the Roller realization of the UBS $\U$. In Lemma~\ref{Lem:UmbraClass}, we will show that  $\~Y_\U$ satisfies the equivalent conditions of Lemma~\ref{ClassSetNonterminating}. Then, $Y_\U$ will denote the principal class of $\~Y_\U$: that is, the unique Roller class whose closure is $\~Y_\U$.
\end{definition}

\begin{lemma}\label{UBScomplex}
Let $\U \subset \W(X)$ be a UBS. Then the umbra $\~Y_{\U}$ associated to $\U$ is a nonempty subset of $\roller X$.
Furthermore, 
 \begin{itemize}
\item If $\U \sim\V$ then $\~Y_\U = \~Y_\V$. 
\item If $\U  \sim \U_1\cup \cdots \cup \U_N$ for some UBSes $\U_1, \dots,\U_N$ then $$\~Y_\U = \bigcap_{i=1}^{N}\~Y_{\U_i}.$$
\end{itemize}
\end{lemma}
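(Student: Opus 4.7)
My plan is to verify the three assertions in order: nonemptiness, containment in $\roller X$, and commensurability invariance (from which the union formula follows).

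\medskip

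\emph{Setup, nonemptiness, and $\~Y_\U \subset \roller X$.} Let $\U'$ be a pruned sub-UBS commensurate with $\U$ supplied by Lemma~\ref{Lemma:PrunedUBS}, and set $\mathfrak s = \{h : \hat h \in \U',\ h\text{ canonical}\}$. I first verify that $\mathfrak s$ is a consistent set of half-spaces: uniqueness of canonical orientations in a pruned UBS yields $\mathfrak s \cap \mathfrak s^* = \emptyset$, and if $h \in \mathfrak s$ is contained in a half-space $k$ associated to some hyperplane of $\U'$, then $k$ inherits the infinitely many elements of $\U'$ lying in $h$, forcing $k$ to be canonical. Proposition~\ref{LiftingDecomp}.\ref{ConsistentEmbedding} then identifies $\~Y_\U = \bigcap_{h \in \mathfrak s} h$ with the image of the nonempty embedded cube complex $\~X(\frakH_{\mathfrak s})$. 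If a vertex $x \in X^{(0)}$ lay in $\~Y_\U$, then Lemma~\ref{MinimalChar} would supply an infinite strictly descending chain of canonical half-spaces in $\U'$ all containing $x$, contradicting the descending chain condition for $\frakH_x^+$.

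\medskip

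\emph{Commensurability invariance.} It suffices to prove that two pruned UBSes $\U_1' \sim \U_2'$ determine equal umbrae; passing to $\U_1' \cap \U_2'$ and arguing symmetrically, I reduce to $\U_2' \subsetneq \U_1'$ with finite complement $F = \U_1' \setminus \U_2'$. The inclusion $\~Y_{\U_2'} \supseteq \~Y_{\U_1'}$ is automatic, and for each $\hat h \in F$ with canonical side $h$ in $\U_1'$ I must show $\~Y_{\U_2'} \subseteq h$. The strategy is to produce a descending chain of canonical half-spaces of $\U_2'$ sitting inside $h$. By Lemma~\ref{Lem:FinManyMinimals}, $\hat h$ belongs to some minimal component $\mathcal D_i$ of $\U_1'$; applying Lemma~\ref{MinimalChar} to $\mathcal D_i$ yields a chain of hyperplanes $\hat g_0, \hat g_1, \ldots \in \mathcal D_i$ whose canonical half-spaces form a strictly descending sequence $g_0 \supsetneq g_1 \supsetneq \cdots$. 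Inseparability of $\U_2'$ combined with the hypothesis $\hat h \notin \U_2'$ forces $\U_2' \cap h^* = \emptyset$ (since a hyperplane of $\U_2'$ entirely in $h^*$ together with one entirely in $h$ would be separated by $\hat h$, compelling $\hat h \in \U_2'$); consequently the chain cannot accumulate hyperplanes into $h^*$, and a parallel-hyperplane analysis shows $g_n \subseteq h$ for all sufficiently large $n$. Finiteness of $F$ ensures $\hat g_n \in \U_2'$ for all large $n$, and because the ``infinitely many'' count changes by at most $|F|$, the canonical orientations of the $\hat g_n$ are preserved when passing from $\U_1'$ to $\U_2'$. Therefore $\~Y_{\U_2'} \subseteq \bigcap_n g_n \subseteq h$, as required.

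\medskip

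\emph{Union formula and main obstacle.} For $\U \sim \U_1 \cup \cdots \cup \U_N$, commensurability invariance lets me pass to pruned representatives. The inclusion $\~Y_\U \subseteq \bigcap_j \~Y_{\U_j}$ is immediate, since each $\U_j \preceq \U$ supplies only a subset of the canonical constraints. For the reverse inclusion, Lemma~\ref{Lem:FinManyMinimals}.\ref{Itm:DisjointUniion} expresses each $\U_j$ as a disjoint union of certain minimal components of $\U$, and collectively these subfamilies exhaust every minimal component of $\U$; hence any canonical constraint of $\U$ either appears as a canonical constraint of some $\U_j$ or belongs to a finite residual set, which is absorbed by the commensurability argument above. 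The main technical obstacle is the parallel-hyperplane step inside the commensurability proof: one must rule out the ``converging chain'' configuration in which the canonical direction of the chain points back toward $\hat h$, and the crucial input is the inseparability consequence $\U_2' \cap h^* = \emptyset$, which prevents the chain tail from escaping outside $h$.
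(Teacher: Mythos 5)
Your nonemptiness argument has a genuine gap: the set $\mathfrak s$ of canonical half-spaces of the pruned UBS $\U'$ is not, in general, a consistent set of half-spaces in the sense required by Proposition~\ref{LiftingDecomp}. Consistency demands closure under passing to \emph{arbitrary} larger half-spaces $k \in \frakH(X)$, whereas you only verify closure for $k$ dual to hyperplanes of $\U'$. The full condition genuinely fails: take $X$ a tree consisting of a ray $v_0, v_1, \dots$ with a leaf $w_n$ attached to each $v_n$, and let $\U = \U'$ be the (minimal, pruned) UBS of hyperplanes dual to the ray edges; the canonical half-space $h_0$ is contained in the half-space $k = X^{(0)} \setminus \{w_0\}$, yet $k \notin \mathfrak s$. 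So Proposition~\ref{LiftingDecomp}.\ref{ConsistentEmbedding} cannot be invoked as stated, and nonemptiness of $\~Y_\U$ is not established. The missing content is precisely the heart of the matter: one must show that any two canonical half-spaces intersect (if $h \cap k = \emptyset$ for $h, k \in \mathfrak s$, then both sides of $\hat h$ would contain infinitely many elements of $\U'$, contradicting unidirectionality), and then conclude that $\bigcap_{h\in\mathfrak s} h \neq \emptyset$ from the Helly property for half-spaces together with compactness of $\~X$; this is the route the paper takes. (Equivalently, one could pass to the upward closure of $\mathfrak s$, but verifying its consistency reduces to the same pairwise-intersection claim.) Your argument that $\~Y_\U$ misses $X^{(0)}$ is fine.

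Your commensurability step is completable but the crux is under-justified and mis-attributed. The consequence $\U_2' \cap \hull(h^*) = \emptyset$ only yields, for large $n$, the dichotomy ``$g_n \subset h$ or $g_n \supset h^*$'': the hyperplane $\hat g_n$ can lie in $\hull(h)$ while its canonical half-space still swallows $h^*$. Ruling out the second branch needs a further argument, e.g.\ that $h^* \subset g_n$ for all large $n$ would place any vertex of $h^*$ in every $g_n$, violating the descending chain condition for vertices. The paper sidesteps all of this by building the descending chain of canonical half-spaces starting at $h$ itself (possible because $\U'$ is pruned and unidirectional), so the chain automatically stays inside $h$ and its tail lies in the other set of canonical half-spaces by commensurability; you may want to adopt that shortcut rather than routing through the minimal component $\mathcal D_i$.
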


\begin{proof}
Let $\U'$ be the pruned UBS commensurate to $\U$. As in Definition~\ref{Def:Umbra}, let $h$ be the canonical orientation of every $\hat h \in \U'$, and define  $\mathfrak U' = \{h : \hat h \in \U' \} \subset \frakH$. Then
 $\~Y_\U:= \bigcap_{h\in \mathfrak U'}{}h$. 
  
 Since $\~X$ is compact, to show that $\~Y_\U$ is not empty, it is sufficient to show that $\mathfrak U'$ has the  finite 
intersection property. By the Helly property for half-spaces~\cite[Section 2.2]{Roller} this is equivalent 
to showing that every pair of half-spaces $h,k \in \mathfrak U'$ has nonempty intersection. But a disjoint pair 
$h, k\in \mathfrak U'$  contradicts the unidirectionality of $\U'$, as both sides of $\hat h$ would contain infinitely many elements  of $\U'$. 

Using the same argument, we observe that for every $h\in \mathfrak U'$ there is an infinite descending chain $\{ h_n \}$ with $h_0 = h$ and $h_n\in \mathfrak U'$. Indeed, fix $h\in \mathfrak U'$. Since $\U'$ is pruned, there are infinitely many hyperplanes in $\U'$ contained in $h$. Since $\U'$ is unidirectional, $\hat k \subset h$ and $\hat k \in \U'$ implies $k \subset h$. Applying induction, we find the infinite descending chain. Thus $\~Y_\U \subset \roller X$.

We now turn to the commensurability class of $\U$. Suppose that $\V$ is a UBS commensurate to $\U$. Since commensurability is an equivalence relation, we conclude that $\U'$ and $\V'$ are commensurate as well. Let $\mathfrak V' = \{h : \hat h \in \V' \} \subset \frakH$ be the set of canonical half-spaces associated to $\V'$.

We claim that $\mathfrak U'$ and $\mathfrak V'$ are also commensurate. Since the 
map taking half-spaces to hyperplanes is 2-to-1, it suffices to show that under 
this map, the set $\mathfrak U'\setminus \mathfrak V'$  has image in the finite 
set $\U'\setminus \V'$, and vice versa. Let $h\in  \mathfrak U'\setminus  
\mathfrak V'$. Then $\hat h\in \U'$. By definition of $\mathfrak U'$, we know 
that the set $\{\hat k \in \U: \hat k \subset h\}$ is infinite. The 
commensurability of $\U$ and $\V$ implies that $\{\hat k \in \V: \hat k \subset 
h\}$ is infinite as well. But $h\notin \mathfrak V'$, hence it must be the case 
that $\hat  h\notin \V'$.

To see that $\~Y_{\U}= \~Y_{\V}$ it is sufficient to  show that if $h\in 
\mathfrak U'$  then there is $k\in \mathfrak V'\cap  \mathfrak U'$ with 
$k\subset h$. But this is immediate. As was proved above, for any $h\in 
\mathfrak U'$ there is an infinite descending chain $\{ h_n \} \subset \mathfrak 
U'$ with $h_0 = h$. Since $\mathfrak U'$ and $\mathfrak V'$ are commensurate, 
for $N$ sufficiently large, we must have that $h_N\in \mathfrak V'$.

Finally, the construction above makes it clear that  if $\U$ is commesurate to $\U_1\cup \cdots \cup \U_N$ for some  UBSes 
$\U_1, \dots,\U_N$ then $\~Y_\U = \bigcap_{i=1}^{N}\~Y_{\U_i}$. 
\end{proof}

\begin{lemma}\label{Lem:UmbraClass}
For every UBS $\U$, there is a Roller class $Z \subset \roller X$ such that $\~Y_{\U} = \~Z$. Furthermore, the assignment $\U \mapsto Z = Y_\U$ is $\Aut(X)$--equivariant.
\end{lemma}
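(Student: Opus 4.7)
The plan is to apply Lemma~\ref{ClassSetNonterminating} to the set $\~Y_\U$. By Lemma~\ref{UBScomplex}, $\~Y_\U$ is a nonempty subset of $\roller X$, and it is convex because it is an intersection of (extended) half-spaces. It is also closed in $\~X$, since each extended half-space is closed (being the intersection of $\~X$ with a cylinder set in $2^\frakH$) and $\~Y_\U$ is defined as an intersection of such sets. So the main task is to verify one of the equivalent conditions of Lemma~\ref{ClassSetNonterminating}; the principal class it provides will then be the desired $Z = Y_\U$.

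I would verify condition~\ref{union of roller classes} of Lemma~\ref{ClassSetNonterminating}, namely that $\~Y_\U$ is a union of Roller classes. Let $\U' \sim \U$ be the pruned UBS used in Definition~\ref{Def:Umbra}, with canonical half-spaces $\mathfrak U' = \{h : \hat h \in \U'\}$. Fix $y \in \~Y_\U$ and let $y' \in \roller X$ with $y \sim y'$, so that $\frakH(y,y')$ is finite. Suppose for contradiction that $y' \notin \~Y_\U$; then there is some $\hat h \in \U'$ with canonical half-space $h$ such that $y \in h$ but $y' \in h^*$. Because $\U'$ is pruned, the set $\{\hat k \in \U' : k \subsetneq h\}$ is infinite. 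For each such $\hat k$, the nesting $k \subset h$ together with unidirectionality (arguing as in the proof of Lemma~\ref{UBScomplex}) gives $y \in k$, and since $y' \in h^*$ we get $y' \in k^*$, so $\hat k \in \W(y,y')$. This produces infinitely many hyperplanes separating $y$ from $y'$, contradicting $y \sim y'$. Hence $y' \in \~Y_\U$, and $\~Y_\U$ is a union of Roller classes.

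By Lemma~\ref{ClassSetNonterminating}, there is a unique principal Roller class $Z$ with $\~Y_\U = \~Z$, and we define $Y_\U := Z$. For $\Aut(X)$--equivariance, fix $g \in \Aut(X)$. The action of $g$ sends the pruned UBS $\U'$ to the pruned UBS $g\U'$ (with canonical half-space $gh$ associated to $g\hat h$), and $g\U'$ is commensurate to $g\U$. Hence
\[
g\,\~Y_\U \;=\; g\bigcap_{\hat h \in \U'} h \;=\; \bigcap_{\hat h \in \U'} gh \;=\; \bigcap_{\hat k \in g\U'} k \;=\; \~Y_{g\U}.
\]
Taking principal classes on both sides yields $g\, Y_\U = Y_{g\U}$, so the assignment $\U \mapsto Y_\U$ is $\Aut(X)$--equivariant.

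I do not expect a serious obstacle here: once closedness and convexity of $\~Y_\U$ are observed, everything reduces to the unidirectionality/pruning argument showing that commensurate points cannot lie on opposite sides of a canonically oriented hyperplane of $\U'$. The only subtlety worth flagging is that one must use the pruned representative $\U'$ (not $\U$ itself) so that every $\hat h \in \U'$ is the base of an infinite sub-chain inside its canonical half-space, which is what drives the contradiction against $|\frakH(y,y')| < \infty$.
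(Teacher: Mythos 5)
Your proof is correct, but it takes a genuinely different route from the paper's. The paper first invokes Lemmas~\ref{Lem:FinManyMinimals} and~\ref{MinimalChar} to replace $\U$ by a commensurate disjoint union of minimal UBSes, each the inseparable closure of a descending chain, and thereby writes $\~Y_\U$ as an intersection of at most $\dimension X$ descending chains of half-spaces; it then applies condition~\ref{finitely many descending chains} of Lemma~\ref{ClassSetNonterminating}. You instead verify condition~\ref{union of roller classes} directly: commensurate points cannot be separated by any canonically oriented hyperplane of the pruned representative $\U'$, because prunedness and unidirectionality place an infinite descending chain of canonical half-spaces below any such hyperplane, contradicting $|\frakH(y,y')|<\infty$. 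Your argument is more self-contained (it bypasses the minimal-UBS decomposition entirely and is essentially a mirror of the paper's own proof that \ref{finitely many descending chains} implies \ref{union of roller classes}), and your equivariance computation $g\,\~Y_\U=\~Y_{g\U}$ is more explicit than the paper's appeal to independence of choices. What the paper's route buys is the explicit formula \eqref{Eqn:UmbraDecomposition}, which is reused verbatim in Corollary~\ref{CorOnto} and Lemma~\ref{Lem:UBSclassSubset}, so the decomposition is not wasted effort there. One small presentational slip on your side: the fact that $y\in k$ is immediate from $y\in\~Y_\U=\bigcap_{\hat k\in\U'}k$ and needs no argument; what the unidirectionality argument from Lemma~\ref{UBScomplex} actually supplies is that the canonical orientation satisfies $k\subset h$, which is what you need to pass from $y'\in h^*$ to $y'\in k^*$. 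All the required facts are present in your write-up, so this is a matter of attribution rather than a gap.
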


\begin{proof}
By Lemmas~\ref{Lem:FinManyMinimals} and~\ref{MinimalChar}, $\U$ is commensurate to a UBS $\V=\bigsqcup_{i=1}^k\U_i$, where $\U_i = \insep{\{\hat h_n^i\}}$ for a descending chain of half-spaces $\{ h_n^i \}$. Note that every $\U_i$ is pruned. Thus
\begin{equation}\label{Eqn:UmbraDecomposition}
  \~Y_{\U} =  \bigcap_{1\leq i \leq k} \~Y_{\U_i} = \bigcap_{1\leq i \leq k} \, \,  \bigcap_{\hat h \in \U_i} h = \bigcap_{1\leq i \leq k} \, \,  \bigcap_{n \in \naturals} h_n^i.
\end{equation}
Here, the first equality holds by Lemma~\ref{UBScomplex}, the second equality holds by Definition~\ref{Def:Umbra}, and the third equality holds because the hyperplanes added in the inseparable closure do not affect the intersection. By Lemma~\ref{ClassSetNonterminating}, a set of this form is the closure of a unique (principal) Roller class $Y_\U$. 

We observe that the above construction is independent of the choice of  descending chains with the desired properties, and hence the map $\U \mapsto Y_\U$ is $\Aut(X)$--equivariant.
\end{proof}

\begin{lemma}\label{standardUBS}
Let $x\in X^{(0)}$ and let $Y \subset \roller X$ be a convex set. Then the set of separating hyperplanes $\W(x,Y)= \W(x,\gate_Y(x))$ is a pruned UBS.  
If $x'\in X^{(0)}$, then 
$\W(x',Y) \sim \W(x,Y)$. Finally, $\W(x,[y])\sim\W(x,y)$ for every $y\in \roller X$. 
\end{lemma}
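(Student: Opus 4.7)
I would begin by setting $\gate_Y(x) := \gate_{\overline Y}(x)$, using the gate projection from Proposition~\ref{Prop:Proj} applied to the closed convex set $\overline Y$. The first equality $\W(x,Y) = \W(x, \gate_Y(x))$ is then immediate: since every half-space is closed in $\~X$, we have $\W(x, Y) = \W(x, \overline Y)$, and Proposition~\ref{Prop:Proj} gives $\W(x, \overline Y) = \W(x, \gate_{\overline Y}(x))$.

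To establish that $\W(x, Y)$ is a UBS, I would verify the four defining properties in the order given by Definition~\ref{Def:UBS}. Inseparability and the absence of facing triples follow by the elementary half-space arguments sketched in Example~\ref{intervalUBS}. For unidirectionality and pruning, the key observation is that for each $\hat h \in \W(x, Y)$, oriented so that $Y \subset h$ and $x \in h^*$, any other $\hat k \in \W(x, Y)$ contained in $h^*$ (oriented as $Y \subset k,\, x \in k^*$) must satisfy $h \subset k$, so $\hat k$ lies between $x$ and $\hat h$; since $x$ is a vertex, only finitely many such hyperplanes exist. Thus the ``inner side'' $h$ is the unique canonical side, yielding both unidirectionality and pruning. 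Infinitude---and hence UBS status---follows by showing $\gate_{\overline Y}(x)$ lies in $\roller X$; this can be read off from the explicit formula $\frakH_{\gate_{\overline Y}(x)}^+ = \frakH_{\overline Y}^+ \cup (\frakH_x^+ \setminus \frakH_{\overline Y}^-)$ together with the non-DCC half-space structure inherited from $\overline Y \subset \roller X$.

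The second statement, $\W(x', Y) \sim \W(x, Y)$, is straightforward: any hyperplane separating one of $x, x'$ from $Y$ but not the other must separate $x$ from $x'$, so the symmetric difference $\W(x, Y) \triangle \W(x', Y)$ is contained in $\W(x, x')$, which is finite since $x, x' \in X^{(0)}$.

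For the final claim $\W(x, [y]) \sim \W(x, y)$, I would apply the first part of the lemma to $\overline{[y]}$ and let $z = \gate_{\overline{[y]}}(x)$; then $\W(x, [y]) = \W(x, \overline{[y]}) = \W(x, z)$. The key step---and the main obstacle I anticipate---is showing $z \in [y]$, the principal class, rather than some smaller class in $\overline{[y]}$. The argument proceeds by computing the half-space set of $z$ explicitly: under the identification $\overline{[y]} \cong \~X(\W([y]))$ of Proposition~\ref{LiftingDecomp}, the restriction of $\frakH_z^+$ to $\W([y])$ equals $\frakH_x^+ \cap \W([y])$, which satisfies the descending chain condition because $\frakH_x^+$ itself does. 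Thus $z$ corresponds to a vertex of $X(\W([y]))$, i.e.\ $z \in [y]$. Once $z \sim y$, commensurability of $\W(x, y)$ and $\W(x, z)$ follows from the inclusion $\W(x, y) \triangle \W(x, z) \subset \W(y, z)$, which is finite.
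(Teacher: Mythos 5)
Your proof is correct and follows essentially the same route as the paper's: the gate projection of Proposition~\ref{Prop:Proj} for the first equality, the interval and descending-chain arguments for the UBS properties, the containment $\W(x,Y)\,\triangle\,\W(x',Y)\subset\W(x,x')$ for commensurability of basepoints, and the gate point $z=\gate_{\overline{[y]}}(x)\in[y]$ for the final claim. Your DCC argument for why $z$ lands in the principal class $[y]$ is in fact more explicit than the paper's, which disposes of that point with ``by definition.''
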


\begin{proof}
We begin by showing that  $\V:=\W(x,Y)$ is a UBS.  Recall that $\W(x,Y)$ is the set of hyperplanes associated to $\frakH_x^+\triangle\, \frakH_Y^+$. Since $x \in X^{(0)}$, it follows that $\frakH_x^+$ satisfies the descending chain condition and since $Y\subset \roller X$, it follows that $\frakH_{Y}^+=\frakH_{\~Y}^+$ 
has an infinite descending chain. Hence, $\V$ is unidirectional and infinite. Since $\V=\W(x,Y)$ is an interval, it follows that it is inseparable and does not contain a facing triple, hence $\V$ is a UBS. The fact that $\W(x,Y)= \W(x,\gate_Y(x))$ follows from Proposition~\ref{Prop:Proj} and shows that $\V$ is pruned.  

Let $x,x' \in X^{(0)}$. We claim that $\W(x,Y)\triangle\,\W(x',Y) \subset \W(x,x')$.  By symmetry, it suffices to observe that  a hyperplane $\hat  h$ separating $x$ from $Y$ but not $x'$ from $Y$ necessarily separates $x$ from $x'$. Since the cardinality of $\W(x,x')$  is the distance $d(x,x')<\infty$, we conclude that the two UBSes are commensurate. 
 
Finally, fix $y\in \partial X$. We show that $\W(x,[y])\sim\W(x,y)$. Proposition~\ref{Prop:Proj} says that $\W(x,[y]) = \W(x,\gate_{[y]}(x))$, which implies that $\W(x,[y]) \cup \W(\gate_{[y]}(x), y)= \W(x,y)$. But $\gate_{[y]}(x) \in [y]$ by definition, hence  $| \W(\gate_{[y]}(x),y) | < \infty$, completing the proof. 
\end{proof}

\begin{definition}[UBS of a Roller class]\label{Def:UBSofRoller}
Let  $Y \in \guralnik X$ be a Roller class, and fix a basepoint $x \in X^{(0)}$. Following Lemma~\ref{standardUBS}, we call $\W(x,Y)$ a \emph{UBS representing the class $Y$}, and denote it $\U_Y$. Since the commensurability class $[\U_Y]$ is independent of $x$, by Lemma~\ref{standardUBS}, we do not include the basepoint $x$ in the notation $\U_Y$.

It follows immediately that the assignment $Y \mapsto [ \U_Y ]$ is $\Aut(X)$--equivariant.
\end{definition}

With this notation in place, we  conclude:

\begin{cor}\label{CorOnto}
Fix $x\in X^{(0)}$ and $Y\in \guralnik X$. Then  $Y = Y_{\U_Y}$. In particular,  every Roller class arises as the principal class of the umbra of some UBS.
\end{cor}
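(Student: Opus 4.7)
My plan is to establish the equality of closed sets $\~Y = \~Y_{\U_Y}$ in $\~X$, from which $Y = Y_{\U_Y}$ will follow by uniqueness of the principal class of a closed convex set (Lemma~\ref{ClassSetNonterminating}).  The first step identifies the canonical orientation of each $\hat h \in \U_Y$.  By Lemma~\ref{standardUBS}, $\U_Y = \W(x,Y)$ is a pruned UBS, so exactly one side of $\hat h$ contains infinitely many hyperplanes of $\U_Y$; I claim this side is the one containing $Y$.  To verify this, fix $y \in Y$ and use Lemma~\ref{Lem:CombGeodesicExists} to produce a combinatorial ray $\gamma$ from $x$ to $y$, whose crossed hyperplanes comprise $\W(x,y)$, commensurate to $\U_Y$ by Lemma~\ref{standardUBS}.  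Since $y \in \roller X$, the set $\frakH^+_y \setminus \frakH^+_x$ carries an infinite descending chain $\{m_n\}$ of pairwise disjoint half-spaces; a nested-intersection argument in the compact space $\~X$ (using that $y$ lies in the $Y$-side $h$ of $\hat h$) forces $\hat m_n \subset h$ for all $n$ sufficiently large, yielding infinitely many elements of $\U_Y$ on the $Y$-side.

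With canonical orientations fixed, the umbra becomes $\~Y_{\U_Y} = \bigcap\{k : Y \subset k,\, x \notin k\}$, while Lemma~\ref{Lem:ConvexClosure} gives $\~Y = \bigcap\{k : Y \subset k\}$.  The inclusion $\~Y \subset \~Y_{\U_Y}$ is immediate, and the remaining work reduces to showing $\~Y_{\U_Y} \subset k$ for every half-space $k$ with $\{x\}\cup Y \subset k$.  I would attack this by decomposing $\U_Y$ into minimal sub-UBSes via Lemmas~\ref{Lem:FinManyMinimals} and \ref{MinimalChar}, each commensurate to the inseparable closure of a descending chain $\{\hat h^i_n\}_n$ whose canonical half-spaces $h^i_n$ contain $Y$.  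The key observation is that whenever $\hat h^i_n$ does not cross $\hat k$, the combination $Y \subset h^i_n \cap k$ with $x \in k \setminus h^i_n$ rules out three of the four nested configurations between the pair of hyperplanes, forcing $h^i_n \subset k$; hence $\bigcap_n h^i_n \subset k$, and therefore $\~Y_{\U_Y} \subset k$.

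The hard part will be guaranteeing that at least one chain $\{\hat h^i_n\}$ eventually avoids crossing $\hat k$.  My strategy combines the pairwise-crossing property of hyperplanes drawn from distinct minimal sub-UBSes (Lemma~\ref{Lem:FinManyMinimals}.\ref{Itm:CrossAllBelow}) with the dimension bound $\dim X$ and the dominant-hyperplane analysis of Lemma~\ref{Lem:DominantChain}: at a sufficiently large level $n$, the hyperplanes $\hat h^1_n,\ldots,\hat h^r_n$ coming from the $r$ distinct minimal sub-UBSes are pairwise crossing, so if $\hat k$ were transverse to all of them we would obtain $r+1$ pairwise-crossing hyperplanes; appealing to Lemma~\ref{Lem:DominantChain}.\ref{item:dominants_cross} applied to the dominant structure should then contradict $\hat k \notin \U_Y$.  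Once $\~Y = \~Y_{\U_Y}$ is established, the ``In particular'' clause follows immediately, as every Roller class $Y$ is now exhibited as the principal class of the umbra of the UBS $\U_Y$.
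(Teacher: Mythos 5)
Your setup, the identification of canonical orientations, and the forward inclusion $\~{Y}\subset\~{Y}_{\U_Y}$ are all fine, but the reverse inclusion has a genuine gap at exactly the step you flag as ``the hard part,'' and the argument you sketch does not close it. A smaller issue first: when $\hat h^i_n$ does not cross $\hat k$, the data $Y\subset h^i_n\cap k$ and $x\in (h^i_n)^*\cap k$ rule out only \emph{two} of the four possible empty corners, not three; the configuration $(h^i_n)^*\cap k^*=\emptyset$, i.e.\ $k^*\subset h^i_n$, survives. (This part is repairable: since $\bigcap_n h^i_n$ lies in $\roller X$ while $k^*$ contains vertices of $X$, one has $k^*\subset h^i_n$ for at most finitely many $n$ in each chain.) The real problem is your claim that some chain eventually avoids crossing $\hat k$. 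The proposed justification---that $\hat k$ together with $\hat h^1_n,\ldots,\hat h^r_n$ would give $r+1$ pairwise-crossing hyperplanes---yields no contradiction unless $r=\dimension X$, and Lemma~\ref{Lem:DominantChain}.\ref{item:dominants_cross} cannot be invoked, since it concerns only hyperplanes $\hat u$ that \emph{belong} to the UBS, which $\hat k$ by hypothesis does not. Worse, if $\hat k$ crossed cofinitely many elements of every chain, then the sets $k^*$ and $h^i_n$ would pairwise intersect, so by the Helly property and compactness of $\~{X}$ one would get a point of $k^*\cap\~{Y}_{\U_Y}$; excluding such a point is precisely the assertion $\~{Y}_{\U_Y}\subset k$ that you are trying to prove, so the direct route is circular without a new input.

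The paper sidesteps all of this with a basepoint-moving trick: given $k\in\frakH^+_Y$ with $x\in k$, choose $x'\in k^*\cap X^{(0)}$. Then $\hat k\in\W(x',Y)$, which by Lemma~\ref{standardUBS} is a pruned UBS commensurate to $\U_Y$; by Lemma~\ref{UBScomplex} it has the same umbra, and by Definition~\ref{Def:Umbra} that umbra is contained in the canonical half-space of $\hat k$, which is $k$. Substituting this step for your chain analysis makes the rest of your outline go through.
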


\begin{proof}
Since $\U_Y = \W(x,Y)$ is a UBS, by Lemma~\ref{Lem:UmbraClass} it is commensurate to $\bigsqcup_{i=1}^k  \insep{\{\hat h_n^i\}}$ for descending chains $\{ h_n^i \}$, hence Equation~\eqref{Eqn:UmbraDecomposition} gives
$$
  \~Y_{\U_Y} =  \bigcap_{1\leq i \leq k} \, \,  \bigcap_{n \in \naturals} h_n^i.
$$
 By construction, $\~Y \subset h_n^i$ for every $\hat h_n^i \in \U_i$, hence $\~Y \subset \~Y_{\U_Y}$.

As in Lemma \ref{Lem:UmbraClass}, let  $Y_{\U_Y}$  denote  the principal Roller class of $\~Y_{\U_Y}$. 
 We claim that $Y = Y_{\U_Y}$. So far, we have shown that $Y_{\U_Y} \leq Y$ in the partial order of Lemma~\ref{POSET Roller} and so we are left to show that $Y\leq Y_{\U_Y}$. To this end, we fix a half-space $h\in \frakH$ with $Y \subset h$ and now show that  $Y_{\U_Y} \subset h$. 

Let $x'\in h^* \cap X^{(0)}$ and set $\U'_Y = \W(x',Y)$. Observe that $\hat h \in \U'_Y$ by construction. Then by Lemma \ref{standardUBS} we have that $\U'_Y$ is a pruned UBS commensurate to $\U_Y$ and so Lemma \ref{UBScomplex} implies $\~Y_{\U_Y}=\~Y_{\U'_Y}$. Finally, by Definition~\ref{Def:Umbra} we have that $Y_{\U_Y}\subset\~Y_{\U_Y}\subset h$. 
\end{proof}

\begin{definition}[$\ell^1$--visible]\label{Def:L1visible}
Let $\gamma \subset X$ be a geodesic ray, in either the CAT(0) or combinatorial metric. Let  $\W(\gamma)$ be the collection of hyperplanes intersecting $\gamma$, which is a UBS by  \cite[Section 3.5.1]{Hagen}.
 If $\U$ is a UBS and $\U\sim\mathcal 
W(\gamma)$ for some combinatorial geodesic ray $\gamma$, we say that the class of $\U$ is \emph{$\ell^1$--visible}. \end{definition}

\begin{remark}\label{Rem:UBSofRollerGeodesic}
Let $Y \in \guralnik X$. Then, for any $x \in X^{(0)}$ and any $y \in Y$, Lemma~\ref{Lem:CombGeodesicExists} provides a combinatorial geodesic ray $\gamma$ such that $\gamma(0)=x$ and $\gamma(\infty) =  y \in Y$. By Definition~\ref{Def:L1visible} and Lemma~\ref{Lem:ConvexClosure}, we have 
$$\W(\gamma) = \W(x, y) \sim \W(x, [y]) = \W(x, \~{[y]}).$$

Furthermore, by Lemma~\ref{standardUBS}, the commensurability class $[\W(\gamma)]= [\U_Y]$ does not depend on the choice of $x \in X^{(0)}$ and  $y \in Y$.  In particular, every UBS representing a class $Y$ is $\ell^1$--visible.
\end{remark}

\begin{lemma}\label{Lem:UBSclassSubset}
Let $\U$ be a UBS.  Let $y\in \~Y_{\U}$ and let $\gamma$ be a combinatorial geodesic in $X$ such that 
$\gamma(\infty) = y$. 
 Then $\U  \preceq \W(\gamma)$.  In particular, if $\U$ is maximal, then $\U  \sim \W(\gamma)$.
\end{lemma}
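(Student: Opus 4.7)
The plan is to prove $\U \preceq \W(\gamma)$ via a combinatorial analysis of canonical half-spaces; the maximality case then follows because $\W(\gamma)$ is itself a UBS. First I would replace $\U$ by a commensurate pruned sub-UBS using Lemma~\ref{Lemma:PrunedUBS}, which leaves $\~Y_\U$ unchanged by Lemma~\ref{UBScomplex}. Every $\hat h \in \U$ then has a canonical half-space $h$ with $y \in \~Y_\U \subset h$. Setting $x = \gamma(0)$ and using that a combinatorial geodesic ray crosses each hyperplane at most once, one sees that $\W(\gamma) = \W(x,y)$; so $\hat h \in \U$ fails to lie in $\W(\gamma)$ exactly when $x \in h$. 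Thus it suffices to show that $\mathcal F := \{\hat h \in \U : x \in h\}$ is finite.

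I would then analyze the set of canonical half-spaces $\mathfrak F = \{h : \hat h \in \mathcal F\}$ as a poset under inclusion. For any $h_1, h_2 \in \mathfrak F$, the intersection $h_1 \cap h_2$ is nonempty because it contains $y$, so a quadrant analysis shows that incomparability of $h_1, h_2$ forces either transversality of $\hat h_1, \hat h_2$ or $h_1^* \cap h_2^* = \emptyset$. No three canonical half-spaces can pairwise satisfy the latter, since they would form a facing triple in the UBS $\U$; meanwhile, pairwise transverse families in $X$ have size at most $\dim X$ because they are dual to a single cube. Combining these two constraints via Ramsey's theorem bounds the antichain size in $\mathfrak F$, so the poset has finite width.

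Dilworth's theorem then partitions $\mathfrak F$ into a finite family of chains, so if $\mathfrak F$ is infinite it must contain an infinite strictly monotone sequence $(h_m)$. A strictly descending sequence $h_1 \supsetneq h_2 \supsetneq \cdots$ lies inside $\frakH_x^+$ and violates the descending chain condition satisfied by the vertex $x \in X^{(0)}$. A strictly ascending sequence $h_1 \subsetneq h_2 \subsetneq \cdots$ instead forces $\hat h_n \subset \hull(h_m^*)$ for all $n > m$: an edge dual to $\hat h_n$ has one endpoint in $h_n^* \subset h_m^*$, and since $\hat h_m, \hat h_n$ are nested rather than transverse, the other endpoint lies on the same side of $\hat h_m$ and hence in $h_m^*$ as well; this puts infinitely many $\U$-hyperplanes on the non-canonical side of $\hat h_m$, contradicting unidirectionality. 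Either way we reach a contradiction, so $\mathfrak F$ is finite and the lemma follows. The hardest step will be the Ramsey-type width estimate, since it is the one place where the two modes of incomparability among canonical half-spaces interact with both the no-facing-triple axiom and the finite-dimensionality of $X$; the remaining steps are routine applications of Dilworth's theorem, DCC, and unidirectionality.
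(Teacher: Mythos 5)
Your proof is correct, but it follows a genuinely different route from the paper's. The paper first invokes the decomposition of $\U$ (up to commensurability) into minimal UBSes $\U_i$, each the inseparable closure of a descending chain $\{h_n^i\}$ (Lemmas~\ref{Lem:FinManyMinimals} and~\ref{MinimalChar}). Since $y$ lies in every $h_n^i$ while the descending chain condition at the vertex $x=\gamma(0)$ forces $x\notin h_n^i$ for large $n$, cofinitely many $\hat h_n^i$ lie in $\W(\gamma)=\W(x,y)$; inseparability of the interval $\W(x,y)$ and Corollary~\ref{Cor:MinimalInseparable} then absorb all but finitely many elements of each $\U_i$ into $\W(\gamma)$. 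You instead bound the exceptional set $\U\setminus\W(\gamma)=\{\hat h\in\U : x\in h\}$ directly, with no appeal to the minimal decomposition: a Ramsey estimate on antichains of canonical half-spaces (incomparable pairs are either transverse, bounded by $\dimension X$, or have disjoint complements, bounded by the no-facing-triple axiom), Dilworth, the descending chain condition at $x$ for descending sequences, and prunedness/unidirectionality for ascending ones. Both arguments ultimately rest on the same two facts --- $y\in h$ for each canonical half-space, and DCC at $x$ --- but yours trades Hagen's decomposition theorem for a width argument close in spirit to the chain-extraction step in the proof of Lemma~\ref{Lem:proper}. The Ramsey bound you flag as the crux is correct as described, and the ascending-chain contradiction also checks out: nestedness of $\hat h_m,\hat h_n$ forces every edge dual to $\hat h_n$ into $h_m^*$ for $n>m$, which together with prunedness puts infinitely many elements of $\U$ on both sides of $\hat h_m$.
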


\begin{proof}
As in Lemma~\ref{Lem:UmbraClass}, $\U$ is commensurate to $\bigsqcup_{i=1}^k \U_i$, where the $\U_i$ are minimal and $\U_i = \insep {\{\hat h_n^i\}}$ for a descending chain of half-spaces $\{ h_n^i \}$. Thus 
Equation~\eqref{Eqn:UmbraDecomposition} gives
$$
  \~Y_{\U} =  \bigcap_{1\leq i \leq k} \~Y_{\U_i} = \bigcap_{1\leq i \leq k} \, \,  \bigcap_{\hat h \in \U_i} h = \bigcap_{1\leq i \leq k} \, \,  \bigcap_{n \in \naturals} h_n^i.
$$
Since $\gamma(m) \to y \in \~Y_{\U}$, we must have $y \in h_n^i$ for every $i$ and every $n$. At the same time, since the sequence $\{ h_n^i \}$ is a descending chain for each $i$, we have that $x = \gamma(0)
\notin h_n^i$ for sufficiently large $n$. Thus all but finitely many hyperplanes $\hat h_n^i$ belong to $\W(x,y)=\W(\gamma)$. The inseparable closure of this cofinite subset of $\{\hat h_n^i\}$ must belong 
to $\W(\gamma)$, and by minimality of $\U_i$ and Corollary~\ref{Cor:MinimalInseparable}, it follows that all but finitely many elements of each $\U_i$ belong to $W(\gamma)$.
\end{proof}

\begin{cor}\label{Cor:MaxIsVisible}
Let $\U$ be a maximal UBS. Then, for every $y \in  Y_\U$, there is a combinatorial geodesic $\gamma$ limiting to $y$, such that $\U \sim \W(\gamma)$. In particular, the commensurability class $[\U]$ is $\ell^1$--visible.
\end{cor}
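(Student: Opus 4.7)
The plan is to obtain this corollary as a direct combination of Lemma~\ref{Lem:CombGeodesicExists} and the ``in particular'' clause of Lemma~\ref{Lem:UBSclassSubset}. Fix a maximal UBS $\U$ and a point $y \in Y_\U$. Since $Y_\U$ is a Roller class and $Y_\U \subset \~Y_\U$ by Lemma~\ref{Lem:UmbraClass}, we have $y \in \roller X$ and $y \in \~Y_\U$.

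Next I would pick any basepoint $x \in X^{(0)}$ and invoke Lemma~\ref{Lem:CombGeodesicExists} to produce a combinatorial geodesic ray $\gamma \from [0,\infty) \to X$ with $\gamma(0) = x$ and $\gamma(\infty) = y$. Since $y \in \~Y_\U$, Lemma~\ref{Lem:UBSclassSubset} applies and yields $\U \preceq \W(\gamma)$. The maximality hypothesis then upgrades this to commensurability: indeed, $\W(\gamma)$ is itself a UBS by the discussion in Definition~\ref{Def:L1visible}, so $\U \preceq \W(\gamma)$ together with maximality of $[\U]$ (Definition~\ref{Def:PreceqMinimal}) forces $\U \sim \W(\gamma)$. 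This is exactly the statement already recorded in the second sentence of Lemma~\ref{Lem:UBSclassSubset}.

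Finally, Definition~\ref{Def:L1visible} defines $\ell^1$--visibility of $[\U]$ as precisely the existence of some combinatorial geodesic $\gamma$ with $\W(\gamma) \sim \U$, which we have just exhibited. Thus $[\U]$ is $\ell^1$--visible, giving the ``in particular'' clause. There is no real obstacle here: all the work was done in Lemma~\ref{Lem:UBSclassSubset} and Lemma~\ref{Lem:CombGeodesicExists}, and the corollary is essentially a packaging step that combines the two with the observation that every point of the Roller class $Y_\U$ lies in its closure $\~Y_\U$.
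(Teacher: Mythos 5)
Your proof is correct and follows essentially the same route as the paper: both apply Lemma~\ref{Lem:CombGeodesicExists} to obtain the ray $\gamma$ and then the maximality clause of Lemma~\ref{Lem:UBSclassSubset} to get $\U \sim \W(\gamma)$, concluding visibility from Definition~\ref{Def:L1visible}. Your explicit remark that $y \in Y_\U \subset \~Y_\U$ (so that Lemma~\ref{Lem:UBSclassSubset} applies) is a small but welcome clarification that the paper leaves implicit.
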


\begin{proof}
Let $y \in Y_\U$. By Lemma~\ref{Lem:CombGeodesicExists}, there is a combinatorial geodesic ray $\gamma$ limiting to $y$.  By  
Lemma~\ref{Lem:UBSclassSubset}, we have $\U \sim \W(\gamma)$. Thus, by Definition~\ref{Def:L1visible}, $\U$ is $\ell^1$--visible.
\end{proof}

In Corollary~\ref{Cor:MaxIsVisible}, the conclusion that $[\U]$ is visible was previously shown by Hagen \cite[Theorem~3.19]{Hagen}. The above proof, using the Roller 
boundary viewpoint, is considerably simpler.

\subsection{Order-preserving maps}
Recall from Definitions~\ref{Def:RollerOrder} and~\ref{Def:PreceqMinimal} that the partial order $\leq$ on Roller classes is set containment of the associated sets of half-spaces, while the partial order $\preceq$ on commensurability classes of UBSes  is set containment up to commensurability. 
The following theorem says that these partial orders are closely related.

\begin{theorem}\label{Thm:UBStoRoller}
Let $\mathcal{UBS}(X)$ be the collection of commensurability classes of UBSes in $X$. Then there are $\Aut(X)$--equivariant functions 
$\UR \from \mathcal{UBS}(X) \to \guralnik X$ and $\RU \from \guralnik X \to \mathcal{UBS}(X)$, with the following properties.
\begin{enumerate}[$(1)$ ]
\item\label{Itm:URonto} The assignment $\UR \from [\U] \mapsto Y_\U$ is order-preserving and onto.
\item\label{Itm:RUsection} The assignment $\RU \from Y \mapsto [\U_Y]$ is an order-preserving section of $\UR$.
\item\label{Itm:URmax} $\UR$ restricts to a bijection between the set of  $\preceq$--maximal classes of UBSes and the set of $\leq$--maximal Roller classes, with inverse $\RU$.
\end{enumerate}
\end{theorem}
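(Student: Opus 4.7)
The plan is to verify each property in turn, using the ingredients already developed in the section. Well-definedness and equivariance of both maps are handled by earlier results: $\UR([\U]) = Y_\U$ is independent of representative by Lemma~\ref{UBScomplex}, $\RU(Y) = [\U_Y]$ is independent of basepoint by Lemma~\ref{standardUBS}, and both assignments are $\Aut(X)$--equivariant by Lemma~\ref{Lem:UmbraClass} and the observation in Definition~\ref{Def:UBSofRoller}. Corollary~\ref{CorOnto} gives $Y = Y_{\U_Y}$, so $\UR \circ \RU = \id_{\guralnik X}$, which immediately yields the surjectivity assertion in (1) and the section property in (2).

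To show $\UR$ is order-preserving, suppose $[\U] \preceq [\V]$. Then $\U \setminus \V$ is finite, so $\U \cup \V \sim \V$ as sets of hyperplanes. Applying the second bullet of Lemma~\ref{UBScomplex} to the decomposition $\V \sim \U \cup \V$ gives
\[
\~Y_\V \: = \: \~Y_\U \cap \~Y_\V \: \subset \: \~Y_\U,
\]
so $Y_\V \subset \~Y_\U$, and Lemma~\ref{POSET Roller} then yields $Y_\U \leq Y_\V$. For $\RU$: if $Y \leq Z$, then $\frakH_Y^+ \subset \frakH_Z^+$, so any hyperplane separating a fixed basepoint $x$ from $Y$ also separates $x$ from $Z$; hence $\W(x,Y) \subset \W(x,Z)$, giving $[\U_Y] \preceq [\U_Z]$.

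The main technical step toward (3) is the following claim: for any UBS $\U$ with $Y_\U = Y$, one has $[\U] \preceq [\U_Y]$. I would prove this by choosing any $y \in Y \subset \~Y_\U$, invoking Lemma~\ref{Lem:CombGeodesicExists} to produce a combinatorial geodesic $\gamma$ from a basepoint $x$ with $\gamma(\infty) = y$, applying Lemma~\ref{Lem:UBSclassSubset} to obtain $\U \preceq \W(\gamma)$, and using Remark~\ref{Rem:UBSofRollerGeodesic} to identify $[\W(\gamma)] = [\U_Y]$. Thus $[\U_Y]$ is the $\preceq$--maximum of the fiber $\UR^{-1}(Y)$.

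With the key claim in hand, (3) reduces to a short diagram chase between the two partial orders. If $Y$ is $\leq$--maximal and $[\U_Y] \preceq [\V]$, then $Y \leq Y_\V$ by (1) forces $Y_\V = Y$, and the key claim then yields $[\V] \preceq [\U_Y]$, so $[\U_Y]$ is $\preceq$--maximal. Conversely, if $[\U]$ is $\preceq$--maximal, the key claim combined with maximality of $[\U]$ gives $[\U] \sim [\U_{Y_\U}]$ (so $\RU \circ \UR$ is the identity on maximal classes); a parallel argument using that $\RU$ is order-preserving then shows that $Y_\U$ must be $\leq$--maximal. I expect the main obstacle to be the bookkeeping in this last step: $\UR$ and $\RU$ are not mutual inverses on the whole domain but only on maximal classes, so one has to carefully juggle the two partial orders $\preceq$ and $\leq$ and keep track of which direction is being applied, with the key claim doing the real work of pinning down the $\preceq$--maximum of each fiber.
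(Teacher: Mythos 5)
Your proposal is correct and follows essentially the same route as the paper: Corollary~\ref{CorOnto} for surjectivity and the section property, Lemma~\ref{UBScomplex} together with Lemma~\ref{POSET Roller} for order-preservation of $\UR$, the half-space containment for order-preservation of $\RU$, and Lemmas~\ref{Lem:CombGeodesicExists} and~\ref{Lem:UBSclassSubset} with Remark~\ref{Rem:UBSofRollerGeodesic} for part (3). Your ``key claim'' that $[\U_Y]$ is the $\preceq$--maximum of the fiber $\UR^{-1}(Y)$ is exactly the content the paper packages as Corollary~\ref{Cor:MaxIsVisible} here and reuses verbatim in the proof of Corollary~\ref{Cor:UBStoRollerSimp}, so this is a harmless reorganization rather than a different argument.
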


The notation $\RU$ stands for ``Roller to unidirectional,'' while $\UR$ stands for ``unidirectional to Roller.'' This is inspired from the classical notation denoting the collection of maps from a set $A$ to a set $B$ as $B^A$.

\begin{proof}
To prove \ref{Itm:URonto}, let $\U$ be a UBS. By Lemma~\ref{Lem:UmbraClass}, there is a well-defined Roller class $Y_\U$ such that the
 umbra $\~Y_\U$ is the Roller closure of $Y_\U$. By 
Lemma~\ref{UBScomplex}, $Y_\U$ only depends on the commensurability class $[\U]$. Hence the assignment $[\U] \mapsto Y_\U$ gives a well-defined function  
$\UR \from \mathcal{UBS}(X) \to \guralnik X$. This function $\UR$ is $\Aut(X)$--equivariant by Lemma~\ref{Lem:UmbraClass}, and onto by Corollary~\ref{CorOnto}. 

Next, we check that $\UR$ respects the ordering. Lemma~\ref{UBScomplex} implies that if
 $\U\preceq \mathcal V$, then
$\overline{Y}_{\mathcal V}\subset \overline{Y}_{\U}$, hence the
Roller classes satisfy
$Y_{\U} \leq Y_{\mathcal V}$ by Lemma~\ref{POSET Roller}. Thus $\UR$ is order-preserving, proving \ref{Itm:URonto}.

\smallskip

To prove \ref{Itm:RUsection}, let $Y \in \guralnik X$. By Definition~\ref{Def:UBSofRoller}, there is a well-defined commensurability class of UBS representing $Y$, namely $[\U_Y] = \UR(Y) \in \mathcal{UBS}(X)$. Thus $\RU$ is well-defined. Equivariance also follows from Definition~\ref{Def:UBSofRoller}. Next, observe that
$$
\UR \circ \RU(Y) = \UR([\U_Y]) = Y_{\U_Y} = Y,$$
 where the last equality follows from Corollary~\ref{CorOnto}. Thus $\RU$ is a section of $\UR$.

To check that $\RU$ is order-preserving, let $Y, Z$ be Roller classes with $Y \leq Z$. By  Lemma~\ref{POSET Roller}, this means $\overline{Z} \subset \overline{Y}$.  Fix a base vertex $x\in X^{(0)}$.  By Lemma~\ref{Lem:CombGeodesicExists}, there exist 
combinatorial geodesic rays $\gamma_y, \gamma_z$, originating at $x$, such that $\gamma_y(\infty) =  y\in Y$ and $\gamma_z(\infty) =  
z \in Z$.  Now, Remark~\ref{Rem:UBSofRollerGeodesic} says that 
$$ \U_{Y} \sim \W(x,Y) \subset \W(x,Z) \sim \U_{Z},$$
hence $[\U_{Y}] \preceq [\U_{Z}]$, as desired.

\smallskip

To prove \ref{Itm:URmax}, let $\U$ be a maximal UBS. Choose a point $y \in Y_\U = \UR([\U])$. By Corollary~\ref{Cor:MaxIsVisible}, there is a combinatorial geodesic ray $\gamma$ limiting to $y$, such that $\U \sim \W(\gamma)$. Thus, by Remark~\ref{Rem:UBSofRollerGeodesic}, we have
\begin{equation}\label{Eqn:MaxIsVisible}
[\U] = [\W(\gamma)] = [\U_{Y_\U}] = \RU(Y_\U) = \RU \circ \UR([\U]).
\end{equation}
Thus the restriction of $\UR$ to maximal classes in $\mathcal{UBS}(X)$ is invertible, and in particular bijective.

Equation~\eqref{Eqn:MaxIsVisible} also shows that $\UR$ sends maximal classes in $\mathcal{UBS}(X)$  to maximal classes in $\guralnik X$. Indeed, suppose that $[\U]$ is maximal but $\UR([\U]) = Y_{\U} < Z$ for some Roller class $Z$. Then we would have $[\U] = \RU(Y_\U) < \RU(Z)$, contradicting the maximality of $\U$. That $\RU$ sends maximal classes to maximal classes is checked in the same way.
\end{proof}

The following example shows that $\UR$ can fail to be injective.

\begin{example}
 Consider the standard $1/8$th-plane staircase, corresponding to squares whose  vertices $(x,y)$ satisfy $x\geq y\geq 0$. In 
this case, there are exactly 2 equivalence classes of minimal UBS, corresponding to vertical hyperplanes $\U_V$ and horizontal 
hyperplanes $\U_H$. These are almost transverse, hence $\U_H\nsim \U_V  \cup \U_H$. However, $\UR[\U_H] = \UR([\U_V  \cup  \U_H])$. Thus $\UR$ is not injective.
\end{example}

On the other hand, the following corollary shows that $\UR$ is finite-to-one.

\begin{cor}\label{Cor:FinitelyManyBelowRoller}
Let $Y \in \guralnik X$ be a Roller class. Then there are finitely many other Roller classes $Z$ such that $Z \leq Y$.
\end{cor}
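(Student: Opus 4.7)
The plan is to deduce this as an immediate corollary of Theorem~\ref{Thm:UBStoRoller} together with the finiteness statement at the end of Lemma~\ref{Lem:FinManyMinimals}. The key point is that the section $\RU \from \guralnik X \to \mathcal{UBS}(X)$ sending $Y \mapsto [\U_Y]$ is both order-preserving and injective, so it transports the question about Roller classes below $Y$ into a corresponding question about commensurability classes of UBSes below $[\U_Y]$, where finiteness is already known.

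First, I would observe that $\RU$ is injective. This is purely formal: since $\UR \circ \RU = \id_{\guralnik X}$ by Theorem~\ref{Thm:UBStoRoller}\ref{Itm:RUsection}, the map $\RU$ has a left inverse and hence is injective. Next, I would record that $\RU$ is order-preserving, again by Theorem~\ref{Thm:UBStoRoller}\ref{Itm:RUsection}: if $Z \leq Y$, then $[\U_Z] \preceq [\U_Y]$.

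Combining these two observations, if $\{Z_i\}_{i \in I}$ is the collection of Roller classes with $Z_i \leq Y$, then $\{\RU(Z_i)\}_{i \in I} = \{[\U_{Z_i}]\}_{i \in I}$ is a collection of commensurability classes of UBSes with $[\U_{Z_i}] \preceq [\U_Y]$, and injectivity makes this assignment one-to-one. The final sentence of Lemma~\ref{Lem:FinManyMinimals} states that there are only finitely many commensurability classes of UBSes $\preceq [\U_Y]$. Therefore $|I| < \infty$, as required.

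There is no significant obstacle: the entire content of the corollary is already packaged in the preceding two results, and the argument is essentially the observation that an injective order-preserving map from $\{Z : Z \leq Y\}$ into a finite set forces the domain to be finite.
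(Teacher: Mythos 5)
Your proof is correct and follows exactly the same route as the paper's: both use the fact that $\RU$ is an injective, order-preserving section of $\UR$ (Theorem~\ref{Thm:UBStoRoller}) to transport the question to commensurability classes of UBSes, where finiteness is given by the last sentence of Lemma~\ref{Lem:FinManyMinimals}. Your explicit justification of injectivity via the left inverse $\UR \circ \RU = \id$ is a small but welcome addition to what the paper merely asserts.
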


\begin{proof}
For each $Z \leq Y$, Theorem~\ref{Thm:UBStoRoller}.\ref{Itm:RUsection} says that $\RU(Z) \preceq \RU(Y)$. Furthermore, $\RU$ is injective.
By Lemma~\ref{Lem:FinManyMinimals}, there are only finitely many commensurability classes  $[\W] \preceq \RU(Y)$. Thus there are finitely many $Z \leq Y$. \end{proof}

\begin{definition}\label{Def:UBSesimp}
Let $(\simp X)'$ be the barycentric subdivision of $\simp X$. Recall from Definition~\ref{Def:simplicial_boundary} that
$\simp X$ is the union of simplices corresponding to commensurability classes
of UBSes. However, a simplex of $\simp X$ may have proper faces that do not come from commensurability
classes, as illustrated in Example~\ref{exmp:weird_simplex} below.  Hence, a vertex of $(\simp X)'$ might fail to be 
associated to a UBS commensurability class. 

Let $\simpubs X$ be the simplicial realization of the partial order $\preceq$ on $\mathcal{UBS}(X)$. Then $\simpubs X$ naturally embeds into $(\simp X)'$. By the above paragraph, this embedding is not 
in general onto.
\end{definition}

The following example, constructed by Dan Guralnik and Alessandra Iozzi, illustrates a subtlety in the definition of $\simp 
X$ which shows the difference between $\simp X$ and $\simpubs X$.

\begin{example}[Weird faces of simplices of $\simp X$]\label{exmp:weird_simplex}
Let $X$ be the $3$--dimensional 
CAT(0) cube complex given by the following data:
\begin{itemize}
     \item The set of hyperplanes has the form $\W(X) = \{B_i\}_{i \ge0} \sqcup \{S_j\}_{j\ge0} \sqcup \{D_k\}_{k \ge0} $.  \item For each 
$A\in\{B,S,D\}$ and each $n\ge1$, the hyperplanes $A_{n\pm1}$ are separated by $A_n$.
\item $B_i$ crosses $S_j$ if and only if $i \geq j$.
\item $S_j$ crosses $D_k$ if and only if $j \geq k$.
\item $B_i$ crosses $D_k$ if and only if $i \geq k$.
\item If $i < j < k$, then $S_j$ separates $B_{i}$ from $D_k$.
\end{itemize}
Part of $X$ is shown in Figure~\ref{fig:weird}.  

\begin{figure}[h]
\begin{overpic}[width=0.75\textwidth]{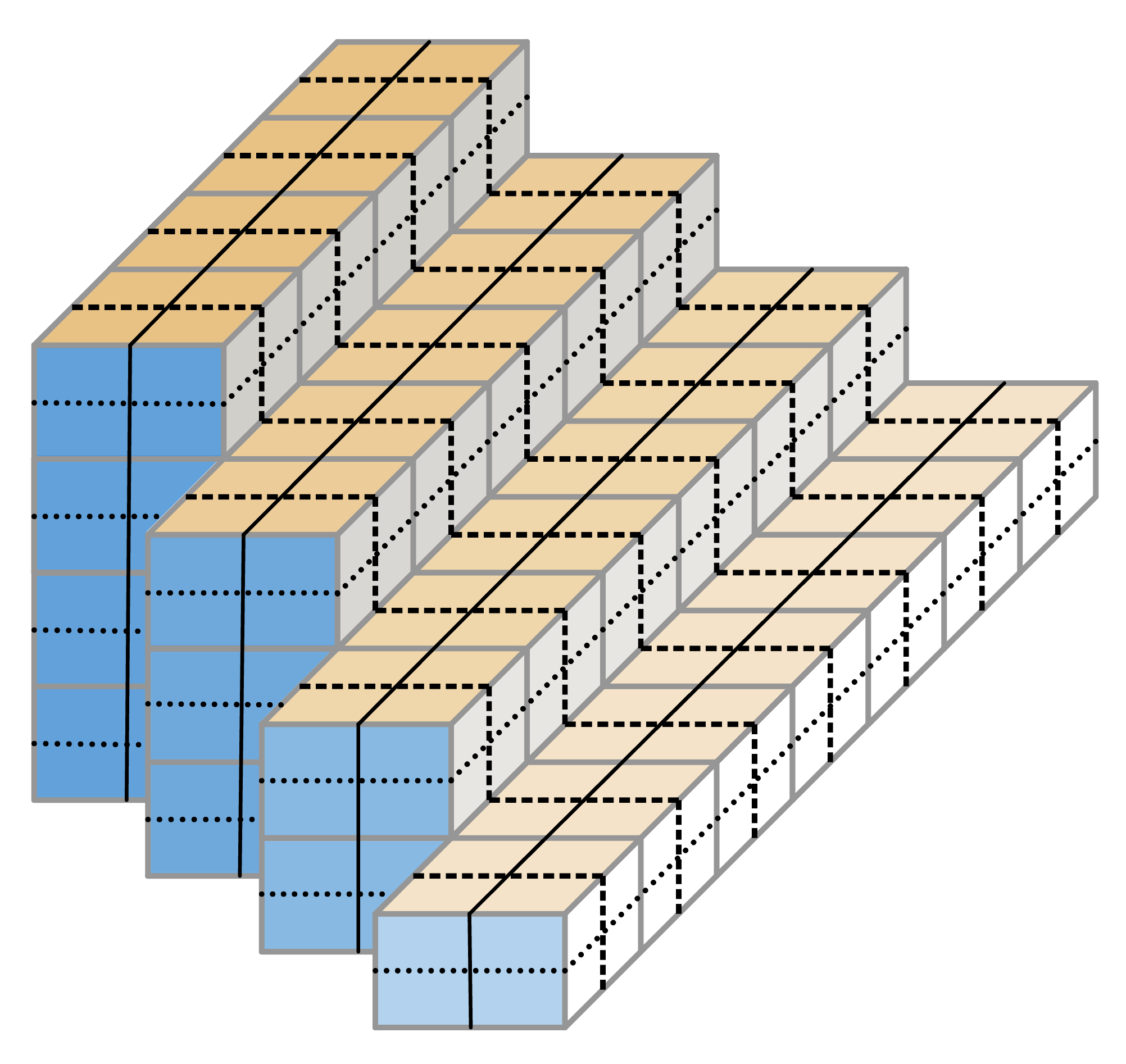}
\put(53,3){$B_0$}
\put(60,10){$B_1$}
\put(66.5,16.5){$B_2$}
\put(87.5,62){$S_0$}
\put(70.5,72){$S_1$}
\put(53.5,82.){$S_2$}
\put(-2,27.5){$D_0$}
\put(-2,37.5){$D_1$}
\put(-2,47.5){$D_2$}
\end{overpic}
\caption{The cube complex $X$ in Example~\ref{exmp:weird_simplex}, 
whose simplicial boundary has a ``weird'' $1$--simplex. The hyperplanes $B_i$ are shown in broken lines, the hyperplanes $S_j$ are solid, and the hyperplanes $D_k$ are dotted. This non-perspective drawing represents an embedding into $\R^3$ where the solid hyperplanes have also been rescaled by a factor of 2.}
\label{fig:weird}
\end{figure}
Then $\simp X$ is a single $2$--simplex represented by the UBS 
$\{B_i\}_{i \ge0} \cup \{S_j\}_{j\ge0} \cup \{D_k\}_{k \ge0} $.  The $0$--simplices are represented by the minimal UBSes 
$\{B_i\}, \{S_j\}, \{D_k\}$.  The sets $\{B_i\}\cup\{S_j\}$ and $\{S_j\}\cup\{D_k\}$ are UBSes representing $1$--simplices, 
but $\{B_i\}\cup\{D_k\}$ is not a UBS, because it is not inseparable (consider the last bullet).  The complex $\simpubs X$ is therefore obtained from 
the barycentric subdivision of the $2$--simplex by deleting all of the cells that contain a vertex at the barycenter of $\{B_i\}\cup\{D_k\}$.
\end{example}

 We remark that by \cite[Theorem 3.23]{Hagen}, a face of a simplex of $\simp X$ can only fail to correspond to a commensurability class of UBSes when one of the $0$--simplices of that face is
 $\ell^1$--invisible.

Having passed from $\simp X$ to $\simpubs  X$, Theorem~\ref{Thm:UBStoRoller} has the following corollary.

\begin{cor}\label{Cor:UBStoRollerSimp}
The order-preserving maps $\UR$ and $\RU$ of Theorem~\ref{Thm:UBStoRoller} induce $\Aut(X)$--equivariant simplicial maps $\URtri \from \simpubs  X\to \absimp X$ and $\RUtri \from  \absimp X \to \simpubs  X$, as follows:
\begin{enumerate}[$(1)$ ]
\item $\URtri \from \simpubs  X\to \absimp X$ is  surjective.
\item $\RUtri \from  \absimp X \to \simpubs  X$  is an injective section of $\URtri$.  
\item\label{Itm:RUhomotopy} $\URtri$ is a homotopy 
equivalence with homotopy inverse  $\URtri$.
\end{enumerate}
\end{cor}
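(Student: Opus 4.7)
The plan is to view $\simpubs X$ and $\absimp X$ as order complexes of the posets $(\mathcal{UBS}(X),\preceq)$ and $(\guralnik X, \leq)$, and to transport the bijection-like relationship between $\UR$ and $\RU$ from Theorem~\ref{Thm:UBStoRoller} up to the simplicial level. Since $\UR$ and $\RU$ are order-preserving, they carry chains to chains and so descend to simplicial maps $\URtri$ and $\RUtri$; the equivariance of these maps is inherited directly from the equivariance of $\UR,\RU$ asserted in Theorem~\ref{Thm:UBStoRoller}.

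For surjectivity (1), I would take an arbitrary simplex $\{Y_0<\cdots<Y_k\}$ of $\absimp X$ and apply $\RU$ to produce the chain $\{\RU(Y_0)\preceq\cdots\preceq\RU(Y_k)\}$ in $\mathcal{UBS}(X)$, which lies in $\simpubs X$. By Theorem~\ref{Thm:UBStoRoller}.\ref{Itm:RUsection} we have $\UR\circ\RU=\id_{\guralnik X}$, so $\URtri$ maps this chain back to the original simplex. The same identity shows that $\RUtri$ is a simplicial section of $\URtri$, proving (2) and giving one side of the homotopy in (3), namely $\URtri\circ\RUtri=\id_{\absimp X}$ on the nose.

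For (3), the remaining task is to show $\RUtri\circ\URtri\simeq\id_{\simpubs X}$. Here I would invoke the standard order-complex homotopy lemma (see e.g.\ \cite{Bjorner:book}): if $f,g\from P\to P$ are poset maps with $f(x)\leq g(x)$ for all $x$, then the induced simplicial maps on $|P|$ are homotopic. The lemma reduces matters to verifying the pointwise inequality $[\U]\preceq (\RU\circ\UR)([\U])$ for every UBS class $[\U]$. Unpacking, $(\RU\circ\UR)([\U])=[\U_{Y_\U}]$. To obtain a representative of this class, pick any $y\in Y_\U$ and any basepoint $x\in X^{(0)}$; by Lemma~\ref{Lem:CombGeodesicExists} there is a combinatorial geodesic ray $\gamma$ with $\gamma(0)=x$ and $\gamma(\infty)=y$, and Remark~\ref{Rem:UBSofRollerGeodesic} yields $\W(\gamma)\sim\U_{Y_\U}$. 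Lemma~\ref{Lem:UBSclassSubset} (applied to $y\in\~Y_\U$) then gives $\U\preceq\W(\gamma)\sim\U_{Y_\U}$, which is exactly the desired relation.

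The main obstacle is really conceptual rather than technical: one must identify the correct direction of the inequality $[\U]\preceq(\RU\circ\UR)([\U])$, i.e.\ recognize that $\RU\circ\UR$ sends a UBS class to the canonical \emph{maximal} UBS class representing its Roller class. Once this is done, the poset-homotopy lemma delivers the homotopy automatically, and equivariance is preserved throughout since every map in sight is built from the equivariant maps $\UR$ and $\RU$.
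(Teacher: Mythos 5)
Your proposal is correct, and its mathematical core coincides with the paper's: both arguments ultimately rest on the single inequality $[\U]\preceq(\RU\circ\UR)([\U])=[\U_{Y_\U}]$, which you establish in exactly the same way the paper does, via Remark~\ref{Rem:UBSofRollerGeodesic} and Lemma~\ref{Lem:UBSclassSubset}. The only divergence is the final homotopy-theoretic step for part (3). The paper uses that inequality to show that each fiber $\URtri^{-1}(Y)$ is a cone with apex $[\U_Y]$, hence contractible, and then invokes Quillen's fiber theorem \cite[Theorem 10.5]{Bjorner:book} to conclude that $\URtri$ is a homotopy equivalence for which any section is a homotopy inverse. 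You instead feed the same inequality into the order homotopy lemma for poset maps ($f\leq g$ pointwise implies the induced maps on order complexes are homotopic), obtaining $\id_{\simpubs X}\simeq\RUtri\circ\URtri$ directly; together with the on-the-nose identity $\URtri\circ\RUtri=\id_{\absimp X}$ coming from Theorem~\ref{Thm:UBStoRoller}.\ref{Itm:RUsection}, this yields (3). The two routes are interchangeable and of equal strength; yours is marginally more direct in that it exhibits $\RUtri$ as the homotopy inverse without a detour through contractibility of fibers, while the paper's fiber-theorem argument is the pattern it reuses elsewhere (e.g.\ in the proofs of Theorem~\ref{Thm:first_HE} and Proposition~\ref{prop:simplicial_boundary}). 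Parts (1) and (2) are handled identically in both proofs, as immediate consequences of Theorem~\ref{Thm:UBStoRoller}.
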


\begin{proof}
 By definition, $\absimp X$ is the simplicial realization of the partial order $\leq$ on $\guralnik X$.  Similarly, $\simpubs  X$ is the simplicial realization of the partial order 
$\preceq$ on $\mathcal{UBS}(X)$.  So, the first two enumerated assertions follow from 
Theorem~\ref{Thm:UBStoRoller}.  

To prove \ref{Itm:RUhomotopy}, let $Y$ be a Roller class.  Let $[\mathcal V]\in (\UR)^{-1}(Y)$.  Then $Y_{\mathcal V}=Y_{\U_Y}$.  By 
Remark~\ref{Rem:UBSofRollerGeodesic} and Lemma~\ref{Lem:UBSclassSubset}, we have $\mathcal V\preceq\U_Y$.

Using the notation $Y$ for the $0$--simplex of $\absimp X$ corresponding to the Roller class $Y$, we have just shown that $\URtri^{-1}(Y)$ is a subcomplex of $\simpubs  X$ spanned by 
$0$--simplices adjacent or equal to the $0$--simplex $[\U_Y]$, so $\URtri^{-1}(Y)$ is topologically a 
cone.  In particular, $\URtri^{-1}(Y)$ is contractible.  Quillen's fiber theorem (see e.g.~\cite[Theorem 
10.5]{Bjorner:book}) implies that $\URtri$ is a homotopy equivalence, such that any section is a homotopy inverse. In particular, $\RUtri$ is a homotopy inverse of $\URtri$.
\end{proof}

\begin{remark}[Alternate strategy]\label{rem:alternate_strategy}
The preceding corollary provides an explicit $\Aut(X)$--equivariant homotopy equivalence $\simpubs  X\to\absimp X$.  Furthermore, it is possible, although somewhat tedious, to construct an explicit, 
$\Aut(X)$--equivariant homotopy equivalence $\simp X\to\simpubs  X$.  Composing these maps gives
a homotopy equivalence $\simp X \to \absimp X$, which is $\Aut(X)$--equivariant on the nose.
This is slightly stronger than Proposition~\ref{prop:simplicial_boundary}, which asserts that $\simp X$ and 
$\absimp X$ are \Authom homotopy equivalent.  We have chosen a different proof of Proposition~\ref{prop:simplicial_boundary} that is somewhat shorter, relying on the nerve theorem, and only yielding 
$\Aut(X)$--equivariance up to homotopy.  Our ultimate goal is to relate the homotopy type of $\simp X$ and $\absimp X$ to that of the Tits boundary $\tits X$, and in any case our homotopy equivalence $\absimp 
X\to\tits X$, provided by Theorem~\ref{Thm:first_HE}, is only equivariant up to homotopy. 
\end{remark}

\section{The Tits boundary}\label{Sec:Tits}

In this section, we recall the definition of the Tits boundary $\tits X$, and establish some 
connections between points on the Tits boundary and UBSes. The main result of this section, Proposition~\ref{Prop:build_CAT(0)}, is a technical result about combining UBSes that will be crucial in the next section.

In this section, we will work extensively with the CAT(0) metric $d_X$ on the cube complex $X$, hence most half-spaces considered here are CAT(0) half-spaces.  Recall from Section~\ref{Sec:half-spaces} that if $\hat h$ is a hyperplane with open carrier $N(\hat h)$,
the two vertex half-spaces associated to  $\hat h$ are denoted $h, h^*$. The two CAT(0) half-spaces associated to $\hat h$ are the cubical convex hulls $\hull(h)$ and $\hull(h^*)$, respectively. 
Observe that 
 $\hull(h)$ and $\hull(h^*)$ are exactly the components of $X \setminus N(\hat h)$.

\begin{definition}[The metric $\dist_T$]\label{Def:tits_metric}
The \emph{Tits boundary of $X$}, denoted $\tits X$, is the set of equivalence classes of CAT(0) geodesic rays  in 
$X$, where rays $\gamma,\gamma'$ are equivalent if the Hausdorff distance between them is finite.  As a set, $\tits X$ coincides with the visual boundary of $X$.

We endow $\tits X$ with a metric, as follows.
Given CAT(0) geodesic rays $\alpha,\beta \from [0,\infty)\to X$ with $\alpha(0)=\beta(0)=x$, the \emph{Alexandrov angle} formed by $\alpha,\beta$ at $x$ is 
\begin{equation}\label{Eqn:Alexandrov}
\angle_x(\alpha,\beta)=\cos^{-1}\left(\lim_{t,t'\to0}\frac{\dist_X(\alpha(t), \beta(t'))^2-t^2-(t')^2}{2tt'}
\right)
\end{equation}
In other words, $\angle_x(\alpha,\beta)$ is computed by applying the law of cosines to a triangle with sides along $\alpha$ and $\beta$, and then taking a limit as the triangle shrinks.
Let $a = [\alpha] \in \tits X$ and $b = [\beta] \in \tits X$ be the the equivalence classes of $\alpha,\beta$ respectively.  Following  \cite[Definition II.9.4]{BridsonHaefliger}, we define
\begin{equation}\label{Eqn:TitsAngle}
\angle_T(a,b) = \angle_T([\alpha],[\beta])
:=\sup_{x\in X}\angle_x(\alpha_x,\beta_x),
\end{equation}
 where $\alpha_x,\beta_x$ are the rays emanating from $x$ and representing 
$a,b$ respectively.  Then $\angle_T$ induces a length metric $d_T$ on $\tits X$ in the standard way, making it a CAT(1) 
space. See \cite[Theorem II.9.13 and Definition II.9.18]{BridsonHaefliger}. We always equip $\tits X$ with the metric $\dist_T$. By \cite[Proposition II.9.5]{BridsonHaefliger}, the automorphism group 
$\Aut(X)$ acts on $\tits X$ by isometries.
\end{definition}

We remark that the original definition of the Alexandrov angle \cite[Definition I.1.12]{BridsonHaefliger} is somewhat more involved. In our setting, since $X$ is a CAT(0) space, \cite[Proposition II.3.1]{BridsonHaefliger} says the limit in \eqref{Eqn:Alexandrov}  exists and is equal to the Alexandrov angle in the original definition. Compare \cite[Remark I.1.13.(4)]{BridsonHaefliger}.

Definition~\ref{Def:tits_metric} has the following generalization to the cuboid setting.

\begin{definition}[Cuboid Tits boundary]\label{Def:TitsMetricCuboid}
Fix a group $G$ acting on $X$ by cubical automorphisms, as well as a $G$--admissible hyperplane rescaling $\rho$, as in Definition~\ref{Def:cuboid_metric}.
Then the cuboid metric $d_X^\rho$ is still a CAT(0) metric by Lemma~\ref{Lem:cat_0_rescaling}. Applying Definition~\ref{Def:tits_metric} to the CAT(0) space $(X, d_X^\rho)$, we obtain the \emph{cuboid Tits boundary} $\tits^\rho X = \tits(X, d_X^\rho)$, with an associated CAT(1) metric $d_T^\rho$. 
The restricted automorphism group $\Aut(X^\rho)$ acts on $\tits^\rho X$ by isometries. Since $\rho$ is a $G$--admissible rescaling, we have $G \subset \Aut(X^\rho)$ by Definition~\ref{Def:AutXrho}, hence $\tits^\rho X$ still admits a $G$--action.
\end{definition}

All of the results of this section extend to a cuboid metric $d_X^\rho$. See Remarks~\ref{Rem:DaCuboid}, \ref{Rem:CuboidsIndistanceBound}, and~\ref{Rem:HuangCuboids} for details.

Throughout this section, we use Greek letters $\alpha, \beta, \gamma$ to denote geodesic rays, and the corresponding Roman letters $a,b,c$ to denote equivalence classes. We write $a = [\alpha] = \alpha(\infty)$ to denote the endpoint of $\alpha$ in $\tits X$.

\subsection{Deep sets}\label{subSec:Deep}
Every point of the Tits boundary has an associated \emph{deep set} of half-spaces.

\begin{definition}[Deep set of a Tits point]\label{Def:DeepSet}
Let $a\in \tits X$, and
let $\alpha \from [0,\infty)\to X$ be a CAT(0) geodesic representing $a$. We define the \emph{deep set} 
$$D_a = \{h\in \frakH : \text{ for all } R>0 \text{ there is } t_R>0 \text{ s.t. } \mathcal  N_R( \alpha[t_R,\infty)) \subset \hull(h) \}.$$
It is immediate to check that the definition does not depend on the representative geodesic $\alpha$. If $h \in D_a$, we say that $h$ is \emph{deep} for $a$.
\end{definition}

\begin{lemma}\label{Lem:DeepTransverse}
Let $\alpha \from [0,\infty)\to X$ be a CAT(0) geodesic ray with $\alpha(0) \in X^{(0)}$, and let $\hat h \in \W(\alpha)$. Then the following hold:
\begin{enumerate}[$(1)$ ]
\item\label{Itm:LinearDivergence} Define $f(t) = d_X(\alpha(t), \hat h)$, and let $t_0 = \alpha^{-1}(\hat h)$. Then, for 
$t_0 \leq s \leq t$, we have
$$f(t)-f(s) \geq A_0 \cdot (t-s),$$
where $A_0 > 0$,  and $A_0$ may depend on $\hat h$ and $\alpha$ but not on $s,t$.
\item\label{Itm:DeepSide} The hyperplane $\hat h$ has a deep half-space $h \in D_a$.
\end{enumerate}
Consequently, $\W(\alpha)$ is a pruned UBS. 
\end{lemma}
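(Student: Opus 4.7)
My plan is to prove (1) and (2) by CAT(0) convexity arguments at the hyperplane $\hat h$, then deduce that $\W(\alpha)$ is a pruned UBS by combining (2) with the gate-projection statement of Lemma~\ref{Lem:Gate_Lipschitz} and the wall-counting inequality of Lemma~\ref{Lem:WallQI}. For (1), I will use that $\hat h$ is CAT(0)-convex, so $f(t) = d_X(\alpha(t), \hat h)$ is convex, with $f(t_0) = 0$. To obtain a positive slope $A_0$, I will invoke the isometric product decomposition $\overline{N(\hat h)} \cong \hat h \times [-1/2, 1/2]$: while $\alpha$ lies in the carrier, its restriction is a geodesic in this product, so its signed perpendicular coordinate $\ell(t)$ is affine with $\ell(t_0) = 0$. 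The hypothesis that $t_0 = \alpha^{-1}(\hat h)$ is a single point prevents $\ell$ from vanishing identically, so $\alpha$ exits $\overline{N(\hat h)}$ at some finite $t_+$ with $|\ell(t_+)| = 1/2$; setting $A_0 := 1/(2(t_+ - t_0)) > 0$ (the slope of $f$ on $[t_0, t_+]$), the monotone-slope property of convex functions with $f(t_0) = 0$ then yields $f(t) - f(s) \geq A_0 (t - s)$ for all $t_0 \leq s \leq t$.

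For (2), I will let $h^*$ be the vertex half-space whose CAT(0) closure contains $\alpha(t)$ for $t$ slightly larger than $t_0$. Given $R > 0$, part (1) provides $t_R$ with $d_X(\alpha(t), \hat h) \geq R + 1/2$ for all $t \geq t_R$; by the triangle inequality, every point of the open CAT(0) ball $B(\alpha(t), R)$ is at distance greater than $1/2$ from $\hat h$, hence outside the carrier $N(\hat h)$. Since $B(\alpha(t), R)$ is connected and contains $\alpha(t) \in \hull(h^*)$, while $N(\hat h)$ is the topological separator between $\hull(h)$ and $\hull(h^*)$, the entire ball is confined to $\hull(h^*)$. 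This gives $\mathcal N_R(\alpha[t_R, \infty)) \subset \hull(h^*)$, which is precisely $h^* \in D_a$.

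The final claim, that $\W(\alpha)$ is a pruned UBS, reduces to prunedness since $\W(\alpha)$ is already a UBS by \cite[Section 3.5.1]{Hagen}. My candidate canonical side of each $\hat h \in \W(\alpha)$ will be the deep side $h^*$ from (2), and I will show it contains infinitely many other hyperplanes of $\W(\alpha)$. For each $r > 0$, I will use (1) to pick $t_r$ with $d_X(\alpha(t_r), \hat h) \geq r$, let $p_r = \gate_{\hat h}(\alpha(t_r))$, and apply Lemma~\ref{Lem:Gate_Lipschitz}.\ref{Itm:GateHyperplanes} to identify $\W(\alpha(t_r), p_r) = \W(\alpha(t_r), \hat h)$. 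Any such $\hat k$ separates $\alpha(t_r)$ from \emph{every} point of $\hat h$, so $\hat h$ lies in one CAT(0) half-space $\hull(k)$ of $\hat k$; ruling out $\hat k \subset \hull(h)$ (which would force $\hull(k^*) \subset \hull(h)$ and contradict $\alpha(t_r) \in \hull(k^*) \cap \hull(h^*)$), I conclude $\hat k \subset \hull(h^*)$. Moreover $\alpha(0) \in \hull(h) \subset \hull(k)$ while $\alpha(t_r) \in \hull(k^*)$, so $\hat k \in \W(\alpha[0, t_r]) \subset \W(\alpha)$. Lemma~\ref{Lem:WallQI} then provides $|\W(\alpha(t_r), p_r)| \geq r/\lambda_0 - \lambda_1 \to \infty$, producing infinitely many hyperplanes of $\W(\alpha)$ inside $\hull(h^*)$. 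The main obstacle in this proof is exactly this last step, namely showing that the hyperplanes produced by the gate projection genuinely lie in $\W(\alpha)$ rather than merely in $\W(X)$; this hinges on the basepoint $\alpha(0) \in \hull(h)$ landing on the opposite side of each such $\hat k$ from $\alpha(t_r)$.
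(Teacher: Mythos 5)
Your proof is correct and follows essentially the same route as the paper: convexity of $f$ plus the affine behaviour of the perpendicular coordinate in the product carrier $\hat h\times[-\tfrac12,\tfrac12]$ gives the linear lower bound, and (2) follows by the same "ball disjoint from the carrier" observation. The only difference is at the final claim, where the paper declares prunedness "immediate" from the definition; your gate-projection/wall-counting argument showing that the deep side of each $\hat h$ contains infinitely many hyperplanes of $\W(\alpha)$ is a legitimate filling-in of that step, and is essentially the argument the paper carries out later in the proof of Lemma~\ref{Lem:DeepSetProps}.\ref{Itm:StartOfChain}.
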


\begin{proof}
By~\cite[Corollary II.2.5]{BridsonHaefliger}, $f$ is a convex, $1$--Lipschitz function.   
By convexity, $f$ has left and right derivatives at every point in $(0,\infty)$, and these are non-decreasing.  Since $f$ is Lipschitz and convex,  the left and right 
derivatives coincide almost everywhere, hence $f'$ is well-defined almost everywhere, and
non-decreasing at all points where it is defined.

Let $t_0 = \alpha^{-1}(\hat h)$. Since the carrier $N(\hat h) \cong \hat h \times (-\frac{1}{2}, \frac{1}{2})$ is an $\ell^2$ product and $\alpha(t_0) \in \hat h$ lies on the midcube of a cube, there 
is an initial segment of $\alpha$ after crossing $\hat h$ on which $f(t)$ is an increasing linear function. Therefore, $f'(t) = f'(t_0) = A_0 > 0$ on this segment. (In fact, $f'(t_0) = \sin \theta_0$, 
where $\theta_0$ is the angle between $\alpha$ and $\hat h$ in the $\ell^2$ metric on a cube.) Hence $f'(t) \geq A_0$ for all $t\geq t_0$ at which $f'(t)$ is defined. Since $f$ is Lipschitz, it 
follows that for all $t_0\leq s\leq t$, we have 
$$f(t)-f(s)=\int^t_sf'(z) \,dz \geq A_0\cdot(t-s),$$
proving \ref{Itm:LinearDivergence}. 
Now, \ref{Itm:LinearDivergence} immediately implies \ref{Itm:DeepSide}.
The conclusion that $\W(\alpha)$ is pruned is then immediate from Definition~\ref{Def:Pruned}.
\end{proof}

The following lemma is a strengthening of \cite[Lemma 2.27]{CFI}. While \cite[Lemma 2.27]{CFI} proves that $D_a$ contains an infinite descending chain, we prove the stronger statement that $D_a$ is non-terminating.

\begin{lemma}\label{Lem:DeepSetProps}
For a point $a \in \tits X$, define $\~Y_{a} = \bigcap_{h\in D_a}{} h$. Then $D_a$ and $\~Y_a$ have the following properties:
\begin{enumerate}[$(1)$ ]
\item\label{Itm:DaConsist} $D_a$ is consistent, and therefore $D_a = \frakH^+_{\~Y_a}$. 
\item\label{Itm:StartOfChain} Every half-space $h_0 \in D_a$ belongs to a descending chain $\{h_0, h_1, \ldots \} \subset D_a$. Furthermore, if $\hat h_0 \in \W(\alpha)$ for a ray $\alpha \in a$, then $\hat h_n \in \W(\alpha)$ for all $n$.
\item\label{Itm:DaPrincipal} ${\~Y_a} \subset \roller X$ is the closure of a (unique, principal) Roller class $Y_a$.
\end{enumerate}
\end{lemma}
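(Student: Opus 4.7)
My plan is to prove the three parts sequentially, using (1) and (2) to deduce (3). For part (1), consistency of $D_a$ is immediate from Definition~\ref{Def:DeepSet}: if $h \in D_a$, then some $R$--neighborhood of a tail of $\alpha$ lies in $\hull(h)$ and so cannot lie in $\hull(h^*)$, forcing $h^* \notin D_a$; if $h \subset k$ then $\hull(h) \subset \hull(k)$, so the same neighborhood condition transfers to $k$. The containment $D_a \subset \frakH^+_{\~Y_a}$ is tautological from the definition of $\~Y_a$. For the reverse, I apply Proposition~\ref{LiftingDecomp}.\ref{ConsistentEmbedding} with $\mathfrak{s} = D_a$, which embeds $\~X(\frakH_{D_a})$ into $\~X$ with image $\~Y_a$. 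If $h^* \in D_a$, then $\~Y_a \subset h^*$, so $h \notin \frakH^+_{\~Y_a}$; and if both $h, h^* \notin D_a$, then $\hat h \in \frakH_{D_a}$ descends to a hyperplane of $\~X(\frakH_{D_a})$, and vertices on both of its sides pull back to points of $\~Y_a$ on both sides of $\hat h$, again forcing $h \notin \frakH^+_{\~Y_a}$.

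For part (2), fix $h_0 \in D_a$. We may assume $\hat h_0 \in \W(\alpha)$, since otherwise we replace $\alpha$ by the unique ray representing $a$ based at a vertex $x_0 \in h_0^*$ (which is nonempty as a vertex half-space). By Lemma~\ref{Lem:DeepTransverse}, $\W(\alpha)$ is a pruned UBS whose canonical orientation coincides with the deep orientation; hence $h_0$ is the canonical side of $\hat h_0$ and contains infinitely many elements of $\W(\alpha)$ entirely inside $\hull(h_0)$. Decompose $\W(\alpha) \sim \bigsqcup_{i=1}^k \U_i$ via Lemma~\ref{Lem:FinManyMinimals}; by pigeonhole, some $\U_i$ contains a hyperplane $\hat m \subset \hull(h_0)$. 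Lemma~\ref{MinimalChar} then produces an infinite descending chain $\{\hat k_n\}_{n \geq 0}$ in $\U_i$ with $\hat k_0 = \hat m$ and canonical orientations $k_0 \supsetneq k_1 \supsetneq \cdots$. The orientation $k_0$ must satisfy $k_0 \subsetneq h_0$: otherwise $k_0 \supseteq h_0^*$, forcing $\hull(k_0) \cap \hull(h_0)$ to coincide with the slab between $\hat h_0$ and $\hat m$, a region of bounded CAT(0) distance from $\hat h_0$; yet Lemma~\ref{Lem:DeepTransverse}.\ref{Itm:LinearDivergence} compels the tail of $\alpha$ (lying in both $\hull(k_0)$ and $\hull(h_0)$) to drift unboundedly far from $\hat h_0$. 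By the chain property, $k_n \subset k_0$ and $\hat k_n \subset \hull(k_0) \subset \hull(h_0)$ for every $n$, so prepending $h_0$ yields the required descending chain $h_0 \supsetneq k_0 \supsetneq k_1 \supsetneq \cdots$ in $D_a \cap \W(\alpha)$.

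Part (3) then follows quickly from (1), (2), and Lemma~\ref{ClassSetNonterminating}. Part (2) shows $D_a$ is non-terminating, and part (1) identifies $D_a = \frakH^+_{\~Y_a}$; the equivalence between the ``non-terminating half-spaces'' and ``closure of a class'' conditions in Lemma~\ref{ClassSetNonterminating} then identifies $\~Y_a$ as the Roller closure of a unique principal class $Y_a$. Any $y \in \~Y_a \cap X^{(0)}$ would satisfy $\frakH^+_y \supset D_a$ and thereby contain an infinite descending chain, contradicting the descending chain condition at a vertex of $X$; hence $\~Y_a \subset \roller X$. The hard part throughout is part (2): certifying that the chain produced in the chosen minimal sub-UBS has its canonical orientations nested strictly inside $h_0$, which is exactly what the bounded-slab-versus-linearly-divergent-tail comparison accomplishes.
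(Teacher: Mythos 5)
Your parts (1) and (3) are correct and essentially match the paper's own argument: consistency of $D_a$ is immediate, Proposition~\ref{LiftingDecomp} gives $D_a=\frakH^+_{\~Y_a}$, and (3) follows from (1), (2), and Lemma~\ref{ClassSetNonterminating} once one rules out vertices of $X$ via the descending chain condition. The problem is in part (2), where you take a different route from the paper and the key step is justified by a false claim.

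The flawed step is the assertion that if the canonical (deep) orientation $k_0$ of $\hat m$ satisfied $k_0\supseteq h_0^*$, then $\hull(k_0)\cap\hull(h_0)$ --- the slab between $\hat h_0$ and $\hat m$ --- would lie at bounded CAT(0) distance from $\hat h_0$. That is false in general: already in a tree, the slab between two disjoint hyperplanes contains every branch splitting off between them, and such branches contain points arbitrarily far from both hyperplanes. So the intended contradiction with Lemma~\ref{Lem:DeepTransverse}.\ref{Itm:LinearDivergence} does not materialize as written. The conclusion $k_0\subsetneq h_0$ is nevertheless true for a more elementary reason you did not invoke: since $\hat m\subset\hull(h_0)$ and $\alpha(0)\in h_0^*$, the basepoint lies on the side of $\hat m$ containing $\hat h_0$; because $\alpha$ crosses $\hat m$ and half-spaces are convex, the tail of $\alpha$ --- hence the deep side of $\hat m$ --- is the opposite side, which is contained in $h_0$. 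Two smaller imprecisions in the same part: Lemma~\ref{MinimalChar} produces \emph{some} chain in $\U_i$ whose inseparable closure is commensurate with $\U_i$, not one based at a prescribed $\hat m$ (you need the prepending argument from the proof of Lemma~\ref{Lem:DominantChain}, using that the deep side of $\hat m$ contains a tail of that chain); and the $\U_i$ are only commensurate with subsets of $\W(\alpha)$, so you must pass to a cofinite subchain to guarantee $\hat h_n\in\W(\alpha)$ in the ``furthermore'' clause. For comparison, the paper's proof of (2) avoids the minimal-UBS decomposition entirely: given $h_0\in D_a$ and $\alpha$ based in $h_0^*$, deepness of $h_0$ forces $\alpha(t)$ arbitrarily far from $\hat h_0$, so Lemma~\ref{Lem:WallQI} and Proposition~\ref{Prop:Proj} produce a hyperplane $\hat h_1$ separating $\alpha(t)$ from $N(\hat h_0)$; orienting $h_1\subset h_0$ and applying Lemma~\ref{Lem:DeepTransverse} gives $h_1\in D_a$, and one inducts. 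Your skeleton is repairable, but as written the central inequality $k_0\subsetneq h_0$ rests on an incorrect geometric premise.
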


\begin{proof}
The consistency of $D_a$ is immediate from the definition. Thus, by Proposition~\ref{LiftingDecomp}.\ref{ConsistentEmbedding}, we obtain $D_a = \frakH^+_{\~Y_a}$. This proves \ref{Itm:DaConsist}.

To prove \ref{Itm:StartOfChain}, let $h_0 \in D_a$.  By \cite[Proposition II.8.2]{BridsonHaefliger},  there exists a geodesic 
ray $\alpha$ representing $a$ such that $\alpha(0) \in h_0^*$. Since $h_0 \in D_a$, Definition~\ref{Def:DeepSet} implies that 
$\alpha$ must deviate arbitrarily far from $\hat h_0$.
Thus, by Lemma~\ref{Lem:WallQI}, a point $\alpha(t)$ for large $t$ has the property that $|\W(\alpha(t), y)| \geq 1$ for 
every $y \in N(\hat h_0)$. Now,  Proposition~\ref{Prop:Proj} says that there is at least one hyperplane $\hat h_1$ separating 
$\alpha(t)$ from $N(\hat h_0)$. Observe that $\hat h_1 \in \W(\alpha)$ by construction, and orient $\hat h_1$ by choosing the 
half-space $h_1 \subset h_0$. Now, Lemma~\ref{Lem:DeepTransverse} says that $h_1 \in D_a$. Continuing inductively, we obtain 
a descending chain $\{h_0, h_1, \ldots \} \subset D_a$ where $\hat h_n \in \W(\alpha)$ for all $n$, proving 
\ref{Itm:StartOfChain}.

By \ref{Itm:DaConsist},  $D_a = \frakH^+_{\~Y_a}$ is consistent, and by  \ref{Itm:StartOfChain}, $D_a$ contains an infinite descending chain. Thus ${\~Y_a}  = \bigcap_{h\in D_a}{} h$ is a convex, nonempty subset of $\roller X$.  Since $D_a$ is non-terminating by  \ref{Itm:StartOfChain}, Lemma~\ref{ClassSetNonterminating} implies
 that $\~Y_{D_a}$ is the closure of a principal Roller class $Y_a \in \guralnik X$. (Compare Definition~\ref{Def:PrincipalClass}.) This proves \ref{Itm:DaPrincipal}.
 \end{proof}

\begin{remark}\label{Rem:DaCuboid}
The previous lemmas generalize readily to a cuboid metric $d_X^\rho$. Indeed, Lemma~\ref{Lem:DeepTransverse} generalizes because because hyperplane carriers are still $\ell^2$ products, and because \cite[Corollary II.2.5]{BridsonHaefliger} works in any CAT(0) space.
Similarly, Lemma~\ref{Lem:DeepSetProps} generalizes because \cite[Proposition II.8.2]{BridsonHaefliger} works in every CAT(0) space and, because $d_X^\rho$ is quasi-isometric to the $\ell^1$--metric on $X^{(0)}$. 
\end{remark}

\subsection{CAT(0) rays and their UBSes}\label{subSec:CAT_0_UBS}
Given a CAT(0) geodesic ray $\gamma$ in $X$, starting at a point of $X^{(0)}$, recall that  $\W(\gamma)$ denotes the set of 
hyperplanes $h$ that intersect $\gamma$.  By Lemma \ref{Lem:DeepTransverse}, 
$\W(\gamma)$ is a UBS.

Throughout this subsection, fix $a,b\in\tits X$ and let $\alpha, \beta$ be CAT(0) geodesic rays representing $a$ and $b$, respectively. We can assume that  $\alpha(0)=\beta(0) = x \in X^{(0)}$.

The following lemma says that when $\W(\alpha)\cup\W(\beta)$ has finite symmetric difference with a UBS, it actually is a UBS. 
Several subsequent lemmas build toward Proposition~\ref{Prop:build_CAT(0)}, which will show that $\W(\alpha)\cup\W(\beta)$ is in fact a UBS associated to a ray $\gamma$.

\begin{lemma}\label{Lem:SameBasepointUnionUBS}
Suppose that $\W(\alpha)\cup\W(\beta)$ is commensurate with a UBS. 
Define $\mathcal A=\W(\alpha) \setminus \W(\beta)$ and $\mathcal B=\W(\beta) \setminus \W(\alpha)$ and  $\mathcal C=\W(\alpha)\cap\W(\beta)$.
Then
\begin{enumerate}[$(1)$]
 \item\label{Itm:InfiniteOrEmpty} Each of $\mathcal A,\mathcal B$ is infinite or empty.
 \item\label{Itm:Crossers} Every element of $\mathcal A$ crosses every element of $\mathcal B$.
  \item\label{Itm:UBSDecomp} $\W(\alpha) \cup \W(\beta) = \mathcal A \sqcup \mathcal B \sqcup \mathcal C$ is a UBS.

\end{enumerate}
\end{lemma}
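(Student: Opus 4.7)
The plan is to use Lemma~\ref{Lem:DeepTransverse} and Lemma~\ref{Lem:DeepSetProps} to build infinite descending chains of hyperplanes in $\W(\alpha)$ that lie entirely inside $\mathcal{A}$ (and symmetrically for $\W(\beta)$ and $\mathcal{B}$), and then to exploit unidirectionality of $\W(\alpha)\cup\W(\beta)$, which is inherited from the commensurate UBS, to force transversality of $\mathcal A$ and $\mathcal B$.

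For~\ref{Itm:InfiniteOrEmpty}, suppose $\hat h \in \mathcal A$. Orient $h$ as the deep half-space in $D_a$ supplied by Lemma~\ref{Lem:DeepTransverse}.\ref{Itm:DeepSide}, so that $\alpha$ enters $h$ and $x = \alpha(0) = \beta(0) \in h^{*}$. Since $\hat h \notin \W(\beta)$ and $\beta(0) \in h^{*}$, the entire ray $\beta$ remains in $h^{*}$. Lemma~\ref{Lem:DeepSetProps}.\ref{Itm:StartOfChain} extends $h_0 = h$ to an infinite descending chain $\{h_n\} \subset D_a$ with $\hat h_n \in \W(\alpha)$ for all $n$. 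Each $\hat h_n \subset h$ is disjoint from $\beta \subset h^{*}$, hence $\hat h_n \notin \W(\beta)$ and $\hat h_n \in \mathcal A$. Thus $\mathcal A$ is infinite, and $\mathcal B$ follows by symmetry.

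For~\ref{Itm:Crossers}, note first that unidirectionality is preserved under finite symmetric differences, so the hypothesis forces $\W(\alpha) \cup \W(\beta)$ to be unidirectional. Given $\hat h \in \mathcal A$ and $\hat k \in \mathcal B$, orient $h$ and $k$ as the respective deep sides, so tails of $\alpha, \beta$ lie in $h \cap k^{*}$ and $h^{*} \cap k$ respectively, while $x \in h^{*} \cap k^{*}$. Three of the four regions determined by $\hat h$ and $\hat k$ are thus nonempty. Of the four possible parallel arrangements ($h \subset k$, $h \subset k^{*}$, $h^{*} \subset k$, $h^{*} \subset k^{*}$), only $h \subset k^{*}$ (equivalently $k \subset h^{*}$) is compatible with these three nonemptinesses. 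But in that case the chain $\{\hat k_m\}_{m \geq 0} \subset \mathcal B$ produced by~\ref{Itm:InfiniteOrEmpty} satisfies $\hat k_m \subset k \subset h^{*}$, while $\{\hat h_n\} \subset h$, so both sides of $\hat h$ contain infinitely many hyperplanes of $\W(\alpha) \cup \W(\beta)$, contradicting unidirectionality. Hence $\hat h \pitchfork \hat k$.

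Finally, for~\ref{Itm:UBSDecomp}, I would verify the four UBS axioms for $\W(\alpha) \cup \W(\beta)$. Infiniteness and unidirectionality are already in hand. A facing triple consists of pairwise disjoint hyperplanes, since if two members crossed, all four quadrants would be nonempty and their opposite half-spaces would intersect. Any such triple in $\W(\alpha) \cup \W(\beta)$ cannot lie inside $\W(\alpha)$ or $\W(\beta)$ alone (each is already a UBS), so it contains an element of $\mathcal A$ and an element of $\mathcal B$, which must cross by~\ref{Itm:Crossers}, a contradiction. For inseparability, if $\hat w$ separates $\hat u, \hat v \in \W(\alpha) \cup \W(\beta)$, then $\hat u$ and $\hat v$ are parallel, so by~\ref{Itm:Crossers} they cannot be split one into $\mathcal A$ and one into $\mathcal B$; they therefore both lie in $\W(\alpha)$ or both in $\W(\beta)$, and inseparability of that UBS places $\hat w$ back in $\W(\alpha) \cup \W(\beta)$. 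The main obstacle is the parallel-case analysis in~\ref{Itm:Crossers}: unidirectionality, together with the chains from~\ref{Itm:InfiniteOrEmpty}, is exactly what eliminates the residual configuration $h \subset k^{*}$.
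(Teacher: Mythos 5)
Your proof is correct and follows essentially the same route as the paper's: part (1) via the descending chains of deep half-spaces from Lemmas~\ref{Lem:DeepTransverse} and~\ref{Lem:DeepSetProps}, part (2) by playing two such chains against unidirectionality of $\W(\alpha)\cup\W(\beta)$, and part (3) by reducing any facing triple or inseparability violation to a non-crossing pair with one member in $\mathcal A$ and one in $\mathcal B$. The only difference is presentational — your explicit enumeration of the parallel arrangements in (2) is a slightly more detailed version of the paper's one-line observation that disjoint deep half-spaces yield disjoint chains.
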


\begin{proof}
Let $\U = \W(\alpha) \cup \W(\beta)$. Since $\U$ is commensurate with a UBS by hypothesis, it must be infinite and unidirectional. We 
will check the other properties of a UBS at the end of the proof.

By Lemma~\ref{Lem:DeepTransverse}, every hyperplane $\hat h = \hat h_0 \in \W(\alpha)$ corresponds to a deep half-space $h_0 \in D_a$, and  by Lemma~\ref{Lem:DeepSetProps}, $h_0$ is the start of a descending chain $\{h_0, h_1, \ldots \} \subset D_a$, where $h_n \in \W(\alpha)$ for every $n$. Similarly, every hyperplane $k_0 \in \W(\beta)$ defines a descending chain $\{k_0, k_1, \ldots \} \subset D_b$, where $k_m \in \W(\beta)$ for every $m$.

 If $\mathcal A = \W(\alpha) \setminus \W(\beta) \neq \emptyset$, then any hyperplane $\hat h_0 \in \mathcal A$ defines a chain  $\{ \hat h_0, \hat h_1, \ldots \} \subset \W(\alpha)$, as above. Furthermore, $\hat h_0$ separates every $\hat h_n$ from $\beta$. Thus $\hat h_n \in \mathcal A$, and $\mathcal A$ is infinite. Similarly, $\mathcal B$ must be empty or infinite, proving  \ref{Itm:InfiniteOrEmpty}.

Next, suppose for a contradiction that $\hat h_0 \in \mathcal A $ is disjoint from $\hat k_0 \in \mathcal B$. Then the deep half-spaces $h_0 \in D_a$ and $k_0 \in D_b$ are also disjoint. But then the disjoint chains $\{\hat h_n\} \subset \W(\alpha)$ and $\{\hat k_m \} \subset \W(\beta)$ contradict the fact that $\U = \W(\alpha) \cup \W(\beta)$ is unidirectional. This proves  \ref{Itm:Crossers}.

Finally, we show that $\U$ is a UBS. We have already checked that $\U$ is infinite and unidirectional.
To check inseparability, suppose that $\hat \ell$ is a hyperplane of $X$ that separates $\hat h, \hat k \in \U$. Since we already know that $\W(\alpha)$ and $\W(\beta)$ are inseparable, it suffices to check the case that $\hat h \in \mathcal A$ and $\hat k \in \mathcal B$. But then $\hat h$ and $\hat k$ must cross by \ref{Itm:Crossers}, a contradiction. Similarly, since $\W(\alpha)$ and $\W(\beta)$ contain no facing triples, any potential facing triple in $\U$ must contain at least one hyperplane  $\hat h \in \mathcal A$ and at least one hyperplane  $\hat k \in \mathcal B$. But then $\hat h$ and $\hat k$ must cross, hence $\U$ cannot contain any facing triples. Thus $\U$ is a UBS, proving \ref{Itm:UBSDecomp}.
\end{proof}

Recall from Definition~\ref{Def:Umbra} that given a UBS $\U$, we have the umbra $\~Y_\U \subset \~X$.
Following Proposition~\ref{Prop:Proj}, let $\gate_{Y_\U} = \gate_{\~Y_\U} \from \~X \to Y_\U$ be the gate projection to $\~Y_\U$.

\begin{lemma}\label{Lem:UBSBoundaryPoint}
Suppose $\alpha$ and $\beta$ are geodesic rays starting at $x\in X^{(0)}$, and that $\U = \W(\alpha)\cup \W(\beta)$ is commensurate with a UBS. Then $\U = \W(x, \gate_{Y_\U}(x))$. 
\end{lemma}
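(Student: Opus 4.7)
My plan is to reduce the claim to the hyperplane equality $\U = \W(x, \~Y_\U)$ and then prove each inclusion separately. By Proposition~\ref{Prop:Proj}, $\W(x, \gate_{Y_\U}(x)) = \W(x, \~Y_\U)$, where $\~Y_\U$ is the umbra of Definition~\ref{Def:Umbra}; this reduces the task to showing $\U = \W(x, \~Y_\U)$. By Lemma~\ref{Lem:SameBasepointUnionUBS}.\ref{Itm:UBSDecomp}, $\U = \W(\alpha) \cup \W(\beta)$ is itself a UBS, and by Lemma~\ref{Lem:DeepTransverse} each of $\W(\alpha), \W(\beta)$ is pruned, with the canonical orientation of any $\hat h$ equal to its deep side (the side not containing the common basepoint $x = \alpha(0) = \beta(0)$).

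For the forward inclusion $\U \subseteq \W(x, \~Y_\U)$, fix $\hat h \in \U$, say $\hat h \in \W(\alpha)$, with deep side $h$. Lemma~\ref{Lem:DeepSetProps}.\ref{Itm:StartOfChain} supplies a descending chain $\{h = h_0, h_1, h_2, \ldots\} \subset D_a$ with every $\hat h_n \in \W(\alpha)$, and all but finitely many $\hat h_n$ lie in any pruned sub-UBS $\U' \sim \U$ used to define the umbra. Since the canonical side of each such $\hat h_n$ in $\U'$ is $h_n \subset h$, Definition~\ref{Def:Umbra} yields $\~Y_\U \subset h_n \subset h$; combined with $x \in h^*$, this shows that $\hat h$ separates $x$ from $\~Y_\U$.

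For the reverse inclusion I argue by contradiction: suppose there is $\hat k \in \W(x, \~Y_\U) \setminus \U$, oriented so that $x \in k^*$ and $\~Y_\U \subset k$. Since $\hat k$ crosses neither $\alpha$ nor $\beta$ and both rays start in $\hull(k^*)$, convexity forces $\alpha, \beta \subset \hull(k^*)$. I then consider the family $\mathcal G = \{k^*\} \cup \{h : \hat h \in \U'\}$ of extended half-spaces in $\~X$. Pairwise intersections within $\{h : \hat h \in \U'\}$ are nonempty by the unidirectionality argument from the proof of Lemma~\ref{UBScomplex}. The pairings $k^* \cap h$ are also nonempty: if $\hat h \in \W(\alpha)$, then for $t$ larger than the crossing time $\alpha^{-1}(\hat h)$, the CAT(0) point $\alpha(t)$ belongs to the nonempty convex subcomplex $\hull(h) \cap \hull(k^*)$, and any vertex $v$ of this subcomplex lies in $h \cap k^*$ as extended half-spaces. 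The Helly property for half-spaces in the median space $\~X$~\cite[Section 2.2]{Roller} promotes pairwise to finite intersection, and compactness of $\~X$ then yields a point in $\bigcap \mathcal G = \~Y_\U \cap k^*$, contradicting $\~Y_\U \subset k$. The main obstacle I anticipate is the bookkeeping for the pairwise intersection $k^* \cap h$; once one locates a vertex in $\hull(h) \cap \hull(k^*)$, Helly plus compactness finish things off just as in Lemma~\ref{UBScomplex}.
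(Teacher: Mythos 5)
Your proof is correct in substance but follows a genuinely different route from the paper's. The paper avoids separate inclusions entirely: it observes that $\W(\alpha)=\W(x,Y_a)$ and $\W(\beta)=\W(x,Y_b)$ (every crossed hyperplane being deep), writes $\frakH^+_{\gate_{Y_a}(x)}$, $\frakH^+_{\gate_{Y_b}(x)}$ and $\frakH^+_{\gate_{Y_\U}(x)}$ explicitly via the lifting decomposition of Proposition~\ref{LiftingDecomp} in terms of the deep sets $D_a$, $D_b$ and $D_a\cup D_b$, and then finishes with a purely set-theoretic identity showing that $\frakH_x^+\setminus\frakH^+_{\gate_{Y_\U}(x)}$ is exactly the set of half-spaces underlying $\W(x,Y_a)\cup\W(x,Y_b)$. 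Your two-inclusion argument --- descending chains in $D_a$ for $\U\subseteq\W(x,\~Y_\U)$, and Helly plus compactness of $\~X$ for the reverse --- is more geometric and reproves from scratch some of what Lemma~\ref{UBScomplex} and Proposition~\ref{LiftingDecomp} already package; the paper's computation is shorter but leans harder on that machinery.

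Two steps need tightening, though neither is fatal. First, you should justify that the canonical orientation in $\U'$ of each $\hat h\in\U$ is its deep side: the deep side contains an infinite descending chain of $\U$ by Lemma~\ref{Lem:DeepSetProps}, so unidirectionality of $\U$ (which is a UBS by Lemma~\ref{Lem:SameBasepointUnionUBS}) forces the side containing $x$ to contain only finitely many elements of $\U$. Second, ``convexity forces $\alpha\subset\hull(k^*)$'' is not a valid deduction: a ray disjoint from $\hat k$ stays in one component of $X\setminus\hat k$, but that component also contains half of the open carrier $N(\hat k)$, so $\alpha(t)$ need not lie in $\hull(k^*)$ for the $t$ you choose. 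What you actually need is only that $h\cap k^*\neq\emptyset$ for each canonical half-space $h$, and this follows by contradiction: if $h\cap k^*=\emptyset$ then $h\subset k$, so $\hull(h)\subset\hull(k)$; since $\alpha$ (or $\beta$) travels from $x\in\hull(k^*)$ to points deep inside $\hull(h)$ by Lemma~\ref{Lem:DeepTransverse}, it would have to cross $\hat k$, contradicting $\hat k\notin\U$. With those repairs the Helly-plus-compactness argument closes the proof exactly as you describe.
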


\begin{proof}
Since the geodesic rays start at the vertex $x$, Lemma~\ref{Lem:DeepSetProps} says that  all the hyperplanes they cross are deep, i.e.  $\W(x,Y_a) =\W(\alpha)$ and $\W(x,Y_b)= \W(\beta)$, hence $\U = \W(x,Y_a)\cup \W(x,Y_b)$. In addition, Lemma~\ref{Lem:SameBasepointUnionUBS} says that $\U$ is a UBS.

Now, Lemma \ref{UBScomplex} gives  $\~Y_\U= \~Y_a  \cap \~Y_b$, and in particular $\frakH_{\~Y_\U}^+ = D_a\cup D_b$. By Lemma \ref{standardUBS} and
  Remark \ref{Rem:UBSofRollerGeodesic}, we have 
 $\W(x, \~Y_\U)=\W(x, Y_\U)= \W(x, \gate_{Y_\U}(x))$. 
We must show that this UBS equals  $\U = \W(x,Y_a)\cup \W(x,Y_b)$.
 To this end we have by Proposition \ref{LiftingDecomp}:
 \begin{eqnarray*}
\frakH_{\gate_{Y_a}(x)}^+ &=& [\frakH_x^+\setminus D_a^*]\cup D_a,\\
\frakH_{\gate_{Y_b}(x)}^+ &=& [\frakH_x^+\setminus D_b^*]\cup D_b,\\
\frakH_{\gate_{Y_\U}(x)}^+ &=& [\frakH_x^+\setminus (D_a^*\cup D_b^*)]\cup (D_a\cup D_b).
\end{eqnarray*}
Then a standard Venn diagram argument shows that 
\begin{eqnarray*}
\frakH_x^+\setminus \frakH_{\gate_{Y_\U}(x)}^+& = &\frakH_x^+\setminus \big( [\frakH_x^+\setminus (D_a^*\cup D_b^*)]\cup (D_a\cup D_b)
\big)
\\
&=&
 \big( \frakH_x^+\cap (D_a^*\cup D_b^*) \big) \setminus(D_a\cup D_b).
\end{eqnarray*}
The set of hyperplanes on the left-hand side is $\W(x, \gate_{Y_\U}(x))$, while the set of hyperplanes on the right-hand side is $\W(x,Y_a)\cup \W(x,Y_b)$.
\end{proof}

\begin{lemma}[Tits distance bound]\label{Lem:tits_distance_bound}
 Suppose that $\W(\alpha)\cup\W(\beta)$ is commensurate with a UBS.  Then $d_T(a,b)\leq\pi/2$.
\end{lemma}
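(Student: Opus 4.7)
The plan is to reduce the statement to a metric estimate $d_X(\alpha(t), \beta(t)) \leq \sqrt{2}\, t$ and to establish this estimate by isometrically embedding a convex subcomplex containing both rays into a Euclidean orthant. The reduction uses two standard CAT(0)/CAT(1) facts: first, for rays from a common basepoint $x$ in a CAT(0) space, the comparison angle $\angle^c_x(\alpha(t), \beta(t))$ is non-decreasing in $t$ and converges to $\angle_T(a,b)$ (see \cite[Prop.~II.9.8]{BridsonHaefliger}), so $\angle_T(a,b) \leq \pi/2$ is equivalent to $\limsup_{t \to \infty} d_X(\alpha(t), \beta(t))/t \leq \sqrt{2}$; second, since $(\tits X, d_T)$ is CAT(1), one has $d_T(a,b) = \angle_T(a,b)$ whenever $\angle_T(a,b) < \pi$. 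Thus the bound $d_X(\alpha(t), \beta(t)) \leq \sqrt{2}\, t$ will suffice.

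To construct the enveloping subcomplex, set $p = \gate_{Y_\U}(x) \in \roller X$, so that $\W(x, p) = \U$ by Lemma~\ref{Lem:UBSBoundaryPoint}. Let $I$ be the CAT(0) convex subcomplex of $X$ whose vertex set is $\I(x, p) \cap X^{(0)}$, so that $\W(I) = \U$. For every half-space $\hull(h)$ with $\hat h \notin \U$ and $x \in \hull(h)$, the hyperplane $\hat h$ does not separate $x$ from $p$ and is not crossed by $\alpha$ or $\beta$; hence $p \in \hull(h)$ and $\alpha \cup \beta \subset \hull(h)$. Intersecting over all such half-spaces shows $\alpha, \beta \subset I$.

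Next, fix $t > 0$ and choose a vertex $w_t \in I$ such that the finite cubical interval $\J(x, w_t) \subset I$ contains the CAT(0) segments $\alpha\vert_{[0, t]}$ and $\beta\vert_{[0, t]}$: such $w_t$ exists because the canonical half-spaces of $\U$ pairwise intersect (by unidirectionality of $\U$, combined with the nested/transverse dichotomy for pairs in $\U$), so the Helly property for half-spaces gives a common vertex in their intersection. By the Euclidean embedding theorem for cubical intervals (the discussion after Lemma~\ref{Lem:proper}, citing~\cite[Theorem 1.16]{BCGNW}), $\J(x, w_t)$ embeds isometrically into $\R^{\dim X}$ as a sub-box $[0, L_1] \times \cdots \times [0, L_k]$, sending $x$ to the origin, where each coordinate direction corresponds to a minimal UBS in the decomposition of Lemma~\ref{Lem:FinManyMinimals} with the positive direction matching the canonical UBS orientation (i.e.\ towards $p$). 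Under this embedding $\alpha\vert_{[0, t]}$ and $\beta\vert_{[0, t]}$ are Euclidean line segments from the origin; since $\alpha$ crosses each hyperplane in $\U$ only in its canonical (positive) direction, its direction vector $v_\alpha$ has all coordinates non-negative, and likewise for $v_\beta$. Hence $v_\alpha \cdot v_\beta \geq 0$, so the Euclidean angle $\theta$ between them satisfies $\theta \leq \pi/2$ and
$$d_X(\alpha(t), \beta(t)) = \lVert \varphi(\alpha(t)) - \varphi(\beta(t)) \rVert = 2t \sin(\theta/2) \leq \sqrt{2}\, t.$$
Letting $t \to \infty$ and applying the reduction finishes the argument.

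The main obstacle is the construction of $w_t$ and the verification that $\J(x, w_t)$ really contains the CAT(0) segments $\alpha\vert_{[0, t]}$ and $\beta\vert_{[0, t]}$, not merely the vertices nearest to $\alpha(t), \beta(t)$: one needs to track the half-space structure carefully and to choose $w_t$ far enough that every cube meeting the two segments has all its vertices inside $\I(x, w_t)$. A subsidiary obstacle is that Lemma~\ref{Lem:FinManyMinimals} only gives an orthogonal decomposition $\U \sim \bigsqcup \U_i$ up to finitely many exceptional hyperplanes; these exceptions affect the Euclidean embedding only in a bounded region near $x$ and so do not disrupt the asymptotic estimate, but this must be addressed explicitly when identifying coordinate directions with the $\U_i$.
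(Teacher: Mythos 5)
Your reduction of the lemma to the linear estimate $d_X(\alpha(t),\beta(t))\leq\sqrt 2\,t$ via \cite[Proposition II.9.8(4)]{BridsonHaefliger} is valid, and it is a genuinely different route from the paper, which instead bounds the Alexandrov angle between representative rays at \emph{every} basepoint and then takes the supremum in \eqref{Eqn:TitsAngle}. The gap is in the geometric step that is supposed to deliver the estimate. The Euclidean embedding you invoke (the discussion after Lemma~\ref{Lem:proper}) is an isometric embedding of the interval with its $\ell^1$ metric into $\R^{\dimension(X)}$; it says nothing about the CAT(0) metric, and a cubical interval $\J(x,w)$ is in general \emph{not} CAT(0)-isometric to a box $[0,L_1]\times\cdots\times[0,L_k]$. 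The staircases of Figure~\ref{Fig:Staircases} are the standard counterexample: they are cubical intervals, their two minimal UBSes are only ``almost transverse,'' and the complex is not a product of two segments (indeed, if intervals were always boxes, the $\ell^2$--visibility pathologies at the heart of this paper would not arise). Consequently your assertion that under the embedding ``$\alpha|_{[0,t]}$ and $\beta|_{[0,t]}$ are Euclidean line segments from the origin'' fails: a CAT(0) geodesic in a cubical interval bends as it passes from cube to cube, so the identity $d_X(\alpha(t),\beta(t))=2t\sin(\theta/2)$ for a fixed Euclidean angle $\theta\leq\pi/2$ has no justification, and the asymptotic estimate does not follow.

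What survives of your idea is its local germ. At the basepoint itself, the $\triangleleft$--minimal hyperplanes of $\U$ pairwise cross and are dual to a single cube $C$ containing the basepoint, and every CAT(0) ray from the basepoint inside the cubical hull of $\alpha\cup\beta$ has a nontrivial initial segment in $C$; two segments issuing from a corner of a Euclidean cube make angle at most $\pi/2$. But this only bounds the Alexandrov angle at that one point, not the limit of comparison angles as $t\to\infty$, so it cannot be fed into your reduction. The paper's proof runs this one-cube argument at every point $y\in X$ (subdividing so that $y$ becomes a vertex) to get $\angle_y(\alpha_y,\beta_y)\leq\pi/2$ uniformly, and then takes the supremum defining $\angle_T(a,b)$. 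To repair your write-up you would either need to adopt that strategy, or find a genuinely global argument for $d_X(\alpha(t),\beta(t))\leq\sqrt2\,t$; the $\ell^1$ Euclidean embedding of intervals does not provide one.
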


\begin{proof}
We will show that $\pi/2$ is an upper bound on the Alexandrov angle between rays representing $a,b$ based at an arbitrary point, and then 
use \eqref{Eqn:TitsAngle}.  The bound will be produced by finding a convex subspace, 
isometric to a Euclidean cube, which has one corner at the basepoint and which contains nontrivial initial segments of the 
rays.

\smallskip
\textbf{Angle bound at a vertex:}  Let $y\in X$ be an arbitrary vertex.  Let $\alpha',\beta'$ be CAT(0) geodesic rays 
emanating from $y$ and representing $a,b$ respectively (so, $\alpha$ is asymptotic to $\alpha'$ and $\beta$ is asymptotic to 
$\beta'$).  Let $\U=\W(\alpha)\cup\W(\beta)$, and let $\U'=\W(\alpha')\cup\mathcal 
W(\beta')$.  Then $\U$ is commensurate to $\U'$, and hence commensurate to a UBS.  
Lemma~\ref{Lem:SameBasepointUnionUBS} then implies that $\U'$ is actually a UBS.

Now let $X_y\subset X$ be the cubical convex hull of $\alpha'\cup\beta'$.  Then the set of hyperplanes crossing $X_y$ is 
exactly $\U'$.  Moreover, by Lemma~\ref{Lem:UBSBoundaryPoint}, there exists a point $\bar y$ in the Roller boundary 
such that $\U'$ is precisely the set of hyperplanes separating $y$ from $\bar y$.

Let $\triangleleft$ be the partial order on $\U'$ defined by declaring that $\hat h\triangleleft \hat h'$ if the hyperplane $\hat h$ 
separates $y$ from $\hat h'$.  Since every hyperplane of $X_y$ separates $y$ from $\bar y$, any hyperplanes $\hat h,\hat 
h'\in\U'$ are either $\triangleleft$--comparable, or they cross.

Note that if $\hat h\in\U'$ is dual to a $1$--cube of $X_y$ incident to $y$, then $\hat h$ is $\triangleleft$--minimal.  
Hence the set $\U'_{\min}$ of $\triangleleft$--minimal hyperplanes is nonempty. Since $\triangleleft$--incomparable hyperplanes cross, $\U'_{\min}$ is a 
set of pairwise-crossing hyperplanes of $X$.  Since $X_y$ is convex and every hyperplane in $\U'$ crosses $X_y$, we 
have a cube $C \subset X_y$ such that the hyperplanes crossing $C$ are exactly those in $\U'_{\min}$.

Note $y\in C$.  Also note that any cube $C'$ of $X_y$ with $y\in C'$ satisfies $C'\subset C$.  Thus any nontrivial 
CAT(0) geodesic segment in $X_y$ emanating from $y$ has a nontrivial initial subsegment  lying in $C$.  In 
particular, $\alpha'$ and $\beta'$ have nontrivial initial segments lying in $C$.  Since the ambient CAT(0) metric restricts 
to the Euclidean metric on the cube $C$, the Alexandrov angle made by $\alpha',\beta'$ at $y$ satisfies
$$\angle_y(\alpha',\beta')=\lim_{t,t'\to0} \angle_y(\alpha'(t),\beta'(t))\leq \pi/2,$$
because any two segments in a cube emanating from a common corner make an angle at most $\pi/2$.

\smallskip
\textbf{Angle bound at arbitrary points:}  We now bound the angle made by rays representing $a,b$ and emanating from 
non-vertex points, by an identical argument taking place in an appropriate cubical subdivision of $X$.  Let $y\in X$ be an 
arbitrary point.  Perform a cubical subdivision of $X$ to obtain a CAT(0) cuboid complex $X'$ in which $y$ is a vertex, so that
the CAT(0) cuboid metric coincides with the original metric on $X$.  

More precisely, for each hyperplane $\hat h$, identify its carrier with $\hat h\times[-\frac12,\frac12]$.  For each $\hat h$ 
such that $y\in\hat h\times(-\frac12,\frac12)$, let $\epsilon_{\hat h}$ be such that $y\in \hat h\times\{\epsilon_{\hat 
h}\}$.  The (geodesically convex) subspace $\hat h\times\{\epsilon_{\hat h}\}$ has a natural cubical structure with an $n$--cube 
for each $(n+1)$--cube intersecting $\hat h$.  We subdivide $X$ so that the cubes of $\hat h\times\{\epsilon_{\hat h}\}$ are 
subdivided cubes whose edges are segments in $X$ whose lengths are inherited from $X$.  The result is a cuboid complex $X'$ such that 
the identity map $X\to X'$ is an isometry in the CAT(0) metric, preserves the median, and sends $y$ to a vertex.  Since the 
hyperplanes of $X'$ are parallel copies of hyperplanes of $X$, the set of hyperplanes of $X'$ that cross $\alpha\cup\beta$ 
continues to be a UBS, so Lemma~\ref{Lem:UBSBoundaryPoint} still applies. (Compare Remark~\ref{Rem:DaCuboid}.) We can thus argue exactly as before to see that 
$\angle_y(\alpha',\beta')\leq\pi/2$.

\smallskip
\textbf{Conclusion:}  We have shown that, for all $y\in X$, letting $\alpha',\beta'$ be the rays based at $y$ and 
representing $a,b$ respectively, we have $\angle_y(\alpha',\beta')\leq\pi/2$.  Taking the supremum over all $y\in X$, as in \eqref{Eqn:TitsAngle}, we see 
that the angular metric satisfies $\angle_T(a,b)\leq \pi/2$.  Now, 
Definition~\ref{Def:tits_metric} gives $\dist_T(a,b)\leq\pi/2$, as required.
\end{proof}

\begin{remark}\label{Rem:CuboidsIndistanceBound}
Lemmas~\ref{Lem:SameBasepointUnionUBS} and~\ref{Lem:UBSBoundaryPoint} extend immediately to the cuboid setting, because they use only hyperplane combinatorics and prior lemmas. The above proof of 
Lemma~\ref{Lem:tits_distance_bound} also extends to cuboids, because it uses prior lemmas combined with CAT(0) geometry. Indeed, the above argument uses cuboids in combination 
with Remark~\ref{Rem:DaCuboid} to prove the desired angle bound for arbitrary basepoints in $X$. The key conceptual reason why the argument works for cuboids is that every pair of geodesic segments 
in a cuboid,  emanating from a corner, meets at angle at most $\pi/2$. 

The reliance on cuboids to prove the desired result for cubes can be avoided, as follows. First, prove the angle bound at a vertex of $X$, exactly as above. Then, let $X'$ be the cubical subdivision 
of $X$, and observe that the CAT(0) geodesics in $X'$ are exactly the same as those in $X$. (See e.g.\ \cite[Section 2.1.6]{fioravanti2019deforming}.) Thus the same argument proves the angle bound at 
every vertex of $X'$. 

Continuing to subdivide by induction, we obtain a set $V_\infty\subset X$ of points that are vertices of the subdivision at some (and hence all subsequent) stages.  Note that $V_\infty$ intersects 
each of the original cubes of $X$ in a dense subset, and in particular contains all the original vertices.  The desired angle bound holds when the rays in question are based at any point in 
$V_\infty$.  Now, fixing $a,b\in\tits X$, we can consider the function $x\mapsto\angle_x(\alpha,\beta)$, where $\alpha,\beta$ are the rays representing $a,b$ and starting at $x$.  This function need 
not be continuous on $X$ (it is upper semicontinuous \cite[Proposition II.9.(2)]{BridsonHaefliger}), but it can be shown to be continuous on each open cube of $X$ (of any dimension).  Since it is 
bounded above by $\pi/2$ on a dense subset of each such cube, we conclude that $\sup_x\angle_x(\alpha,\beta)=\angle_T(a,b)\leq\pi/2$.

Finally, observe that the statement and proof of Lemma~\ref{Lem:tits_distance_bound} fail completely if we modify the angles of the cubes, precisely because two segments emanating from a vertex of a parallelogram might meet at a large angle.
\end{remark}

Our main goal in this section is to show that if $\alpha,\beta$ are geodesic rays with common initial point, and $\W(\alpha)\cup\W(\beta)$ is a UBS, then any 
geodesic ray $\gamma$ representing an interior point on the Tits geodesic from $\alpha(\infty)$ to $\beta(\infty)$ must cross all the hyperplanes in $\W(\alpha)\cup\W(\beta)$.  Before proving this in Proposition~\ref{Prop:build_CAT(0)}, we will need a few auxiliary results,  starting with a corollary of Lemma~\ref{Lem:DominantChain}.

\begin{cor}[Crossing all dominants implies crossing everyone]\label{Cor:Cross_Everybody}
Suppose that $\U = \W(\alpha)\cup \W(\beta)$ is a UBS. Let $Y$ be the cubical convex hull of $\alpha \cup \beta$. Let $\gamma \from [0,\infty)\to X$ be a CAT(0) geodesic ray with $\gamma(0)=\alpha(0)=\beta(0)$.  Suppose that $\gamma$ is contained in $Y$ and crosses every dominant hyperplane in $\U$.  Then $\gamma$ crosses every 
hyperplane in $\U$.
\end{cor}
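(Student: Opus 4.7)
The plan is to apply Lemma~\ref{Lem:DominantChain}.\ref{item:dominants_cross} to each $\hat u\in\U$, extracting a dominant hyperplane $\hat h$ nested in the canonical half-space of $\hat u$, and then use a convexity/IVT argument to conclude that $\gamma$ must cross $\hat u$ as well.

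First, I would check that the hypotheses of Lemma~\ref{Lem:DominantChain} apply to the convex subcomplex $Y$. Since $Y=\hull(\alpha\cup\beta)$ is convex and $\alpha(0)=\beta(0)=x$, a short argument shows that $\W(Y)=\W(\alpha)\cup\W(\beta)=\U$: any hyperplane separating a point of $\alpha$ from a point of $\beta$ must separate $x$ from one of them and hence lies in $\W(\alpha)\cup\W(\beta)$. By Lemma~\ref{Lem:DeepTransverse}, each of $\W(\alpha)$ and $\W(\beta)$ is pruned; by hypothesis $\U$ is a UBS, so it is unidirectional. Unidirectionality forces the canonical side of any $\hat h\in\W(\alpha)$ (respectively $\W(\beta)$), which already contains infinitely many elements of $\U$, to coincide with the canonical side of $\hat h$ in $\U$. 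Hence $\U$ is itself pruned, and Lemma~\ref{Lem:DominantChain} applies to $Y$.

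Next, fix $\hat u\in\U$ and use Lemma~\ref{Lem:DominantChain}.\ref{item:dominants_cross} to produce an index $i\in\{d,\ldots,k\}$ and a dominant hyperplane $\hat h\in\mathcal D_i$ contained in the canonical half-space $u$ of $\hat u$. Orient $\hat h$ so that $h\subset u$, hence $h^*\supset u^*$. By hypothesis $\gamma$ crosses every dominant hyperplane, so in particular $\gamma$ crosses $\hat h$.

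I would then locate the common basepoint $x$ relative to $\hat u$. Without loss of generality $\hat u\in\W(\alpha)$. By the identification of canonical sides above, the canonical half-space $u$ coincides with the deep half-space in $D_a$ where $a=[\alpha]$. Since $x\in X^{(0)}$, $x$ is not on $\hat u$; since $\alpha$ eventually enters $u$ deeply and, by Lemma~\ref{Lem:DeepTransverse}.\ref{Itm:LinearDivergence}, moves linearly away from $\hat u$ after crossing it (so crosses transversally exactly once), we get $x=\alpha(0)\in u^*\subset h^*$.

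Finally, $\gamma$ is a continuous path starting at $x\in\hull(u^*)$ and, after crossing $\hat h$, landing in $\hull(h)\subset\hull(u)$. Since $\hull(u)$ and $\hull(u^*)$ are the two components of $X\setminus\hat u$, the intermediate value theorem forces $\gamma$ to meet $\hat u$, i.e.\ $\hat u\in\W(\gamma)$. This holds for every $\hat u\in\U$, completing the proof.

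The only real obstacle is the bookkeeping step that reconciles the canonical orientation of $\hat u$ in the UBS $\U$ with the deep half-space $u\in D_a$ provided by Lemma~\ref{Lem:DeepTransverse}; once that coherence is in place, the geometric conclusion is immediate from convexity of CAT(0) half-spaces.
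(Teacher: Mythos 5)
Your proof is correct and takes the same route as the paper, which simply cites Lemma~\ref{Lem:DominantChain}.\ref{item:dominants_cross} applied to $\U=\W(Y)$ and treats the remaining steps as immediate. The details you supply --- that $\W(Y)=\U$, that $\U$ is pruned so the lemma applies, that the canonical half-space of $\hat u$ in $\U$ agrees with the deep half-space from Lemma~\ref{Lem:DeepTransverse}, and the final separation argument --- are exactly the implicit content of the paper's one-line proof.
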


\begin{proof}
This follows immediately from 
Lemma~\ref{Lem:DominantChain}.\ref{item:dominants_cross} applied to $\U = \W(Y)$.
\end{proof}

Next, we study angles at which rays cross dominant hyperplanes. 

\begin{lemma}[Lower angle bound for single rays]\label{Lem:SingleRayAngleBound}
 Let $\alpha$ be a CAT(0) geodesic ray with $\alpha(0)\in 
X^{(0)}$.  Then there exists $\kappa>0$ such that the following holds.  Let $\mathcal D_d,\cdots,\mathcal D_k\subset 
\W(\alpha)$ be the dominant minimal UBSes provided by applying Lemma~\ref{Lem:DominantChain} to $\mathcal 
W(\alpha)$, and let $i\in\{d,\ldots,k\}$.  Then $\mathcal D_i$ contains a chain $\{\hat u_n\}_{n\geq0}$ of hyperplanes such 
that $\mathcal D_i$ is commensurate with the inseparable closure of $\{\hat u_n\}_{n\geq 0}$, and 
$$ \angle_{y_n}(\alpha,\hat u_n)\geq\kappa $$
for all $n\geq 0$, where $y_n$ is the point $\alpha\cap\hat u_n$.
\end{lemma}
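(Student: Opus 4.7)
\emph{Chain extraction.} Fix $i \in \{d, \ldots, k\}$. The plan is to first use Lemma~\ref{Lem:DominantChain}.\ref{item:dominant_chain} to choose a chain $\{\hat g_m\}_{m\ge 0} \subset \mathcal{D}_i$ whose inseparable closure is commensurate with $\mathcal{D}_i$. Since the canonical half-spaces $g_m$ form a descending chain contained in the deep set $D_a$ (where $a = \alpha(\infty)$), the ray $\alpha$ crosses the $\hat g_m$ in the order dictated by the nesting; after relabeling, $\alpha$ meets $\hat g_m$ at time $t_m$ with $t_0 < t_1 < \cdots$. Set $y_m = \alpha(t_m)$ and $\theta_m = \angle_{y_m}(\alpha, \hat g_m)$; each $\sin\theta_m > 0$ by Lemma~\ref{Lem:DeepTransverse}.\ref{Itm:LinearDivergence}.

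\emph{Reduction to a limsup estimate.} The crux will be to show $\kappa_i := \limsup_{m\to\infty}\sin\theta_m > 0$. Once this is established, one passes to a subsequence $\{\hat u_n\} := \{\hat g_{m_n}\}$ along which $\sin\theta_{m_n} \ge \kappa_i/2$. This subsequence remains a chain; moreover, its inseparable closure is an infinite sub-UBS of the minimal UBS $\mathcal{D}_i$, hence commensurate with $\mathcal{D}_i$ by minimality. Taking $\kappa := \min_i \kappa_i/2 > 0$ will give the required uniform lower bound across all $i$.

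\emph{Main argument.} To prove $\limsup \sin\theta_m > 0$, the plan is to argue by contradiction, assuming $\sin\theta_m \to 0$. Let $p_m := \gate_{\hat g_m}(\alpha(0))$ be the closest-point projection to $\hat g_m$. The geodesic triangle with vertices $\alpha(0)$, $y_m$, $p_m$ has angle at least $\pi/2$ at $p_m$ by the closest-point property. Combining CAT(0) comparison with the linear divergence bound $d_X(\alpha(0), \hat g_m) \ge \sin\theta_m \cdot t_m$ from Lemma~\ref{Lem:DeepTransverse}.\ref{Itm:LinearDivergence}, the visual direction from $\alpha(0)$ toward $p_m$ becomes asymptotically perpendicular to $\alpha$. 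Passing to a visual subsequence limit $q \in \visual X$ of the $p_m$ would yield a point with $\angle_T(a, q) \ge \pi/2$. On the other hand, the nested inclusions $p_m \in \hat g_m \subset \hull(g_{m-1})$ force $q$ into the visual closure of $\bigcap_m g_m$, which lies in the Tits realization of $a$'s principal Roller class; applying Lemma~\ref{Lem:tits_distance_bound} to $\alpha$ and a CAT(0) ray representing $q$ would give $\angle_T(a,q) \le \pi/2$, and a strict convexity argument (since $a \neq q$) would yield a strict inequality, producing the contradiction.

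\emph{Main obstacle.} The hardest part will be making the final Tits-boundary argument precise. Two delicate points arise: verifying that the visual limit $q$ is $\ell^2$-visible (so that a representative CAT(0) ray exists and Lemma~\ref{Lem:tits_distance_bound} applies), and establishing the strict inequality $\angle_T(a, q) < \pi/2$. An alternative route that bypasses the visibility issue is to work directly with the asymptotic slopes $s_m := \lim_{t\to\infty} d_X(\alpha(t), \hat g_m)/t$ (satisfying $s_m \ge \sin\theta_m$ by convexity) and derive a contradiction via Busemann-function estimates along the chain.
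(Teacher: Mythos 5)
Your overall architecture (fix a chain, prove a positive lower bound on crossing angles along a subsequence, extract the subsequence) is reasonable, but the core of your argument — the contradiction assuming $\sin\theta_m\to 0$ — does not go through, and it differs fundamentally from what the paper does. First, the comparison estimate points the wrong way. With $A=\dist_X(\alpha(0),p_m)$, $B=t_m$, $C=\dist_X(p_m,y_m)$ and angle at least $\pi/2$ at $p_m$, the CAT(0) inequality gives $C^2\leq B^2-A^2$, hence the \emph{comparison} angle $\bar\gamma_m$ at $\alpha(0)$ satisfies $\cos\bar\gamma_m\geq A/B\geq\sin\theta_m$; since the Alexandrov angle is bounded \emph{above} by $\bar\gamma_m$, you only obtain $\angle_{\alpha(0)}(p_m,y_m)\leq\arccos(\sin\theta_m)\to\pi/2$. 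There is no lower bound forcing asymptotic perpendicularity, so the claimed conclusion $\angle_T(a,q)\geq\pi/2$ is unsupported. Second, even if you had it, it would not contradict Lemma~\ref{Lem:tits_distance_bound}: the bound $\pi/2$ there is sharp (the two coordinate rays of the squared quadrant $[0,\infty)^2$ realize it), so the strict inequality $\angle_T(a,q)<\pi/2$ you need has no identified source — this is exactly the obstacle you flag, and it is not a technicality but the crux. Third, your reduction assumes that for a chain $\{\hat g_m\}$ chosen \emph{in advance} one has $\limsup_m\sin\theta_m>0$. The paper never proves this and its argument does not yield it: the statement only requires \emph{some} chain with the angle bound, and the paper manufactures that chain out of hyperplanes already certified to be crossed at a definite angle, rather than certifying a pre-chosen chain.

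For contrast, the paper's proof is combinatorial and local rather than asymptotic. Using Lemma~\ref{Lem:DeepTransverse}.\ref{Itm:LinearDivergence}, $\alpha$ diverges linearly from a fixed dominant base hyperplane $\hat h_0\in\mathcal D_i$; on the other hand, if a subsegment $\alpha([s,t])$ crosses no element of $\mathcal D_i$, then (away from finitely many exceptions and at most $2\dimension X$ hyperplanes of the hull not meeting the segment) every hyperplane it crosses also crosses $\hat h_0$, and Huang's projection lemmas then bound $\dist_X(\alpha(t),\hat h_0)-\dist_X(\alpha(s),\hat h_0)$ by a constant. Combining the two bounds yields a constant $C$ such that every length-$C$ window of $\alpha$ crosses an element of $\mathcal D_i$. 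A Ramsey argument then finds, in each window of length $C\cdot\Ram(3,\dimension X+1)+1=L$, a hyperplane $\hat v\in\mathcal D_i$ sandwiched between two disjoint elements of $\mathcal D_i$ crossed by that window, forcing $|\alpha\cap N(\hat v)|\leq L$; since $N(\hat v)\cong\hat v\times(-\tfrac12,\tfrac12)$ is a metric product of width $1$, this gives $\angle(\alpha,\hat v)\geq\sin^{-1}(1/L)=\kappa$. Repeating over disjoint windows produces infinitely many such big-angle hyperplanes in $\mathcal D_i$, from which a chain is extracted; its inseparable closure is commensurate with $\mathcal D_i$ by minimality. If you want to salvage your approach, you would need an independent proof of the limsup claim; the paper's window-counting argument is the natural substitute and avoids boundary limits altogether.
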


\begin{proof}
Recall from Lemma~\ref{Lem:DeepTransverse} that $\W(\alpha)$ is a pruned UBS. Thus Lemma~\ref{Lem:DominantChain} applies to $\W(\alpha)$.

Now, fix $i\in\{d,\ldots,k\}$.  
Lemma~\ref{Lem:DominantChain} says that $\mathcal D_i=\insep{\{\hat h_m\}_{m\geq0}}$, where
each $\hat h_m$ is dominant. This means each $\hat h_m$ crosses all but finitely many hyperplanes in $\mathcal 
W(\alpha) \setminus \mathcal D_i$.  

\smallskip
\textbf{The divergence $f(t)$ of $\alpha$ from $\hat h_0$:}  Define $f(t) = \dist_X(\alpha(t),\hat h_0)$, and 
let $T_0 = \alpha^{-1}(\hat h_0)$. By Lemma~\ref{Lem:DeepTransverse}.\ref{Itm:LinearDivergence}, there is a constant $A_0 > 0$ (depending on $\hat h_0$ but not $s,t$) such that
$$f(t)-f(s) \geq A_0\cdot(t-s)$$
for all $T_0\leq s\leq t$.

\smallskip
\textbf{Hyperplane count:}  We wish to produce a constant $C$ such that any length--$C$ subpath of $\alpha$ 
crosses an element of $\mathcal D_i$.

Let $s_0\geq T_0$ be sufficiently large that $\alpha([0,s_0))$ crosses all of the finitely many hyperplanes in $\mathcal 
W(\alpha) \setminus \mathcal D_i$ not crossing $\hat h_0$.  Suppose that $s_0\leq s\leq t$.  Then all hyperplanes crossing $\alpha([s,t])$ either cross $\hat h_0$ or belong to $\mathcal D_i$.  
Assume that $\alpha([s,t])$ does not cross any element of $\mathcal D_i$.  

Let $H_{st} = \hull(\alpha([s,t]))$ denote the cubical convex hull of $\alpha([s,t])$.  There may be hyperplanes crossing $H_{st}$ that do not cross $\alpha([s,t])$, because $\alpha(s),\alpha(t)$ need 
not be vertices. However, we can bound the number of such hyperplanes as follows.  Let $\mathcal W_{\operatorname{bad}}\subset\mathcal W(H_{st})\subset \mathcal W(\alpha)$ be the set of hyperplanes 
that cross $H_{st}$ but do not cross 
$\alpha([s,t])$.  Note that $\mathcal W(H_{st})$ inherits a partial order from $\mathcal W(\alpha)$, by restricting the partial order $\triangleleft$ from the proof of 
Lemma~\ref{Lem:tits_distance_bound}.  
Suppose that $\hat h\in\mathcal W(H_{st})$ is neither $\triangleleft$--maximal nor $\triangleleft$--minimal in $\mathcal W(H_{st})$.  Then there exist $\hat u,\hat v\in\mathcal W(H_{st})$ such that 
$\hat h$ separates $\hat u$ from $\hat v$.  By the definition of the convex hull, $\hat h$ cannot separate $\hat u$ or $\hat v$ from $\alpha([s,t])$, so $\hat h$ must cross $\alpha([s,t])$.  Thus 
every hyperplane in $\mathcal W_{\operatorname{bad}}$ is $\triangleleft$--minimal or $\triangleleft$--maximal.  Since incomparable hyperplanes have to cross, we conclude that $|\mathcal W_{\operatorname{bad}}|\leq 
2\dimension(X)$, hence there are at most $D=2\dimension X$ hyperplanes that cross $H_{st}$ but not $\alpha([s,t])$.
The rest of the 
hyperplanes crossing $H_{st}$ must cross $\hat h_0$, since they cross $\alpha([s,t])$, and we have chosen $s \geq s_0$ and assumed $\mathcal W(\alpha([s,t]))\cap\mathcal D_i=\emptyset$.

Consider the CAT(0) closest-point projections $p \from X\to \hat h_0$ and $q \from X\to H_{st}$.
By~\cite[Lemma 2.10]{Huang:quasiflats},  the $d_X$--convex hull of $p(H_{st})\cup 
q(\hat h_0)$ is isometric to $p(H_{st})\times [0,\dist_X(\hat h_0,H_{st})]$, where $p(H_{st})$  is identified with $p(H_{st}) \times \{ 0 \}$, and $q(\hat h_0)$ is identified with $p(H_{st}) \times \{\dist_X(\hat h_0,H_{st})\}$.
By~\cite[Lemma 2.14]{Huang:quasiflats}, the hyperplanes that cross $q(\hat h_0)$ are precisely those that 
cross $\hat h_0$ and $H_{st}$.  So, there are at most $D$ hyperplanes crossing $H_{st}$ but not $q(\hat h_0)$.  By 
Lemma~\ref{Lem:WallQI}, the points $\alpha(s),\alpha(t)$ are thus both within distance $\lambda_0D+\lambda_1$ of 
points in $q(\hat h_0)$, where $\lambda_0D+\lambda_1$ depends only on $D$. Thus
$$\dist_X(\alpha(t),\hat h_0) \: \leq \: \lambda_0D+\lambda_1+\dist_X(H_{st},\hat h_0) \: \leq \: \lambda_0 D+\lambda_1+\dist_X(\alpha(s),\hat 
h_0).$$  In other words, since  $s,t\geq s_0 \geq T_0$, we have shown
$$A_0(t-s) \leq f(t)-f(s)\leq \lambda_0D+\lambda_1.$$
Thus, for any $0\leq s\leq t$ (without assuming $s \geq s_0$), we get 
$$t-s\leq \frac{\lambda_0D+\lambda_1}{A_0}+ s_0.$$

Letting $C=  \frac{\lambda_0 D+\lambda_1}{A_0} + s_0+1$, we have shown that any subsegment of $\alpha$ of 
length at least $C$ crosses an element of $\mathcal D_i$.  
Hence, for any $0\leq s\leq t$, we have that $\alpha([s,t])$ 
crosses at least $\lfloor (t-s)/C\rfloor$ elements of $\mathcal D_i$.  

\smallskip
\textbf{Big angle hyperplanes:}  Let $\sigma$ be a subsegment of $\alpha$.  Consider the hyperplanes in $\mathcal D_i$ 
crossing $\sigma$.  Any two such hyperplanes are either disjoint or not, and any collection of pairwise crossing 
hyperplanes has size at most $\dimension X$.  So, if there are more than $\Ram(3,\dimension X+1)$ hyperplanes in $\mathcal D_i$ crossing $\sigma$, then there are 
three disjoint such hyperplanes, where $\Ram( \cdot, \cdot)$ denotes the Ramsey number.  Hence, if $\sigma$ has length $|\sigma|=C\cdot \Ram(3,\dimension X+1)+1$, 
then $\sigma$ crosses three disjoint hyperplanes $\hat u,\hat v,\hat w\in\mathcal D_i$.  Without loss of generality, 
say $\hat v$ separates $\hat u,\hat w$.  Hence $\alpha\cap N(\hat v)$ is a subpath of $\alpha$ lying between the points 
$\alpha\cap\hat u$ and $\alpha\cap\hat w\in\sigma$, so 
$$|\alpha\cap N(\hat v)|\leq C\cdot \Ram(3,\dimension X+1)+1=L.$$
Note that $L$ is independent of the hyperplane $\hat v$. Indeed, $L$ depends on $A_0,\lambda_0,\lambda_1,s_0$, and hence is determined by $X$, the hyperplane $\hat h_0$ and the geodesic $\alpha$.  
Now, using the fact that $N(\hat v)$ is isometric to a product
 of the form $\hat v\times(-\frac12,\frac12)$, we see that
$$\angle_{\alpha\cap\hat v}(\hat v,\alpha)\geq \sin^{-1}(\tfrac{1}{L}),$$
which we denote by $\kappa$.  

Since we can do the above procedure for infinitely many disjoint length--$L$ segments in $\alpha$, we find infinitely many 
hyperplanes in $\mathcal D_i$ making an angle at least $\kappa$ with $\alpha$.  Since any infinite subset of a UBS contains 
a chain, we thus have a chain $\{\hat u_n\}_{n\geq 0}\subset\mathcal D_i$ with this property.  Finally, the inseparable 
closure of $\{\hat u_n\}_{n\geq 0}$ is a UBS contained in $\mathcal D_i$, and hence commensurate with $\mathcal D_i$ since 
$\mathcal D_i$ is minimal.  This verifies the statement for the given $\mathcal D_i$, and we conclude by replacing $\kappa$ 
with the minimal $\kappa$ for the various $i\in\{d,\ldots,k\}$.
\end{proof}

\begin{remark}\label{Rem:HuangCuboids}
Extending Lemma~\ref{Lem:SingleRayAngleBound} to a cuboid metric $d_X^\rho$ requires a bit of care. The above proof relies on 
 some results of of Huang~\cite{Huang:quasiflats}, which are written in the context of a cube complex with finitely many isometry types of cells. This hypothesis may fail in $(X, d_X^\rho)$. 
Fortunately, we only need to use Huang's results in a \emph{finite} cuboid complex, namely the convex hull of $H_{st}\cup p(H_{st})$, where $H_{st}$ is itself finite and $p$ is a certain CAT(0) 
projection. Thus Huang's results~\cite[Lemmas 2.10 and  2.14]{Huang:quasiflats} apply to the subcomplex we need. 
 
Some constants in the proof would need to be adjusted for the cuboid metric.  The constants $\lambda_0, \lambda_1$ of Lemma~\ref{Lem:WallQI}, used in the definition of $C$, would have to be replaced 
by the constants $\lambda_0^\rho, \lambda_1^\rho$ of Lemma~\ref{Lem:CuboidWallQI}. 
In the constant $\sin^{-1}(\frac{1}{L})$, the numerator $1$ is the thickness of a hyperplane carrier, and would have to be replaced by the minimal thickness of a hyperplane carrier in  the metric 
$d_X^\rho$.
\end{remark}

Now we can prove the main result of this section.

\begin{prop}[Combining UBSes for interior points of Tits geodesic]%\label{Lem:CombineUBSMidpoint}
\label{Prop:build_CAT(0)}
Let $\alpha$, $\beta$ be CAT(0) geodesic rays with $\alpha(0) = \beta(0) \in X^{(0)}$.
Suppose that $\W(\alpha)\cup\W(\beta)$ is commensurate 
with a UBS.  Then $a=\alpha(\infty)$ and $b=\beta(\infty)$ are joined by a unique geodesic 
$g$ in $\tits X$. Furthermore, any interior point $c$ of $g$ is represented by a CAT(0) geodesic ray $\gamma$ 
such that $\W(\gamma)  = \W(\alpha)\cup\W(\beta)$.
\end{prop}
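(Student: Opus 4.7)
First I would establish uniqueness of the Tits geodesic $g$. By Lemma~\ref{Lem:tits_distance_bound}, $d_T(a,b)\leq\pi/2<\pi$, and $\tits X$ is CAT(1) by Definition~\ref{Def:tits_metric}, so $g$ is unique and every interior point $c$ satisfies $d_T(a,c),d_T(c,b)\in(0,\pi/2)$. Next, set $Y=\hull(\alpha\cup\beta)$, a $d_X$-convex subcomplex with $\W(Y)=\U:=\W(\alpha)\cup\W(\beta)$. By Lemma~\ref{Lem:SameBasepointUnionUBS}, $\U$ is a UBS and hence contains no facing triple, so Lemma~\ref{Lem:proper} makes $Y$ a proper CAT(0) space. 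The convex inclusion $Y\hookrightarrow X$ induces an isometric embedding $\tits Y\hookrightarrow\tits X$, which places $g$ inside $\tits Y$. For each interior $c$, properness of $Y$ produces the unique CAT(0) geodesic ray $\gamma$ in $Y$ from $x$ to $c$, which by CAT(0) uniqueness of rays also serves as the unique $X$-ray from $x$ to $c$; the inclusion $\gamma\subset Y$ then gives $\W(\gamma)\subset\W(Y)=\U$.

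For the reverse inclusion $\W(\gamma)\supset\U$, Corollary~\ref{Cor:Cross_Everybody} reduces matters to showing that $\gamma$ crosses every dominant hyperplane $\hat h\in\U$. Fix such $\hat h$ with deep half-space $h$. By Lemma~\ref{Lem:SameBasepointUnionUBS}.\ref{Itm:Crossers}, any two disjoint hyperplanes in $\U$ cannot straddle $\mathcal{A}=\W(\alpha)\setminus\W(\beta)$ and $\mathcal{B}=\W(\beta)\setminus\W(\alpha)$, so any chain in $\U$ lies entirely in $\W(\alpha)$ or entirely in $\W(\beta)$. Combined with Lemma~\ref{Lem:DominantChain}, I may therefore assume, without loss of generality, that the minimal dominant UBS containing $\hat h$ lies in $\W(\alpha)$; then $h\in D_a$ and $a\in\tits{\hull(h)}$. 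Since $\hull(h)$ is $d_X$-convex, $\tits{\hull(h)}$ embeds isometrically in $\tits X$ and is $\pi$-convex there, because the CAT(1) Tits boundary of a convex CAT(0) subspace inherits the ambient Tits metric and hence shares its short geodesics. The crux is therefore to place $c\in\tits{\hull(h)}$: once that is known, the unique CAT(0) ray $\gamma$ from $x\in\hull(h^*)$ to $c$ is forced to eventually enter $\hull(h)$ and hence to cross $\hat h$.

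To locate $c$ in $\tits{\hull(h)}$, I proceed by cases. If $\hat h\in\W(\alpha)\cap\W(\beta)$, then $b\in\tits{\hull(h)}$ as well, and $\pi$-convexity together with $d_T(a,b)<\pi$ forces the entire geodesic $g$, and in particular $c$, to lie in $\tits{\hull(h)}$. In the harder case $\hat h\in\mathcal{A}$, the ray $\beta$ lies in $\hull(h^*)$, and I must still argue that $b\in\tits{\hull(h)}$. The plan is to exploit the dominance of $\hat h$ in $\U$: since $\hat h$ crosses all but finitely many hyperplanes of $\W(\beta)$, the Bridge Lemma~\ref{Lem:Bridge} applied to $\hat h$ and the convex hulls of finite subrays of $\beta$, together with the product structure of hyperplane carriers, should force $\beta$ to remain at bounded $d_X$-distance from $\hat h$; this yields $b\in\tits{\hat h}\subset\tits{\hull(h)}$, and the $\pi$-convexity argument concludes as before. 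I expect this bounded-distance statement for $\hat h\in\mathcal{A}$ to be the main technical obstacle, since it is not local in nature; it will likely require a careful limit argument using properness of $Y$ (via Lemma~\ref{Lem:proper}) to extract an asymptotic ray in $\hat h$ that fellow-travels $\beta$, combined with the structural description of dominant-chain hyperplanes in Lemma~\ref{Lem:DominantChain}.
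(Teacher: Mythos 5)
Your setup matches the paper's: uniqueness of $g$ via Lemma~\ref{Lem:tits_distance_bound}, working inside the cubical convex hull $Y$ of $\alpha\cup\beta$, the easy inclusion $\W(\gamma)\subset\U$, and the reduction via Corollary~\ref{Cor:Cross_Everybody} to showing that $\gamma$ crosses every dominant hyperplane. The gap is in the final step. Knowing $c\in\tits\hull(h)$ does \emph{not} force the ray $\gamma$ from $x$ to $c$ to enter $\hull(h)$, because $\tits\hat h\subset\tits\hull(h)$: a ray can be asymptotic to $\hull(h)$ while staying in $\hull(h^*)$ forever. For instance, in the squared quadrant $[0,\infty)^2$ with $\alpha$ horizontal and $\beta$ vertical (so $\U=\W(\alpha)\cup\W(\beta)$ is a UBS), the vertical ray represents a point of $\tits\hull(h)$ for every vertical hyperplane $\hat h$, yet crosses none of them. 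What is actually needed is the strictly stronger statement $c\notin\tits\hat h$; since the shallow side of $\hat h$ inside $Y$ lies in a bounded neighborhood of $\hat h$ (by unidirectionality of $\U$), a ray in $Y$ that fails to cross $\hat h$ must converge to a point of $\tits\hat h$, and it is exactly this that one must exclude. Your argument never rules out $c\in\tits\hat h$; already in your ``easy'' Case 1 ($\hat h\in\W(\alpha)\cap\W(\beta)$) the claimed implication fails for this reason.

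Excluding $c\in\tits\hat h$ is the real content of the proposition and is quantitative. The paper does it by invoking Lemma~\ref{Lem:SingleRayAngleBound} to produce a chain of dominant hyperplanes $\hat h_n$ that $\alpha$ crosses at angle bounded below by some $\kappa>0$, and then showing that at the crossing points $y_n=\alpha\cap\hat h_n$ the local direction toward $c$ converges to the interpolation, at parameter $s=\dist_T(a,c)/\dist_T(a,b)$, between the directions toward $a$ and $b$; this forces the ray toward $c$ based at $y_n$ to make angle at least $(1-s)\kappa/2>0$ with $\hat h_n$ for all large $n$, whence $c\notin\tits\hat h_n$. (It is here that the hypothesis that $c$ is an \emph{interior} point, i.e.\ $s<1$, is used.) Your proposal contains no analogue of this angle estimate and does not cite Lemma~\ref{Lem:SingleRayAngleBound}; the ``bounded-distance'' claim you identify as the main technical obstacle (that $\beta$ stays near $\hat h$ when $\hat h\in\mathcal A$) is comparatively routine and addresses a different point. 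As written, the proof does not go through.
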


\begin{proof}
 By Lemma~\ref{Lem:tits_distance_bound}, we have 
$\dist_T(a,b)\leq\pi/2$, so the CAT(1) space $\tits X$ contains a 
unique geodesic $g$ from $a$ to $b$.  This proves the first assertion of the lemma. 

Let $c \in\tits X$ be an interior point of $g$.  Let $\gamma$ be the unique CAT(0) geodesic ray starting at $\alpha(0)$ and 
representing $c$.  We need to show that $\W(\gamma)=\mathcal 
W(\alpha)\cup\W(\beta)$. 

\smallskip
\textbf{Working in the convex hull of $\alpha\cup\beta$:}  Let $Y$ be the 
cubical convex hull of $\alpha\cup\beta$. We claim that $g$ lies in $\tits Y\subset\tits X$.  Indeed, 
applying Lemma~\ref{Lem:tits_distance_bound} inside of $Y$ shows that $a,b$ can be joined by a unique geodesic $g'$ in 
$\tits Y$.  Since the Tits distance in $\tits Y$ from $a$ to $b$ depends only on $\{\dist_Y(\alpha(t),\beta(t)):t\geq 0\}$ and 
$\dist_Y(\alpha(t),\beta(t))=\dist_X(\alpha(t),\beta(t))$ by convexity of $Y$, we have that $g'$ is a geodesic of $\tits X$.   Since 
$g'$ has length less than $\pi$, it is the unique geodesic in $\tits X$ from $a$ to $b$, i.e. $g=g'$ and $g$ lies in $\tits 
Y$. 

For the rest of the proof, we work entirely in $Y$.
For any $y\in Y$, we can choose rays $\alpha_y,\beta_y$ with $\alpha_y(0)=\beta_y(0)=y$ and $\alpha_y(\infty)=a,\beta_y(\infty)=b$, and note that the cubical convex hull $Y_y$ of 
$\alpha_y\cup\beta_y$ is contained in $Y$, and crosses all but finitely many of the hyperplanes crossing $Y$.  By convexity, $\tits Y_y=\tits Y$.  So, when convenient, we can move the basepoint.

\smallskip
\textbf{What we will actually verify:}  The plan is as follows.  Let $\hat h$ be a hyperplane of $Y$ that is dominant in the UBS $\U = \W(Y) = \W(\alpha) \cup \W(\beta)$.  We will show that $c\not\in\tits\hat h$.  Since the 
shallow side of any hyperplane in $Y$ is contained in a neighborhood of that hyperplane, it follows that $\gamma$ must cross $\hat h$.  Once we show that $\gamma$ crosses every dominant hyperplane, 
Corollary~\ref{Cor:Cross_Everybody} will then imply that $\gamma$ crosses every hyperplane, i.e. $\W(\gamma)=\W(\alpha)\cup\W(\beta)$, as required.  So, it remains to argue that 
$c\not\in\tits \hat h$ when $\hat h$ is a dominant hyperplane.

\smallskip
\textbf{Initial segments of $\alpha$ and $\beta$ inside a cube:}  For each $r\geq 0$, let $y_r=\alpha(r)$.  Let $\alpha_{y_r}$ and $\beta_{y_r}$ be geodesics asymptotic to $\alpha$ and $\beta$, defined as above.  Note that $Y_{y_r}\subset Y$ 
contains a ray based at $y_r$ and representing each element of $\tits Y$.  As in the proof of Lemma~\ref{Lem:tits_distance_bound}, there is a single cube $C_{y_r} \subset Y_{y_r}$, with $y_r\in 
C_{y_r}$, such that any ray in $Y_{y_r}$ emanating from $y_r$ has a nontrivial initial segment in $C_{y_r}$.  

Let $\alpha'_{y_r},\beta'_{y_r},\gamma'_{y_r}$ be the maximal (nontrivial) segments of $\alpha_{y_r},\beta_{y_r},\gamma_{y_r}$ that lie in $C_{y_r}$.  Then the Alexandrov angle between any two of the rays 
$\alpha_{y_r},\beta_{y_r},\gamma_{y_r}$ at $y_r$ is just the Euclidean angle in $C_{y_r}$ between the corresponding segments.

By~\cite[Proposition II.9.8.2]{BridsonHaefliger}, $\angle_{y_r}(\alpha_{y_r},\beta_{y_r})$ converges to $\angle_T(a,b) = \dist_T(a,b)$, where the equality follows from
Lemma~\ref{Lem:tits_distance_bound} because the distance is less than $\pi$.

\smallskip
\textbf{Angle computation:}  Let $\hat h$ be a dominant hyperplane in $\U$.  By Lemma~\ref{Lem:DominantChain},  we 
can assume that $\hat h\in\mathcal D$, where $\mathcal D = \mathcal D_j$ is a minimal UBS consisting of dominant hyperplanes. Since $\mathcal D \subset \W(\alpha) \cup \W(\beta)$ is a minimal UBS, we must have either $\mathcal D \preceq \W(\alpha)$ or $\mathcal D \preceq \W(\beta)$. We assume without loss of generality that $\mathcal D \preceq \W(\alpha)$.

Note that every hyperplane in $\mathcal D$ is dominant in $\W(\alpha)$, because any hyperplane crossing all but 
finitely many hyperplanes in $\U \setminus \mathcal D$ crosses all but finitely many hyperplanes in $\mathcal 
W(\alpha) \setminus \mathcal D$.  We now apply Lemma~\ref{Lem:SingleRayAngleBound} to $\W(\alpha)$ and $\mathcal D$ to produce 
a constant $\kappa$ and a chain  $\{ \hat h_n \} \subset \mathcal D$ whose inseparable closure is $\mathcal D$ and whose (necessarily dominant) 
hyperplanes all cross $\alpha$ at an angle at least $\kappa$:
$$\angle_{y_n}(\alpha_{y_n},\hat h_n)\geq\kappa,$$
where $y_n=\hat h_n\cap\alpha$ and 
$\alpha_{y_n}$ is the sub-ray of $\alpha$ emanating from $y_n$.  
Since all but finitely many of the $\hat h_n$ lie on the deep side of $\hat h$, it suffices to show that $\gamma$ crosses $\hat h_n$ for sufficiently large $n$. As explained above, it suffices to show that $c \notin \tits \hat h_n$ for sufficiently large $n$.

Let $\beta_{y_n}$ be the ray emanating from $y_n$ and 
asymptotic to $\beta$.  
Working in the cube $C_{y_n}$ constructed above, we have that the segment $\alpha_{y_n}'$ makes a Euclidean angle at least $\kappa$ with the midcube $M_n = C_{y_n} \cap\hat h_n$.  Meanwhile, 
the segment $\beta_{y_n}'$ makes some angle $\theta \geq 0$ with $M_n$.  
See Figure~\ref{fig:work_in_cube}.

Let $s\in(0,1)$ be such that $\dist_T(a,c)=s\cdot\dist_T(a,b)$ and $\dist_T(c,b)=(1-s)\cdot\dist_T(a,b)$.  Such an $s$ exists since $c$ is an interior point of the Tits geodesic $g$ from $a$ to $b$.

\begin{figure}[h]
 \begin{overpic}[width=0.6\textwidth]{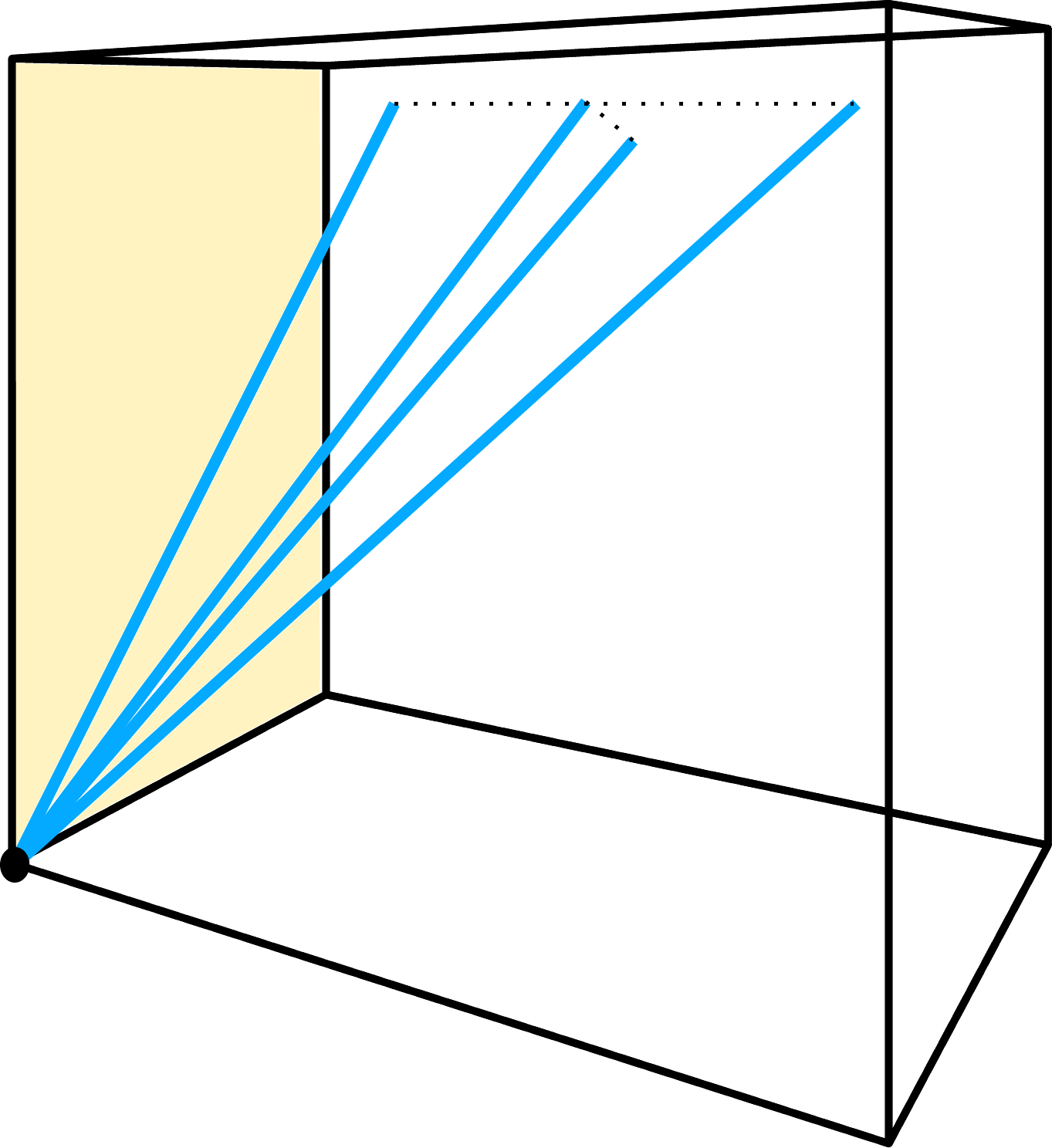}
  \put(-5,23){$y_n$}
  \put(5,80){$M_n$}
  \put(30,80){$\beta_{y_n}'$}
  \put(66,80){$\alpha_{y_n}'$}
  \put(51,80){$\gamma_{y_n}'$}
  \put(38,81){$\eta_n$}
  \put(38,91){$1-s$}
  \put(59,91){$s$}
 \end{overpic}

 \caption{The figure shows a portion of the cube $C_{y_n}$, with the back wall is $M_n = C_{y_n} \cap \hat h_n$.
The angle between $\alpha_{y_n}'$ and $M_n$ is at least $\kappa$, so the angle between $\eta_n$ and $M_n$ is bounded away from $0$ by $\kappa_1 = \kappa (1-s)$. But the angle between $\gamma_{y_n}'$ and $\eta_n$ can be made much smaller than $\kappa (1-s)$ by taking $n$ large.  So $\gamma_{y_n}'$ makes a positive angle with $\hat 
h_n$.}\label{fig:work_in_cube}
\end{figure}

Let $\eta_n \subset C_{y_n}$ be the maximal segment that emanates from $y_n$ and makes an angle $s\cdot\angle_{y_n}(\alpha_{y_n}',\beta_{y_n}')$ with $\alpha_{y_n}'$ and and 
angle $(1-s)\cdot \angle_{y_n}(\alpha_{y_n}',\beta_{y_n}')$ with $\beta_{y_n}'$.  Then there exists $\kappa_1= (1-s) \kappa >0$ such that $\angle_{y_n}(\eta_n,
M_n) \geq \kappa_1$. Crucially, $\kappa_1$ is  independent of $n$.  On the other hand, as 
$n \to \infty$, we have convergent sequences
\begin{align*}
\angle_{y_n} (\alpha_{y_n}',\beta_{y_n}') & \to \dist_T(a,b), \\
\angle_{y_n} (\alpha_{y_n}',\gamma_{y_n}') &  \to \dist_T(a,c) = s \cdot \dist_T(a,b), \\
\angle_{y_n} (\beta_{y_n}',\gamma_{y_n}') & \to \dist_T(b,c) = (1-s) \cdot \dist_T(a,b),
\end{align*}
Thus,  for all sufficiently large $n$, 
$$\angle_{y_n}(\eta_n,\gamma_{y_n}')<\frac{\kappa_1}{2},$$
so by the triangle inequality for Alexandrov angles, $\angle_{y_n}(\gamma'_n,M_n) > \kappa_1/2 >0$.  Hence the ray $\gamma_{y_n}$ is not contained in $\hat h_n$ and, since its initial 
point is in $\hat h_n$, we thus have $c\not\in\tits\hat h_n$.  This completes the proof.
\end{proof}

We observe that the proof of Proposition~\ref{Prop:build_CAT(0)} extends to the cuboid setting with minimal effort. The proof of that lemma combines prior results and the local Euclidean geometry of a 
cube; all of the local arguments work equally well in a rescaled cuboid. 

\section{Connections between Tits and Roller boundaries}\label{Sec:TitsRollerConnections}

In this section, we establish some important connections between $\tits X$ and $\guralnik X$. Every Roller class $v \in \guralnik X$ is assigned a 
canonical CAT(1)--convex  \emph{Tits boundary realization} $Q(v) \subset \tits X$.
We define a pair of maps $\psi \from \tits X \to \guralnik X$ and $\varphi \from \guralnik X \to \tits X$  that will play a major role in the proof of our main theorem. See Proposition~\ref{Prop:diameter} for the properties of $Q(v)$ and Proposition~\ref{prop:phi_facts} for  the relationship between $\varphi$ and $\psi$. 

Toward the end of this section, we focus on $\ell^2$--visible Roller classes, namely all $v \in \guralnik X$
 such that $\psi \circ \varphi(v) = v$. 
In Section~\ref{Sec:covering_tits_boundary}, we will use the visible Roller classes to construct a closed covering of $\tits X$ that is compatible with a covering of a large part of $\absimp X$, denoted $\vispart X$.

All of the results of this section extend with minimal effort to a cuboid metric $d_X^\rho$ obtained via a $G$--admissible hyperplane rescaling (recall Definitions~\ref{Def:cuboid_metric} and~\ref{Def:TitsMetricCuboid}). This extension is described in Section~\ref{Sec:TitsCuboid}.

\begin{definition}[Map $\psi \from \tits X \to \guralnik X$] \label{Def:Psi}
Given $a \in\tits X$, define $\psi(a) = Y_a \in \guralnik X$ to be the Roller class $Y_a$ constructed in Lemma~\ref{Lem:DeepSetProps}. That is, $\psi(a)$ is the principal Roller class of the  intersection of half-spaces in the deep set  $D_a$. 
\end{definition}

Our definition of $\psi$ generalizes Guralnik's boundary decomposition map \cite[Definition 4.8]{Guralnik}, because $\tits X$ agrees as a set with the visual boundary $\bdy_\eye X$.

\begin{lemma}\label{Lem:PsiAlternate}
Let $a = [\alpha] \in\tits X$. Let $\W(\alpha)$ be the UBS consisting of hyperplanes crossing the geodesic $\alpha$. Then $\psi(a) = Y_a = Y_{\W(\alpha)}$.
\end{lemma}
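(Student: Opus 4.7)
The first equality $\psi(a) = Y_a$ is immediate from Definition~\ref{Def:Psi}, so the real content is the identity $Y_a = Y_{\W(\alpha)}$. My plan is to show that $\W(\alpha)$ equals, as a set of hyperplanes, the canonical UBS $\U_{Y_a} = \W(x, Y_a)$ representing $Y_a$, where $x = \alpha(0)$. Once that is established, the conclusion follows at once from Corollary~\ref{CorOnto}, which gives $Y_{\U_{Y_a}} = Y_a$.

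First I would reduce to the case where $x = \alpha(0) \in X^{(0)}$. Given any other representative $\alpha'$ of $a$ with vertex basepoint, the function $t \mapsto d_X(\alpha(t), \alpha'(t))$ is convex and bounded at both endpoints, so the two rays fellow-travel at uniformly bounded CAT(0) distance. Any $\hat h$ in the symmetric difference $\W(\alpha) \triangle \W(\alpha')$ must separate $\alpha(0)$ from $\alpha'(0)$, and Lemma~\ref{Lem:WallQI} ensures only finitely many hyperplanes do so. Hence $\W(\alpha) \sim \W(\alpha')$, so $Y_{\W(\alpha)}$ depends only on $a$; I may therefore assume $x = \alpha(0) \in X^{(0)}$.

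The core step is to verify $\W(\alpha) = \W(x, \~Y_a)$ as sets. If $\hat h \in \W(\alpha)$, then Lemma~\ref{Lem:DeepTransverse}.\ref{Itm:DeepSide} yields an orientation $h$ (the side containing the tail of $\alpha$) with $h \in D_a$, while $x \in h^*$. By Lemma~\ref{Lem:DeepSetProps}.\ref{Itm:DaConsist}, $D_a = \frakH^+_{\~Y_a}$, so $\~Y_a \subset h$, whence $\hat h$ separates $x$ from $\~Y_a$. Conversely, if $\hat h$ separates $x$ from $\~Y_a$ with $x \in h^*$ and $\~Y_a \subset h$, then $h \in D_a$, so Definition~\ref{Def:DeepSet} provides $t_1 > 0$ with $\alpha(t_1) \in \hull(h)$. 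Since $x$ is a vertex lying in $h^*$, continuity of $\alpha$ forces it to cross $\hat h$, giving $\hat h \in \W(\alpha)$.

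Finally, Definition~\ref{Def:UBSofRoller} together with Proposition~\ref{Prop:Proj} give $\U_{Y_a} = \W(x, Y_a) = \W(x, \~Y_a)$, so the previous step yields $\W(\alpha) = \U_{Y_a}$ on the nose. Since the umbra (Definition~\ref{Def:Umbra}) depends only on the UBS (Lemma~\ref{UBScomplex}), Corollary~\ref{CorOnto} gives $Y_{\W(\alpha)} = Y_{\U_{Y_a}} = Y_a$, as required. There is no serious obstacle here: the whole argument reduces to the dictionary between ``$\alpha$ crosses $\hat h$'' and ``$\hat h$ separates the basepoint from $\~Y_a$,'' which is clean once one tracks the basepoint-side condition $x \in h^*$ on both sides.
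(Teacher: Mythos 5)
Your proof is correct, but it takes a different route from the paper's. The paper fixes $y\in Y_a$, invokes Lemma~\ref{Lem:CombGeodesicExists} to produce a combinatorial geodesic ray $\alpha'$ from $x$ to $y$ with $\W(\alpha')=\W(x,y)\sim\U_{Y_a}$, and then uses the fellow-travelling result Lemma~\ref{lem:combinatorial_fellow_travels_cat0} (the cuboid version of Beyrer--Fioravanti) to get $\W(\alpha)\sim\W(\alpha')$, concluding via Lemma~\ref{UBScomplex}. You instead bypass combinatorial geodesics entirely and prove the on-the-nose identity $\W(\alpha)=\W(x,\~Y_a)=\U_{Y_a}$ by the deep-set dictionary: Lemma~\ref{Lem:DeepTransverse}.\ref{Itm:DeepSide} shows every crossed hyperplane is deep on the side away from the vertex basepoint, and conversely $D_a=\frakH^+_{\~Y_a}$ from Lemma~\ref{Lem:DeepSetProps}.\ref{Itm:DaConsist} plus the definition of deepness forces $\alpha$ to cross every hyperplane separating $x$ from $\~Y_a$; Corollary~\ref{CorOnto} then finishes. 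Your version is more self-contained (no dependence on the $\ell^1$/$\ell^2$ fellow-travelling lemma) and yields equality of UBSes rather than mere commensurability, at the cost of the small bookkeeping in the basepoint-reduction step, which is only needed if one allows $\alpha(0)\notin X^{(0)}$ (the paper's convention for $\W(\alpha)$ being a UBS already assumes a vertex basepoint). Both arguments are sound.
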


\begin{proof}
By Lemma~\ref{Lem:DeepSetProps} we have that $D_a= \frakH_{\~Y_a}^+$ and $Y_a \in \guralnik X$. Fix $x\in X^{(0)}$. By definition,  $\widehat{D_a\setminus \frakH_x^+} = \W(x,Y_a) =  \W(x,[y])$ for any $y \in Y_a$. Finally by  Lemma  \ref{standardUBS}  $\W(x,[y])\sim\W(x,y)$. 

By Lemma~\ref{Lem:CombGeodesicExists}, there is a combinatorial geodesic $\alpha' \from [0,\infty) \to X$ with $\alpha'(0) = x$ and $\alpha'(\infty) =  y$.  By Remark~\ref{Rem:UBSofRollerGeodesic}, we have $\W(\alpha')= \W(x,y)$. 

Now, Lemma~\ref{lem:combinatorial_fellow_travels_cat0} implies that $\alpha$ and $\alpha'$ can be chosen to lie at finite Hausdorff distance. Thus $\alpha$ and $\alpha'$ cross the same hyperplanes, 
except possibly for 
finitely many. In symbols,
 $\W(\alpha') \sim \W(\alpha)$. By Lemma~\ref{UBScomplex}, we conclude that $Y_a = Y_{\W(\alpha)}$.
\end{proof}

\subsection{Tits boundary realizations}\label{Sec:BoundaryRealization}
Defining a map $\varphi \from \guralnik X \to \tits X$ that serves as a partial 
inverse to $\psi$ takes considerably more effort. As a first step, we will 
define a \emph{Tits boundary realization} $Q(v)$ associated to a Roller class 
$v$.

A family $\mathcal F$ of subsets of $X$ is called \emph{filtering} if for every $E,F\in \mathcal F$ there is a $D\in \mathcal F$ such that $D\subset E\cap F$. For example, for $y\in \roller X$,  the family 
$$
\mathcal F :=\{\hull(h_1)\cap \cdots \cap \hull(h_n) \; : \; h_1, \dots, h_n\in \frakH_{y}^+, n\in \N\}
$$ 
is a filtering family of closed convex subspaces. 

The following theorem combines results of Caprace--Lytchak \cite[Theorem 1.1]{CapraceLytchak} and Balser--Lytchak \cite[Proposition 1.4]{BalserLytchak}.

\begin{theorem}\label{thm:CapLytch}
Let $\{X_i\}_{i\in I}$ be a filtering family of closed convex subsets of a finite-dimensional CAT(0) space $X$. If the 
intersection $\bigcap_{i\in I} X_i$ is empty, then the intersection $\bigcap_{i\in I}{} \tits X_i$ of their 
boundaries is nonempty, and furthermore $\bigcap_{i\in I}{} \tits X_i$ has intrinsic radius at most $\pi/2$ and therefore a canonical circumcenter.  
\end{theorem}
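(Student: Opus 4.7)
The plan is to establish the three conclusions — nonemptiness of $\bigcap_i \tits X_i$, intrinsic radius at most $\pi/2$, and existence of a canonical circumcenter — in that order, invoking finite-dimensionality of $X$ throughout. First I would produce a Tits boundary point in every $\tits X_i$ by a limiting argument on CAT(0) projections. Fix a basepoint $o \in X$ and let $p_i = \pi_{X_i}(o)$ be the unique nearest-point projection, which exists by closedness and convexity of $X_i$. The filtering hypothesis gives monotonicity along the filter: whenever $X_j$ refines $X_i$ (that is, $X_j \subset X_i$), the projection $p_j \in X_j \subset X_i$, so $d(o,p_j) \geq d(o,p_i)$. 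If these distances remained bounded, then in a finite-dimensional CAT(0) space the family $\{p_i\}$ would have a cluster point (closed bounded convex sets have compact convex hulls, by the finite-dimensional version of the Cartan–Bruhat–Tits argument), and that cluster point would sit in every closed $X_i$, contradicting $\bigcap_i X_i = \emptyset$. Hence $d(o,p_i) \to \infty$ along the filter.

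Next I would extract a limiting direction. Each $p_i$ determines a CAT(0) geodesic segment $\sigma_i$ from $o$ to $p_i$; passing to an ultrafilter refinement, the initial unit tangent vectors $\sigma_i'(0)$ converge in the space of directions $\Sigma_o X$ (using finite geometric dimension), yielding a geodesic ray $\gamma$ with endpoint $\xi \in \tits X$. To see that $\xi \in \tits X_i$ for every $i$, fix $i$ and take $j$ refining $i$. Since $p_j \in X_i$ and $X_i$ is convex, the terminal segment of $\sigma_j$ lies in $X_i$; letting $j$ vary, arbitrarily long initial portions of $\gamma$ are $d_X$-approximated by segments ending in $X_i$. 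Convexity and closedness then force $\xi \in \tits X_i$, giving $\xi \in \bigcap_i \tits X_i$.

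For the radius bound I would argue by contradiction: if $\bigcap_i \tits X_i$ had intrinsic radius greater than $\pi/2$, then one could use the filtering family together with the CAT(1) geometry of $\tits X$ to build a bounded subset of $X$ whose circumcenter lies in every $X_i$, again contradicting $\bigcap_i X_i = \emptyset$. Concretely, one plays off two points $\xi,\xi' \in \bigcap_i \tits X_i$ with $d_T(\xi,\xi') > \pi/2$: Busemann functions for rays representing $\xi,\xi'$ inside each $X_i$ would have coercive behavior forcing a common minimum, which (by standard convex-function minimization in finite-dimensional CAT(0) spaces) produces an interior common point. Once the intrinsic radius is at most $\pi/2$, the canonical circumcenter follows from \cite[Proposition 1.4]{BalserLytchak}: in a complete CAT(1) space, any closed subset of intrinsic radius $\leq \pi/2$ admits a unique circumcenter, obtained via strict convexity of the squared-distance function at radii strictly below $\pi/2$ and a delicate limiting/averaging argument at the sharp value $\pi/2$.

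The main obstacle is the sharp radius bound $\pi/2$ — not $\pi$. The naive CAT(1) estimate on a closed convex subset of $\tits X$ only gives radius at most $\pi$; improving this to $\pi/2$ is precisely where finite-dimensionality of $X$ and the emptiness of $\bigcap_i X_i$ are used in essential combination. Any proof must preclude the possibility that the boundary intersection contains an antipodal pair at Tits distance $\pi$, and this requires translating a hypothetical "fat" configuration at infinity into a barycentric/circumcentric contradiction in the interior — the content of Caprace–Lytchak's Theorem 1.1. The circumcenter step at the boundary value $\pi/2$ is similarly nontrivial, since uniqueness fails in general CAT(1) spaces of radius exactly $\pi/2$ (e.g., spheres); Balser–Lytchak circumvent this by using the additional convex structure inherited from being an intersection of Tits boundaries of CAT(0) convex sets.
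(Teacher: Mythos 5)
The paper does not actually prove this statement: Theorem~\ref{thm:CapLytch} is imported wholesale, explicitly described as a combination of \cite[Theorem 1.1]{CapraceLytchak} (nonemptiness and the radius bound $\pi/2$) and \cite[Proposition 1.4]{BalserLytchak} (the canonical circumcenter). To the extent that you also defer to those references for the radius bound and the circumcenter, your proposal agrees with the paper. But the part you do try to prove from scratch --- nonemptiness of $\bigcap_{i}\tits X_i$ --- has genuine gaps. First, the assertion that ``the terminal segment of $\sigma_j$ lies in $X_i$'' is false: only the endpoint $p_j$ of the segment from $o$ to $p_j$ is known to lie in $X_i$, not any positive-length terminal piece. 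Second, and more seriously, the limiting argument relies on compactness that finite-dimensional CAT(0) spaces do not have: closed bounded convex subsets need not be compact (an infinite tripod is a complete, bounded, convex, one-dimensional CAT(0) space that is not compact), the space of directions $\Sigma_o X$ need not be compact (at the center of the infinite tripod it is an infinite set with pairwise distances $\pi$), and even when directions of segments converge, the limit direction need not be realized by a geodesic ray in a non-proper space. This is precisely why nonemptiness is the hard content of Caprace--Lytchak's theorem, proved there by induction on (telescopic) dimension rather than by extracting a limit of projection directions. (Your bounded-case step does reach a true conclusion, but for the wrong reason: a bounded filtering family of closed convex sets has a common point via the Cauchy estimate $d(p_i,p_j)^2\leq d(o,p_j)^2-d(o,p_i)^2$ for nested sets, not via compactness.)

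The radius bound is likewise only gestured at (``Busemann functions \dots would have coercive behavior forcing a common minimum''), and you concede it is ``the content of Caprace--Lytchak's Theorem 1.1.'' That concession is the correct move --- it is what the paper itself does --- but it means your proposal is a citation dressed as a proof for the two hard conclusions and an incomplete argument for the third. One further correction: \cite[Proposition 1.4]{BalserLytchak} applies to any finite-dimensional CAT(1) space of intrinsic radius at most $\pi/2$ (the canonical circumcenter is obtained by iterating the circumcenter-of-the-set-of-circumcenters construction, which terminates by finite dimensionality); it does not use the extra structure of being an intersection of Tits boundaries, and the round sphere is not a counterexample to uniqueness at radius $\pi/2$, since its intrinsic radius is $\pi$.
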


Next, we present two definitions that will turn out to be equivalent (compare Lemma~\ref{Lem:Qy_defs}). The first of these definitions appears in~\cite[Corollary 6.2]{FLM:RandomWalks}.

\begin{definition}\label{Def:Qy}
Let $y\in \roller X$. Define $Q(y)=\bigcap_{h\in \frakH_{y}^+}{} \tits \hull(h)$. Note that $Q(y) \neq \emptyset$ by Theorem \ref{thm:CapLytch}.
We call $Q(y)$ the \emph{Tits boundary realization} of $y$.
\end{definition}

\begin{definition}\label{Def:Q'y}
Let $x\in X^{(0)}$ and let $y\in\roller X$.    Recall that $\I(x,y)\subset\~X$ is the vertex interval between $x$ and $y$, so 
that $\I(x,y)\cap X$ is a vertex--convex subset of $X^{(0)}$.  Generalizing Definition~\ref{Def:IntervalInX}, let $\J(x,y)$ be the union of all of the cubes in $X$ 
whose $0$--skeleta lie in $\I(x,y)\cap X$.
Then $\J(x,y)$ is a convex subcomplex of $X$, 
and hence has a well-defined Tits boundary naturally embedded in $\tits X$.  We define  $Q'(y)=\tits\J(x,y)$.
\end{definition}

In fact, the two definitions are equivalent: 

\begin{lemma}\label{Lem:Qy_defs}
Let $y\in\roller X$ and let $x,x'\in X^{(0)}$.  Then:
\begin{enumerate}[$(1)$ ]
 \item \label{item:q'_well_defined}$\tits \J(x,y)=\tits \J(x',y)$, 
hence $Q'(y)$ is well-defined.
 \item \label{item:q=q'} $Q(y)=Q'(y)$.
 \item \label{item:q'_depends_on_class} If $y\sim y'$, then $Q'(y)=Q'(y')$ and $Q(y)=Q(y')$.
\end{enumerate}
\end{lemma}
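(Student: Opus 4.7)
All three parts share a common metric device: if $A, B \subset X$ are convex subcomplexes with $\W(A) \triangle \W(B)$ finite, then $A$ and $B$ are at bounded Hausdorff $d_1$--distance (hence bounded $\dist_X$--distance), so $\tits A = \tits B$. Indeed, for any vertex $p \in A$, the gate $\gate_{\overline{B^{(0)}}}(p)$ from Proposition~\ref{Prop:Proj} lies in $B$ by Corollary~\ref{Cor:ProjectionInX}, and the set of hyperplanes separating $p$ from $B$ is contained in $\W(A) \setminus \W(B)$, which is finite; symmetry gives the reverse.

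For part~\ref{item:q'_well_defined}, I apply this device to $A = \J(x,y)$ and $B = \J(x',y)$. A hyperplane $\hat h$ separating a vertex $p \in A$ from $B$ must satisfy $h \in \frakH_{x'}^+ \cap \frakH_y^+$ and $p \in h^\ast$. Since $\J(x,y) \subset \hull(h)$ whenever $h \in \frakH_x^+ \cap \frakH_y^+$, such an $\hat h$ must separate $x$ from $x'$, and there are only finitely many such hyperplanes. For part~\ref{item:q'_depends_on_class}, the identical argument applied to $A = \J(x,y)$ and $B = \J(x,y')$ forces a separating hyperplane into $\W(y,y')$, which is finite since $y \sim y'$; this yields $Q'(y) = Q'(y')$, and combined with part~\ref{item:q=q'} also $Q(y) = Q(y')$.

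For part~\ref{item:q=q'}, I first observe the identity $\J(x,y) = \bigcap_{h \in \frakH_x^+ \cap \frakH_y^+} \hull(h)$: both sides are subcomplexes containing exactly those cubes whose vertex sets lie in $\I(x,y) \cap X^{(0)}$. I also invoke the standard CAT(0) fact that if $C \subset X$ is closed convex, $v \in C$, and $[\alpha] \in \tits C$, then the unique ray $\alpha_v$ from $v$ in the class $[\alpha]$ satisfies $\alpha_v \subset C$: the function $t \mapsto \dist_X(\alpha_v(t), C)$ is non-negative, convex, and bounded (since asymptotic CAT(0) rays have non-increasing mutual distance), and it vanishes at $t=0$, hence vanishes identically. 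For $[\alpha] \in Q(y)$, the representative $\alpha_x$ from $x$ then lies in every $\hull(h)$ with $h \in \frakH_x^+ \cap \frakH_y^+$, hence in $\J(x,y)$, so $[\alpha] \in Q'(y)$. Conversely, for $[\alpha] \in Q'(y)$ the same fact gives $\alpha_x \subset \J(x,y)$, and for $h \in \frakH_y^+$ we must bound $\dist_X(\alpha_x(t), \hull(h))$: the case $h \in \frakH_x^+$ is immediate since $\J(x,y) \subset \hull(h)$; otherwise $\hat h \in \W(x,y)$, and any hyperplane $\hat k$ separating $\alpha_x(t)$ from $\hull(h)$ must satisfy $h \subset k$ and $\hat k \in \W(x,y)$. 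Using a combinatorial geodesic $\gamma$ from $x$ to $y$ (Lemma~\ref{Lem:CombGeodesicExists}), one checks that such $\hat k$ are non-transverse to $\hat h$ and crossed by $\gamma$ at or before $\hat h$, giving a finite bound, so $[\alpha] \in \tits \hull(h)$.

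\textbf{Main obstacle.} The technical heart is the counting argument in part~\ref{item:q=q'}: bounding $\{\hat k \in \W(x,y) : h \subset k\}$ requires a case analysis on whether $\hat k$ is transverse to $\hat h$ or nested with it, and uses the order in which $\gamma$ crosses hyperplanes to distinguish $h \subset k$ from $k \subset h$. The CAT(0) fact about asymptotic rays in closed convex sets is used implicitly throughout to reduce the analysis to the canonical representative $\alpha_x$ emanating from the vertex $x$.
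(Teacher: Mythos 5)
Your proposal is correct, and part (2) takes a genuinely different route from the paper, so a comparison is worthwhile. Parts (1) and (3) coincide with the paper's proof, which just observes that $\J(x,y)$ and $\J(x',y)$ (resp.\ $\J(x,y)$ and $\J(x,y')$) lie at Hausdorff distance at most $\dist_1(x,x')$ (resp.\ $\dist_1(y,y')$); your gate-based derivation is a fleshed-out version of that observation. In part (2), for the inclusion $Q'(y)\subset Q(y)$ the paper exploits the basepoint-independence just established in part (1): for each $h\in\frakH_y^+$ it simply relocates the basepoint $x$ into $\hull(h)$, so that $\J(x,y)\subset\hull(h)$ holds trivially and no counting is needed. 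You instead keep $x$ fixed and bound the separating set $\W(\alpha_x(t),\hull(h))$ by the finite set $\{\hat k\in\W(x,y): h\subsetneq k\}$; this costs you the counting argument you flag as the main obstacle, but it is sound (each such $\hat k$ is crossed by a combinatorial geodesic from $x$ to $y$ before $\hat h$ is, so there are only finitely many) and it yields the slightly stronger quantitative fact that all of $\J(x,y)$ lies at bounded Hausdorff distance from its gate image in $\hull(h)$. For the inclusion $Q(y)\subset Q'(y)$ the paper argues contrapositively, producing from $q\notin\tits\J(x,y)$ a hyperplane disjoint from $\J(x,y)$ whose half-space witnesses $q\notin Q(y)$, whereas your direct argument via the identity $\J(x,y)=\bigcap_{h\in\frakH_x^+\cap\frakH_y^+}\hull(h)$ together with the convexity fact that a ray based in a closed convex set and asymptotic to a boundary point of that set stays inside it is, if anything, cleaner. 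The only point you leave implicit is that boundedness of $\sup_t\dist_X(\alpha_x(t),\hull(h))$ really produces an asymptotic ray inside $\hull(h)$, i.e.\ membership in $\tits\hull(h)$; this needs properness of $\J(x,y)$ (available via Lemma~\ref{Lem:proper}) or an equivalent limiting argument, but the paper relies on the same standard fact without comment, so this is not a gap.
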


\begin{proof}
 To prove conclusion \ref{item:q'_well_defined}, observe that $\J(x,y)$ and $\J(x',y)$ lie at 
Hausdorff distance bounded by $\dist_1(x,x')$. Hence $\J(x,y)$ and $\J(x',y)$ have the same 
Tits boundary.

Next, we consider conclusion~\ref{item:q=q'}.  For any given vertex half-space $h\in \frakH_{y}^+$, 
let $\hull(h)$ be the 
associated CAT(0) half-space containing $y$.  By \ref{item:q'_well_defined}, we can assume that $x\in \hull(h)$.  
Hence $\J(x,y)\subset \hull(h)$, so $Q'(y)=\tits\J(x,y)\subset\tits \hull(h)$. Therefore $Q'(y)\subset Q(y)$.

For the reverse inclusion, suppose $q\notin Q'(y)$.  Then there is a CAT(0) half-space $\hull(h)$ such that $\mathcal 
J(x,y) \subset \hull(h)$ and $q \notin \tits \hull(h)$. (To find such a half-space, let $\gamma$ be any CAT(0) geodesic ray 
representing $q$. Since $\gamma$ leaves every finite neighborhood of $\J(x,y)$, it must cross a hyperplane $\hat h$ 
disjoint from $\J(x,y)$.) The associated vertex half-space $h$ satisfies $h\in \frakH_{y}^+$ but $q \notin \tits 
\hull(h)$. By Definition~\ref{Def:Qy}, it follows that $q \notin Q(y)$.  Hence $Q(y)\subset Q'(y)$.

Finally, consider conclusion~\ref{item:q'_depends_on_class}. If $y \sim y'$, the Hausdorff distance from 
$\J(x,y)$ to $\J(x,y')$ is bounded by $\dist_1(y,y') < \infty$. Using \ref{item:q=q'}, we obtain
$$Q(y) = Q'(y) = \tits \J(x,y) = \tits \J(x,y') = Q'(y') = Q(y'). \qedhere$$
\end{proof}

Following Theorem~\ref{thm:CapLytch} and Lemma~\ref{Lem:Qy_defs}, we can make the following definition.

\begin{definition}[Tits boundary realization]\label{Def:Qcircum}
Let $v \in \guralnik X$ be a Roller class. Define $Q(v) = Q(y)$ for any representative element $y \in v$.  
The reader can think of $Q(y)$ according to either Definition~\ref{Def:Qy} or Definition~\ref{Def:Q'y}.  We call
 $Q(v)$ the \emph{Tits boundary realization} of the Roller class $v$.
 
Observe that the collection of half-spaces $\mathfrak H_y^+$ containing $y$ has empty intersection in $X$. Thus, by  Theorem~\ref{thm:CapLytch}, 
$Q(v)$  has a canonical circumcenter. We define $\chi(v) \in \tits X$ to be the circumcenter of $Q(v)$. 
\end{definition}

\begin{cor}\label{Cor:Q_equivariant}
For every $v \in \guralnik X$ and every $g \in \Aut(X)$, we have $Q(gv) = gQ(v)$ and $\circum(gv) = g \circum(v)$. Furthermore, $Q(v)$ and $Q(gv)$ have the same intrinsic radius.
\end{cor}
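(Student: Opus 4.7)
The plan is to observe that this corollary is essentially immediate from the definitions together with the fact that $\Aut(X)$ acts on $X$ by cubical (hence CAT(0)) isometries, which therefore extend to isometries of $\tits X$ (see Definition~\ref{Def:tits_metric}). The main point is that all three objects in question — the set $Q(v)$, its canonical circumcenter $\chi(v)$, and its intrinsic radius — are constructed in an entirely canonical way from the half-space structure of $\~X$, which $\Aut(X)$ preserves.

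First I would verify $Q(gv) = gQ(v)$. Fix $y \in v$, so that $gy \in gv$, and recall that $g$ permutes the set $\frakH$ of half-spaces via $h \mapsto gh$, with $\frakH_{gy}^+ = g \frakH_y^+$. Because $g$ is a cubical automorphism, it sends the CAT(0) half-space $\hull(h)$ to $\hull(gh)$, and since $g$ acts on $\tits X$ by isometries, $\tits \hull(gh) = g \tits \hull(h)$. Using Definition~\ref{Def:Qy}, we compute
\[
Q(gv) = Q(gy) = \bigcap_{k \in \frakH_{gy}^+} \tits\hull(k) = \bigcap_{h \in \frakH_y^+} \tits\hull(gh) = g \bigcap_{h \in \frakH_y^+} \tits\hull(h) = gQ(y) = gQ(v).
\]

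Next I would handle the circumcenter and intrinsic radius together. Since $g$ acts on $\tits X$ by isometries (with respect to the Tits metric $\dist_T$), the intrinsic diameter and radius of any subset are invariant under $g$. In particular, $Q(v)$ and $gQ(v) = Q(gv)$ have the same intrinsic radius. Moreover, the circumcenter supplied by Theorem~\ref{thm:CapLytch} is canonically defined, hence commutes with isometries: if $c$ is the unique point realizing the intrinsic circumradius of $Q(v)$, then $gc$ realizes the intrinsic circumradius of $gQ(v) = Q(gv)$, so by uniqueness $\chi(gv) = g\chi(v)$.

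There is no real obstacle here; the argument is two lines of bookkeeping once one unpacks the definitions. The only thing worth being careful about is to use Definition~\ref{Def:Qy} rather than Definition~\ref{Def:Q'y}, since the description as an intersection of $\tits\hull(h)$ over $h \in \frakH_y^+$ makes the $\Aut(X)$-equivariance completely transparent (Lemma~\ref{Lem:Qy_defs} then tells us this coincides with the other characterization).
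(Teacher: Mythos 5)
Your argument is correct and follows the same route as the paper: both use the description of $Q(v)$ as $\bigcap_{h\in\frakH_y^+}\tits\hull(h)$ from Definition~\ref{Def:Qy}, observe that $g$ carries $\tits\hull(h)$ isometrically to $\tits\hull(gh)$ so that $g\from Q(v)\to Q(gv)$ is a Tits isometry, and then invoke the canonicity of the circumcenter from Theorem~\ref{thm:CapLytch}. Your write-up just spells out the intersection bookkeeping slightly more explicitly than the paper does.
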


\begin{proof}
Let $y\in \roller X$, and let $v=[y]$. Then, for every $h \in \frakH_{y}^+$, the map $g$ gives an isometry from $\tits \hull(h)$  to $\tits \hull(gh)$. Thus $g \from Q(v) \to Q(gv)$ is an isometry, hence $Q(v)$ and $Q(gv)$ have the same intrinsic radius. By Theorem~\ref{thm:CapLytch}, $\circum(v)$ and $\circum(gv)$ are uniquely determined by the geometry of $Q(v)$ and $Q(gv)$, respectively, hence $\circum(gv) = g \circum(v)$.
\end{proof}

Consider a Roller class $v\in\guralnik X$. We say that a combinatorial geodesic  ray $\gamma$ in $X$ \emph{represents} $v$ if 
$\gamma(\infty) = y \in v$, or equivalently if 
the UBS $\W(\gamma)$ represents the class $v$.   Setting $x = \gamma(0)$, observe that $\J(x,y)$ is the 
cubical convex hull of $\gamma$, and $\W(\gamma) = \W(x,y)$ is exactly the collection of hyperplanes crossing 
$\J(x,y)$. This leads to

\begin{lemma}\label{Lem:psiphi}
Let $v\in\guralnik X$ be a Roller class and let $a\in Q(v)$.  Then $\psi(a)\leq v$.  
\end{lemma}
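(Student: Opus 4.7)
The goal is to show that if $a \in Q(v)$ then the deep set $D_a = \frakH_{Y_a}^+$ (by Lemma~\ref{Lem:DeepSetProps}.\ref{Itm:DaConsist}) is contained in $\frakH_v^+$. Unpacking Definition~\ref{Def:RollerOrder}, this is equivalent to showing: for every $h \in D_a$ and every $y \in v$, the point $y$ lies in the extended half-space $h$.

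The plan is to argue by contradiction. Suppose $h \in D_a$ but there is some $y \in v$ with $y \in h^*$. Since $h^*$ is a vertex half-space, it contains at least one vertex $x \in X^{(0)}$; fix such an $x$. Using Lemma~\ref{Lem:Qy_defs} (both the equality $Q(y)=Q'(y)$ and the basepoint independence of $Q'$), we may compute $Q(v)$ as $\tits \J(x,y)$ for this choice of basepoint. Since $x, y \in h^*$, Definition~\ref{Def:Convex} gives $\I(x,y) \subset h^*$: any vertex $z$ with $\frakH_z^+ \supset \frakH_x^+ \cap \frakH_y^+$ must satisfy $h^* \in \frakH_z^+$. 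Hence $\J(x,y)$ (the union of the cubes spanned by $\I(x,y) \cap X$) is contained in the CAT(0) half-space $\hull(h^*)$, so
\[
a \in Q(v) = \tits \J(x,y) \subset \tits \hull(h^*).
\]

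Now I would derive a contradiction with $h \in D_a$. Since $\hull(h^*)$ is a closed convex subspace of $X$, the inclusion $a \in \tits \hull(h^*)$ means $a$ is represented by some CAT(0) geodesic ray $\beta$ lying entirely in $\hull(h^*)$. On the other hand, Definition~\ref{Def:DeepSet} applied with $R = 1$ provides some $t_1$ such that $\beta([t_1, \infty)) \subset \hull(h)$ (the definition of $D_a$ is representative-independent). But $\hull(h)$ and $\hull(h^*)$ are the two components of $X \setminus N(\hat h)$ and are therefore disjoint, so $\beta(t_1) \in \hull(h) \cap \hull(h^*) = \varnothing$, a contradiction.

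No step looks particularly delicate; the only item to keep in mind is the distinction between vertex half-spaces and CAT(0) half-spaces, and the fact that the definition of $D_a$ is independent of the choice of representative ray, which lets us swap in a ray supplied by $\tits \hull(h^*)$.
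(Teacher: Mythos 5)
Your proof is correct. The geometric heart is the same as the paper's: realize $Q(v)$ as $\tits\J(x,y)$ for a representative $y\in v$ and a chosen basepoint, and exploit the fact that $a$ is then represented by a CAT(0) ray confined to a convex subcomplex determined by $v$. But the bookkeeping is genuinely different. The paper takes a ray $\alpha\subset\J(x,y)$ representing $a$, notes $\W(\alpha)\subset\W(x,y)$, and concludes $\psi(a)\leq v$ by passing through the UBS machinery (Lemma~\ref{Lem:PsiAlternate} identifying $\psi(a)$ with $Y_{\W(\alpha)}$, together with the order-compatibility of $\U\mapsto Y_\U$ from Lemma~\ref{UBScomplex}/Theorem~\ref{Thm:UBStoRoller}). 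You instead verify the partial order directly from its definition: $\psi(a)\leq v$ amounts to $D_a\subset\frakH_v^+$, and you rule out a half-space $h\in D_a$ with $v\not\subset h$ by placing the basepoint in $h^*$, forcing a representative ray of $a$ into $\hull(h^*)$, and contradicting deepness of $h$. This buys a more self-contained argument that uses only Definitions~\ref{Def:DeepSet}, \ref{Def:Psi}, \ref{Def:RollerOrder} and Lemmas~\ref{Lem:DeepSetProps}, \ref{Lem:Qy_defs}, at the cost of a proof by contradiction; the paper's version is shorter on the page but leans on heavier earlier results. One small streamlining available to you: since $y\in h^*$ gives $h^*\in\frakH_y^+$, Definition~\ref{Def:Qy} already yields $a\in Q(y)\subset\tits\hull(h^*)$ directly, so the detour through $\I(x,y)$ and $\J(x,y)$ is not needed.
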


\begin{proof}
Let $\gamma$ be a combinatorial geodesic ray representing $v$, with $\gamma(0) = x$ and  $\gamma(\infty) = y \in v$. Then  
the cubical convex hull of $\gamma$, namely $\J(x,y)$, is also CAT(0) geodesically convex. Since $a\in Q(v) = \tits \mathcal 
J(x,y)$ by Lemma~\ref{Lem:Qy_defs},
we may choose a CAT(0) geodesic $\alpha$  representing $a$ such that $\alpha \subset \J(x,y)$.  Then $\W(\alpha) 
\subset \W(\gamma) = \W(x,y)$.  Thus every hyperplane crossing $\alpha$ crosses $\J(x,y)$ and hence $\gamma$.   
Hence $\psi(a)\leq v$.
\end{proof}

\begin{remark}\label{rem:future_phi}
In Definition~\ref{Def:phi_map}, we will define a map $\varphi \from \guralnik X\to\tits X$ with the property that $\varphi(v)\in Q(v)$ for each Roller 
class $v$. This map will be defined by slightly perturbing the circumcenter $\circum(v)$.  In view of Lemma~\ref{Lem:psiphi}, we 
will have $\psi(\varphi(v))\leq v$.
\end{remark}

\begin{lemma}\label{Lem:containment}
Let $v,w\in\guralnik X$ satisfy $w\leq v$.  Then $Q(w)\subset Q(v)$.
\end{lemma}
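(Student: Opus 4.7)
The plan is to fix representatives $y\in w$ and $z\in v$ and reduce the problem to showing, for each half-space $h\in\frakH_z^+$, that $Q(w)\subset\tits\hull(h)$. Intersecting those inclusions over all $h\in\frakH_z^+$ and applying Definition~\ref{Def:Qy} to $z$ will then give $Q(w)\subset Q(z)=Q(v)$. The apparent obstacle is that a fixed representative $y\in w$ need not lie in every $h\in\frakH_z^+$, so one cannot directly compare $\frakH_y^+$ with $\frakH_z^+$. The key freedom will be Lemma~\ref{Lem:Qy_defs}.\ref{item:q'_depends_on_class}, which says that $Q$ only depends on the commensurability class, so one is free to pick a different representative of $w$ for each $h$.

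Concretely, for each $h\in\frakH_z^+$ I would first produce a representative $y_h\in w$ with $y_h\in h$. If no such $y_h$ existed, then $w\subset h^*$, i.e.\ $h^*\in\frakH_w^+$. From $w\leq v$ one has $\frakH_w^+\subset\frakH_v^+$, and since $z\in v$ any half-space containing $v$ contains $z$, giving $\frakH_v^+\subset\frakH_z^+$; together these inclusions would put $h^*\in\frakH_z^+$, contradicting $h\in\frakH_z^+$. Hence the desired $y_h$ exists, and $h\in\frakH_{y_h}^+$.

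With $y_h$ chosen, Lemma~\ref{Lem:Qy_defs}.\ref{item:q'_depends_on_class} gives $Q(w)=Q(y_h)$, and Definition~\ref{Def:Qy} then yields
$$ Q(y_h)\;=\;\bigcap_{h'\in\frakH_{y_h}^+}\tits\hull(h')\;\subset\;\tits\hull(h). $$
Intersecting over all $h\in\frakH_z^+$ completes the inclusion $Q(w)\subset Q(v)$. I expect the main (and essentially only) obstacle to be the brief case analysis producing $y_h$; everything else is unpacking definitions and invoking the class-invariance of $Q$.
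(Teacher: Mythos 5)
Your proof is correct, and it takes a genuinely different route from the paper's. The paper fixes a basepoint $x_0$, takes combinatorial geodesic rays $\gamma_w,\gamma_v$ representing $w,v$, invokes Theorem~\ref{Thm:UBStoRoller} to translate $w\leq v$ into $\W(\gamma_w)\preceq\W(\gamma_v)$, uses the gate map onto the cubical convex hull of $\gamma_v$ to upgrade $\preceq$ to honest containment $\W(\gamma_w)\subset\W(\gamma_v)$, and then reads off $\J(x_0,y_w)\subset\J(x_0,y_v)$ and takes Tits boundaries via Lemma~\ref{Lem:Qy_defs}.\ref{item:q=q'}. You instead work directly with Definition~\ref{Def:Qy} as an intersection of half-space boundaries, and your one nontrivial move --- choosing, for each $h\in\frakH_z^+$, a representative $y_h\in w\cap h$ (which exists because otherwise $h^*\in\frakH_w^+\subset\frakH_v^+\subset\frakH_z^+$ would contradict $h\in\frakH_z^+$) and then invoking the class-invariance of $Q$ --- is exactly what is needed to compare $Q(w)$ with each $\tits\hull(h)$. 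Your argument is more elementary: it uses only the definition of the partial order on $\guralnik X$ and Lemma~\ref{Lem:Qy_defs}.\ref{item:q'_depends_on_class}, and avoids geodesic rays, gate maps, and Theorem~\ref{Thm:UBStoRoller} entirely. What the paper's approach buys in exchange is the concrete geometric picture of nested convex hulls $\J(x_0,y_w)\subset\J(x_0,y_v)$, which matches how these realizations are manipulated elsewhere (e.g.\ in Lemma~\ref{Lem:QIntersection}); but as a proof of this lemma alone, yours is complete and arguably cleaner.
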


\begin{proof}
Fix a basepoint $x_0\in X^{(0)}$ and let $\gamma_v,\gamma_w$ be combinatorial geodesic rays emanating from $x_0$ and 
representing $v,w$ respectively.  Since $w\leq v$, Theorem~\ref{Thm:UBStoRoller}.\ref{Itm:RUsection} says that $\mathcal 
W(\gamma_w)\preceq\W(\gamma_v)$.  Without moving $x_0$, we can replace $\gamma_w$ by its image under the gate map to the 
cubical convex hull of $\gamma_v$, 
ensuring that $\mathcal 
W(\gamma_w)\subset\W(\gamma_v)$.  Hence, setting $y_w = \gamma_w(\infty) \in w$ and $y_v = \gamma_v(\infty) \in v$, the 
cubical convex hulls of these geodesics satisfy $\J(x_0,y_w)\subset \J(x_0,y_v)$.  Taking 
boundaries and applying Lemma~\ref{Lem:Qy_defs} gives  $Q(w)\subset 
Q(v)$.
\end{proof}

\subsection{Diameter, intrinsic radius, and Tits-convexity of $Q(v)$}\label{subSec:Q_v_geometry}
Let $v \in \guralnik X$ be a Roller class. Recall that $Q(v)$ is the intersection of the Tits boundaries of the half-spaces corresponding to a representative element of $v$. We will need the following properties of  $Q(v)$:

\begin{prop}[Features of $Q(v)$]\label{Prop:diameter}
For any Roller class $v$, the Tits boundary realization $Q(v)$ has the following properties:
\begin{enumerate}[$(1)$ ]
\item \label{item:Q_diameter} $Q(v)$ has diameter at most $\pi/2$.
     \item \label{item:Q_convex} $Q(v)$ is Tits--convex.
     \item\label{item:Q_contractible} $Q(v)$ is contractible in the Tits metric topology and compact in the visual topology.
     \item \label{item:Q_radius} $Q(v)$ has intrinsic radius $r_v<\pi/2$. 
     \item \label{item:Q_center} $Q(v) \subset \~B_T(\circum(v), r_v)$.
\end{enumerate}
\end{prop}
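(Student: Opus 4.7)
The plan is to prove the five assertions in order, with parts (1) and (2) doing the main work via Lemma~\ref{Lem:tits_distance_bound} and Proposition~\ref{Prop:build_CAT(0)}, after which the remaining parts follow from general principles. For (1), I would use Definition~\ref{Def:Q'y} to write $Q(v) = \tits\J(x,y)$ for $x \in X^{(0)}$ and $y \in v$. Any $a, b \in Q(v)$ can be represented by CAT(0) rays $\alpha, \beta$ starting at $x$ and lying in $\J(x,y)$, so $\W(\alpha) \cup \W(\beta) \subset \W(x,y) = \U_v$, which is a UBS. Then Lemma~\ref{Lem:tits_distance_bound} gives $d_T(a, b) \leq \pi/2$. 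For (2), since $d_T(a,b) < \pi$, a unique Tits geodesic $g$ joins $a$ to $b$; by Proposition~\ref{Prop:build_CAT(0)}, any interior point $c$ of $g$ is represented by a CAT(0) ray $\gamma$ starting at $x$ with $\W(\gamma) = \W(\alpha) \cup \W(\beta) \subset \W(\J(x,y))$. Since $\gamma$ starts in the cubically convex set $\J(x,y)$ and crosses only its hyperplanes, it remains in $\J(x,y)$, so $c \in Q(v)$.

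For (3), $\J(x,y)$ contains no facing $(\dim X + 1)$--tuples, so Lemma~\ref{Lem:proper} makes it a proper CAT(0) space whose visual boundary is compact; this visual boundary agrees as a set with $Q(v) = \tits\J(x,y)$. Contractibility in the Tits topology then follows from Tits-convexity (from (2)) and $\diameter Q(v) < \pi$: a continuous contraction to any chosen basepoint is obtained by running along the unique Tits geodesics, which vary continuously with their endpoints in the CAT(1) metric when distances stay bounded away from $\pi$. For (5), applying Theorem~\ref{thm:CapLytch} to the filtering family $\{\hull(h) : h \in \frakH_y^+\}$, whose intersection in $X$ is empty because $y \in \roller X$, produces both $r_v \leq \pi/2$ and the canonical circumcenter $\circum(v)$, and the containment in (5) is then immediate from the definition of circumcenter.

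The main obstacle will be strengthening $r_v \leq \pi/2$ to the strict inequality $r_v < \pi/2$ required in (4). I plan to argue by contradiction: if $r_v = \pi/2$, then $Q(v)$ contains a sequence of points at Tits distance approaching $\pi/2$ from $\circum(v)$, and using visual compactness from (3) together with Tits-convexity from (2) to form midpoints of appropriate Tits geodesics inside $Q(v)$ should yield a new candidate center with strictly smaller maximum Tits distance, contradicting the optimality of $\circum(v)$. Making this midpoint/averaging argument rigorous --- particularly when the supremum defining $r_v$ is only attained as a limit --- will require careful use of the lower semicontinuity of $d_T$ in the visual topology.
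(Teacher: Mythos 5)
Parts (1), (3), and (5) of your proposal match the paper's argument: the diameter bound via Lemma~\ref{Lem:tits_distance_bound}, compactness via Lemma~\ref{Lem:proper}, contractibility via geodesic contraction in a uniquely geodesic CAT(1) space, and the containment $Q(v)\subset \~B_T(\circum(v),r_v)$ from the definition of the circumcenter. For (2) you take a genuinely different route: the paper shows $\tits\J(x,y)\hookrightarrow\tits X$ is a Tits-isometric embedding (because $\J(x,y)$ is CAT(0)-convex) and concludes convexity from unique geodesics in the sub-boundary, whereas you use Proposition~\ref{Prop:build_CAT(0)} to represent interior points of the Tits geodesic by rays with $\W(\gamma)=\W(\alpha)\cup\W(\beta)\subset\W(\J(x,y))$ and argue such rays stay in $\J(x,y)$. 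That works (a ray starting in a CAT(0) half-space containing $\J(x,y)$ and never meeting the bounding hyperplane cannot leave that half-space, by convexity of the distance to it), and it is a nice alternative, though it leans on the heavier Proposition~\ref{Prop:build_CAT(0)} where the paper only needs convexity of subcomplexes.

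The genuine gap is part (4). Theorem~\ref{thm:CapLytch} only gives $r_v\leq\pi/2$, and your plan for upgrading this to the strict inequality is not a proof. The paper obtains strictness by combining the diameter bound from (1) with the Jung-type inequality $\operatorname{rad}(Y)<\diameter(Y)$ for finite-dimensional CAT(1) spaces (Balser--Lytchak, Proposition 1.2); this is precisely the nontrivial input you would be reproving. Your midpoint/perturbation sketch does not obviously close: the circumcenter already realizes the infimum of $c\mapsto\sup_q d_T(c,q)$, so you must produce, from the hypothesis $r_v=\pi/2$, a point with strictly smaller sup-distance. But moving $\circum(v)$ toward one far point $q_0$ can \emph{increase} the distance to another far point $q$ whenever the angle at the center between $q_0$ and $q$ is obtuse, and the supremum need not be attained, so a single perturbation or a naive averaging of two far points does not yield a better center. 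Any correct argument here must use finite-dimensionality in an essential way (the inequality $\operatorname{rad}<\diameter$ fails for infinite-dimensional CAT(1) spaces), which is why the paper delegates this step to the cited result. You should either invoke that result, as the paper does, or supply a complete proof of the Jung-type inequality in this setting.
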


\begin{proof}
 Let $v$ be an arbitrary Roller class and $x_0 \in X^{(0)}$ an arbitrary basepoint. Let $\gamma$ be a combinatorial geodesic ray based at $x_0$ and representing $v$. Let $H= H(x_0,v,\gamma)$ denote the cubical convex hull of $\gamma$. Then, for an arbitrary point $y \in [y] = v$, the cubical hull $H$ and the interval $\J(x_0, y)$ lie within bounded Hausdorff distance of each other.  Thus, by 
Lemma~\ref{Lem:Qy_defs}, $Q(v)$ is equal to the closed subset $\tits H  = \tits \J(x_0, y) \subset \tits X$.

 \smallskip
 \textbf{Diameter:} 
To bound $\diameter_T(Q(v))$, let $a,b\in 
Q(v)$.  Represent $a,b$ by CAT(0) rays $\alpha,\beta$ with initial point $x_0$. Note that $\alpha, \beta \subset H$ because $H$ is a convex subcomplex.  Then $\W(\alpha),\W(\beta)$ 
are both subsets of $\W(\gamma)$, which is a UBS representing $v$.  So, $\mathcal 
W(\alpha)\cup\W(\beta) \subset \W(\gamma)$.  Now, $\W(\alpha)\cup\W(\beta)$ is 
infinite.  Moreover, $\W(\alpha)\cup\W(\beta)$ is unidirectional and contains no facing triple, since $\W(\gamma)$ has those properties.  Thus $\mathcal 
W(\alpha)\cup\W(\beta)$ is a UBS, hence Lemma~\ref{Lem:tits_distance_bound} 
implies $\dist_T(a,b)\leq \pi/2$.

\smallskip
\textbf{Convexity:}  Since $H=H(x_0,v,\gamma)\hookrightarrow X$ is an isometric embedding in the CAT(0) 
metric, the 
inclusion $\tits H=Q(v)\hookrightarrow\tits X$ is a Tits-metric isometric embedding.  Indeed, let $a,b\in 
Q(v)$ be represented by CAT(0) rays $\alpha,\beta \subset H$ with initial point $x_0$. Since 
$Q(v)$ has diameter less than $\pi$, we have $\dist_T(a,b)=\angle(a,b)$ by  
\cite[Remark II.9.19.(2)]{BridsonHaefliger}.  
Since $\alpha, \beta \subset H$, the angle $\angle_T(a,b)$ is 
determined entirely by 
the set $\{\dist_X(\alpha(t),\beta(t))\}_{t\ge 0}$, by \cite[Proposition II.9.8.(4)]{BridsonHaefliger}).  By convexity of $H$, this set coincides with 
$\{\dist_H(\alpha(t),\beta(t))\}_{t\ge 0}$.  Hence the Tits distance from $a$ to $b$ measured in $\tits X$ 
is the same 
as the Tits distance measured in $\tits H=Q(v)$.

Now, $Q(v)$ contains a unique geodesic joining $a,b$.  Since this geodesic realizes 
the distance from $a$ to 
$b$ in $\tits 
X$, this shows that $Q(v)$ is Tits-convex.

\smallskip
\textbf{Intrinsic radius:}  Since $Q(v)$ is a CAT(1) space of 
diameter less than $\pi$, the intrinsic radius $r_v$ satisfies $r_v<\mathrm{diam}\ Q(v)$, by \cite[Proposition 1.2]{BalserLytchak}.  Since $\mathrm{diam}\ Q(v)\leq\pi/2$, we have $r_v<\pi/2$, proving 
assertion~\ref{item:Q_radius}.  
Now, observe that $Q(v) \subset \~B_T(\circum(v), r_v)$ by Definition~\ref{Def:Qcircum} and Theorem~\ref{thm:CapLytch}, proving~\ref{item:Q_center}.

\smallskip
\textbf{Topological properties:}  Since $Q(v)$ is a uniquely geodesic CAT(1) space, a standard straight-line homotopy allows one to deformation retract $Q(v)$ to 
a single point. Thus $Q(v)$ is contractible.

Finally, recall from Lemma~\ref{Lem:Qy_defs} (and from earlier in this proof) that $Q(v) = \tits \J(x_0,y)$ for an arbitrary point  
$y \in [y] = v$.  By Lemma~\ref{Lem:proper}, the subcomplex $\J(x_0, y)$ is proper, hence $Q(v)$ is compact in the visual topology.  (The visual topology is defined in \cite[Section 
II.8]{BridsonHaefliger}, where it is called the \emph{cone topology}.)
\end{proof}

\subsection{Visibility and perturbing the circumcenter}\label{subSec:visible_perturb}
Recall the $\Aut(X)$--equivariant circumcenter map 
 $\circum \from \guralnik X \to \tits X$, mentioned in Corollary~\ref{Cor:Q_equivariant}.
In this subsection, we perturb  $\circum$ to obtain a map $\varphi \from \guralnik X\to\tits X$ with slightly nicer 
properties.  First, like $\circum$, the map $\varphi$ will have the property that $\varphi(v)\in Q(v)$ for each Roller class 
$v$, and hence $\psi(\varphi(v))\leq v$.  Compare Remark~\ref{rem:future_phi}.

We will define $\varphi$ so that Roller classes $v$ for which $\psi(\varphi(v))=v$ are exactly those for which there is some 
CAT(0) geodesic ray representing $v$. (Note that a combinatorial geodesic exists for every $v$, but a CAT(0) geodesic is not guaranteed.) Later, we will work with only such Roller classes, which we term 
\emph{$\ell^2$--visible}; see Definition~\ref{Def:L2visible}. 
See also Lemma~\ref{Lem:L2Visibility} for several equivalent characterizations of visibility. While the map $\varphi$ is not guaranteed to be equivariant, the set of $\ell^2$--visible Roller classes 
will still be invariant.

The following example illustrates the point that $\circum(v)$ can lie in $Q(w)\subsetneq Q(v)$ for some $w<v$. 

\begin{example}\label{Ex:CircumcenterOnBoundary}
Consider the cone of $\mathbb{R}^3$ cut out by the planes $z=0$, $z=y$, and $z=x$, and let $X$ be the union of all cubes (in the standard cubical tiling of $\mathbb{R}^3$) contained in this cone. Then $\guralnik X$ has four classes, described in coordinates as $u=[(\infty, 0, 0)]$, $u'=[(0, \infty, 0)]$, $w = [(\infty, \infty, 0)]$, and $v = [(\infty, \infty, \infty)]$, all of which are represented by $\ell^2$ geodesic rays. Furthermore, $Q(v) = Q(\infty, \infty, \infty) = \tits X$ is isometric to an isosceles triangle of $S^2$ whose base has length $\pi/2$ and whose height is $\arctan(1/\sqrt{2})$. The base of the triangle is $Q(w) = Q(\infty, \infty, 0)$.
See Figure \ref{Fig:SphericalTriangle}.

\begin{figure}
\begin{overpic}[width=0.7\textwidth]{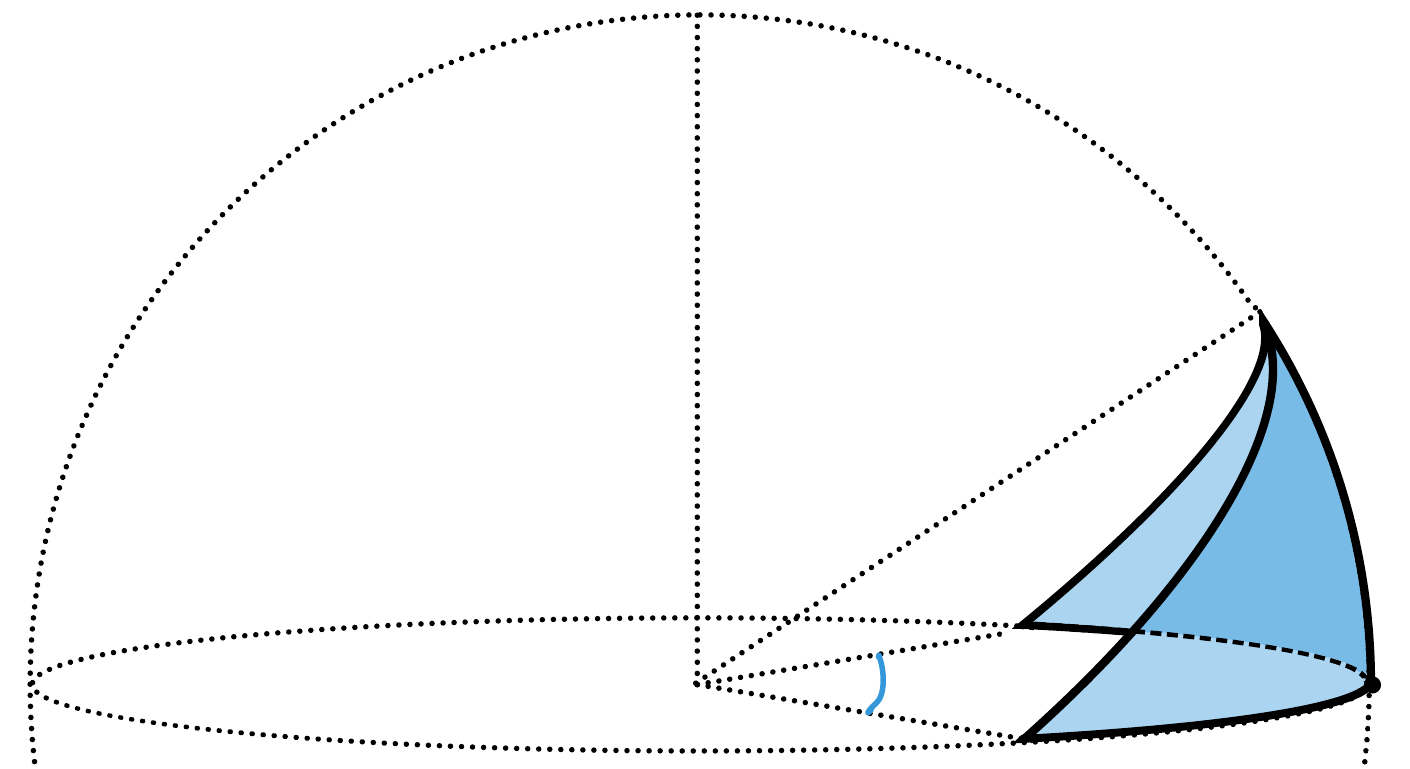}
\put(63.5,5){$\pi/2$}
\put(97,18){$\beta$}
\put(86,12){$Q(v)$}
\put(84,-1){$Q(w)$}
\put(99,5){$\circum(w) = \circum(v)$}
\end{overpic}
\caption{For the cube complex $X$ in Example~\ref{Ex:CircumcenterOnBoundary}, the Tits boundary $\tits X$ is the spherical triangle shown here. We have $\tits X = Q(v)$ for a single Roller class $v$, and $\circum(v) = \circum(w)$ for a Roller class $w < v$.}
\label{Fig:SphericalTriangle}
\end{figure}

First, observe that $\circum(w)$ is the midpoint of the geodesic segment $Q(w)$. Next, we claim that $\circum(v) = \circum(w)$. To see this, let $\beta$ be the altitude from $\circum(w)$ to the the apex of $Q(v)$. Because $Q(v)$ has a reflective symmetry in $\beta$, its circumcenter must be contained in $\beta$. In addition, since $\operatorname{len}(\beta) = \arctan(1/\sqrt{2}) < \pi/4$, any point $b \in \beta$ must have distance less than $\pi/4$ to the apex but distance at least $\pi/4$ to the two endpoints of $Q(w)$. Thus $\circum(w) = \circum(v)$ even though $w<v$ and $Q(w)\subsetneq Q(v)$.
\end{example}

The
phenomenon of Example~\ref{Ex:CircumcenterOnBoundary} will be inconvenient later. To remedy this problem, we will define 
$\varphi(v)$ by perturbing $\circum(v)$ slightly, to get a point in the interior of $Q(v)$ that retains the property that 
all of $Q(v)$ is contained in the $r_v$--neighborhood of $\varphi(v)$ for some $r_v<\pi/2$.  (This latter property will 
also be necessary later.)

To achieve this, we need some preliminary discussion and lemmas.

\begin{lemma}\label{Lem:m_v}
For every Roller class $v$,  there is a unique Roller class $M_v \in \psi(Q(v))$ that is maximal among all Roller classes in $\psi(Q(v))$. Furthermore $Q(M_v)=Q(v)$.
\end{lemma}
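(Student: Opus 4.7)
The approach is to establish $\psi(Q(v))$ as a finite directed set (any two elements have a common upper bound), from which the unique maximum $M_v$ follows immediately, and then to verify $Q(M_v) = Q(v)$ via a convexity/projection argument.

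First I would observe finiteness of $\psi(Q(v))$: by Lemma~\ref{Lem:psiphi} every class in $\psi(Q(v))$ is $\leq v$, and Corollary~\ref{Cor:FinitelyManyBelowRoller} says only finitely many Roller classes lie below $v$.

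The core step is to show that given $u,w \in \psi(Q(v))$, realized as $u = \psi(a)$ and $w = \psi(b)$ for some $a,b \in Q(v)$, there exists $c \in Q(v)$ with $\psi(c) \geq u,w$. Fix $y_v \in v$ and a basepoint $x_0 \in X^{(0)}$; since $Q(v) = \tits \J(x_0,y_v)$ by Lemma~\ref{Lem:Qy_defs} and $\J(x_0,y_v)$ is CAT(0) convex, we may represent $a,b$ by CAT(0) rays $\alpha,\beta$ lying in $\J(x_0,y_v)$ and starting at $x_0$. Then $\W(\alpha) \cup \W(\beta) \subset \W(x_0,y_v)$, and the argument in the proof of Proposition~\ref{Prop:diameter}.\ref{item:Q_diameter} already shows this union is itself a UBS. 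Proposition~\ref{Prop:build_CAT(0)} then supplies a CAT(0) ray $\gamma$ from $x_0$ representing an interior point $c$ of the Tits geodesic from $a$ to $b$, with $\W(\gamma) = \W(\alpha) \cup \W(\beta)$. Tits convexity of $Q(v)$ (Proposition~\ref{Prop:diameter}.\ref{item:Q_convex}) gives $c \in Q(v)$, while Lemma~\ref{Lem:PsiAlternate} combined with the order-preserving property of $\UR$ (Theorem~\ref{Thm:UBStoRoller}.\ref{Itm:URonto}) yields $\psi(c) = Y_{\W(\gamma)} \geq Y_{\W(\alpha)} = u$ and $\psi(c) \geq w$. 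A finite poset in which every pair has a common upper bound has a unique maximum, giving the desired $M_v$.

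For the identity $Q(M_v) = Q(v)$, the inclusion $Q(M_v) \subset Q(v)$ is immediate from $M_v \leq v$ and Lemma~\ref{Lem:containment}. For the reverse, given $a \in Q(v)$, represent $a$ by a CAT(0) ray $\alpha$ based at some $x_0 \in X^{(0)}$. Using that $\alpha$ crosses each hyperplane at most once and diverges linearly from each hyperplane in $\W(\alpha)$ (Lemma~\ref{Lem:DeepTransverse}), the sequence $\alpha(n)$ converges in $\~X$ to a point $y_\alpha \in \roller X$ with $\W(\alpha) = \W(x_0,y_\alpha)$; Lemma~\ref{standardUBS}, Corollary~\ref{CorOnto}, and Lemma~\ref{Lem:PsiAlternate} then identify $\psi(a) = Y_{\W(\alpha)} = [y_\alpha]$. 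A short convexity argument shows $\alpha \subset \J(x_0,y_\alpha)$: if $h$ is a vertex half-space with $x_0,y_\alpha \in h$, then $\hat h \notin \W(\alpha)$, $\alpha$ is eventually in $\hull(h)$, and $\alpha$ cannot exit and re-enter $\hull(h)$ without violating CAT(0) convexity of $\hull(h)$. Hence $a \in \tits \J(x_0,y_\alpha) = Q(\psi(a))$. Since $\psi(a) \leq M_v$ by maximality, Lemma~\ref{Lem:containment} gives $a \in Q(\psi(a)) \subset Q(M_v)$.

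The main obstacle is the join step via Proposition~\ref{Prop:build_CAT(0)}: the delicate point is the coordinated choice of basepoint so that $\W(\alpha) \cup \W(\beta)$ is a single UBS, and the verification that this union really sits inside an ambient UBS representing $v$. A secondary technical step is the claim $a \in Q(\psi(a))$ for every $a \in \tits X$, which requires the Roller limit computation for CAT(0) rays together with the non-reentry convexity argument for CAT(0) half-spaces.
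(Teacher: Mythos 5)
Your proposal is correct and follows essentially the same route as the paper: both arguments hinge on Proposition~\ref{Prop:build_CAT(0)} to produce, from two rays in $Q(v)$ with a common basepoint, a third ray whose hyperplane set is the union, giving a common upper bound in $\psi(Q(v))$ (the paper phrases this as a contradiction between two maximal elements rather than directedness of the poset, but this is only a bookkeeping difference), and both deduce $Q(M_v)=Q(v)$ from Lemma~\ref{Lem:containment} together with $a\in Q(\psi(a))$. Your extra justification of $a\in Q(\psi(a))$ fills in a step the paper leaves implicit, but it is not a departure in method.
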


\begin{proof}
If $w\in\psi(Q(v))$, then $w=\psi(a)$ for some $a\in Q(v)$, so $w\leq v$ by Lemma~\ref{Lem:psiphi}.  
Since there is a bound on the length of $\leq$--chains, it follows that $\psi(Q(v))$ contains $\leq$--maximal elements.

Suppose that $m,m'\in\psi(Q(v))$ are $\leq$--maximal.  Let $a,a'\in Q(v)$ be such that 
$\psi(a)=m$ and $\psi(a')=m'$.  Let $\alpha,\alpha'$ be CAT(0) geodesic rays in $X$ representing $a,a'$ 
respectively, chosen so that $\alpha(0)=\alpha'(0)$. Then $\U_m \sim \W(\alpha)$ and $\U_{m'} \sim \W(\alpha')$. 
 Since $m,m'\leq v$, Theorem~\ref{Thm:UBStoRoller} gives
 $\mathcal W(\alpha) \preceq \U_v$ and $\W(\alpha') \preceq \U_v$, hence
$W(\alpha)\cup\W(\alpha')$ is commensurate with a UBS by Lemma~\ref{Lem:FinManyMinimals}. Thus, by Proposition~\ref{Prop:build_CAT(0)}, there exists 
a CAT(0) geodesic ray $\beta$ with $\W(\beta)  = \W(\alpha)\cup\mathcal 
W(\alpha')$.  By construction, 
$$m \leq\psi(\beta(\infty))\leq v, \qquad m' \leq\psi(\beta(\infty))\leq v$$
 where the first inequality for $m$ or $m'$ is strict if 
$m\neq m'$. So, $\beta(\infty)\in Q(v)$ by Lemma~\ref{Lem:containment} and the second inequality.  Hence the 
first inequality contradicts the maximality of $m,m'$ in $\psi(Q(v))$, unless $m=m'$.  This proves 
the uniqueness of a maximal element $M_v$.

Next, $M_v\leq v$, so $Q(M_v)\subset Q(v)$ by Lemma~\ref{Lem:containment}.  On the other hand, if $a\in 
Q(v)$, then $\psi(a)\in\psi(Q(v))$, so $\psi(a)\leq M_v$.  Hence $a\in Q(\psi(a))\subset Q(M_v)$ by 
Lemma~\ref{Lem:containment}. Thus $Q(v)\subset Q(M_v)$, and we conclude  that $Q(v) = Q(M_v)$.
\end{proof}

For a Roller class $v$, define $Q_0(v) = Q(v) \setminus \bigcup_{u < M_v} Q(u)$.

\begin{lemma}\label{Lem:Q_0_characterize}
Let $v,w$ be Roller classes, and let $a \in Q(v)$. Then $a \in Q_0(v)$ if and only if $\psi(a)= M_v$.
Furthermore, if $Q(v) = Q(w)$, then $M_v=M_w$ and $Q_0(v)=Q_0(w)$.
\end{lemma}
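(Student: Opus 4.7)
The plan is to unwind the definitions, relying on three ingredients already available: Lemma~\ref{Lem:psiphi} (giving $\psi(a)\leq v$ whenever $a\in Q(v)$), Lemma~\ref{Lem:containment} (giving $Q(w)\subset Q(v)$ whenever $w\leq v$), and the fact (not yet stated but easy) that every $a\in\tits X$ satisfies $a\in Q(\psi(a))$. The main work consists of the last observation; once that is in hand, the biconditional is immediate.

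First I would record the auxiliary fact that $a \in Q(\psi(a))$ for every $a \in \tits X$. By Lemma~\ref{Lem:DeepSetProps}, $\psi(a)=Y_a$ is the principal Roller class whose closure has $\frakH^+_{\~Y_a}=D_a$, so by Definition~\ref{Def:Qy} we have $Q(\psi(a))=\bigcap_{h\in D_a}\tits\hull(h)$. For any $h\in D_a$ and any CAT(0) geodesic $\alpha$ representing $a$, Definition~\ref{Def:DeepSet} says $\alpha$ eventually lies in $\hull(h)$, hence $a=\alpha(\infty)\in\tits\hull(h)$. Intersecting over $h\in D_a$ gives $a\in Q(\psi(a))$.

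For the forward direction of the biconditional, suppose $a\in Q_0(v)$. Since $Q_0(v)\subset Q(v)=Q(M_v)$ by Lemma~\ref{Lem:m_v}, Lemma~\ref{Lem:psiphi} yields $\psi(a)\leq M_v$. If the inequality were strict, the auxiliary fact would place $a\in Q(\psi(a))\subset\bigcup_{u<M_v}Q(u)$, contradicting $a\in Q_0(v)$. Hence $\psi(a)=M_v$. Conversely, if $\psi(a)=M_v$ and $a\in Q(u)$ for some Roller class $u$, then Lemma~\ref{Lem:psiphi} gives $M_v=\psi(a)\leq u$, so no $u<M_v$ contains $a$ in its Tits realization; thus $a\in Q_0(v)$.

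For the second assertion, if $Q(v)=Q(w)$ then $\psi(Q(v))=\psi(Q(w))$, so the uniqueness clause of Lemma~\ref{Lem:m_v} forces $M_v=M_w$; then $Q_0(v)=Q_0(w)$ follows directly from the definition of $Q_0$. No real obstacle is anticipated: the only subtle point is the auxiliary fact $a\in Q(\psi(a))$, which is a direct consequence of how $\psi$ and $D_a$ were set up.
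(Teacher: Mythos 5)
Your overall architecture matches the paper's proof exactly: both directions of the biconditional and the final assertion about $Q(v)=Q(w)$ are argued the same way, using Lemma~\ref{Lem:psiphi}, the uniqueness clause of Lemma~\ref{Lem:m_v}, and the auxiliary fact that $a\in Q(\psi(a))$. The paper invokes that auxiliary fact without proof (it is asserted ``by definition'' in the proof of Lemma~\ref{Lem:disjoint_Q} and used again in Lemma~\ref{Lem:m_v}), so your decision to justify it explicitly is reasonable.

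However, your justification of that auxiliary fact is incorrect as written. You claim $Q(\psi(a))=\bigcap_{h\in D_a}\tits\hull(h)$, but Definition~\ref{Def:Qy} takes the intersection over $\frakH_y^+$ for a representative $y\in Y_a$, and $\frakH_y^+$ properly contains $D_a=\frakH^+_{\~Y_a}$ whenever some hyperplane cuts the class $Y_a$; the extra half-spaces can genuinely shrink the intersection. Only the inclusion $Q(\psi(a))\subseteq\bigcap_{h\in D_a}\tits\hull(h)$ is automatic, and showing that $a$ lies in the larger set does not place it in $Q(\psi(a))$. Concretely, for the standard squared quarter-plane $[0,\infty)^2$ with $a$ the horizontal direction, $D_a$ consists of the right-pointing vertical half-spaces, so $\bigcap_{h\in D_a}\tits\hull(h)$ is the entire arc $\tits X$, whereas $Q(\psi(a))=Q([(\infty,0)])$ is the single point $\{a\}$. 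The fact $a\in Q(\psi(a))$ is nevertheless true; the correct route is via Lemma~\ref{Lem:Qy_defs}: taking $x=\alpha(0)$ and $y=\gate_{Y_a}(x)$, one has $\W(x,y)=\W(x,Y_a)=\W(\alpha)$ (by Proposition~\ref{Prop:Proj} together with Lemmas~\ref{Lem:DeepTransverse} and~\ref{Lem:DeepSetProps}), so no hyperplane separates a point of $\alpha$ from both $x$ and $y$; hence $\alpha\subset\J(x,y)$ and $a\in\tits\J(x,y)=Q(\psi(a))$. With that repair, the rest of your argument goes through verbatim and coincides with the paper's.
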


\begin{proof}
First, suppose  that $a \in Q_0(v)$. Then $\psi(a) \leq M_v$ by the definition of $M_v$. If $\psi(a) = u < M_v$, then $a \in Q(u)$, contradicting the definition of $Q_0(v)$. Thus $\psi(a) = M_v$, as desired.

Conversely, suppose that $\psi(a)=M_v$.  If $a\in Q(u)$ for some $u<M_v$, then 
$\psi(a)\leq u$ by Lemma~\ref{Lem:psiphi}, so $\psi(a)<M_v$, a contradiction.  Hence $a\in Q_0(v)$.

Finally, suppose $Q(v) = Q(w)$. Then Lemma~\ref{Lem:m_v} says that $M_v = M_w$ is the unique maximal element of $\psi(Q(v)) = \psi(Q(w))$.
Thus, by the above equivalence, $Q_0(v)=Q_0(w)$.
\end{proof}

\begin{lemma}\label{Lem:Q_0_dense}
Let $v$ be a Roller class. Then  $Q_0(v)$ contains points arbitrarily close to $\circum(v)$ in the Tits metric. 
\end{lemma}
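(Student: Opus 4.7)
The plan is to perturb $\circum(v)$ along a Tits geodesic toward a point already known to lie in $Q_0(v)$, and then use Proposition~\ref{Prop:build_CAT(0)} to verify that the perturbed points also lie in $Q_0(v)$.

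First, set $a_0 = \circum(v)$. If $\psi(a_0) = M_v$, then $a_0 \in Q_0(v)$ by Lemma~\ref{Lem:Q_0_characterize} and there is nothing to prove, so assume $\psi(a_0) < M_v$. By the definition of $M_v$ as the maximal element of $\psi(Q(v))$, there exists $b \in Q(v)$ with $\psi(b) = M_v$, and hence $b \in Q_0(v)$. Since $Q(v)$ is Tits-convex and has diameter at most $\pi/2$ by Proposition~\ref{Prop:diameter}, there is a unique Tits geodesic $g$ from $a_0$ to $b$, and $g \subset Q(v)$.

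Next, I would choose CAT(0) geodesic rays $\alpha,\beta$ in $X$ starting at a common vertex $x_0 \in X^{(0)}$ and representing $a_0$ and $b$ respectively, and apply Proposition~\ref{Prop:build_CAT(0)} to $\alpha,\beta$. This requires verifying that $\W(\alpha) \cup \W(\beta)$ is commensurate with a UBS. This is handled exactly as in the proof of Lemma~\ref{Lem:m_v}: by Lemma~\ref{Lem:PsiAlternate}, $\W(\alpha)$ and $\W(\beta)$ are commensurate with $\U_{\psi(a_0)}$ and $\U_{\psi(b)}$; since $\psi(a_0), \psi(b) \leq v$, Theorem~\ref{Thm:UBStoRoller}.\ref{Itm:RUsection} gives $\W(\alpha) \preceq \U_v$ and $\W(\beta) \preceq \U_v$; and then Lemma~\ref{Lem:FinManyMinimals} shows that $\W(\alpha) \cup \W(\beta)$ is commensurate with a subcollection of the minimal UBSes in the decomposition of $\U_v$, which is itself a UBS. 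Proposition~\ref{Prop:build_CAT(0)} then provides, for every interior point $c$ of $g$, a CAT(0) ray $\gamma$ representing $c$ with $\W(\gamma) = \W(\alpha) \cup \W(\beta)$.

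It remains to show $c \in Q_0(v)$. Combining Lemma~\ref{Lem:PsiAlternate} with Lemma~\ref{UBScomplex} yields
$$\~{\psi(c)} \; = \; \~Y_{\W(\gamma)} \; = \; \~Y_{\W(\alpha)} \cap \~Y_{\W(\beta)} \; \subset \; \~Y_{\W(\beta)} \; = \; \~{M_v}.$$
By Lemma~\ref{POSET Roller}, this inclusion of closures is equivalent to $M_v \leq \psi(c)$. On the other hand, $c \in Q(v)$, so $\psi(c) \in \psi(Q(v))$ and $\psi(c) \leq v$ by Lemma~\ref{Lem:psiphi}. The maximality of $M_v$ in $\psi(Q(v))$ therefore forces $\psi(c) = M_v$, so $c \in Q_0(v)$ by Lemma~\ref{Lem:Q_0_characterize}. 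Since $c$ can be chosen arbitrarily close to $a_0 = \circum(v)$ in the Tits metric along $g$, this completes the proof. The only subtle step is the verification of the UBS hypothesis for Proposition~\ref{Prop:build_CAT(0)}, which I would handle with the minimal-UBS decomposition of Lemma~\ref{Lem:FinManyMinimals} exactly as in Lemma~\ref{Lem:m_v}.
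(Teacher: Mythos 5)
Your proof is correct and follows essentially the same route as the paper's: take the Tits geodesic in $Q(v)$ from $\circum(v)$ to a point realizing $M_v$, apply Proposition~\ref{Prop:build_CAT(0)} to interior points, and use maximality of $M_v$ together with Lemma~\ref{Lem:Q_0_characterize} to conclude those points lie in $Q_0(v)$. The only differences are cosmetic: you spell out the verification that $\W(\alpha)\cup\W(\beta)$ is commensurate with a UBS (which the paper leaves implicit, having done it in Lemma~\ref{Lem:m_v}) and you separate off the trivial case $\psi(\circum(v))=M_v$.
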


\begin{proof}
By Lemma~\ref{Lem:m_v}, there is a point $a \in Q(v)$  such that $\psi(a) = M_v$.
By Lemma~\ref{Lem:Q_0_characterize}, we have $a \in Q_0(v)$.
Let $g$ be the geodesic in $\tits X$ from $a$ to 
$b = \circum(v)$.  By Proposition~\ref{Prop:diameter}.\eqref{item:Q_convex}, $g$ lies in $Q(v)$.
 For any $\epsilon>0$, choose $c \in g$ so that $0 < \dist_T(c,\circum(v))<\epsilon$.  We will show $c \in Q_0(v)$.
 
By Proposition~\ref{Prop:build_CAT(0)}, the Tits points $a,b,c$ are represented by CAT(0) geodesic rays $\alpha, \beta, \gamma$ such that $\W(\gamma) = \W(\alpha) \cup \W(\beta) \supset \W(\alpha)$. Thus, by Lemma~\ref{Lem:PsiAlternate}, we have $\psi(c) = Y_{\W(\gamma)} \subset Y_{\W(\alpha)} = \psi(a)$.
By Lemma~\ref{POSET Roller}, we have $\psi(c) \geq \psi(a) = M_v$, hence $\psi(c) = M_v$ by the maximality of $M_v$.
Therefore, $c \in Q_0(v)$ by Lemma~\ref{Lem:Q_0_characterize}.
\end{proof}

\begin{definition}[The pseudocenter $\varphi$]\label{Def:phi_map}
Let $v$ be a Roller class, and fix $Q = Q(v)$. Let $r<\pi/2$ be the intrinsic radius of $Q$. Using Lemma~\ref{Lem:Q_0_dense}, choose $p_Q\in 
Q_0(v)$ such that $d_T(p_Q,\circum(v))\leq \pi/4 -r/2$. We call $p_Q$ the \emph{pseudocenter} of $Q$, and denote it $\varphi(v)$.

Using Lemma~\ref{Lem:Q_0_characterize}, we ensure that $\varphi(v) = \varphi(w) = p_Q$ for every Roller class $w$ such that $Q(w) = Q(v) = Q$.
\end{definition}

In the following proposition we collect the facts about the pseudocenter map 
$\varphi$ that will be needed later, in conjunction with Lemma~\ref{Lem:L2Visibility} and the definition of 
$\varphi$.

\begin{prop}[$\varphi$ and $\psi$ facts]\label{prop:phi_facts}
     The pseudocenter map $\varphi$ has the following properties.  Let $v\in\guralnik X$.  Then:
     \begin{enumerate}[$(1)$ ]
          \item $\varphi(v)\in Q(v)$.\label{item:phi_in_Q}
          \item $\psi(\varphi(v))\leq v$.\label{item:composition}
          \item $Q(v)=Q(\psi(\varphi(v)))$.\label{item:same_Q}
          \item If $Q(w)=Q(v)$, then $\varphi(w)=\varphi(v)$. \label{item:same_phi}
     \end{enumerate}
\end{prop}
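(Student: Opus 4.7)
The proof will be a direct assembly of the lemmas already established about $Q(v)$, $\varphi(v)$, and the maximal class $M_v$; each of the four items follows essentially immediately once the right lemma is quoted. The plan is to verify them in order.

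For \eqref{item:phi_in_Q}, I note that Definition~\ref{Def:phi_map} sets $\varphi(v) = p_Q$ with $p_Q \in Q_0(v) \subset Q(v)$, so the claim is immediate. For \eqref{item:composition}, since $\varphi(v) \in Q(v)$ by \eqref{item:phi_in_Q}, Lemma~\ref{Lem:psiphi} gives $\psi(\varphi(v)) \leq v$.

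For \eqref{item:same_Q}, the key observation is that $\varphi(v) \in Q_0(v)$ by construction, so Lemma~\ref{Lem:Q_0_characterize} identifies $\psi(\varphi(v)) = M_v$. Then Lemma~\ref{Lem:m_v} yields $Q(\psi(\varphi(v))) = Q(M_v) = Q(v)$.

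Finally, \eqref{item:same_phi} is built directly into the definition of the pseudocenter: Definition~\ref{Def:phi_map} stipulates that $\varphi$ depends only on the set $Q(v)$, with the consistency across classes sharing the same Tits boundary realization being justified by Lemma~\ref{Lem:Q_0_characterize} (which guarantees $M_v = M_w$ and $Q_0(v) = Q_0(w)$ whenever $Q(v) = Q(w)$, so the single choice $p_{Q(v)}$ may legitimately be assigned to all such classes). There is no serious obstacle here — the real work was done in Lemmas~\ref{Lem:m_v}, \ref{Lem:Q_0_characterize}, and \ref{Lem:Q_0_dense}, which made Definition~\ref{Def:phi_map} possible; the proposition simply records the four formal consequences that will be cited in the sequel.
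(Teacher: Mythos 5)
Your proposal is correct and follows exactly the same route as the paper's proof: items (1) and (2) from the definition of $\varphi$ plus Lemma~\ref{Lem:psiphi}, item (3) from Lemmas~\ref{Lem:Q_0_characterize} and~\ref{Lem:m_v}, and item (4) from Definition~\ref{Def:phi_map} together with Lemma~\ref{Lem:Q_0_characterize}. Nothing is missing.
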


\begin{proof}
     Assertion~\ref{item:phi_in_Q} holds by construction, and, together with Lemma~\ref{Lem:psiphi}, implies 
assertion~\ref{item:composition}.  Since $\varphi(v)\in Q_0(v)$, we have $\psi(\varphi(v))=M_v$ by Lemma~\ref{Lem:Q_0_characterize}, and $Q(v)=Q(M_v)$ by 
Lemma~\ref{Lem:m_v}, yielding assertion~\ref{item:same_Q}.  Assertion~\ref{item:same_phi} follows from the 
definition of $\varphi$ and Lemma~\ref{Lem:Q_0_characterize}.
\end{proof}

We can now define ($\ell^2$)--visible Roller classes:

\begin{definition}[$\ell_2$--visible]\label{Def:L2visible}
Let $v\in\guralnik X$.  Then $v$ is \emph{$\ell^2$--visible} if $v\in\psi(\tits X)$. Let 
$\visi(X) = \psi(\tits X)$ denote the set of visible Roller classes. Observe that $\Aut(X)$ stabilizes $\visi(X)$, because the map $\psi$ is $\Aut(X)$--equivariant.
\end{definition}

\begin{lemma}[Characterizing visibility]\label{Lem:L2Visibility}
Let $v\in\guralnik X$. Then the following are equivalent:
\begin{enumerate}[$(1)$ ]
\item\label{Itm:VisibleDef} $v$ is $\ell^2$--visible.
\item\label{Itm:VisibleRay} There exists a CAT(0) geodesic ray $\alpha$ such that $\W(\alpha)$ represents $v$.
\item\label{Itm:VisibleMv} $v = M_v$.
\item\label{Itm:VisiblePhi} $v = \psi(\varphi(v))$.
\end{enumerate}
\end{lemma}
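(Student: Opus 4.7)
The plan is to prove the equivalences through the cycle (2)$\Leftrightarrow$(1)$\Leftrightarrow$(3)$\Leftrightarrow$(4), with the hardest link being (1)$\Rightarrow$(3).

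The equivalence (1)$\Leftrightarrow$(2) is essentially a direct translation via Lemma~\ref{Lem:PsiAlternate}. Unpacking: $v$ is $\ell^2$--visible exactly when $v=\psi(a)$ for some $a\in\tits X$; choosing any CAT(0) representative $\alpha$ of $a$ starting at a vertex, Lemma~\ref{Lem:PsiAlternate} gives $\psi(a)=Y_{\W(\alpha)}$, so $\psi(a)=v$ is the same as saying $\W(\alpha)$ represents $v$ (in the sense of Theorem~\ref{Thm:UBStoRoller}).

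For (3)$\Leftrightarrow$(4), the key observation is that the pseudocenter $\varphi(v)$ was chosen in $Q_0(v)$, so Lemma~\ref{Lem:Q_0_characterize} yields $\psi(\varphi(v))=M_v$ automatically. Hence $v=\psi(\varphi(v))$ holds if and only if $v=M_v$.

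The heart of the argument is (1)$\Leftrightarrow$(3). The direction (3)$\Rightarrow$(1) is immediate, since $M_v\in\psi(Q(v))\subset\psi(\tits X)$ by Lemma~\ref{Lem:m_v}. For (1)$\Rightarrow$(3), since $M_v\leq v$ always holds (any $a\in Q(v)$ satisfies $\psi(a)\leq v$ by Lemma~\ref{Lem:psiphi}), it suffices to show that if $\psi(a)=v$ for some $a\in\tits X$, then actually $a\in Q(v)$; then $v=\psi(a)\in\psi(Q(v))$ forces $v\leq M_v$ and hence $v=M_v$. To produce such an $a\in Q(v)$, represent $a$ by a CAT(0) ray $\alpha$ with $\alpha(0)=x\in X^{(0)}$, and invoke Lemma~\ref{lem:combinatorial_fellow_travels_cat0} to obtain a combinatorial ray $\alpha'$ with $\alpha'(0)=x$ that fellow-travels $\alpha$ at bounded CAT(0) distance. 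Bounded CAT(0) fellow-traveling implies $\W(\alpha)\sim\W(\alpha')$, so by Lemma~\ref{UBScomplex} the Roller classes $Y_{\W(\alpha)}$ and $Y_{\W(\alpha')}$ coincide; letting $y=\alpha'(\infty)$, standard identifications give $[y]=Y_{\W(\alpha')}=Y_{\W(\alpha)}=\psi(a)=v$, so $y\in v$. Since $\alpha'\subset \J(x,y)$ and $\alpha$ stays a bounded distance from $\alpha'$, the CAT(0) ray $\alpha$ lies in a bounded neighborhood of $\J(x,y)$, hence $a\in\tits\J(x,y)=Q(v)$ by Lemma~\ref{Lem:Qy_defs}.

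The main obstacle is verifying $a\in Q(v)$ in the step above; everything else is a direct consequence of definitions and of facts already established. The delicate point is that $v$ may be $\ell^2$--invisible in the sense that many representative rays could fail this condition, but the fellow-traveling lemma guarantees at least one combinatorial witness aligned with $\alpha$, which is exactly what is needed.
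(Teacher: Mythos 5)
Your proof is correct and follows essentially the same route as the paper: both arguments rest on Lemma~\ref{Lem:PsiAlternate} for (1)$\Leftrightarrow$(2), on the fact that a point $a$ with $\psi(a)=v$ lies in $Q(v)$ (so that maximality of $M_v$ forces $v=M_v$), and on $\varphi(v)\in Q_0(v)$ together with Lemma~\ref{Lem:Q_0_characterize} for the link with (4). The only differences are cosmetic — you close the cycle as (3)$\Leftrightarrow$(4) rather than (3)$\Rightarrow$(4)$\Rightarrow$(2), and you spell out via Lemma~\ref{lem:combinatorial_fellow_travels_cat0} the containment $a\in Q(\psi(a))$ that the paper uses implicitly.
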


\begin{proof}
To begin with, we have \ref{Itm:VisibleDef}$\Leftrightarrow$\ref{Itm:VisibleRay} by Lemma~\ref{Lem:PsiAlternate}.

For \ref{Itm:VisibleRay}$\Rightarrow$\ref{Itm:VisibleMv},  let $\alpha$ be a CAT(0) geodesic ray such that $\mathcal 
W(\alpha)$ represents $v$. Let $a = \alpha(\infty) \in \tits X$. 

Then, by Lemma~\ref{Lem:PsiAlternate}, we have $\psi(a) = v$.  Now, recall from Lemma~\ref{Lem:m_v} that $M_v$ is 
maximal among all Roller classes in $\psi(Q(v))$. By 
Lemma~\ref{Lem:psiphi}, every class $w \in \psi(Q(v))$ satisfies $w \leq v$, hence $M_v \leq v$.  On the other hand, $v = 
\psi(a) \in \psi(Q(v))$, hence $v \leq M_v$ by the maximality of $M_v$. Thus $v = M_v$.

For \ref{Itm:VisibleMv}$\Rightarrow$\ref{Itm:VisiblePhi}, suppose $v = M_v$.  By Definition~\ref{Def:phi_map}, we have 
$\varphi(v) \in Q_0(v)$. Thus, by Lemma~\ref{Lem:Q_0_characterize}, we have $\psi(\varphi(v)) = M_v = v$.

For \ref{Itm:VisiblePhi}$\Rightarrow$\ref{Itm:VisibleRay}, suppose $v = \psi(\varphi(v))$. 
Then $\varphi(v) \in Q_0(v) \subset Q(v)$. 
Choose a CAT(0) geodesic ray $\alpha$ representing $\varphi(v)$.   The associated UBS $\W(\alpha)$ represents the 
class $\psi(\varphi(v)) = v$.
\end{proof}

\begin{lemma}\label{Lem:maximal_visible}
Let $v\in\guralnik X$ be a $\leq$--minimal Roller class.  Then $v$ is $\ell^2$--visible.
\end{lemma}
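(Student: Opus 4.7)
The proof should be essentially immediate from the machinery already assembled, specifically Proposition~\ref{prop:phi_facts} and Lemma~\ref{Lem:L2Visibility}.

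Here is the plan. Apply the pseudocenter map $\varphi \from \guralnik X \to \tits X$ to $v$, then push the result back via $\psi$. By Proposition~\ref{prop:phi_facts}.\ref{item:composition}, we have $\psi(\varphi(v)) \leq v$ as elements of $\guralnik X$. Since $v$ is $\leq$--minimal, this forces equality: $\psi(\varphi(v)) = v$. By the equivalence \ref{Itm:VisiblePhi}$\Leftrightarrow$\ref{Itm:VisibleDef} in Lemma~\ref{Lem:L2Visibility}, this is exactly the statement that $v$ is $\ell^2$--visible.

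There is no real obstacle here; the work has all been done in setting up $\varphi$ so that $\psi(\varphi(v))$ is automatically a Roller class $\leq v$, and in showing (via Lemma~\ref{Lem:Q_0_characterize} and Lemma~\ref{Lem:m_v}) that $\varphi$ lands in $Q_0(v)$, so that $\psi(\varphi(v)) = M_v$. Thus the only content of the lemma is that minimality removes the one possible way for $\psi \circ \varphi$ to strictly decrease $v$. In particular, the argument gives no information about where on $\tits X$ the witnessing CAT(0) ray lies, beyond the fact that it represents $\varphi(v) \in Q(v)$.
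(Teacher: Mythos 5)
Your proof is correct and matches the paper's argument essentially verbatim: the paper cites Lemma~\ref{Lem:psiphi} to get $\psi(\varphi(v))\leq v$ (which is exactly the content of Proposition~\ref{prop:phi_facts}.\ref{item:composition} that you invoke), then uses minimality to force equality, which is visibility by Lemma~\ref{Lem:L2Visibility}. Nothing further is needed.
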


\begin{proof}
Lemma~\ref{Lem:psiphi} gives $v\geq\psi(\varphi(v))$.  By 
minimality of $v$, we thus have $v=\psi(\varphi(v))$.
\end{proof}

\subsection{Cuboid generalization}\label{Sec:TitsCuboid}
Here, we explain how to generalize the results of this section to a cuboid metric $d_X^\rho$. A reader who is only interested in cube complexes with the standard $\ell^1$ and $\ell^2$ metrics is invited to skip ahead to Section~\ref{Sec:covering_tits_boundary}. 

For the duration of this subsection, fix a $G$--admissible hyperplane rescaling $\rho$ (Definition~\ref{Def:cuboid_metric}) and the resulting cuboid metric $d_X^\rho$. Recall that the cuboid Tits boundary $\tits^\rho X = \tits(X, d_X^\rho)$ was defined in Definition~\ref{Def:TitsMetricCuboid}. All of the constructions and results involving $\tits^\rho X$ will be invariant under the restricted automorphism group $\Aut(X^\rho)$ of Definition~\ref{Def:AutXrho}.

To start, we can define a map $\psi^\rho \from \tits^\rho X \to \guralnik X$ exactly as in Definition~\ref{Def:Psi}. Generalizing Lemma~\ref{Lem:PsiAlternate}, we can characterize $\psi^\rho(a) = \psi^\rho([\alpha])$ as the principal class of the umbra of a UBS representing a ray $\alpha$:
$$ \psi^\rho(a) = \psi^\rho([\alpha]) = Y_{\W(\alpha)}.$$
This characterization holds because the proof of Lemma~\ref{Lem:PsiAlternate} works for $\psi^\rho$: its proof combines previous lemmas and combinatorial properties of hyperplane sets. 

Now, consider a point $y \in \roller X$. Since $(X, d_X^\rho)$ is a CAT(0) space and the collection of half-spaces $\frakH_y^+$ is a filtering family, we can define the Tits boundary realization $Q^\rho(y)$ exactly as in Definition~\ref{Def:Qy}. Following Lemma~\ref{Lem:Qy_defs}, whose proof extends to cuboids because cubical half-spaces are convex in $d_X^\rho$, we learn that the boundary realization $Q^\rho(y)$ is also the boundary of a cubical interval, and depends only on the Roller class $v = [y]$. Thus we may define a Tits-convex set  $Q^\rho(v)$ and its circumcenter $\chi^\rho(v)$, as in Definition~\ref{Def:Qcircum}. Following Corollary~\ref{Cor:Q_equivariant}, both  $Q^\rho(v)$ and $\chi^\rho(v)$ are invariant under the restricted automorphism group  $\Aut(X^\rho)$.

The partial order properties of the Tits boundary realization $Q(v)$ that are proved in Lemmas~\ref{Lem:psiphi} and ~\ref{Lem:containment} still hold for $Q^\rho(v)$, because the proofs of those results are essentially combinatorial.
The features of $Q(v)$ described in  Proposition~\ref{Prop:diameter} also hold for $Q^\rho(v)$, because the proof of the Proposition uses prior results and the convexity of the cubical convex hull of a geodesic ray.

Next, Section~\ref{subSec:visible_perturb} contains several results about perturbing the circumcenter $\circum(v) \in Q(v)$ to a pseudocenter $\varphi(v)$. The definition of the pseudocenter $\varphi(v)$ is enabled by Lemmas~\ref{Lem:m_v}, \ref{Lem:Q_0_characterize}, and~\ref{Lem:Q_0_dense}. All of these lemmas generalize immediately to the cuboid setting, because their proofs are essentially a top-level assembly of prior results. Thus we may generalize Definition~\ref{Def:phi_map} to define a pseudocenter $\varphi^\rho(v) \in Q^\rho(v)$. This pseudocenter $\varphi^\rho(v)$ has all of the properties described in Proposition~\ref{prop:phi_facts}, because the proof of that proposition merely assembles previously established results.

Finally, Definition~\ref{Def:L2visible} generalizes immediately to define $\visi^\rho(X) = \psi^\rho (\tits^\rho X)$, the set of Roller classes that are visible after the rescaling $\rho$. Since $\psi^\rho$ is $\Aut^\rho(X)$--equivariant, the set $\visi^\rho(X)$ is $\Aut^\rho(X)$--equivariant as well. Lemmas~\ref{Lem:L2Visibility} and~\ref{Lem:maximal_visible} generalize immediately to the cuboid setting, because their proofs are top-level assemblies of previous lemmas.

\begin{remark}[The set $\visi(X)$ and cuboids]\label{Rem:VisCuboid}
In Definition~\ref{Def:L2visible}, the set $\visi(X) = \psi(\tits X)$ is defined in terms of the CAT(0) metric on $X$.  When 
we change the metric on $X$ using a $G$--admissible rescaling $\rho$, there is no \emph{a priori} reason to expect $\visi(X) 
= \psi(\tits X)$ to coincide with $\visi^\rho(X) = \psi^\rho (\tits^\rho X)$.  Instead, our whole argument simply goes 
through for whichever of $\visi(X)$ or $\visi^\rho(X)$ we are considering.

In fact, it turns out that $\visi(X)=\visi^\rho(X)$. Since we do not use this fact in any proofs, we only sketch a proof.  Let $v\in\visi(X)$ 
be a visible Roller class, and let $\alpha:[0,\infty)\to X$ be a CAT(0) geodesic ray for the metric $\dist_X$ with $\mathcal 
W(\alpha)$ representing $v$.  We will produce a geodesic ray $\beta$ for the CAT(0) metric $\dist_X^\rho$, satisfying 
$\mathcal W(\alpha)=\mathcal W(\beta)$, from which it follows that $v\in\visi^\rho(X)$.  This shows 
$\visi(X)\subseteq\visi^\rho(X)$, and a symmetric argument gives the reverse inclusion.

Since cubical convexity of subcomplexes is independent of which CAT(0) metric we consider, we assume for convenience that 
$X$ is the cubical convex hull of $\alpha$, i.e. $\mathcal W(\alpha)=\mathcal W(X)$.

First, use Lemma~\ref{Lem:SingleRayAngleBound} to produce chains $\{\hat h_n^1\}_n,\cdots,\{\hat h^k_n\}_n$ of dominant 
hyperplanes such that there exists a constant $C\geq 0$ with the property that, for all $j\leq k$, any subpath of $\alpha$ 
of length at least $C$ crosses $\hat h^j_n$ for some $n$.  The lemma allows us to choose these to be dominant hyperplanes in 
the UBS $\mathcal W(\alpha)$; therefore, if we construct a $\dist_X^\rho$--geodesic $\beta$ that crosses each $h_n^j$, then 
by Lemma~\ref{Lem:DominantChain}, we get $\mathcal W(\alpha)=\mathcal W(\beta)$.

By admissibility of the rescaling, there exists $m\geq 1$ such that the identity map $(X,\dist_X)\to(X,\dist_X^\rho)$ is 
$m$--bilipschitz.  Since $X$ is the convex hull of $\alpha$ and contains no facing triples, Lemma~\ref{Lem:proper} guarantees properness of 
$(X,\dist_X)$ and implies properness of $(X,\dist_X^\rho)$.  Thus, letting $\beta_t$ 
be the $\dist_X^\rho$--geodesic from $\alpha(0)$ to $\alpha(t)$, and letting $t$ tend to infinity, the $\beta_t$ subconverge 
uniformly on compact sets to a $\dist_X^\rho$--geodesic $\beta$ with $\beta(0)=\alpha(0)$.

Fix $j\leq k$.  Now, for any $t$, the hyperplanes crossing $\beta_t$ are exactly those crossing $\alpha([0,t])$.  Indeed, 
any hyperplane separating $\alpha(0),\alpha(t)$ crosses $\beta_t$, and any hyperplane crossing $\beta_t$ does so in a single 
point (and hence separates $\alpha(0),\alpha(t)$), since $\beta_t$ is a $\dist_X^\rho$--geodesic.  In particular, 
$\alpha([0,t])$ crosses $h_1^j,\ldots,h_{N_t}^j$, where $N_t=\lfloor t/C\rfloor$.  Hence $\beta_t$ crosses the same 
hyperplanes while having $\dist_X^\rho$--length at most $mt$.  From this one deduces the following: there exists $L$ such 
that for all $n$, and all sufficiently large $t$, the path $\beta_t$ crosses $h^1_1,\ldots,h^1_n$ and intersects each of 
those carriers in a subpath of length at most $L$.  In other words, there is a uniform bound on how long each $\beta_t$ can 
fellow-travel any of the $\hat h_n^j$, whence $\beta$ cannot be parallel to a ray in any $\hat h^n_j$.  This implies that 
$\beta$ must cross $\hat h^j_n$ for all $j,n$, and hence, as explained above, $\mathcal W(\beta)=\mathcal W(\alpha)$ and 
$v\in\visi^\rho(X)$.

Looking ahead to Definition~\ref{Def:MaxVis},  the above argument also shows that $\maxvis(X) = \maxvis^\rho(X)$ for any 
admissible rescaling $\rho$. In the next section, we will heavily use $\maxvis(X)$ to construct open and closed coverings of 
$\tits X$, so that the nerves of those coverings can be used to prove homotopy equivalence. In an analogous fashion, we will 
use $\maxvis^\rho(X)$ to construct open and closed coverings of $\tits^\rho X$.   One can then apply Lemma~\ref{Lem:psiphi} 
and Lemma~\ref{Lem:containment} to show that the nerves of the coverings of $\tits X$ coincide with the nerves of the 
corresponding coverings of $\tits^\rho X$, and one can use Lemma~\ref{Lem:omega_nerve} to see that the covering of 
$\vispart X$ coming from visible Roller classes of $X$ coincides with the corresponding covering constructed using 
visible classes of $X^\rho$.  However, as we mentioned, the cuboid version of our argument does not rely on this; one 
instead just substitutes $\maxvis^\rho(X)$ for $\maxvis(X)$ everywhere, and runs all the arguments.     
\end{remark}

\section{Open and closed coverings of $\tits X$}\label{Sec:covering_tits_boundary}
In this section, we study the closed covering of $\tits X$ by the sets $Q(v)$ corresponding to maximal  $\ell^2$--visible 
Roller classes (see Definition~\ref{Def:MaxVis}).  Our goal, achieved in Theorem~\ref{Thm:first_HE}, is to show that $\tits 
X$ is homotopy equivalent to the nerve $\mathcal N_T$ of this cover.

\begin{definition}[Maximal visible Roller classes]\label{Def:MaxVis}
Let $\maxvis(X)$ denote the set of Roller classes $v$ such that:
\begin{itemize}
     \item $v$ is $\ell^2$--visible;
     \item if $w$ is visible and $v\leq w$, then $w=v$.
\end{itemize}
\end{definition}

The plan for this section is as follows. We will first show that $\{Q(v):v\in\maxvis(X)\}$ is a closed covering of $\tits X$.  Then, we will thicken each closed set $Q(v)$ to an open set $U(v)$, in 
such a way that the intersection pattern of the open cover $\{U(v) :v\in\maxvis(X) \}$ is the same as that of the closed cover $\{Q(v):v\in\maxvis(X)\}$.  Then we will apply the Nerve Theorem to 
conclude that $\tits X$ is homotopy equivalent to the nerves of these covers.

The main result of this section is Theorem~\ref{Thm:first_HE}. Section~\ref{Sec:closed_cover_tits_boundary} is about the initial closed covering.  Section~\ref{Sec:open_cover_tits_boundary} describes the thickening procedure. Section~\ref{Sec:CuboidCover} describes how to generalize these results to a cuboid metric $d_X^\rho$.

\subsection{The closed covering}\label{Sec:closed_cover_tits_boundary}
The maximal visible Tits boundary realizations $Q(v)$ provide a closed covering of the Tits boundary. We use this covering to define a nerve, as follows.

\begin{definition}[Simplicial complex $\mathcal N_T$]\label{Def:Big_N_T}
Let $\mathcal N_T$ be the simplicial complex with vertex set  $\maxvis(X)$, where vertices $v_0,\ldots,v_n\in\maxvis(X)$ span an $n$--simplex if and only if $\bigcap_{i=0}^nQ(v_i)\neq\emptyset$.
\end{definition}

\begin{lemma}[Covering $\tits X$]\label{Lem:max_vis}
Let $v\in\guralnik X$.  Then  there exists $w\in\maxvis(X)$ with $\varphi(v)\in Q(w)$.  
Hence 
$\{Q(v):v\in\maxvis(X)\}$ covers $\tits X$.
\end{lemma}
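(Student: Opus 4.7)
The plan is to handle the two assertions in sequence. Both reduce to the observation that any $\ell^2$--visible Roller class lies beneath some class in $\maxvis(X)$, which follows from the fact that chains in the partial order on $\guralnik X$ have length at most $\dim X$ (Remark~\ref{Rem:AbsimpDimension}). Given any visible $u$, iteratively enlarging $u$ through visible classes must terminate at some $w\in\maxvis(X)$ with $u\leq w$; Lemma~\ref{Lem:containment} then yields $Q(u)\subset Q(w)$.

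For the first assertion, I would set $u=\psi(\varphi(v))$. By Definition~\ref{Def:L2visible}, $u$ lies in $\visi(X)$, and by Proposition~\ref{prop:phi_facts}.\ref{item:same_Q} we have $Q(u)=Q(v)$, so $\varphi(v)\in Q(u)$. Enlarging $u$ to some $w\in\maxvis(X)$ as above and applying Lemma~\ref{Lem:containment} gives $\varphi(v)\in Q(u)\subset Q(w)$, which is the desired conclusion.

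For the covering assertion, fix $a\in\tits X$ and set $v=\psi(a)\in\visi(X)$. The main technical step is to verify that $a\in Q(v)$. Choose a CAT(0) ray $\alpha$ representing $a$ with $\alpha(0)=x\in X^{(0)}$. By Lemma~\ref{lem:combinatorial_fellow_travels_cat0}, $\alpha$ fellow-travels a combinatorial geodesic ray $\beta$ based at $x$; let $y=\beta(\infty)\in\roller X$. Since fellow-traveling rays cross the same hyperplanes up to a finite set, $\W(\alpha)\sim\W(\beta)=\W(x,y)$, and then Lemma~\ref{Lem:PsiAlternate} together with Remark~\ref{Rem:UBSofRollerGeodesic} identifies $v=Y_{\W(\alpha)}=Y_{\W(x,y)}=[y]$, so $y\in v$. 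The ray $\alpha$ thus stays in a bounded CAT(0) neighborhood of $\beta\subset\J(x,y)$, and the $1$--Lipschitz gate map onto the convex subcomplex $\J(x,y)$ sends $\alpha$ to a ray asymptotic to $\alpha$; hence $a\in\tits\J(x,y)=Q(v)$ by Lemma~\ref{Lem:Qy_defs}. Extending $v$ to $w\in\maxvis(X)$ as in the first paragraph and applying Lemma~\ref{Lem:containment} completes the proof.

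The only nontrivial piece is pinning down that the combinatorial companion $\beta$ of $\alpha$ has its Roller-endpoint in the visible class $\psi(a)$, so that the cubical interval $\J(x,y)$ is a legitimate realization of $Q(v)$ containing $a$ in its Tits boundary; once that is in hand, everything else is routine application of prior lemmas and the finite-chain bound on $\guralnik X$.
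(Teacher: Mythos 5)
Your proof is correct and follows essentially the same route as the paper: both arguments reduce to the facts that $\varphi(v)\in Q(v)=Q(\psi(\varphi(v)))$, that every $\ell^2$--visible class lies below some element of $\maxvis(X)$ (via the bound on chain lengths), and that $Q$ is monotone under $\leq$ by Lemma~\ref{Lem:containment}. The only difference is that you spell out the verification that $a\in Q(\psi(a))$ via the combinatorial companion ray and the gate map, a point the paper's proof uses but leaves implicit; that extra detail is sound.
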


\begin{proof}
Note that $\psi( \varphi(v)) \in \visi(X)$, by Definition~\ref{Def:L2visible}.
Hence there exists $w\in\maxvis(X)$ such that $\psi \varphi(v) \leq w$.  
By Lemma~\ref{Lem:containment}, $\varphi(v) \in Q(\psi \varphi(v))\subset Q(w)$.

Now, given a point $a\in\tits X$, we can apply the above argument to $v=\psi(a)$.  Then, for some $w\in\maxvis(X)$, Lemma~\ref{Lem:L2Visibility} implies
$$
a\in Q(\psi(a))=Q(\psi \varphi \psi(a))  \subset Q(w) .
 $$
Hence $\{Q(v):v\in\maxvis(X)\}$ covers 
$\tits X$.
\end{proof}

\begin{lemma}\label{Lem:visible_injective}
Let $v,w\in\visi(X)$.  Then $Q(v)=Q(w)$ if and only if $v=w$. In particular, the assignment $v \mapsto Q(v)$ gives a bijection from $\maxvis(X)$ to its image.
\end{lemma}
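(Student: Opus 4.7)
The reverse implication is immediate, so the content is in showing $Q(v) = Q(w)$ forces $v = w$ when both Roller classes are $\ell^2$--visible. My plan is to exploit the uniqueness of the maximal class $M_v$ constructed in Lemma~\ref{Lem:m_v} together with the characterization of visibility from Lemma~\ref{Lem:L2Visibility}.

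First I would recall that, by Lemma~\ref{Lem:m_v}, there is a \emph{unique} Roller class $M_v$ that is $\leq$--maximal among all classes lying in $\psi(Q(v))$, and this class satisfies $Q(M_v) = Q(v)$. Since $M_v$ is characterized entirely in terms of the set $Q(v)$ (namely, as the unique maximal element of $\psi(Q(v))$), the equality of sets $Q(v) = Q(w)$ forces $M_v = M_w$. This is essentially the content already extracted in the second half of Lemma~\ref{Lem:Q_0_characterize}.

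Next I would invoke Lemma~\ref{Lem:L2Visibility}, specifically the equivalence \ref{Itm:VisibleDef}$\Leftrightarrow$\ref{Itm:VisibleMv}, which says that a Roller class $u$ is $\ell^2$--visible if and only if $u = M_u$. Applying this to both $v$ and $w$, and combining with $M_v = M_w$ from the previous step, gives
\[
v \;=\; M_v \;=\; M_w \;=\; w,
\]
as required. The "in particular" clause then follows at once: restricting the assignment $v \mapsto Q(v)$ to $\maxvis(X) \subset \visi(X)$ yields an injective map onto its image.

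I do not anticipate any real obstacle here, since every ingredient has already been established; the proof is a two-line bookkeeping argument that simply chains together Lemma~\ref{Lem:m_v}, Lemma~\ref{Lem:Q_0_characterize}, and Lemma~\ref{Lem:L2Visibility}.
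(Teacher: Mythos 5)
Your proof is correct and follows essentially the same route as the paper's: the paper deduces $\varphi(v)=\varphi(w)$ from $Q(v)=Q(w)$ via Proposition~\ref{prop:phi_facts} and then applies the visibility criterion $\psi(\varphi(v))=v$ from Lemma~\ref{Lem:L2Visibility}, whereas you deduce $M_v=M_w$ via Lemma~\ref{Lem:Q_0_characterize} and apply the equivalent criterion $v=M_v$ from the same lemma. Since $\psi(\varphi(v))=M_v$, the two arguments are interchangeable bookkeeping; yours has the minor advantage of avoiding the (non-canonical) pseudocenter entirely.
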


\begin{proof}
Suppose $Q(v) = Q(w)$. Then Proposition~\ref{prop:phi_facts}.\ref{item:same_phi} implies $\varphi(v)=\varphi(w)$. By visibility and Lemma~\ref{Lem:L2Visibility}, we have $\psi \varphi(v)=v$ and $\psi \varphi(w)=w$, hence $v=w$.
\end{proof}

Now, Lemmas~\ref{Lem:max_vis} and~\ref{Lem:visible_injective} combine to yield:

\begin{cor}\label{Cor:MtNerve}
$\mathcal N_T$ is the nerve of the covering of $\tits X$ by the collection of closed sets $\{Q(v):v\in\maxvis(X)\}$.
\end{cor}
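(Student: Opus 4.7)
The plan is to assemble the corollary from the two preceding lemmas plus a verification that each $Q(v)$ is closed. First, I would recall Definition~\ref{Def:nerve}: the nerve of a covering $\{Y_i\}_{i\in I}$ has vertex set $I$ and an $n$--simplex spanned by $v_{i_0},\ldots,v_{i_n}$ exactly when $\bigcap_j Y_{i_j}\neq\emptyset$. Comparing this with Definition~\ref{Def:Big_N_T} of $\mathcal N_T$, all that needs to be checked is that (i) $\{Q(v):v\in\maxvis(X)\}$ is a covering of $\tits X$ by closed sets, and (ii) the parameterization $v\mapsto Q(v)$ of the covering by $\maxvis(X)$ is a bijection onto its image, so that the indexing set is unambiguous.

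For (i), Lemma~\ref{Lem:max_vis} directly gives that $\tits X=\bigcup_{v\in\maxvis(X)}Q(v)$. Closedness of each $Q(v)$ is immediate from Definition~\ref{Def:Qy}: the Tits boundary $\tits \hull(h)$ of a CAT(0) half-space is closed in $\tits X$, and $Q(v)=\bigcap_{h\in\frakH^+_y} \tits\hull(h)$ for any representative $y\in v$ is an intersection of closed sets. (Alternatively, compactness in the visual topology from Proposition~\ref{Prop:diameter}.\eqref{item:Q_contractible} combined with Tits--convexity also suffices, since Tits--convex subsets of diameter less than $\pi$ are closed.)

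For (ii), Lemma~\ref{Lem:visible_injective} shows that the assignment $v\mapsto Q(v)$ is injective on $\visi(X)$, and in particular on the subset $\maxvis(X)\subset\visi(X)$. Thus the vertex set of $\mathcal N_T$, namely $\maxvis(X)$, is in canonical bijection with the cover. Combining (i) and (ii) with Definitions~\ref{Def:nerve} and~\ref{Def:Big_N_T} yields the conclusion.

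There is no real obstacle here; the corollary is a bookkeeping assembly of Lemmas~\ref{Lem:max_vis} and~\ref{Lem:visible_injective} together with the elementary observation that each $Q(v)$ is closed. The substantive content has already been proved in those two lemmas, so the proof should be only a few lines.
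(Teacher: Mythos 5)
Your proposal is correct and matches the paper's approach exactly: the paper offers no separate proof, simply stating that Lemmas~\ref{Lem:max_vis} and~\ref{Lem:visible_injective} combine to yield the corollary, which is precisely the assembly you carry out (your added check that each $Q(v)$ is closed, as an intersection of the closed sets $\tits\hull(h)$, is a harmless and correct elaboration).
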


\subsection{The open covering}\label{Sec:open_cover_tits_boundary}
The goal of this subsection is to thicken up the closed sets $Q(v)$ for $v \in \maxvis(X)$ to be open sets $U(v)$, such that the 
intersection pattern of the $U(v)$ is the same as that of the $Q(v)$. This is needed since the Nerve 
Theorem~\ref{Thm:EquivariantOpenNerve} works for open covers only.

\begin{lemma}\label{Lem:disjoint_Q}
Let $v,w\in\visi(X)$ be points such that $Q(v)\cap Q(w)=\emptyset$. Then
 $\dist_T(Q(v),Q(w))> \epsilon_0$, where $\epsilon_0 > 0$ is a constant depending only on $\dim X$.
 \end{lemma}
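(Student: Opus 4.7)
The plan is to prove the uniform bound $\dist_T(Q(v), Q(w)) \geq \pi/3$, independent of $\dim X$; any $\epsilon_0 < \pi/3$ (for instance $\epsilon_0 = \pi/4$) will then suffice. The strategy combines two ingredients: the bridge lemma together with properness of cubical intervals will force a certain cubical intersection to be bounded, and the Bridson--Haefliger asymptotic formula for the Tits angle will convert the resulting linear divergence of CAT(0) rays into the desired angle bound.

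To set up, I would fix $a \in Q(v)$ and $b \in Q(w)$ arbitrarily; it suffices to show $\dist_T(a,b) \geq \pi/3$. Choose a vertex $x_0 \in X^{(0)}$ together with $y_v \in v$ and $y_w \in w$, and let $J_v = \J(x_0, y_v)$, $J_w = \J(x_0, y_w)$, so that $Q(v) = \tits J_v$ and $Q(w) = \tits J_w$ by Lemma~\ref{Lem:Qy_defs}. Because $J_v$ is a convex subcomplex containing the vertex $x_0$, there is a unit-speed CAT(0) ray $\alpha \from [0,\infty) \to J_v$ with $\alpha(0) = x_0$ and $\alpha(\infty) = a$; similarly choose $\beta \from [0,\infty) \to J_w$ representing $b$.

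The geometric core is to argue that $J_v \cap J_w$ is bounded. This intersection is a convex subcomplex containing $x_0$, and it is proper because each $J_v$ and $J_w$ is proper by Lemma~\ref{Lem:proper}, so the intersection is a closed subset of a proper space. Any CAT(0) ray in $J_v \cap J_w$ is simultaneously a ray in $J_v$ and in $J_w$, so
$$ \tits(J_v \cap J_w) \subseteq \tits J_v \cap \tits J_w = Q(v) \cap Q(w) = \emptyset. $$
A proper CAT(0) space with empty visual boundary must be bounded, and I would set $D_0 := \diameter(J_v \cap J_w) < \infty$. Now the bridge lemma (Lemma~\ref{Lem:Bridge}\ref{Itm:WallSetIntersect}), extended to non-vertex points via Remark~\ref{Rem:GateExtension}, gives $\gate_{J_v}(\beta(t)) \in \gate_{J_v}(J_w) \subseteq J_v \cap J_w$, and since $x_0 \in J_v \cap J_w$ the triangle inequality yields
$$ \dist_X(\alpha(t), \beta(t)) \geq \dist_X(\beta(t), J_v) = \dist_X(\beta(t), \gate_{J_v}(\beta(t))) \geq t - D_0.$$

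To conclude, I would apply the Bridson--Haefliger asymptotic formula for the Tits angle in a complete CAT(0) space (\cite[Proposition II.9.8]{BridsonHaefliger}):
$$ \angle_T(a, b) = \lim_{t \to \infty} 2\arcsin\!\left(\frac{\dist_X(\alpha(t), \beta(t))}{2t}\right) \geq 2\arcsin\!\left(\liminf_{t\to\infty}\frac{t - D_0}{2t}\right) = 2\arcsin\!\left(\tfrac{1}{2}\right) = \frac{\pi}{3}.$$
Since $\dist_T \geq \angle_T$ and $a \in Q(v)$, $b \in Q(w)$ were arbitrary, $\dist_T(Q(v), Q(w)) \geq \pi/3 > \epsilon_0$. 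The only non-routine step is the boundedness of $J_v \cap J_w$: properness of cubical intervals (Lemma~\ref{Lem:proper}) is essential since the ambient complex $X$ need not be proper, and the inclusion $\tits(J_v \cap J_w) \subseteq Q(v) \cap Q(w)$ must be justified from the observation that a ray in the intersection lies in each factor. Everything else is standard CAT(0) geometry, and the argument transfers with no modification to the CAT(0) cuboid metric $\dist_X^\rho$ since it relies only on convexity of half-spaces in $\dist_X^\rho$ (Corollary~\ref{Lem:CuboidallyConvex}) and on Bridson--Haefliger's formula, which holds in any complete CAT(0) space.
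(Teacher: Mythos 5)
Your overall architecture is the same as the paper's: show that the overlap of the two cubical hulls is bounded (the paper phrases this as finiteness of $\W(\bar\alpha)\cap\W(\bar\beta)$, you phrase it as compactness of $J_v\cap J_w$ via Lemma~\ref{Lem:proper} and emptiness of its visual boundary — these are equivalent and both correct), then convert linear divergence of $\alpha(t)$ and $\beta(t)$ into an angle bound via \cite[Proposition II.9.8]{BridsonHaefliger}. The setup and the conclusion of your argument are fine.

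The gap is the inequality $\dist_X(\beta(t), J_v) \geq t - D_0$, and in particular the chain
$\dist_X(\beta(t), J_v) = \dist_X(\beta(t), \gate_{J_v}(\beta(t))) \geq t - D_0$.
Two problems. First, the gate map is defined combinatorially (by hyperplanes, Proposition~\ref{Prop:Proj}) and is only shown to be $1$--Lipschitz for $\dist_X$ (Lemma~\ref{Lem:Gate_Lipschitz}); it is \emph{not} the CAT(0) nearest-point projection, so the displayed equality is unjustified. Second, and more seriously, the statement ``two convex sets containing $x_0$ with intersection of diameter $D_0$ satisfy $\dist_X(\beta(t),J_v)\geq t-D_0$'' is false as a piece of ``standard CAT(0) geometry'': two lines through the origin in $\reals^2$ meeting at a small angle $\theta$ have intersection of diameter $0$, yet $\dist(\beta(t),J_v)=t\sin\theta$. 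So the bound cannot come for free from convexity and bounded overlap; it must use the cubical structure. The correct repair is exactly the paper's wall count: every hyperplane in $\W(x_0,\beta(t))$ that does not cross $J_v$ separates $\beta(t)$ from $J_v$, and by the Bridge Lemma the ones that do cross $J_v$ cross $J_v\cap J_w$, of which there are only finitely many, say $N$; hence $|\W(\beta(t),z)|\geq |\W(x_0,\beta(t))|-N$ for every $z\in J_v$, and converting wall counts to $\dist_X$ via Lemma~\ref{Lem:WallQI} (in both directions) yields $\dist_X(\alpha(t),\beta(t))\geq \frac{2t}{\lambda_0^2}-C$. This gives $\sin(\angle_T(a,b)/2)\geq 1/\lambda_0^2$ rather than $\geq 1/2$: the constant necessarily picks up the bilipschitz constant $\lambda_0=\lambda_0(\dimension X)$ twice, which is precisely why the lemma's $\epsilon_0$ is allowed to depend on $\dimension X$. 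Your claimed uniform bound $\pi/3$ is therefore unsubstantiated (and I see no reason to believe it in high dimension); with the hyperplane-counting repair your argument becomes the paper's proof.
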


\begin{proof}
Let $a\in Q(v)$ and $b\in Q(w)$.  By definition, $a\in Q(\psi(a))$.  Since 
$v\geq\psi(a)$, we have $Q(\psi(a))\subset Q(v)$ and 
similarly $Q(\psi(b))\subset Q(w)$, by Lemma~\ref{Lem:containment}. Since we have assumed $Q(v)\cap Q(w)=\emptyset$, it follows that  $Q(\psi(a))\cap 
Q(\psi(b))=\emptyset$.

Choose CAT(0) geodesic rays $\alpha,\beta$ such that 
$\alpha(0)=\beta(0) \in X^{(0)}$ and $\alpha(\infty)=a$, $\beta(\infty)=b$.
Since $X$ is finite-dimensional, Lemma~\ref{lem:combinatorial_fellow_travels_cat0} provides  $\ell^1$ geodesic rays 
$\bar\alpha, \bar\beta$ with common basepoint $\alpha(0)=\beta(0)$, that lie at 
uniformly bounded  Hausdorff distance from $\alpha,\beta$ respectively.
Note that 
$\W(\alpha) \sim \W(\bar\alpha)$ and $\W(\beta) \sim \W(\bar\beta)$.

We claim that $\W(\bar \alpha)\cap\mathcal 
W(\bar \beta)$ is finite. Suppose not. By Lemma~\ref{Lem:Qy_defs}, $Q(\psi(a)) = \tits \J_{\bar \alpha}$, where $\J_{\bar 
\alpha} = \J(\bar \alpha(0), \bar \alpha(\infty))$  is the cubical convex hull of $\bar \alpha$. Similarly, 
$Q(\psi(b)) = \tits \J_{\bar \beta}$.
%    Suppose for a contradiction that $\W(\bar\alpha)\cap\W(\bar\beta)$ is infinite.  
Since $C=\J_{\bar\alpha}\cap \J_{\bar\beta}\neq\emptyset$, we have
$\W(C) = \W(\bar\alpha)\cap\W(\bar\beta)$ by Lemma~\ref{Lem:Bridge}.\ref{Itm:WallSetIntersect}.
Now $\J_{\bar \alpha}$ is a proper CAT(0) space, by Lemma~\ref{Lem:proper}. Thus $C$ is also a proper CAT(0) space, hence $|\W(C)| = \infty$ implies $C$ is unbounded and 
$\tits C$ is a nonempty subspace of $Q(\psi(a))$.  Similarly, $\tits C$ is a nonempty subspace of $Q(\psi(b))$.  Thus $\tits 
C  \subset Q(\psi(a)) \cap Q(\psi(b)) = \emptyset$, a contradiction. This proves the claim.

Since $\W(\alpha) \sim \W(\bar\alpha)$ and $\W(\beta) \sim \W(\bar\beta)$, we may define
 $N = |\W(\alpha) \cap \W(\beta)| < \infty$. Recall that $\alpha(0) = \beta(0)$.
Then, for all $t \geq 0$, we have
$$| \W( \alpha(t) , \beta(t) ) | \geq | \W( \alpha(t) , \alpha(0))| + |\W(\beta(0), \beta(t) ) | - 2N.$$
Applying Lemma~\ref{Lem:WallQI} to both sides gives
$$
\lambda_0 \cdot d_X(\alpha(t), \beta(t) )+ \lambda_1 \geq \left(  \frac{t}{\lambda_0} - \lambda_1 \right) + \left(  \frac{t}{\lambda_0} - \lambda_1 \right) - 2N = \frac{2t}{\lambda_0} - 2(N + \lambda_1).
$$
Here, $\lambda_0 \geq 1$ and $\lambda_1$ are constants depending only on $\dim X$. Taking limits gives $$\lim_{t\to\infty}\frac{\dist_X(\alpha(t),\beta(t))}{2t}\geq \frac{1}{\lambda_0^2}.$$  

By~\cite[Proposition 
II.9.8.(4)]{BridsonHaefliger}, we thus have  $\sin(\angle_T(a,b)/2)\geq1/\lambda_0^2$.  Now,
$$\dist_T(a,b) \geq  \angle_T(a,b)  \geq \sin^{-1}(1/\lambda_0^2) >1/\lambda_0^2.$$  
Setting  $\epsilon_0 = 1/\lambda_0^2$  completes the proof.
\end{proof}

In the next lemma, we show that, given a collection of Roller classes, the intersection of the associated Tits boundary realizations 
coincides with the Tits boundary of the intersection of the convex hulls of the corresponding CAT(0) geodesic rays.

\begin{lemma}\label{Lem:QIntersection}
Let $v_1, \ldots, v_k \in \visi(X)$ be Roller classes. Then there exist convex subcomplexes 
 $\J_1, \ldots, \J_k \subset X$ such that the following holds. Each $\J_i$ is the cubical convex hull 
of a CAT(0) geodesic ray $\gamma_i$, with a common basepoint.
 Furthermore,
for every subset $J \subset \{1, \ldots, k\}$, we have $\bigcap_{i \in J} Q(v_i) = \tits \! \left(\bigcap_{i\in 
J}\J_i\right)$.
\end{lemma}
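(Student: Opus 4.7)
The plan is to construct each $\J_i$ as the cubical convex hull of a CAT(0) geodesic ray from a common basepoint, then invoke properness of cubical intervals together with uniqueness of CAT(0) rays to handle intersections.

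First I fix a basepoint $x_0 \in X^{(0)}$. For each $i$, choose $y_i^0 \in v_i$; the cubical interval $\J(x_0, y_i^0)$ is a convex subcomplex that is a proper CAT(0) space (by the paragraph following Lemma~\ref{Lem:proper}) with Tits boundary $Q(v_i)$ by Lemma~\ref{Lem:Qy_defs}. Since $v_i$ is $\ell^2$--visible, Lemma~\ref{Lem:L2Visibility} gives $\varphi(v_i) \in Q(v_i)$ and $\psi(\varphi(v_i)) = v_i$; applying \cite[Proposition II.8.2]{BridsonHaefliger} inside the proper CAT(0) space $\J(x_0, y_i^0)$ yields a unique CAT(0) geodesic ray $\gamma_i \subset \J(x_0, y_i^0)$ based at $x_0$ that represents $\varphi(v_i)$. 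Setting $y_i = \gamma_i(\infty) \in \roller X$, visibility forces $[y_i] = \psi(\varphi(v_i)) = v_i$, so $y_i \in v_i$.

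Next I set $\J_i := \hull(\gamma_i)$ and check that $\J_i = \J(x_0, y_i)$. The key observation is that a CAT(0) half-space $\hull(h)$ contains $\gamma_i$ if and only if $h$ contains $x_0$ and the extension of $h$ to $\~X$ contains $y_i$: the forward direction is immediate, and the reverse uses CAT(0) convexity of $\hull(h)$ together with the fact that $\gamma_i(t) \in \hull(h)$ for all sufficiently large $t$, forcing $\gamma_i|_{[0,t]} \subset \hull(h)$ for every such $t$. Hence $\hull(\gamma_i)$ and $\J(x_0, y_i)$ are intersections of the same family of CAT(0) half-spaces, so they coincide, and Lemma~\ref{Lem:Qy_defs} yields $Q(v_i) = Q(y_i) = \tits\J(x_0, y_i) = \tits \J_i$.

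For the intersection identity, fix $J \subset \{1,\ldots,k\}$ and set $C_J = \bigcap_{i\in J} \J_i$. The inclusion $\tits C_J \subset \bigcap_{i\in J}\tits\J_i$ is immediate, and $x_0 \in C_J$ because $x_0 = \gamma_i(0) \in \J_i$ for every $i$. For the reverse inclusion, let $a \in \bigcap_{i\in J}\tits\J_i$. Each $\J_i$ is a proper convex CAT(0) subspace containing $x_0$ with $a$ as a Tits boundary point, so \cite[Proposition II.8.2]{BridsonHaefliger} yields a CAT(0) ray $\alpha_i \subset \J_i$ based at $x_0$ representing $a$. Any two CAT(0) rays in $X$ sharing a basepoint and having the same endpoint in $\tits X$ must coincide, since their pairwise distance is a nonnegative convex bounded function on $[0,\infty)$ vanishing at $0$. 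Therefore the $\alpha_i$ all agree as a single ray $\alpha$ lying in every $\J_i$, so $\alpha \subset C_J$ and $a \in \tits C_J$.

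I expect the main obstacle to be this reverse inclusion: it crucially exploits properness of cubical intervals to realize $a$ by a ray from $x_0$ inside each $\J_i$, and uniqueness of CAT(0) rays based at a common point to glue these rays into a single curve that lies in every $\J_i$ simultaneously.
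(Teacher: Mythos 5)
Your overall strategy is the same as the paper's: build $\gamma_i$ from a common vertex basepoint representing $\varphi(v_i)$, identify $\hull(\gamma_i)$ with a cubical interval $\J(x_0,y_i)$ using visibility, and get the intersection identity from existence (via properness) and uniqueness of CAT(0) rays from $x_0$. The final step and the containment $\hull(\gamma_i)\subset\J(x_0,y_i^0)$ are fine. But there is a genuine gap in your ``key observation,'' and it sits exactly where the real content of the lemma lives.

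First, $y_i=\gamma_i(\infty)\in\roller X$ is not defined: the paper only defines Roller limits for combinatorial rays (Lemma~\ref{Lem:CombLimit}), and a CAT(0) ray does not single out a point of $\roller X$ without further work (it only determines the deep set $D_{[\gamma_i]}$, hence the class $\psi([\gamma_i])=v_i$, not a point of it). Second, and more seriously, the forward direction of your equivalence --- $\hull(h)\supset\gamma_i$ implies $y_i\in h$ --- is \emph{not} immediate, and is false for a general representative $y_i\in v_i$. Example: let $X=[0,\infty)\times[0,1]$ with the unit square tiling and let $\gamma$ be the bottom edge ray from $(0,0)$. Its class $v=\psi([\gamma])$ contains both $(\infty,0)$ and $(\infty,1)$; the bottom vertex half-space $b$ satisfies $\hull(b)\supset\gamma$ and $(0,0)\in b$, yet $(\infty,1)\notin b$. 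So the equivalence only holds for the correct representative (e.g.\ $y_i=\gate_{\overline{v_i}}(x_0)$, which is what the paper uses), and even then the forward direction is precisely the nontrivial half: unwinding Proposition~\ref{Prop:Proj}, ``$x_0\in h$ but $y_i\notin h$'' means $\hat h$ separates $x_0$ from the entire class $v_i$, so one must show that every hyperplane separating $x_0$ from $v_i$ is crossed by $\gamma_i$. That is where $\psi(\varphi(v_i))=v_i$ must actually be invoked: such an $\hat h$ uncrossed by $\gamma_i$ would put all of $\overline{v_i}=\bigcap_{k\in D_{[\gamma_i]}}k$ on the opposite side of $\hat h$ from $\gamma_i$, forcing $h^*\in D_{[\gamma_i]}$ while $\gamma_i\subset\hull(h)$, a contradiction with Definition~\ref{Def:DeepSet}. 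You cite visibility at the start but never deploy it here, and instead spend your effort on the reverse direction, which is the routine one (convexity of $t\mapsto\dist_X(\gamma_i(t),\hull(h))$). With the representative fixed correctly and this separation argument supplied, your proof closes up and coincides with the paper's.
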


\begin{proof}
Fix a basepoint $x\in X^{(0)}$.  For $1\leq i\leq k$, let $\gate_{v_i} \! \from X\to v_i$ be the gate map of Proposition~\ref{Prop:Proj}, and let $y_i= \gate_{v_i} (x)$. 
Let $\J_i=\J(x,y_i)$ be the 
convex subcomplex of $X$ determined by the vertex 
interval between $x$ and $y_i$ as in Definition~\ref{Def:Q'y}. Then $\W(\J_i) = \W(x, y_i) = \W(x,v_i)$, where the last equality holds by Proposition~\ref{Prop:Proj}.
 By Lemma~\ref{Lem:Qy_defs}, $Q(v_i)=\tits \J_i$.

For each $i$, let $\gamma_i$ be a CAT(0) geodesic ray that emanates from $x$ and represents 
$\varphi(v_i)$. 
We want to show that $\J_i = \hull(\gamma_i)$, the cubical convex hull of $\gamma_i$.

First, we claim that $ \hull(\gamma_i) \subset \J_i$. Since $\J_i$ is convex in the CAT(0) metric $d_X$, it contains a CAT(0) geodesic from every point in the interior to every point on the boundary. In particular, it contains the unique CAT(0) geodesic $\gamma_i$ from $\gamma_i(0) = x\in \J_i$ to 
$\gamma_i (\infty)  \in Q(v_i) = \tits \J_i$. Thus $\gamma_i \subset \J_i$. Since $\J_i$ is cubically convex, it follows that $\hull(\gamma_i) \subset \J_i$.

Next, we claim that $ \J_i \subset \hull(\gamma_i)$, or equivalently $\W(\J_i) \subset \W(\gamma_i)$.  Since $\gamma_i(0)=x \in \J_i$, any hyperplane $\hat h \in \W(\mathcal 
J_i) \setminus \mathcal 
W(\gamma_i)$ would separate the entire ray $\gamma_i$ from 
the entire class $v_i$, contradicting that $\psi(\gamma_i(\infty))=\psi(\varphi(v_i))=v_i$, where the second equality  holds by visibility and Lemma~\ref{Lem:L2Visibility}.  Hence $\W(\J_i) \subset \W(\gamma_i)$, 
and we conclude that $\J_i = \hull(\gamma_i)$.

Finally, consider an arbitrary subset  $J \subset \{1, \ldots, k\}$. The inclusion $\tits \! 
\left(\bigcap_{i\in 
J}\J_i\right) \subset \bigcap_{i\in I}Q(v_i)$ is immediate. For the other inclusion, 
let 
$b \in \bigcap_{i\in J}Q(v_i)$. Then each convex set $\J_i$ contains the unique CAT(0) ray $\beta$ starting at 
$x$ and ending at $b$.
Thus $\beta \subset \bigcap_{i\in J} \J_i$, hence $b \in \tits \! \left(\bigcap_{i\in 
J}\J_i\right)$.
\end{proof}

\begin{remark}\label{Rem:L1Intersection}
The following related statement about $\ell^1$ geodesics can be proved by an easier analogue of the proof of Lemma~\ref{Lem:QIntersection}.
Let $v_1, \ldots, v_k$ be arbitrary Roller classes. Then there exist convex subcomplexes  $\J_1, \ldots, \mathcal 
J_k$ (defined in exactly the same way, namely  $\J_i=\J(x,  \gate_{v_i} (x)$) such that each  $\W(\J_i)$ is a UBS representing 
$v_i$. Furthermore, each $\J_i$ is the cubical 
convex hull of a combinatorial geodesic, with a common basepoint. The intersection $\bigcap_i \J_i$ itself has the 
property that $\W( \bigcap_i \J_i) = \bigcap_i \W( \J_i)$. In particular, if 
$\W( \bigcap_i \J_i)$ is infinite, then it is an $\ell^1$--visible UBS.
\end{remark}

The next lemma is crucial.  We will later replace the closed covering of $\tits X$ by Tits boundary realizations with an open covering, by 
slightly thickening each Tits realization.  The lemma says, roughly, that for a collection of Tits boundary realizations corresponding to a 
simplex in the nerve of the closed covering, any point in the intersection of small $\epsilon$--neighborhoods of the Tits boundary realizations is still in 
a $f(\epsilon)$--neighborhood of the intersection of the Tits boundary realizations.

\begin{lemma}\label{Lem:close_to_intersection}
There exists a constant $\epsilon_1>0$, depending on $\dimension(X)$, and a function $f \from (0,\epsilon_1)\to(0,\pi/2)$ such that $\lim_{\epsilon \to 0} f(\epsilon) = 0$ and 
such that the following holds.  Fix $\epsilon\in(0,\epsilon_1)$ and let $v_0,\ldots,v_k\in\visi(X)$ satisfy 
$\bigcap_{i=0}^kQ(v_i)\neq\emptyset$.  For every $a\in\tits X$ such that $\dist_T(a,Q(v_i)) \leq \epsilon$ for all 
$i$, there is a point $b \in \bigcap_{i=0}^kQ(v_i)$ such that
$\dist_T(a, b) \leq f(\epsilon)$.
\end{lemma}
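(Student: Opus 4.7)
The plan is to reduce the Tits-metric claim to a quantitative sub-linear approximation in the CAT(0) cube complex $X$, and then convert that back into a Tits distance bound via CAT(0) angle comparison.

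First, fix a basepoint $x \in X^{(0)}$ and invoke Lemma~\ref{Lem:QIntersection} to produce convex subcomplexes $\J_0, \ldots, \J_k$ all containing $x$ such that $Q(v_i) = \tits \J_i$ and $\bigcap_i Q(v_i) = \tits C$, where $C = \bigcap_{i=0}^k \J_i$. Each $\J_i$, being a cubical interval, is a proper CAT(0) space by Lemma~\ref{Lem:proper}, and hence so is $C$. Let $\alpha$ denote the CAT(0) ray from $x$ representing $a$. For each $i$, pick $a_i \in Q(v_i)$ with $\dist_T(a, a_i) \leq \epsilon$, and let $\alpha_i \subset \J_i$ be the CAT(0) ray from $x$ representing $a_i$ (which lies in $\J_i$ by CAT(0) convexity). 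Since $\dist_T(a, a_i) < \pi/2$, \cite[Proposition II.9.8]{BridsonHaefliger} gives
\[
\limsup_{t \to \infty} \frac{\dist_X(\alpha(t), \J_i)}{t} \: \leq \: \limsup_{t \to \infty} \frac{\dist_X(\alpha(t), \alpha_i(t))}{t} \: = \: 2\sin\bigl(\dist_T(a, a_i)/2\bigr) \: \leq \: 2\sin(\epsilon/2).
\]

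Next, I would translate this into a uniform sub-linear bound on $\dist_X(\alpha(t), C)$. The key cubical identity, obtained from the Bridge Lemma~\ref{Lem:Bridge}.\ref{Itm:WallSetIntersect} and induction on $k$, is
\[
\W(y, C) \: = \: \bigcup_{i=0}^k \W(y, \J_i) \qquad \text{for every } y \in X,
\]
since any hyperplane separating $y$ from $C$ but from no single $\J_i$ would have to cross every $\J_i$, and hence cross $C$ by the Bridge Lemma, a contradiction. Combined with Lemma~\ref{Lem:WallQI}, this yields an estimate $\dist_X(\alpha(t), C) \leq K \bigl( \sin(\epsilon/2) + o(1) \bigr) t$ for a constant $K$ controlled by $\dim X$ (and initially by $k$). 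With this in hand, set $p_T = \gate_C(\alpha(T))$ and let $\beta_T$ be the unit-speed CAT(0) geodesic from $x$ to $p_T$; for $\epsilon$ small, $\dist_X(x, p_T) \to \infty$, so by properness of $C$ the segments $\beta_T$ subconverge uniformly on compact sets to a CAT(0) ray $\beta \subset C$. Let $b = \beta(\infty) \in \tits C \subset \bigcap_i Q(v_i)$.

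A final application of \cite[Proposition II.9.8]{BridsonHaefliger} to $\alpha$ and $\beta$, together with the triangle inequality $\dist_X(\alpha(T), \beta(T)) \leq \dist_X(\alpha(T), p_T) + \dist_X(p_T, \beta(T))$ and the sub-linear estimate above, then yields
\[
\sin\bigl(\dist_T(a, b)/2\bigr) \: \leq \: \limsup_{t \to \infty} \frac{\dist_X(\alpha(t), \beta(t))}{2t} \: \leq \: K' \sin(\epsilon/2),
\]
so $f(\epsilon) := 2\arcsin\bigl(K' \sin(\epsilon/2)\bigr)$ has the required properties, once $\epsilon_1$ is chosen small enough that $f(\epsilon_1) < \pi/2$ and that $\dist_X(x, p_T) \to \infty$.

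The main obstacle is arranging the constant $K$ (and hence $K'$) to depend only on $\dim X$ rather than on the number $k+1$ of subcomplexes: the naive union bound $|\bigcup_i \W(y, \J_i)| \leq \sum_i |\W(y, \J_i)|$ introduces an unwanted factor of $k+1$. Overcoming this should involve a finer cubical argument. One route is to exploit the absence of facing tuples in the interval from $y$ to $\gate_C(y)$, as in the proof of Lemma~\ref{Lem:proper}, to bound $|\W(y, C)|$ in terms of $\max_i |\W(y, \J_i)|$ up to constants depending only on $\dim X$. An alternative route is to observe that the visible classes $v_0, \ldots, v_k$ are mutually constrained via any common point of $\bigcap_i Q(v_i)$ (which lies below every $v_i$ by Lemma~\ref{Lem:psiphi}), so that Corollary~\ref{Cor:FinitelyManyBelowRoller} together with the facing-tuple restrictions of Lemma~\ref{Lem:proper} should effectively bound $k$ in terms of $\dim X$.
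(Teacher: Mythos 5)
Your overall skeleton coincides with the paper's: both arguments fix the convex subcomplexes $\J_i$ from Lemma~\ref{Lem:QIntersection}, show that $\alpha(t)$ stays within distance roughly $(\mathrm{const})\cdot\epsilon t$ of $C=\bigcap_i\J_i$ by counting separating hyperplanes, gate-project onto the proper complex $C$ to extract a limit ray representing $b$, and convert the linear-in-$t$, small-in-$\epsilon$ divergence bound back into a Tits angle bound via \cite[II.9.8 and II.9.16]{BridsonHaefliger}. You have also correctly isolated the one genuinely delicate point: the identity $\W(y,C)=\bigcup_i\W(y,\J_i)$ is true (and is exactly the paper's decomposition $\U(t)=\bigcup_i\mathcal B_i(t)$), but the naive union bound costs a factor of $k+1$, which must be replaced by something depending only on $\dimension(X)$.

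Of your two proposed fixes, the second one fails. The number $k+1$ of visible classes with $\bigcap_iQ(v_i)\neq\emptyset$ is \emph{not} bounded in terms of $\dimension(X)$: in $X=T\times\reals$ with $T$ an infinite tree, each end $\xi$ of $T$ gives a distinct visible class $w_\xi=[(\xi,+\infty)]$, all lying above the class $[(p,+\infty)]$, so all the sets $Q(w_\xi)$ contain the vertical point of $\tits X$; there are infinitely many such classes in a $2$--dimensional complex. Corollary~\ref{Cor:FinitelyManyBelowRoller} bounds the classes \emph{below} a given class, not above it, so it gives no control here. Your first fix, however, can be made to work, and it is a genuinely different resolution from the paper's. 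Since $\W(y,C)=\W(y,\gate_C(y))$ is an interval, it contains no facing triple, so \cite[Proposition 3.3]{Hagen:FacingTuples} (as used in Lemma~\ref{Lem:proper}) provides a chain $\hat h_1,\ldots,\hat h_m$ in $\W(y,C)$ with $m\geq|\W(y,C)|/K(\dimension X)$. Order the chain so that $\hat h_j$ separates $y$ from $\hat h_{j+1}$; the last element $\hat h_m$ separates $y$ from some $\J_{i_0}$, and each earlier $\hat h_j$ separates $y$ from $\hat h_m$ and hence from the half-space beyond $\hat h_m$ containing $\J_{i_0}$. Thus the entire chain lies in $\W(y,\J_{i_0})$, giving $|\W(y,C)|\leq K(\dimension X)\cdot\max_i|\W(y,\J_i)|$ for \emph{every} $y$, which is exactly the estimate you need.

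For comparison, the paper resolves the same difficulty differently: it only needs the bound along the single ray $\alpha$, disjointifies $\mathcal B_i(t)$ into sets $\mathcal A_i(t)$, and observes that the cumulative sets $\mathcal A_i=\bigcup_t\mathcal A_i(t)$ are inseparable, unidirectional subsets of the UBS $\W(\alpha)$, hence each is either finite or a UBS; since the infinite ones are pairwise disjoint sub-UBSes of $\W(\alpha)$, Lemma~\ref{Lem:FinManyMinimals} caps their number at $\dimension(X)$, and the finite ones contribute only a $t$--independent additive constant. Your chain argument is arguably cleaner (it is pointwise in $y$ and avoids the UBS decomposition), at the cost of the Ramsey-type constant $K(\dimension X)$. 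One last small caution: in your final step, bounding $\dist_X(\alpha(t),\beta(t))$ for the \emph{limit} ray $\beta$ requires a word of justification, since $p_T=\beta_T(s_T)$ with $s_T\to\infty$ and uniform convergence on compacta does not directly control $\dist_X(p_T,\beta(T))$; either use convexity of $s\mapsto\dist_X(\alpha(s),\beta_T(s\cdot s_T/T))$ to transfer the endpoint bound to each fixed time $t$ before passing to the limit, or follow the paper and invoke \cite[Lemma II.9.16]{BridsonHaefliger} on the comparison angles $\bar\angle_x(\alpha(t),\gate_C(\alpha(t)))$ directly, which avoids the issue.
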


\begin{proof}
We begin by defining the constant $\epsilon_1$ and the function $f \from (0,\epsilon_1)\to(0,\pi/2)$. Let 
$\lambda_0, \lambda_1$ be the constants of Lemma~\ref{Lem:WallQI}, which depend only on $\dimension(X)$. Define $K = 4 \lambda_0^2 \dimension (X)$. Then, we set 
$$
\epsilon_1 = \frac{4}{5K}, \qquad
f(\epsilon)  = \cos^{-1} \big(1 - K \epsilon \big) \leq \cos^{-1} (\tfrac{1}{5}) < \tfrac{\pi}{2}.$$
The bound $f(\epsilon) \leq \cos^{-1}(\frac{1}{5})$ holds because $K \epsilon < K \epsilon_1 = \frac{4}{5}$. The property $\lim_{\epsilon \to 0} f(\epsilon) = 0$ is now immediate.

\smallskip
\textbf{Plan of the proof.}
Let $v_0,\ldots,v_k\in\visi(X)$ be Roller classes such that 
$\bigcap_{i=0}^kQ(v_i)\neq\emptyset$.
By Lemma~\ref{Lem:QIntersection}, there exist convex subcomplexes $\J_0,\ldots,\J_k$ such that $\bigcap_{i = 0}^k Q(v_i)=\tits \! \left(\bigcap_{i= 0}^k \J_i\right)$. Choose  a basepoint $x \in \bigcap_{i= 0}^k \J_i$. 

Fix $\epsilon < \epsilon_1$, and let $a\in\tits X$ be a point such that $\dist_T(a,Q(v_i)) \leq \epsilon$ for all $i$. Let $\alpha\from[0,\infty)\to X$ be a CAT(0) geodesic ray, such that $\alpha(0) = x$ and $\alpha(\infty) = a$. To prove the lemma, we will find a point $b\in\bigcap_{i=0}^kQ(v_i)$ such that $d_T(a, b) \leq  f(\epsilon)$.

We locate the point $b$ using gate projections. Let $\gate \from X \to \bigcap_{i=1}^k \J_i$ be the gate map.
 For $t \geq 0$, let $x_t = \gate(\alpha(t))$. Since $\bigcap_{i=0}^k\J_i$ is a finite-dimensional CAT(0) 
cube complex with no facing triple of hyperplanes, it is  proper by 
Lemma~\ref{Lem:proper}. We will check that the sequence $\{x_t\}_{t\in\naturals}$ is unbounded.
  Thus the segments joining $x_0$ to the elements of $\{x_t\}_{t\in\naturals}$ 
subconverge uniformly on compact sets to a CAT(0) geodesic ray representing some point $b\in\bigcap_{i=0}^k Q(v_i)$.

To prove that $d_T(a, b) \leq f(\epsilon)$, we will estimate the Alexandrov angle at $x = \alpha(0)$ between $\alpha(t)$ and $x_t$. In order to do this, we will control the number of hyperplanes that separate $\alpha(t)$ from $\bigcap_{i=0}^k \J_i$. 

\smallskip
\textbf{Main hyperplane estimate.} For each $t \geq 0$, let $\U(t)$ be the set of hyperplanes that separate $\alpha(t)$ from 
$\bigcap_{i=0}^k\J_i$. We will prove the following double-sided estimate for all large $t$:
\begin{equation}\label{Eqn:UtCount}
\frac{1}{\lambda_0} \dist_X(\alpha(t),x_t) - \lambda_1 \leq| \U(t)|\leq  \dimension(X)  (\lambda_0 \cdot 2 t \epsilon + \lambda_1) + D,
\end{equation}
where $D$ is a constant independent of $t$.

The lower bound of \eqref{Eqn:UtCount} is straightforward.  By Lemma~\ref{Lem:Gate_Lipschitz}, the gate map $\gate  \from X\to\bigcap_{i = 0}^k \J_i$  is characterized by the property that a 
hyperplane $\hat h$ separates 
    $\alpha(t)$ from $x_t = \gate(\alpha(t))$ if and only if $\hat h$ separates $\alpha(t)$ from $\bigcap_{i=0}^k\J_i$.  
   In other words, $\U(t) = \W(\alpha(t), x_t)$.
     Hence Lemma~\ref{Lem:WallQI} gives
$$\frac{1}{\lambda_0} d_X(\alpha(t),x_t) - \lambda_1 \leq |\U(t)| ,$$
as desired. By contrast, the upper bound requires some hyperplane combinatorics.

\smallskip
\textbf{Hyperplane sets.} 
For each $i$, let $\mathcal B_i(t)$ be the set of hyperplanes that separate $\alpha(t)$ 
from $\J_i$.  Note that $\U(t)\supset\bigcup_{i=0}^k\mathcal B_i(t)$. We claim that $\U(t)=\bigcup_{i=0}^k\mathcal B_i(t)$.

Suppose that $\hat h\in\U(t) \setminus \bigcup_{i=0}^k\mathcal B_i(t)$.  Then $\hat h$ crosses each $\J_i$, so by a standard cubical 
convexity argument, $\hat h$ crosses $\bigcap_{i=0}^k\J_i$ and hence cannot separate any point from $\bigcap_{i=0}^k\J_i$.  This is 
a contradiction, and thus $\U(t)=\bigcup_{i=0}^k\mathcal B_i(t)$. 

Let $\mathcal A_0(t)=\mathcal B_0(t)$. For $i \geq 1$, let $\mathcal A_i(t)=\mathcal B_i(t) \setminus \bigcup_{j < i} \mathcal A_{j}(t)$.  Then the 
sets $\mathcal A_0(t),\ldots,\mathcal A_k(t)$ are pairwise disjoint, and their union is $\U(t)$.  For each $i$, let $\mathcal A_i = \bigcup_{t \geq 0} \mathcal A_i(t)$.
Observe that every $\mathcal A_i$ is inseparable, by definition. 
Furthermore, $\mathcal A_i$ is unidirectional. Indeed, $\mathcal A_i \subset \mathcal B_i =  \bigcup_{t \geq 0} \mathcal B_i(t)$, and $\mathcal B_i$ separates $\J_i$ from the tail of the ray 
$\alpha$. Unidirectionality passes to subsets, hence $\mathcal A_i$ is unidirectional.
 Consequently $\mathcal A_i$ is either finite or a UBS.

Let $J\subset\{0,\ldots,k\}$ be the set of $i$ such that $\mathcal A_i$ is a UBS. Observe that $\mathcal A_i  \cap \mathcal A_j = \emptyset$ for $i \neq j$ and $\bigsqcup_{i \in J} \mathcal A_i \subset \W(\alpha)$, which is itself a UBS. Thus, by Lemma~\ref{Lem:FinManyMinimals}, we have $|J| \leq \dimension(X)$.
 We define $D = \sum_{i \notin J} | \mathcal A_i |$. 

\smallskip
\textbf{The upper bound of \eqref{Eqn:UtCount}.} 
Next, we will use the decomposition $\U(t) = \bigsqcup_{i=1}^k \mathcal A_i(t)$ to prove the upper bound  of \eqref{Eqn:UtCount}.

For each $i \in \{0, \ldots, k\}$, let $\xi_i$ be a ray in $\J_i$ issuing from $x = \alpha(0)$, such that $\xi_i(\infty)$ is a closest point in 
$Q(v_i) = \tits \J_i $ to $a$.  
Now, for all $i$ and all $t$, any hyperplane $\hat h \in\mathcal A_i(t)$ separates $\alpha(t)$ from $\J_i$ and hence separates $\alpha(t)$ from 
$\xi_i(t)$.  Thus Lemma~\ref{Lem:WallQI} gives
$$|\mathcal A_i(t)| \leq |\W(\alpha(t),\xi_i(t))| \leq \lambda_0 \cdot d_X (\alpha(t), \xi_i(t)) + \lambda_1.$$ 

Now, by \cite[Proposition~II.9.8.(4)]{BridsonHaefliger}, for any $\delta>0$ there exists $t_0$ such that for all $t>t_0$, 
we have $\dist_X(\alpha(t),\xi_i(t))\leq t(2\sin\frac{\epsilon}{2}+\delta)$.  Since $\epsilon$ is independent of $t$, it follows that 
for all sufficiently large $t$,  we have 
$$\dist_X(\alpha(t),\xi_i(t))\leq 4t\sin \tfrac{\epsilon}{2} < 2 t \epsilon.$$

Summing $|\mathcal A_i(t)|$ over all $i$  gives
\begin{align*}
| \U(t) | &= \sum_{i\in J}|\mathcal A_i(t)| + \sum_{i\notin J}|\mathcal A_i(t)| \\
& \leq \sum_{i \in J} \Big( \lambda_0 \cdot d_X (\alpha(t), \xi_i(t)) + \lambda_1 \Big) + \sum_{i \notin J} | \mathcal A_i | \\
&\leq  |J| (\lambda_0 \cdot 2 t \epsilon + \lambda_1) + D,
\end{align*}
which proves \eqref{Eqn:UtCount} because $|J| \leq \dimension(X)$.

\smallskip
\textbf{Angle estimate.}
Let $x=\alpha(0)$.  Set $A = \dist_X(\alpha(0), \alpha(t))=t$ (latter equality since $\alpha$ is a geodesic), and $B = \dist_X(\alpha(0), x_t)$, and $C = \dist_X(\alpha(t), x_t)$.  Let $\bar\theta_t$ 
be the Euclidean angle at the vertex corresponding to $x$ in the Euclidean comparison triangle with sides of length $A,B,C$, i.e. the comparison angle between $\alpha(t)$ and $x_t$ at $x$.  Let 
$\theta_t$ be the Alexandrov angle at $x = \alpha(0)$ between $x_t$ and $\alpha(t)$, so that $\theta_t\leq\bar\theta_t$.   

We can estimate $\bar\theta_t$ using the law of cosines, starting with an estimate of $C$. Multiplying every term of \eqref{Eqn:UtCount} by $\lambda_0$ and choosing $t$ sufficiently large yields
\begin{equation}\label{Eqn:CEst}
C = d_X(\alpha(t), x_t) \leq 4 \lambda_0^2 \dimension(X) \cdot \epsilon t = K \epsilon t.
\end{equation}
Meanwhile, $B$ can be estimated as follows:
\begin{equation}\label{Eqn:BEst}
 t(1-K \epsilon) 
 \leq \dist_X(\alpha(0),\alpha(t))-\dist_X(\alpha(t),x_t))
\leq \dist_X(\alpha(0),x_t) = B \leq t.
\end{equation}
Here, the first inequality is by \eqref{Eqn:CEst}, the second inequality is the triangle inequality, and the final inequality 
holds because the projection $\gate$ is $1$--Lipschitz for the CAT(0) metric by Lemma~\ref{Lem:Gate_Lipschitz}.

By  the cosine law and the CAT(0) inequality, we have

$$ 2 AB \cos(\theta_t) \geq 2AB \cos(\bar\theta_t)=A^2 + B^2 - C^2.$$
The equality is the cosine law applied to the Euclidean comparison triangle, and the inequality holds for the corresponding CAT(0) triangle.  We are interested in the former, since we will need a 
bound on $\bar\theta_t$ (which will incidentally bound $\theta_t$).
Substituting the bounds \eqref{Eqn:CEst} and \eqref{Eqn:BEst} into the cosine law shows that for all large $t$, we have
$$2t^2\cos\bar\theta_t  \geq  t^2+t^2(1-K \epsilon)^2- t^2 (K\epsilon)^2,$$
which simplifies to
$\cos\bar\theta_t \geq 
1-K \epsilon$. Since we have chosen $\epsilon \in (0, \tfrac{4}{5K} )$, this ensures that $\bar\theta_t$ is a small angle; more precisely $\bar\theta_t \leq \cos^{-1}( 1- K \epsilon)$.

\smallskip
\textbf{Conclusion.} 
Since $K \epsilon < \frac{4}{5}$, equation~\eqref{Eqn:BEst} implies 
$d_X(\alpha(0), x_t) \geq (1-K \epsilon) t > \frac{1}{5} t.$
Thus the sequence $\{x_t\}_{t\in\naturals}$ is unbounded,
and some subsequence converges to a point $b\in\bigcap_{i=0}^kQ(v_i)$, as mentioned above.
By~\cite[Lemma II.9.16]{BridsonHaefliger}, which relates the Tits angle $\angle(a,b)$ to the Euclidean comparison angles $\bar\theta_t$, we have 
$$
\angle(a,b)\leq\lim\inf_{t\to\infty}\bar\theta_t \leq \cos^{-1} (1 - K \epsilon) = f(\epsilon).
$$
Since we have already checked that $f(\epsilon) < \frac{\pi}{2}$, this ensures  
 $\angle(a,b) \leq f(\epsilon)<\pi/2$.  In particular, $\dist_T(a,b)=\angle(a,b) \leq f(\epsilon)$.
\end{proof}

Recall from Proposition~\ref{Prop:diameter}.\eqref{item:Q_radius},\eqref{item:Q_center} that for each $v\in\visi(X)$, there is a radius $r_v<\pi/2$ such that $Q(v)$ is contained in the closed ball 
$\~B_T(\circum(v), r_v)$ in the Tits metric.

\begin{definition}[Thickening constant $\epsilon(w)$]\label{Def:EpsilonW}
Let $\epsilon_0$ be the constant of Lemma~\ref{Lem:disjoint_Q}. Let $f \from (0, \epsilon_1) \to \reals$ be the function from Lemma~\ref{Lem:close_to_intersection}.  Since $\lim_{\epsilon \to 0} f(\epsilon) = 0$, that lemma allows us to 
choose a constant $\epsilon_2 < \epsilon_1$, depending  only on $\dimension(X)$, so that $f(\epsilon)+\epsilon \leq \epsilon_0$ whenever 
$\epsilon \leq \epsilon_2$.  
For each Roller class $w$, define a \emph{thickening constant} 
$\epsilon(w) = \min\{\epsilon_0/2 ,\epsilon_2, r_w/4\} > 0$.

Since the constants $\epsilon_0, \epsilon_2$ depend only on $\dimension(X)$, and the intrinsic radius $r_w$ depends only on the $\Aut(X)$--orbit of $w$ by Corollary~\ref{Cor:Q_equivariant},
it follows that $\epsilon(w)$ itself depends only on the the $\Aut(X)$--orbit of $w$.
\end{definition}

\begin{prop}[Open neighborhood $U(v)$]\label{Prop:ThickNeighborhoodU}
For every $v\in\visi(X)$, there exists a subset $U(v)\subset\tits X$ such that the following 
hold:
\begin{itemize}
 \item $Q(v)\subset U(v)\subset\mathcal N_{\epsilon(v)}(Q(v))$;\label{item:small}
 \item $U(v)$ has diameter less than $3\pi/4$; 
 \item $U(v)$ is open;\label{item:open}
 \item $U(v)$ is convex and contractible;
 \item $U(gv) = gU(v)$ for all $g \in \Aut(X)$.
\end{itemize}
\end{prop}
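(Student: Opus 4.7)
The plan is to take $U(v) := \mathcal N_{\epsilon(v)}(Q(v))$, the open $\epsilon(v)$-neighborhood of $Q(v)$ in the Tits metric. Four of the five properties fall out almost directly; the main substantive work is Tits-convexity.

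First I would verify the easy properties. The containment $Q(v) \subset U(v) \subset \mathcal N_{\epsilon(v)}(Q(v))$ is immediate. Openness is automatic. For equivariance, $\Aut(X)$ acts on $\tits X$ by isometries (Definition~\ref{Def:tits_metric}), $Q(gv) = gQ(v)$ by Corollary~\ref{Cor:Q_equivariant}, and $\epsilon(gv) = \epsilon(v)$ because $\epsilon(\cdot)$ depends only on the $\Aut(X)$-orbit (Definition~\ref{Def:EpsilonW}); hence $U(gv) = gU(v)$. For the diameter bound, any $a, b \in U(v)$ admit $a', b' \in Q(v)$ with $d_T(a, a'), d_T(b, b') < \epsilon(v) \leq r_v/4 < \pi/8$; combined with $\diameter_T Q(v) \leq \pi/2$ from Proposition~\ref{Prop:diameter}.\ref{item:Q_diameter}, the triangle inequality gives $d_T(a, b) < \pi/2 + 2\epsilon(v) \leq \pi/2 + r_v/2 < 3\pi/4$.

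The main task is Tits-convexity (from which contractibility is immediate by deformation-retracting onto the pseudocenter $\varphi(v) \in Q(v) \subset U(v)$ along Tits geodesics). Given $a, b \in U(v)$, note $d_T(a, b) < 3\pi/4 < \pi$, so there is a unique Tits geodesic $\gamma \from [0, L] \to \tits X$ from $a$ to $b$. Since $Q(v)$ is closed, Tits-convex (Proposition~\ref{Prop:diameter}.\ref{item:Q_convex}), and every point of $U(v)$ lies at distance less than $\epsilon(v) < \pi/2$ from $Q(v)$, standard CAT(1) theory provides a unique nearest-point projection $\pi \from U(v) \to Q(v)$. Let $a' = \pi(a)$, $b' = \pi(b)$, and let $\delta \from [0, \ell] \to Q(v)$ be the Tits geodesic from $a'$ to $b'$; its length is at most $\pi/2$. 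The quadrilateral $a, b, b', a'$ has perimeter less than $2\epsilon(v) + \pi/2 + 3\pi/4 < 3\pi/2 < 2\pi$, so CAT(1) comparison quadrilaterals exist in $S^2$. A standard convexity estimate—the CAT(1) analogue of the fact that the distance function to a complete convex subset is convex along geodesics (Bridson--Haefliger II.2.5) valid when the nearest-point projection is defined, which is our situation---then shows that $t \mapsto d_T(\gamma(t), Q(v))$ is convex on $[0, L]$. Being convex with both endpoint values less than $\epsilon(v)$, it is bounded above by $\epsilon(v)$ throughout, so $\gamma \subset U(v)$.

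The hard part will be rigorously justifying the convexity of $t \mapsto d_T(\gamma(t), Q(v))$ in the CAT(1) setting. Unlike the CAT(0) case, where this convexity is automatic and unconditional, in CAT(1) one must verify that every relevant comparison configuration fits inside a hemisphere of $S^2$: the triangles $\{a, a', \gamma(t)\}$ and $\{b, b', \gamma(t)\}$ and $\{a', b', \gamma(t)\}$ all have perimeter less than $2\pi$. The smallness built into $\epsilon(v) \leq \min\{\epsilon_0/2, \epsilon_2, r_v/4\}$ in Definition~\ref{Def:EpsilonW}, together with the bounds $\diameter_T Q(v) \leq \pi/2$ and $d_T(a,b) < 3\pi/4$, is precisely what guarantees these comparisons go through.
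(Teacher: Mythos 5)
Your reduction of everything to the single claim that $t\mapsto \dist_T(\gamma(t),Q(v))$ is convex along Tits geodesics is where the argument breaks: that claim is false in CAT(1) spaces, and there is no ``CAT(1) analogue'' of \cite[Corollary II.2.5]{BridsonHaefliger} that yields genuine convexity of the distance function to a convex set, even when all comparison configurations fit in a hemisphere. Concretely, take $X$ so that $Q(v)$ is a geodesic arc $\sigma$ of length $\pi/2$ in a round spherical piece of $\tits X$ (this happens already in Example~\ref{Ex:CircumcenterOnBoundary}, where $Q(w)$ is the quarter-circle base of a spherical triangle). On the unit sphere, let $\sigma$ run from $e_1$ to $e_2$ along the equator and let $a,b$ be the endpoints pushed distance $\epsilon'$ toward the pole. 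Then $\dist_T(a,\sigma)=\dist_T(b,\sigma)=\epsilon'$, but the midpoint of the geodesic from $a$ to $b$ lies at distance approximately $\sqrt{2}\,\epsilon'$ from $\sigma$. So with $\epsilon'$ slightly less than $\epsilon(v)$ the points $a,b$ lie in $\mathcal N_{\epsilon(v)}(Q(v))$ while the midpoint of $[a,b]$ does not; the distance function along $[a,b]$ rises in the middle and falls at the ends, so it is not convex, and your candidate $U(v)=\mathcal N_{\epsilon(v)}(Q(v))$ is not Tits-convex. Shrinking $\epsilon(v)$ does not help, since the failure scales linearly. (Your other four bullet points are fine, and contractibility could likely be rescued via nearest-point projection, but the convexity bullet genuinely fails for the metric neighborhood.)

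This is exactly the difficulty the paper's proof is designed to avoid. Rather than thickening inside $\tits X$, the paper passes to the Euclidean cone $Y$ over $\tits X$, which is CAT(0), where $\epsilon$-neighborhoods of convex sets \emph{are} convex. It forms the convex set $K(v)=B\cap Z$, where $Z$ is the sub-cone over $Q(v)$ and $B$ is a horoball chosen so that $K(v)\supset Q(v)$ stays at distance $t_0=\cos r_v$ from the cone point; it takes the (convex, open) $\epsilon'$-neighborhood $U'(v)$ of $K(v)$ in $Y$; and it defines $U(v)$ as the radial projection of $U'(v)$ back to $\tits X$. Convexity of $U(v)$ then follows from \cite[Proposition I.5.10(1)]{BridsonHaefliger}, and the Lipschitz bound on radial projection outside the $(t_0-\epsilon')$-ball gives $U(v)\subset\mathcal N_{\epsilon(v)}(Q(v))$. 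Note that the resulting $U(v)$ is in general a proper subset of the metric $\epsilon(v)$-neighborhood, with a shape dictated by the cone geometry; some such detour through a CAT(0) model appears unavoidable here.
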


\begin{proof}
Let $Y$ be the Euclidean cone on $\tits X$, with cone-point denoted $0$.  
By~\cite[Theorem II.3.14]{BridsonHaefliger}, the usual cone metric $\dist_Y$ on $Y$ has the 
property that $(Y,\dist_Y)$ is a CAT(0) geodesic space.  We use the notation $\nu_{\delta}(A)$ to denote the open $\delta$--neighborhood of a set $A \subset Y$ in this CAT(0) metric.

We identify $\tits X$ 
with the unit sphere about $0$ in $Y$. We also set $Q = Q(v)$ for simplicity of notation. Recall from Proposition~\ref{Prop:diameter}.\eqref{item:Q_radius} that the intrinsic radius of $Q$ is $r_v 
\in [0, \pi/2)$.

\smallskip
\textbf{The convex sub-cone $Z = Z(v)$:}  Let $Z\subset Y$ be the subspace arising as the union of all rays in $Y$ 
emanating from $0$ and passing through $Q\subset\tits X$.  We claim that 
$Z$ is convex in $(Y,\dist_Y)$.  Indeed, let $x,y\in Z$. Then $x,y$ 
respectively lie on rays $\alpha_x,\alpha_y$ emanating from $0$ and intersecting $Q$ 
in points $x',y'$.  By \cite[Proposition I.5.10(1)]{BridsonHaefliger}, the rays $\alpha_x,\alpha_y$ determine a convex sector $S \subset Y$ 
intersecting $\tits X$ in the unique Tits geodesic from $x'$ to $y'$, which lies in 
$Q$ by Tits convexity of $Q$.  Hence $S$ lies in $Z$, and thus so does 
the $\dist_Y$--geodesic from $x$ to $y$, since $x,y\in S$.

\smallskip
\textbf{The horoball $B = B(v)$:}  Let  $c=\circum(v)$ be the circumcenter of $Q = Q(v)$. Let $\alpha =\alpha_c$ be the parametrized geodesic ray in $Y$ such that $\alpha(0)=0$ and $\alpha(1)=c$.  Let $p \from Y\to\reals$ be the associated 
horofunction, defined by 
$$ p(y)=\lim_{s\to\infty}[\dist_Y(\alpha(s),y)-s] . $$ 
Let 
$t_0=\cos r_v$, which is positive since $r_v\in[0,\pi/2)$.

Let $B = B(v)=p^{-1}((-\infty,-t_0])$.  Recall from e.g.~\cite[Lemma 
3.88]{DrutuKapovich} that $B$ is a convex 
subset of $Y$.  Let $K(v)=B\cap Z$. See Figure~\ref{fig:U-prime}.

\smallskip
\textbf{$Q$ lies in $K(v)$:}  We claim that $Q\subset K(v)$.  Since $Q$ is the unit sphere in $Z$ by the definition of $Y$ and $Z$, 
it suffices to show that $Q\subset B$.  To that end, fix $q\in Q$, and observe that 
$\dist_Y(q,0)=1$.  Let $s\gg 0$ and consider the convex Euclidean triangle in $Y$ with vertices at $0,q,\alpha(s)$.  The angle in this triangle at $0$ is denoted 
$\theta$, and we note that $\theta\leq r_v$.  From the cosine law, we obtain 
$\dist_Y(q,\alpha(s))^2=s^2+1-2s\cos\theta$.  Hence 
$$p(q)=\lim_{s\to\infty}\left[\sqrt{s^2+1-2s\cos\theta}-s\right]=-\cos\theta \leq - \cos r_v = - t_0.$$
Thus, by the definition of $B$, we have $q\in B$, as 
required.  Since $Z\cap\tits X=Q$, we have in fact shown that $K(v)\cap\tits 
X=Q$. 

\smallskip
\textbf{$K(v)$ satisfies $d(0,K(v)) = t_0$:}  We claim that $K(v)$ is disjoint from the open $t_0$--ball in $Y$ about 
$0$.  Indeed, let $b\in K(v)$.  Then $p(b)\leq -t_0$.  Hence, for any fixed 
$\delta>0$, and any sufficiently large $s>0$, we have 
$\dist_Y(b,\alpha(s))<s-t_0+\delta$.  By the triangle inequality, $s\leq 
\dist_Y(0,b)+s-t_0+\delta$, i.e. $\dist_Y(0,b)>t_0-\delta$.  Hence 
$\dist_Y(b,0)\geq t_0$, as required.  At the same time, $\alpha = \alpha_c$ is a parametrized geodesic ray and $\alpha(t_0)$ is a point of $K(v)$ at distance exactly $t_0$ from $0$, hence $d(0,K_v) = t_0$.

\smallskip
\textbf{The projection $\varpi$ and the open set $U'(v)$:}  Let $\varpi:Y \setminus \{0\}\to \tits X$ be the radial projection. Recall that $\nu_\rho(0)$ is the open $\rho$--neighborhood of $0$. 
We claim that for every $0 < \rho \leq 1$, the restriction of $\varpi$ to  $Y \setminus \nu_\rho(0)$ is $1/\rho$--Lipschitz. Indeed,  by \cite[Proposition I.5.10(1)]{BridsonHaefliger}, it suffices to check this claim on a sector of $\reals^2$. In polar coordinates on $\reals^2$, the distance element $ds$ satisfies $ds^2 = dr^2 + r^2 d\theta^2$, hence the projection $\varpi$ rescales $ds$ by a factor of at most $r^{-1}$, hence $\varpi$ is $1/\rho$--Lipschitz when $r \geq \rho$. 

Recall the constant $\epsilon(v)$ defined in Definition~\ref{Def:EpsilonW}, and define a positive constant 
\begin{equation}\label{Eqn:EpsilonPrime}
\epsilon' = \epsilon'(v) = \frac{\epsilon(v) }{1 + \epsilon(v)} t_0 \in (0, t_0),
\quad \text{which satisfies} \quad
\frac{\epsilon'}{t_0 - \epsilon'} = \epsilon(v).
\end{equation}
Let $U'(v) = \nu_{\epsilon'} (K(v))$ be the open $\epsilon'$--neighborhood 
of $K(v)$ in $Y$.  Since $K(v) = B \cap Z$ is convex and $Y$ is CAT(0), it follows that $U'(v)$ is convex. Furthermore, $U'(v)$ is open and disjoint from the ball of radius $t_0 - \epsilon'$ about 
$0$.  See Figure~\ref{fig:U-prime}.

\begin{figure}[h]
\begin{overpic}[width=0.7\textwidth]{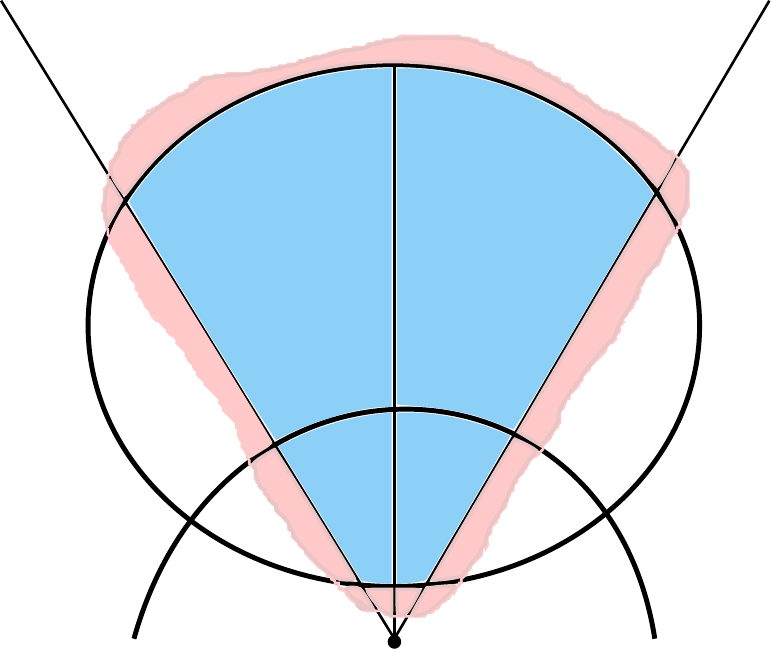}
 \put(50,-4){$0$}
 \put(48, 60){$\alpha$}
    \put(86,3){$\tits X$}
    \put(8,30){$B$}
    \put(60,32){$Q$}
    \put(40,50){$K$}
 \end{overpic}
\caption{Construction of $U'=U'(v)$ as a small-radius neighborhood of $K=B\cap Z$, where $Z\subset Y$ is the cone on $Q\subset\tits X$ and $B$ is an appropriately chosen horoball determined by 
$\alpha$.  The radius is chosen so that $U'$ avoids the $(t_0-\epsilon')$--ball around $0$.}
\label{fig:U-prime}
\end{figure}

\smallskip
\textbf{Definition and properties of $U(v)$:}
Let $U(v)=\varpi(U'(v))\subset \tits X$. We claim that this has all of the properties claimed in the lemma statement. First, since $Q(v) \subset K(v)$ by construction, and $\varpi$ is the identity map on $\tits X \subset Y$, it follows that $Q(v) = \varpi(K(v))\subset U(v)$.
On the other hand, since $d(0, U'(v)) \geq t_0 - \epsilon'$,  the projection $\varpi$ is $1/(t_0 - \epsilon')$--Lipschitz on $U'(v)$. Therefore, \eqref{Eqn:EpsilonPrime} implies
$$
U(v) = \varpi(\nu_{\epsilon'} (K(v)) ) \subset \nu_{\epsilon(v)} (\varpi(K(v))) =  \nu_{\epsilon(v)} (Q(v)),
$$
proving the first bullet of the lemma.

Second, recall from Definition~\ref{Def:EpsilonW} that $\epsilon(v) \leq r_v / 4$. Thus Proposition~\ref{Prop:diameter}.\eqref{item:Q_radius} implies
$$
\diameter(U(v)) \leq \diameter(Q(v)) + 2\epsilon(v) \leq \frac{\pi}{2} + \frac{r_v}{2} < \frac{\pi}{2} + \frac{\pi}{4} = \frac{3\pi}{4}.
$$

Third, observe that $\varpi$ is an open map.  Indeed, 
$Y \setminus \{0\}$ is homeomorphic to $\tits X\times(0,\infty)$, and $\varpi$ is 
projection to the first factor, which is an open map. Thus $U(v) = \varpi(U'(v))$ is open.

Fourth, it follows from \cite[Proposition I.5.10.(1)]{BridsonHaefliger} and the convexity of $U'(v)$  that $U(v)$ is convex in $\tits X$. Since $U(v)$ has diameter less than $\pi$, it is uniquely geodesic, hence contractible.

Fifth, observe that all of the ingredients in the definition of $U(v)$ are $\Aut(X)$--equivariant. Indeed, the intrinsic radius $r_v$ is invariant under $\Aut(X)$, hence the number $t_0 = \cos(r_v)$ is as well. Thus the definitions of the sub-cone $Z(v)$, the horoball $B(v)$, and the set $K(v) = Z(v) \cap B(v)$ are all $\Aut(X)$--equivariant. The constant $\epsilon'(v)$, defined in \eqref{Eqn:EpsilonPrime} is invariant under $\Aut(X)$, because both $t_0$ and $\epsilon(v)$ are. It follows that $U(v) = \varpi(\nu_{\epsilon'} (K(v)) )$ is also $\Aut(X)$--equivariant.
\end{proof}

Recall that in Section~\ref{Sec:closed_cover_tits_boundary}, we have defined a simplicial complex $\mathcal N_T$.
By Corollary~\ref{Cor:MtNerve}, $\mathcal{N}_T$ coincides with the nerve of the cover of $\tits X$ by the closed sets $Q(v)$  for $v \in \maxvis (X)$. By Lemmas~\ref{Lem:max_vis} and~\ref{Prop:ThickNeighborhoodU},  the open sets $U(v)$ for $v \in \maxvis (X)$ also cover $\tits X$. 

By analogy with Definition~\ref{Def:Big_N_T}, we define a simplicial complex $\mathcal C_T$ with vertex set $\maxvis X$, where vertices $v_0, \ldots, v_n \in \maxvis X$ span an $n$--simplex if and only if $\bigcap_{i=0}^n U(v_i)\neq\emptyset$.
The content of the following lemma is that $\mathcal C_T$ is isomorphic to $\mathcal N_T$.

\begin{lemma}\label{Lem:flag-ish}
For Roller classes $v_0, \ldots,v_n \in \maxvis(X)$, we have $\bigcap_{i=0}^n U(v_i)\neq\emptyset$ if and only if 
$\bigcap_{i=0}^n Q(v_i)\neq\emptyset$. Consequently, the identity map $\maxvis(X) \to \maxvis(X)$ induces an $\Aut(X)$--equivariant simplicial isomorphism $\mathcal C_T  \to \mathcal N_T$.
\end{lemma}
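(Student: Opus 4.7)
The forward direction ($\bigcap_i Q(v_i) \neq \emptyset \Rightarrow \bigcap_i U(v_i) \neq \emptyset$) is immediate since $Q(v_i)\subset U(v_i)$, so the work is the converse, which I will do by induction on $n$. For the base case $n=1$, any $a \in U(v_0) \cap U(v_1)$ satisfies $\dist_T(a, Q(v_i)) < \epsilon(v_i) \leq \epsilon_0/2$ by Proposition~\ref{Prop:ThickNeighborhoodU} and Definition~\ref{Def:EpsilonW}, whence $\dist_T(Q(v_0), Q(v_1)) < \epsilon_0$, and the contrapositive of Lemma~\ref{Lem:disjoint_Q} forces $Q(v_0) \cap Q(v_1) \neq \emptyset$.

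For the inductive step $n \geq 2$, I will pick $a \in \bigcap_{i=0}^n U(v_i)$ and apply the induction hypothesis to $v_0, \ldots, v_{n-1}$ to get $\bigcap_{i<n} Q(v_i) \neq \emptyset$. Since $\dist_T(a, Q(v_i)) < \epsilon_2$ for every $i<n$, Lemma~\ref{Lem:close_to_intersection} then produces a point $b \in \bigcap_{i<n} Q(v_i)$ with $\dist_T(a,b) \leq f(\epsilon_2)$; combining this with $\dist_T(a,Q(v_n))<\epsilon_2$ and the slack $f(\epsilon_2)+\epsilon_2 \leq \epsilon_0$ built into Definition~\ref{Def:EpsilonW} yields $\dist_T(b, Q(v_n)) < \epsilon_0$, so some $b' \in Q(v_n)$ satisfies $\dist_T(b,b') < \epsilon_0$.

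The decisive step, and the one I expect to be the main obstacle, is to upgrade this near-intersection to an actual point of $\bigcap_{i \leq n} Q(v_i)$. My plan is to pass to the visible Roller classes $\psi(b), \psi(b') \in \visi(X)$: since $b \in \bigcap_{i<n} Q(v_i)$, Lemmas~\ref{Lem:psiphi} and~\ref{Lem:containment} give $\psi(b) \leq v_i$ and hence $Q(\psi(b)) \subset \bigcap_{i<n} Q(v_i)$, and symmetrically $Q(\psi(b')) \subset Q(v_n)$. Because $b \in Q(\psi(b))$ and $b' \in Q(\psi(b'))$ by construction (as in the proof of Lemma~\ref{Lem:disjoint_Q}), one has $\dist_T(Q(\psi(b)), Q(\psi(b'))) \leq \dist_T(b,b') < \epsilon_0$, so Lemma~\ref{Lem:disjoint_Q} forces $Q(\psi(b)) \cap Q(\psi(b')) \neq \emptyset$; any point in that intersection lies in $\bigcap_{i\leq n} Q(v_i)$. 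The subtlety will be in tracking strict versus non-strict inequalities, but the strict bound in Lemma~\ref{Lem:disjoint_Q} together with the strictness of $\dist_T(a, Q(v_i)) < \epsilon(v_i)$ inside the open neighborhood provides exactly the margin needed. Finally, the ``consequently'' clause is formal: by Proposition~\ref{Prop:ThickNeighborhoodU} and Corollary~\ref{Cor:Q_equivariant}, both $v\mapsto U(v)$ and $v\mapsto Q(v)$ are $\Aut(X)$--equivariant, so $\mathcal{C}_T$ and $\mathcal{N}_T$ inherit simplicial $\Aut(X)$--actions and the identity map on $\maxvis(X)$ gives the claimed $\Aut(X)$--equivariant simplicial isomorphism.
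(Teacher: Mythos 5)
Your proof is correct, and its skeleton (induction on $n$, base case via Lemma~\ref{Lem:disjoint_Q}, inductive step via Lemma~\ref{Lem:close_to_intersection} and the slack $f(\epsilon_2)+\epsilon_2\leq\epsilon_0$ built into Definition~\ref{Def:EpsilonW}) matches the paper's. The one place you diverge is in how the inductive step is closed. The paper first upgrades the induction hypothesis to a structural statement: using Lemma~\ref{Lem:QIntersection} it realizes $\bigcap_{i<n}Q(v_i)$ as $\tits$ of an intersection of convex hulls, extracts the corresponding UBS, and produces a single visible class $w=\psi(\varphi(w'))$ with $Q(w)=\bigcap_{i<n}Q(v_i)$ (via Proposition~\ref{prop:phi_facts}); it then applies Lemma~\ref{Lem:disjoint_Q} directly to the pair $Q(w),Q(v_n)$ and derives a contradiction from assumed disjointness. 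You instead stay at the level of points: you take $b\in\bigcap_{i<n}Q(v_i)$ and $b'\in Q(v_n)$ with $\dist_T(b,b')<\epsilon_0$, pass to the visible classes $\psi(b),\psi(b')$, and use $b\in Q(\psi(b))$ together with Lemmas~\ref{Lem:psiphi} and~\ref{Lem:containment} to get $Q(\psi(b))\subset\bigcap_{i<n}Q(v_i)$ and $Q(\psi(b'))\subset Q(v_n)$ before invoking Lemma~\ref{Lem:disjoint_Q}. Your route avoids Lemma~\ref{Lem:QIntersection} and the construction of $w$ entirely, at the cost of leaning on the fact that every $x\in\tits X$ lies in $Q(\psi(x))$ (which the paper also uses, e.g.\ inside the proof of Lemma~\ref{Lem:disjoint_Q}, so this is legitimate); the paper's route yields the additional structural fact that a nonempty intersection of maximal-visible realizations is itself the realization of a visible class. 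Both arguments are sound, and your handling of the strict-versus-weak inequalities and of the ``consequently'' clause is fine.
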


\begin{proof}
Observe that $\bigcap_{i=0}^n Q(v_i)\neq\emptyset$ trivially implies $\bigcap_{i=0}^n U(v_i)\neq\emptyset$. 
For the other direction, we assume $\bigcap_{i=0}^n U(v_i) \neq \emptyset$ and aim to show $\bigcap_{i=0}^n Q(v_i)\neq\emptyset$. 

We will argue by induction on $n$. 
 In the base case, $n=1$, suppose that $U(v_0) \cap U(v_1) \neq \emptyset$. Since $U(v_i) \subset \mathcal{N}_{\epsilon(v_i)}(Q(v_i))$, and $\epsilon(v_i) \leq \epsilon_0/2$ by Definition~\ref{Def:EpsilonW}, it follows that there are points $a_i \in Q(v_i)$ such that $d_T(a_0, a_1) < \epsilon_0$. But then Lemma~\ref{Lem:disjoint_Q} implies $Q(v_0) \cap Q(v_1) \neq \emptyset$, proving the base case.

For the inductive step, assume that $\bigcap_{i=0}^n U(v_i) \neq \emptyset$ and
$\bigcap_{i=0}^{n-1}Q(v_i)\neq\emptyset$. We claim that there is a Roller class $w \in \visi(X)$ such that $\bigcap_{i=0}^{n-1}Q(v_i)=Q(w)$.

By Lemma~\ref{Lem:QIntersection}, we have convex subcomplexes $\J_0, \ldots, \J_{n-1} \subset X$, such that $Q(v_i) = \tits \J_i$ 
and $\bigcap_{i=0}^{n-1}Q(v_i) = \tits \bigcap_{i=0}^{n-1}\J_i$. By Lemma~\ref{standardUBS}, the hyperplane 
collection $\W(\J_i)$ is a UBS. Furthermore, since each $\J_i$ is convex, we have $\W(\bigcap \J_i) 
= \bigcap \W(\J_i)$, which is a UBS because $\bigcap \J_i$ is unbounded. By Lemma~\ref{Lem:UmbraClass}, the 
UBS $\W(\bigcap \J_i)$ represents a Roller class $w'$, which means that  
$
Q(w') =  \tits \bigcap_{i=0}^{n-1}\J_i = \bigcap_{i=0}^{n-1}Q(v_i)$. Now, we let $w = \psi(\varphi(w'))$. Then 
Definition~\ref{Def:L2visible} says that $w \in \visi(X)$, and Proposition~\ref{prop:phi_facts}.\ref{item:same_Q} says that 
$Q(w) = Q(w') = \bigcap_{i=0}^{n-1}Q(v_i)$, proving the Claim.

Next, we claim that $Q(v_n)\cap Q(w) \neq \emptyset$. Since $Q(w) = \bigcap_{i=0}^{n-1}Q(v_i)$, this will complete the inductive step and prove the Lemma.

Suppose for a contradiction that $Q(v_n)\cap Q(w) = \emptyset$. Then, by Lemma~\ref{Lem:disjoint_Q}, we have
$\dist_T(Q(v_n),Q(w))>\epsilon_0$.  Let $\epsilon_2$ be as in Definition~\ref{Def:EpsilonW}.
Fix $a\in  \bigcap_{i=0}^n U(v_i) \subset \bigcap_{i=0}^n\mathcal N_{\epsilon(v_i)}(Q(v_i))$, which ensures that $\dist_T(a,Q(v_n))<\epsilon_2$.  Since $\epsilon(v_i) \leq \epsilon_2$ for each $v_i$,  
Lemma~\ref{Lem:close_to_intersection} implies $\dist_T(a,Q(w))<f(\epsilon_2)$.  
Thus $\dist_T(Q(v_n),Q(w))<\epsilon_2+f(\epsilon_2) \leq \epsilon_0$, a contradiction.  This proves the Claim and the equivalence $\bigcap_{i=0}^n Q(v_i)\neq\emptyset \Leftrightarrow \bigcap_{i=0}^n U(v_i)\neq\emptyset$.

This gives the simplicial isomorphism $\mathcal C_T \to \mathcal N_T$. This isomorphism is $\Aut(X)$--equivariant, by Corollary~\ref{Cor:Q_equivariant} 
and Proposition~\ref{Prop:ThickNeighborhoodU}.
\end{proof}

\subsection{Homotopy equivalence of $\tits X$ and $\mathcal N_T$}\label{Sec:first_homotopy}

\begin{theorem}\label{Thm:first_HE}
There is an \Authom homotopy equivalence from  the simplicial complex $\mathcal N_T$ to $\tits X$. \end{theorem}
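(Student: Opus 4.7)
The plan is to invoke the Equivariant Open Nerve Theorem~\ref{Thm:EquivariantOpenNerve} applied to the open covering $\mathcal{U} = \{U(v) : v \in \maxvis(X)\}$ of $\tits X$ provided by Proposition~\ref{Prop:ThickNeighborhoodU}, and then transfer the resulting homotopy equivalence across the simplicial isomorphism $\mathcal C_T \cong \mathcal N_T$ from Lemma~\ref{Lem:flag-ish}.

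To apply Theorem~\ref{Thm:EquivariantOpenNerve}, I need to verify four things. First, $\tits X$ with the Tits metric is a CAT$(1)$ metric space (Definition~\ref{Def:tits_metric}), hence metrizable and therefore paracompact. Second, by Lemma~\ref{Lem:max_vis}, the sets $U(v) \supset Q(v)$ indeed cover $\tits X$, and by Lemma~\ref{Lem:visible_injective} the assignment $v \mapsto U(v)$ is injective on $\maxvis(X)$. Third, the group $\Aut(X)$ acts on the index set $\maxvis(X)$ (since $\Aut(X)$ preserves $\leq$ and $\ell^2$--visibility), and $\Aut(X)$ acts on $\tits X$ by isometries with $g U(v) = U(gv)$ for every $g \in \Aut(X)$, by Proposition~\ref{Prop:ThickNeighborhoodU}.

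Fourth, and most substantively, I need to check that every nonempty finite intersection $\bigcap_{i=0}^{n} U(v_i)$ with $v_0,\ldots,v_n \in \maxvis(X)$ is contractible. Each $U(v_i)$ is convex in $\tits X$ with diameter less than $3\pi/4$ by Proposition~\ref{Prop:ThickNeighborhoodU}. Hence the intersection, if nonempty, is convex and has diameter strictly less than $\pi$, so it is uniquely geodesic in the CAT$(1)$ space $\tits X$. A straight-line (geodesic) homotopy from the identity to a constant map then contracts the intersection to a point, as in the proof of Proposition~\ref{Prop:diameter}.\eqref{item:Q_contractible}.

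With these hypotheses verified, Theorem~\ref{Thm:EquivariantOpenNerve} produces an \Authom homotopy equivalence $f \from \tits X \to \mathcal C_T$, where $\mathcal C_T$ is the nerve of the open cover $\mathcal U$. By Lemma~\ref{Lem:flag-ish}, the identity on $\maxvis(X)$ induces an $\Aut(X)$--equivariant simplicial isomorphism $\mathcal C_T \to \mathcal N_T$. Composing $f$ with this isomorphism gives an \Authom homotopy equivalence $\tits X \to \mathcal N_T$. Since \Authom homotopy equivalence is a symmetric relation (noted after Definition~\ref{Def:G-homotopy}), a homotopy inverse yields the desired \Authom homotopy equivalence $\mathcal N_T \to \tits X$. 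No step should present serious obstacles: the main preparatory work has already been done in Proposition~\ref{Prop:ThickNeighborhoodU} and Lemma~\ref{Lem:flag-ish}, which were designed precisely to make the nerve theorem applicable here.
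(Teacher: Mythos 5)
Your overall strategy is exactly the paper's: apply the Equivariant Open Nerve Theorem~\ref{Thm:EquivariantOpenNerve} to the cover $\{U(v)\}_{v\in\maxvis(X)}$ from Proposition~\ref{Prop:ThickNeighborhoodU}, verify contractibility of finite intersections via convexity and the diameter bound $3\pi/4<\pi$, and transfer across Lemma~\ref{Lem:flag-ish}. All of that matches, and your verifications of paracompactness, covering, equivariance, and contractibility are fine.

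There is, however, one genuine gap: your claim that ``by Lemma~\ref{Lem:visible_injective} the assignment $v \mapsto U(v)$ is injective on $\maxvis(X)$.'' That lemma only gives injectivity of $v\mapsto Q(v)$. The thickened sets $U(v)$ are open neighborhoods satisfying $Q(v)\subset U(v)\subset \mathcal N_{\epsilon(v)}(Q(v))$, and nothing in Proposition~\ref{Prop:ThickNeighborhoodU} rules out $U(v)=U(w)$ for distinct $v,w$ with $Q(v)\neq Q(w)$; the paper explicitly flags this possibility. This matters because you use injectivity to identify ``the nerve of the open cover'' with the complex $\mathcal C_T$ indexed by $\maxvis(X)$, which is the complex Lemma~\ref{Lem:flag-ish} identifies with $\mathcal N_T$. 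If $v\mapsto U(v)$ fails to be injective, the nerve of the cover (as a collection of distinct open sets) is a possibly smaller complex $\mathcal L_T$, and your chain of identifications breaks at that point. The paper repairs this by introducing both complexes and showing that the natural surjective simplicial map $\mathcal C_T\to\mathcal L_T$ has contractible point- and simplex-preimages (each preimage is a join of cones over vertices $w$ with $U(w)=U(v)$), so it is a homotopy equivalence by Quillen's fiber theorem. Your argument needs either this extra step, or an explicit justification that the nerve theorem as stated applies to the \emph{indexed} family $\{U(v)\}_{v\in\maxvis(X)}$ with repetitions allowed, yielding $\mathcal C_T$ directly. As written, the step is asserted with an incorrect citation, so the proof is incomplete there.
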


\begin{proof}
By Corollary~\ref{Cor:MtNerve}, $\mathcal N_T$ is the nerve of the covering of $\tits X$ by the closed sets $Q(v)$, where $v$ varies over 
$\maxvis(X)$.

By Proposition~\ref{Prop:ThickNeighborhoodU}, $\tits X$ admits an open covering $\{ U(v) : v\in\maxvis(X) \}$, with each $U(v)$ a 
convex, uniquely geodesic subspace of $\tits X$.  Hence, if $v_0,\ldots,v_n\in\maxvis(X)$, then either 
$\bigcap_{i=0}^nU(v_i)=\emptyset$, or $\bigcap_{i=0}^nU(v_i)$ is contractible. Let $\mathcal L_T$ be the nerve of this open covering. By the Equivariant Open Nerve Theorem~\ref{Thm:EquivariantOpenNerve},
there is an \Authom homotopy equivalence $\mathcal L_T \to \tits X$.

As above, let $\mathcal C_T$ be the simplicial complex with one vertex for each $v\in\maxvis(X)$, with $v_0,\ldots,v_n$ spanning an 
$n$--simplex whenever $\bigcap_{i=0}^nU(v_i)\neq\emptyset$.   Note that $\mathcal C_T$ is not necessarily isomorphic to 
$\mathcal L_T$, since we may have $U(v_i)=U(v_j)$ for some pair $v_i,v_j$ with $i\neq j$.  But $\mathcal C_T$ is 
homotopy-equivalent to $\mathcal L_T$, which can be seen as follows.  The $\Aut(X)$--equivariant assignment 
$v\mapsto U(v)$ (i.e. the surjection $\mathcal C_T^{(0)}\to\mathcal L_T^{(0)}$) determines an equivariant simplicial map 
$\mathcal C_T \to\mathcal L_T$ such that the preimage of each $0$--simplex $U(v)$ is the full subcomplex of $\mathcal C_T$ 
spanned by the set of $w$ 
with 
$U(v)=U(w)$.  Since any finite set of such $w$ span a simplex of $\mathcal C_T$, this preimage is contractible: it is 
homeomorphic to the cone on the link of any of its vertices.  Now, let $\sigma$ be a simplex of $\mathcal C_T$.  Then by the 
definition of $\mathcal L_T$, the preimage of $\sigma$ is the join of the preimages of the vertices of $\sigma$, each of 
which we have just shown to be contractible, so the preimage of $\sigma$ is contractible.  Hence the map $\mathcal C_T 
\to\mathcal L_T$ is a homotopy equivalence, by Quillen's fiber theorem (see e.g.~\cite[Theorem 10.5]{Bjorner:book}).

To conclude, recall that by Lemma~\ref{Lem:flag-ish}, we have an equivariant simplicial isomorphism $\mathcal C_T \to \mathcal N_T$. Hence we have a chain of \Authom homotopy equivalences
$$
\mathcal N_T  \xrightarrow{ \: \cong \: }  \mathcal C_T \xrightarrow{ \: \sim \: }  \mathcal L_T \xrightarrow{\: \sim \:}  \tits X. \qedhere
$$
\end{proof}

\subsection{Cuboid generalization}\label{Sec:CuboidCover}

All of the results of this section have a straightforward generalization to the cuboid setting. In the following description, we refer to the notation introduced in Section~\ref{Sec:TitsCuboid}.

Definition~\ref{Def:MaxVis} generalizes immediately to give a set of maximal $\ell^2$ visible Roller classes, namely $\maxvis^\rho(X) \subset \visi^\rho(X)$. Following Definition~\ref{Def:Big_N_T}, we get a simplicial complex $\nerve_T^\rho$ with vertex set $\maxvis^\rho(X)$, where vertices $v_0, \ldots, v_n$ span an $n$--simplex if and only if $\bigcap_{i=1}^n Q^\rho(v_i) \neq \emptyset$. Then, Lemmas~\ref{Lem:max_vis} and~\ref{Lem:visible_injective} generalize immediately because their proofs are assembled from the results of Section~\ref{Sec:TitsRollerConnections}, and we have already checked that those results generalize to cuboids. Thus Corollary~\ref{Cor:MtNerve} generalizes as well, and we learn that $\nerve_T^\rho$ is the covering of $\tits^\rho X$ by the closed sets $\{Q^\rho(v) : v \in \maxvis^\rho(X) \}$.

(Recall that Remark~\ref{Rem:VisCuboid} outlined an argument that $\visi(X) = \visi^\rho(X)$ for every admissible rescaling 
$\rho$. It follows that $\maxvis(X) = \maxvis^\rho(X)$ as well.  The nerves $\nerve_T$ and $\nerve_T^\rho$ can then be 
shown to coincide, as mentioned in Remark~\ref{Rem:VisCuboid}. However, we do not use this.)

Moving ahead to Section~\ref{Sec:open_cover_tits_boundary}, the results generalize as follows. The existence statement of Lemma~\ref{Lem:disjoint_Q} extends, with the modification that the constant $\epsilon_0$ depends on $\dim(X)$ and $\rho$. This is because the proof of Lemma~\ref{Lem:disjoint_Q} uses the constants $\lambda_0, \lambda_1$ of Lemma~\ref{Lem:WallQI}, which needs to be replaced by the constants $\lambda_0^\rho, \lambda_1^\rho$ of Lemma~\ref{Lem:CuboidWallQI}. Lemma~\ref{Lem:QIntersection} extends to cuboids as well, because the proof of that lemma uses combinatorial arguments in combination with the $d_X^\rho$--convexity of cubical convex hulls.
Similarly, Lemma~\ref{Lem:close_to_intersection} extends: its proof works perfectly well in the cuboid metric $d_X^\rho$, once we substitute Lemma~\ref{Lem:CuboidWallQI} for Lemma~\ref{Lem:WallQI}. As above, the outputs of that lemma depend on both $\dim(X)$ and $\rho$. Combining these ingredients, we can follow Definition~\ref{Def:EpsilonW} to define a thickening constant $\epsilon^\rho(w) > 0$ for every Roller class $w$. This constant depends only $\rho$ and the $\Aut^\rho(X)$--orbit of $w$.

Proposition~\ref{Prop:ThickNeighborhoodU} extends verbatim to cuboids. Indeed, its proof is a CAT(0) argument that takes place in the Euclidean cone on $\tits X$, which works equally well in the Euclidean cone on $\tits^\rho X$. (The proof also uses Proposition~\ref{Prop:diameter}, which works for cuboids.) At the end of the lemma, we get a collection of open sets $U^\rho(v)$ for $v \in \visi^\rho(X)$, which is invariant under $\Aut^\rho(X)$.

By analogy with Definition~\ref{Def:Big_N_T}, we define a simplicial complex $\mathcal C_T^\rho$, where the simplices are defined by intersection patterns of the open sets $U(v_i)$ for $v_i \in \maxvis^\rho(X)$. Now, Lemma~\ref{Lem:flag-ish} extends verbatim to give an $\Aut^\rho(X)$--equivariant simplicial isomorphism $\mathcal C_T^\rho \to \nerve_T^\rho$. The proof of Theorem~\ref{Thm:first_HE} also extends verbatim, because all of its ingredients extend. Thus we obtain the following cuboid extension of 
Theorem~\ref{Thm:first_HE}:

\begin{theorem}\label{Thm:First_HE_Cuboid}
Let $\rho$ be a $G$--admissible hyperplane rescaling of $X$. Then there is a \Ghom homotopy homotopy equivalence $\nerve_T^\rho \to \tits^\rho X$. 
\end{theorem}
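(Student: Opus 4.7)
The plan is to run the proof of Theorem~\ref{Thm:first_HE} verbatim in the cuboid setting, using the cuboid analogues of all ingredients that the authors have already outlined in Section~\ref{Sec:CuboidCover}. First, I would invoke the cuboid version of Corollary~\ref{Cor:MtNerve} (already noted to hold) to identify $\nerve_T^\rho$ with the nerve of the closed covering $\{Q^\rho(v):v\in\maxvis^\rho(X)\}$ of $\tits^\rho X$. Since the Equivariant Open Nerve Theorem~\ref{Thm:EquivariantOpenNerve} requires an open cover, I would then pass to the cuboid thickenings $U^\rho(v)$ produced by the cuboid version of Proposition~\ref{Prop:ThickNeighborhoodU}.

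Next, I would verify the hypotheses of Theorem~\ref{Thm:EquivariantOpenNerve} for this cuboid open cover $\{U^\rho(v):v\in\maxvis^\rho(X)\}$ of $\tits^\rho X$. The $G$--equivariance $g U^\rho(v)=U^\rho(gv)$ is built into the construction (the thickening constants $\epsilon^\rho(w)$ depend only on $\rho$ and the $\Aut^\rho(X)$--orbit of $w$, and $G\subset\Aut^\rho(X)$). Each $U^\rho(v)$ is convex and uniquely geodesic in $\tits^\rho X$ (being contained in a ball of radius less than $3\pi/8$), so arbitrary finite intersections $\bigcap_{i=0}^n U^\rho(v_i)$ are either empty or convex and hence contractible. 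Letting $\mathcal L_T^\rho$ denote the nerve of this open cover, Theorem~\ref{Thm:EquivariantOpenNerve} yields a \Ghom homotopy equivalence $\mathcal L_T^\rho\to\tits^\rho X$.

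Then I would relate $\mathcal L_T^\rho$ to $\nerve_T^\rho$. Define, as in the proof of Theorem~\ref{Thm:first_HE}, the intermediate complex $\mathcal C_T^\rho$ with vertex set $\maxvis^\rho(X)$ and simplices determined by the intersection pattern of the $U^\rho(v_i)$. The cuboid version of Lemma~\ref{Lem:flag-ish}, which the authors indicate extends verbatim, gives a $G$--equivariant simplicial isomorphism $\mathcal C_T^\rho\cong\nerve_T^\rho$, since the cuboid versions of Lemmas~\ref{Lem:disjoint_Q}, \ref{Lem:QIntersection}, and~\ref{Lem:close_to_intersection} are available. The surjection $\mathcal C_T^\rho\to\mathcal L_T^\rho$ given by $v\mapsto U^\rho(v)$ is a $G$--equivariant simplicial map whose fiber over any simplex is a join of contractible preimages of vertices (each such preimage being a simplex, hence contractible), so Quillen's fiber theorem gives a $G$--equivariant homotopy equivalence $\mathcal C_T^\rho\to\mathcal L_T^\rho$.

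Composing these three steps produces the chain of \Ghom homotopy equivalences
\[
\nerve_T^\rho\;\xrightarrow{\;\cong\;}\;\mathcal C_T^\rho\;\xrightarrow{\;\sim\;}\;\mathcal L_T^\rho\;\xrightarrow{\;\sim\;}\;\tits^\rho X,
\]
proving the theorem. The main obstacle, already essentially absorbed into Section~\ref{Sec:CuboidCover}, is checking that every step of the CAT(0)/metric geometry — especially the hyperplane-count estimates in Lemma~\ref{Lem:close_to_intersection} and the horoball construction inside the Euclidean cone on $\tits^\rho X$ in Proposition~\ref{Prop:ThickNeighborhoodU} — goes through with $d_X^\rho$ in place of $d_X$; the authors have verified this by substituting the cuboid quasi-isometry constants $\lambda_0^\rho,\lambda_1^\rho$ from Lemma~\ref{Lem:CuboidWallQI} and noting that the Euclidean cone on any CAT(1) space is CAT(0).
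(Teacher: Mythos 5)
Your proposal is correct and follows the paper's own route exactly: the paper proves this theorem by noting that every ingredient of Theorem~\ref{Thm:first_HE} (the cuboid versions of Corollary~\ref{Cor:MtNerve}, Proposition~\ref{Prop:ThickNeighborhoodU}, and Lemma~\ref{Lem:flag-ish}) has been established in Section~\ref{Sec:CuboidCover}, and then running the chain $\nerve_T^\rho \cong \mathcal C_T^\rho \to \mathcal L_T^\rho \to \tits^\rho X$ verbatim. Your spelled-out verification of the nerve-theorem hypotheses and the Quillen fiber argument matches the original proof step for step.
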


\section{Proof of the Main Theorems}\label{Sec:visible_part}

 In this section, we prove Theorem~\ref{Thm:HE-triangle}, which was stated in the Introduction. That is, we construct the following commutative diagram of \Authom homotopy equivalences:
\begin{diagram}
 \simp X &&\rTo^{\ST}  
 &     & \tits X  \\
&\rdTo_{\SR} & &\ruTo_{\RT}&\\
        && \Re_\triangle X&&
\end{diagram}

The homotopy equivalence $\SR \from \simp X \to \absimp X$ will be proved in Proposition~\ref{prop:simplicial_boundary}, while the homotopy equivalence $\RT \from \absimp X \to \tits X$ will be proved in Corollary~\ref{Cor:RollerSimpTitsHomotopy}. At the end of the section, we check that all of the arguments extend to cuboids, proving Theorem~\ref{Thm:MainCuboids}.

To prove Corollary~\ref{Cor:RollerSimpTitsHomotopy}, we first study a subcomplex of $\absimp X$.  

\begin{definition}[Visible subcomplexes of $\absimp  X$]\label{Def:VisiPart}
Let $\vispart X$ be the  subcomplex of $\absimp X$ consisting of all simplices corresponding to chains 
$v_0\leq\cdots\leq 
v_n$ such that $v_i\in\visi(X)$ for $0\leq i\leq n$. 

For each $v\in\visi(X)$, let $\Sigma_v$ be the connected subcomplex of $\vispart X$ consisting of all simplices 
corresponding to 
chains $v_0\leq\cdots\leq v_n$ for which:
\begin{itemize}
     \item each $v_i\in\visi(X)$;
     \item $v_n = v$.
\end{itemize}
\end{definition}

\subsection{Closed cover of $\vispart X$}\label{Sec:vispart_closed_cover}

Next, we show that the complexes $\Sigma_v$ yield a closed covering of $\vispart X$, and use this covering to build a nerve.

\begin{lemma}\label{Lem:visible_simplices_closed_cover}
The set $\{\Sigma_v:v\in\maxvis(X)\}$ is a closed covering of $\vispart X$.  Moreover, for all 
$v,v_0,\ldots,v_n\in\visi(X)$, we have the following:
\begin{enumerate}[$(1)$ ]
     \item \label{item:finite} $\Sigma_v$ is finite.
     \item \label{item:injective} $\Sigma_{v_i}=\Sigma_{v_j}$ only if $v_i=v_j$.
     \item \label{item:contractible}  $\bigcap_{i=0}^n\Sigma_{v_i}$ is either empty or contractible.
     \item \label{item:equi} For every $g \in \Aut(X)$, we have $g \Sigma_v = \Sigma_{gv}$.
\end{enumerate}
\end{lemma}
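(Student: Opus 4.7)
The plan is to handle the covering claim and assertions \ref{item:finite}, \ref{item:injective}, \ref{item:equi} by short direct arguments, and to reduce the contractibility statement \ref{item:contractible} to a CAT(0) construction built on Proposition~\ref{Prop:build_CAT(0)}.

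For the covering, given a simplex of $\vispart X$ corresponding to a chain $w_0 \leq \cdots \leq w_k$ of visible classes, I would extend $w_k$ upward to some $v \in \maxvis(X)$ --- Remark~\ref{Rem:AbsimpDimension} bounds the length of chains in $\guralnik X$ by $\dim X$, so any ascending chain of visible classes terminates and the terminal class is automatically in $\maxvis(X)$. Then the whole chain lies in $\Sigma_v$. For \ref{item:finite}, Corollary~\ref{Cor:FinitelyManyBelowRoller} says only finitely many Roller classes sit below $v$, so $\Sigma_v$ has finitely many vertices. For \ref{item:injective}, the equality $\Sigma_{v_i} = \Sigma_{v_j}$ forces $v_i$ to be a vertex of $\Sigma_{v_j}$ and conversely, yielding $v_i \leq v_j \leq v_i$. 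For \ref{item:equi}, the action of $\Aut(X)$ preserves both $\leq$ on $\guralnik X$ and the set $\visi(X)$, so it permutes the collection $\{\Sigma_v\}$ compatibly with the labels.

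The main work is \ref{item:contractible}. I would first identify the intersection $\bigcap_{i=0}^n \Sigma_{v_i}$ with the order complex of the poset $B = \{z \in \visi(X) : z \leq v_i \text{ for every } i\}$: a chain of visible classes belongs to every $\Sigma_{v_i}$ exactly when each of its entries is bounded above by every $v_i$. If $B = \emptyset$, the intersection is empty. If $B \neq \emptyset$, the plan is to show that $B$ has a \emph{unique} $\leq$-maximum $z_{\max}$; then the order complex of $B$ is a cone with apex $z_{\max}$ and is therefore contractible. The existence of $\leq$-maximal elements in $B$ follows once more from Remark~\ref{Rem:AbsimpDimension}, since any ascending chain in $B$ has length at most $\dim X$.

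The hard part --- and the only place where the geometry of $X$, as opposed to pure poset combinatorics, enters --- is the uniqueness of $z_{\max}$. I would argue as follows: suppose $z_1, z_2 \in B$ are both $\leq$-maximal. By visibility and Lemma~\ref{Lem:L2Visibility}, each $z_j$ is represented by a CAT(0) geodesic ray $\beta_j$ starting at a common basepoint in $X^{(0)}$, with $\psi(\beta_j(\infty)) = z_j$. Since $z_j \leq v_0$, Theorem~\ref{Thm:UBStoRoller} gives $[\W(\beta_j)] \preceq \U_{v_0}$; decomposing $\U_{v_0}$ into minimal UBSes via Lemma~\ref{Lem:FinManyMinimals}, the union $\W(\beta_1) \cup \W(\beta_2)$ is commensurate with a disjoint union of some of those minimal pieces, hence commensurate with a UBS. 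Proposition~\ref{Prop:build_CAT(0)} then produces a CAT(0) ray $\gamma$ (representing an interior point of the Tits geodesic from $\beta_1(\infty)$ to $\beta_2(\infty)$) with $\W(\gamma) = \W(\beta_1) \cup \W(\beta_2)$. Setting $z_3 = \psi(\gamma(\infty))$, the order-preserving map $\UR$ from Theorem~\ref{Thm:UBStoRoller} gives $z_3 \leq v_i$ for all $i$, so $z_3 \in B$; and $\W(\beta_j) \subseteq \W(\gamma)$ yields $z_j \leq z_3$ by the same theorem. Maximality of $z_1, z_2$ forces $z_1 = z_3 = z_2$, establishing uniqueness and completing the proof.
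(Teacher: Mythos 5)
Your proposal is correct, and the covering statement together with parts \ref{item:finite}, \ref{item:injective}, and \ref{item:equi} are handled exactly as in the paper (finiteness via Corollary~\ref{Cor:FinitelyManyBelowRoller}, injectivity via the two-sided inequality, equivariance of $\leq$ and $\visi(X)$). The only real divergence is in part \ref{item:contractible}. Both arguments produce the same cone point --- the unique maximal $\ell^2$--visible class lying below every $v_i$ --- but they reach it differently. The paper first forms the intersection $\V=\bigcap_i\V_i$ of UBSes representing the $v_i$, uses the nonemptiness of $\bigcap_i\Sigma_{v_i}$ to see that $\V$ is infinite and hence a UBS, passes to its Roller class $v$, and then quotes Lemma~\ref{Lem:m_v} to obtain the unique maximal visible class $M_v\leq v$ as the apex. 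You instead work directly in the poset $B$ of visible classes bounded above by all the $v_i$ and prove uniqueness of its maximum by combining two rays via Proposition~\ref{Prop:build_CAT(0)} --- which is precisely the argument hiding inside the paper's proof of Lemma~\ref{Lem:m_v}, so you are re-deriving that lemma in situ rather than citing it. Your route spares you the verification that $\bigcap_i\V_i$ is a UBS; the paper's route buys brevity by reusing an established lemma and identifies the apex intrinsically as $M_v$. One step of yours deserves a flag, though it is not held to a higher standard than the paper itself: the inference that $\W(\beta_1)\cup\W(\beta_2)$ is commensurate with a UBS because each piece is $\preceq\U_{v_0}$ is not literally what Lemma~\ref{Lem:FinManyMinimals} gives (a union of minimal sub-UBSes need not be inseparable --- see Example~\ref{exmp:weird_simplex}); the paper makes the identical assertion in the proof of Lemma~\ref{Lem:m_v}, so your argument is exactly as justified as the original.
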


\begin{proof}
Let $w \in\visi(X)$.  Then there exists $v \in\maxvis(X)$ with $w \leq v$, and hence $w \in\Sigma_v^{(0)}$.  Thus 
$\{\Sigma_v:v\in\maxvis(X)\}$ is a closed covering of $\vispart X$.

Let $v$ be a (visible) Roller class. By Corollary~\ref{Cor:FinitelyManyBelowRoller}, there are finitely many other classes $w$ such that $w \leq v$. This proves 
assertion~\ref{item:finite}.

Now suppose that $v_i,v_j$ are (visible) Roller classes satisfying $\Sigma_{v_i}=\Sigma_{v_j}$.  Since $v_j \in\Sigma_{v_i}$, 
we have $v_j\leq v_i$.  Likewise, $v_i \leq v_j$.  Hence $v_i=v_j$, proving assertion~\ref{item:injective}.

Next, let $v_0,\ldots,v_n\in\visi(X)$ and suppose that $\bigcap_{i=0}^n\Sigma_{v_i}\neq\emptyset$.  First, note that 
$\Sigma_{v_i}=v_i\join L_{v_i}$, where $L_{v_i}$ (the \emph{link} of $v_i$ in $\Sigma_{v_i}$) is a subcomplex of $\vispart X$ and $\join$ denotes the simplicial 
join operation.   Hence $\Sigma_{v_i}$ is topologically a cone, and is thus contractible.

For each $i$, let $\mathcal V_i$ be a UBS representing $v_i$ (recall Definition~\ref{Def:UBSofRoller}).  Then 
$\bigcap_{i=0}^n\mathcal V_i$ is unidirectional and contains no facing triple, since those properties hold for each $\mathcal 
V_i$ and are inherited by subsets.  Second, $\bigcap_{i=0}^n\mathcal V_i$ is inseparable, because each $\mathcal V_i$ is.

Now, if $u \in\bigcap_{i=0}^n\Sigma_{v_i}$ is a $0$--simplex, then $u \leq v_i$ for all $i$, so 
$u$ is represented by a UBS of the form $\U \preceq \bigcap_{i=0}^n\mathcal V_i$.  
Hence $\bigcap_{i=0}^n\mathcal V_i$ is infinite. Combined with the above discussion, this shows that 
$\bigcap_{i=0}^n\mathcal V_i$ is a UBS.  Let $v$ be the corresponding Roller class. Since $\U \preceq \bigcap_{i=0}\mathcal V_i$, we have $u \leq v$.  

Since each such $u$ is $\ell_2$--visible, the set of visible Roller classes $w$ with $w\leq v$ is nonempty.  Hence there is a unique Roller class $m$ such that $m\leq v$, and $m$ 
is $\ell_2$--visible, and $m$ is $\leq$--maximal with those properties, by Lemma~\ref{Lem:m_v} and Definition~\ref{Def:L2visible}.  Since each $u\in\bigcap_{i=0}^n\Sigma_{v_i}$ satisfies $u\leq m\in 
\vispart X$, there is a subcomplex $L\subset\vispart X$ such that $\bigcap_{i=0}^n\Sigma_{v_i}=m\star L$, so $\bigcap_{i=0}^n\Sigma_{v_i}$ is contractible in $\vispart X$.

Finally, statement \ref{item:equi} follows from the equivariance of $\visi(X)$ and the partial order $\leq$.
\end{proof}

\begin{definition}[Simplicial complex $\mathcal N_\triangle$]\label{Def:SimplicialNerveVisi}
Let $\mathcal N_\triangle$ be the nerve of the closed covering $\{\Sigma_v:v\in\maxvis(X)\}$ of $\vispart X$.  By 
Lemma~\ref{Lem:visible_simplices_closed_cover}.\ref{item:injective}, $\mathcal N_\triangle$ has vertex-set $\maxvis(X)$, 
and $v_0,\ldots,v_n$ span an $n$--simplex if and only if 
$\bigcap_{i=0}^n\Sigma_{v_i}\neq\emptyset$. By 
Lemma~\ref{Lem:visible_simplices_closed_cover}.\ref{item:equi}, $\Aut(X)$ acts by simplicial automorphisms on $\mathcal N_\triangle$.
\end{definition}

\begin{lemma}\label{Lem:omega_nerve}
Let $v_0,\ldots,v_n\in\visi(X)$.  Then 
$$\bigcap_{i=0}^n\Sigma_{v_i}\neq\emptyset \: \Leftrightarrow \: \bigcap_{i=0}^nQ(v_i)\neq\emptyset.$$
Consequently, there is an $\Aut(X)$--equivariant simplicial isomorphism $\mathcal N_\triangle \to \mathcal N_T$.
\end{lemma}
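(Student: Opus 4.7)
The plan is to prove the biconditional about intersections of $\Sigma_{v_i}$'s versus $Q(v_i)$'s, and then observe that it delivers the simplicial isomorphism on the nose, because both $\mathcal{N}_\triangle$ and $\mathcal{N}_T$ share the vertex set $\maxvis(X)$ and are defined purely by intersection patterns.

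For the implication $(\Rightarrow)$, suppose $u \in \bigcap_{i=0}^n \Sigma_{v_i}^{(0)}$. Then $u$ is a visible Roller class with $u \leq v_i$ for every $i$, so Lemma~\ref{Lem:containment} yields $Q(u) \subset Q(v_i)$ for all $i$. Since $Q(u)$ is nonempty (Theorem~\ref{thm:CapLytch}), we conclude $\bigcap_{i=0}^n Q(v_i) \neq \emptyset$. This direction is immediate and uses no CAT(0) geometry.

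The interesting direction is $(\Leftarrow)$, and this is where the reader should expect the bulk of the work, though most of the machinery is already in place. Assume $\bigcap_{i=0}^n Q(v_i) \neq \emptyset$. I would invoke Lemma~\ref{Lem:QIntersection} (using visibility of the $v_i$) to obtain convex subcomplexes $\J_0, \ldots, \J_n$ with common basepoint such that $Q(v_i) = \tits \J_i$ and $\bigcap_i Q(v_i) = \tits(\bigcap_i \J_i)$. Nonemptiness of this Tits boundary forces $\bigcap_i \J_i$ to be unbounded, so $\W(\bigcap_i \J_i) = \bigcap_i \W(\J_i)$ is infinite; since inseparability, unidirectionality, and the lack of facing triples all pass to intersections of UBSes, this collection is itself a UBS $\mathcal{W}$. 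Let $w' = Y_\mathcal{W}$ be its principal Roller class (Lemma~\ref{Lem:UmbraClass}), so that $Q(w') = \tits(\bigcap_i \J_i)$. Because $\mathcal{W} \preceq \W(\J_i)$ and the latter represents $v_i$, the order-preserving map $\UR$ from Theorem~\ref{Thm:UBStoRoller}.\ref{Itm:URonto} gives $w' \leq v_i$ for every $i$.

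The class $w'$ may not itself be visible, so I would then pass to $w := \psi(\varphi(w'))$. By Proposition~\ref{prop:phi_facts}.\ref{item:composition} we have $w \leq w' \leq v_i$ for every $i$, and $w$ lies in $\visi(X)$ by Definition~\ref{Def:L2visible}, since $w \in \psi(\tits X)$. Therefore $w$ is a vertex of $\Sigma_{v_i}$ for every $i$, so $\bigcap_{i=0}^n \Sigma_{v_i} \neq \emptyset$, as desired. The only subtlety to watch is ensuring that $w'$ is indeed a Roller class (not finite), which is why one needs the Tits-boundary nonemptiness to guarantee unboundedness of $\bigcap_i \J_i$; everything else is a routine invocation of previously established facts.

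Finally, for the isomorphism statement: both $\mathcal{N}_\triangle$ and $\mathcal{N}_T$ have $\maxvis(X)$ as their $0$--skeleton (Definitions~\ref{Def:Big_N_T} and~\ref{Def:SimplicialNerveVisi}), and their higher simplices are encoded by the intersection patterns that the biconditional just identified. Thus the identity map on $\maxvis(X)$ extends to a simplicial isomorphism $\mathcal{N}_\triangle \to \mathcal{N}_T$. Equivariance under $\Aut(X)$ follows because the assignments $v \mapsto \Sigma_v$ and $v \mapsto Q(v)$ are both $\Aut(X)$--equivariant (Lemma~\ref{Lem:visible_simplices_closed_cover}.\ref{item:equi} and Corollary~\ref{Cor:Q_equivariant}).
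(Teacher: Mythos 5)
Your proposal is correct, and the forward direction ($\bigcap\Sigma_{v_i}\neq\emptyset\Rightarrow\bigcap Q(v_i)\neq\emptyset$) matches the paper's argument: take a visible $u\leq v_i$ for all $i$ and apply Lemma~\ref{Lem:containment}. For the converse, however, you take a noticeably longer route than the paper. The paper simply picks a point $a\in\bigcap_{i=0}^nQ(v_i)$ and applies Lemma~\ref{Lem:psiphi} to get $\psi(a)\leq v_i$ for every $i$; since $\psi(a)\in\psi(\tits X)=\visi(X)$ by Definition~\ref{Def:L2visible}, the class $\psi(a)$ is already a vertex of every $\Sigma_{v_i}$, and the proof is finished in two lines with no need to identify the intersection $\bigcap_i Q(v_i)$ as a boundary realization. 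Your argument instead reconstructs $\bigcap_i Q(v_i)$ as $\tits\bigl(\bigcap_i\J_i\bigr)$ via Lemma~\ref{Lem:QIntersection}, extracts the UBS $\bigcap_i\W(\J_i)$, produces its principal class $w'$, and then passes to the visible class $\psi(\varphi(w'))$. All of these steps are valid (in particular, unboundedness of $\bigcap_i\J_i$ does follow from nonemptiness of its Tits boundary, and the intersected hyperplane set is indeed a UBS), but this is essentially the machinery the paper reserves for the inductive step of Lemma~\ref{Lem:flag-ish}, where one does \emph{not} have a point in the intersection of all the $Q$'s in hand and therefore must build the class $w$ from the convex hulls. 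Here you do have such a point, so applying $\psi$ to it directly is both shorter and avoids any appeal to Lemma~\ref{Lem:QIntersection} or Theorem~\ref{Thm:UBStoRoller}. Your treatment of the isomorphism statement and its equivariance agrees with the paper's.
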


\begin{proof}
Suppose that there exists $a\in\bigcap_{i=0}^nQ(v_i)$.  Then Lemma~\ref{Lem:psiphi} says $ \psi(a) \leq v_i $ for all $i$.  Hence, by Definition~\ref{Def:VisiPart}, we have $\psi(a)\in\Sigma_{v_i}$ for all $i$.  

Conversely, suppose that there exists a vertex (i.e. a Roller class) $w\in\bigcap_{i=0}^n\Sigma_{v_i}$.  Then $w \leq v_i $ 
for all $i$, by definition.  Hence, by Lemma~\ref{Lem:containment}, $Q(w)\subset Q(v_i)$ for all $i$, from which it 
follows that $\varphi(w)\in\bigcap_{i=0}^nQ(v_i)$. 

Finally, the simplicial isomorphism $\mathcal N_\triangle \to \mathcal N_T$ comes from identifying the $0$--skeleta of $\mathcal N_\triangle$  and $\mathcal N_T$ with $\maxvis (X)$.
\end{proof}

\subsection{Homotopy equivalence between $\absimp X$ and $\tits X$}\label{Sec:vis_tits_homotopic}
We can now assemble the proof that $\absimp X$ is homotopy equivalent to $\tits X$. We do this in two propositions:

\begin{prop}\label{prop:tits_visible_homotopic}
There is an \Authom homotopy equivalence  $\vispart X \to \tits X$.
\end{prop}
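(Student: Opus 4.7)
The proof will be a short assembly argument once the two nerve constructions from earlier in the section are recognized as isomorphic. Concretely, I plan to apply the Equivariant Simplicial Nerve Theorem~\ref{Thm:EquivariantSimplicialNerve} to the cover of $\vispart X$ by the subcomplexes $\Sigma_v$, and then splice the result together with Theorem~\ref{Thm:first_HE} via the isomorphism of Lemma~\ref{Lem:omega_nerve}.

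More precisely, I would first verify the hypotheses of Theorem~\ref{Thm:EquivariantSimplicialNerve} in the following setup: take $Y=\vispart X$ (a simplicial complex on which $\Aut(X)$ acts simplicially), take the index set $I=\maxvis(X)$ with its $\Aut(X)$--action, and consider the family $\V=\{\Sigma_v : v\in\maxvis(X)\}$. Lemma~\ref{Lem:visible_simplices_closed_cover} already packages everything that is needed: the first sentence gives that $\V$ covers $\vispart X$ by subcomplexes, item \ref{item:equi} gives the equivariance condition $g\Sigma_v=\Sigma_{gv}$, and item \ref{item:contractible} gives that every nonempty finite intersection $\bigcap_{i=0}^n\Sigma_{v_i}$ is contractible. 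Theorem~\ref{Thm:EquivariantSimplicialNerve} therefore supplies an \Authom homotopy equivalence $\vispart X \to \nerve_\triangle$, where $\nerve_\triangle$ is the nerve defined in Definition~\ref{Def:SimplicialNerveVisi}.

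Next, I would invoke Lemma~\ref{Lem:omega_nerve}, which gives an $\Aut(X)$--equivariant simplicial isomorphism $\nerve_\triangle \to \nerve_T$ (in particular, an \Authom homotopy equivalence), and Theorem~\ref{Thm:first_HE}, which gives an \Authom homotopy equivalence $\nerve_T \to \tits X$. Composing the three maps
\[
\vispart X \xrightarrow{\ \sim\ } \nerve_\triangle \xrightarrow{\ \cong\ } \nerve_T \xrightarrow{\ \sim\ } \tits X,
\]
and using the fact (noted immediately after Definition~\ref{Def:G-homotopy}) that compositions of \Ghom homotopy equivalences are \Ghom homotopy equivalences, yields the required \Authom homotopy equivalence $\vispart X \to \tits X$.

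There is no real obstacle here: all the genuine content — the contractibility of finite intersections of the $\Sigma_v$ (Lemma~\ref{Lem:visible_simplices_closed_cover}), the matching of intersection patterns between the $\Sigma_v$ and the $Q(v)$ via $\varphi$ and $\psi$ (Lemma~\ref{Lem:omega_nerve}), and the covering of $\tits X$ and its homotopy equivalence with $\nerve_T$ (Theorem~\ref{Thm:first_HE}) — has been done already. The only mild point to be attentive to is that the nerve theorem is applied with the weak topology on $\nerve_\triangle$, which is harmless because Lemma~\ref{Lem:omega_nerve} is a genuine simplicial isomorphism and is thus a homeomorphism in either topology.
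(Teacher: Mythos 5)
Your proposal is correct and is essentially identical to the paper's own proof: both apply the Equivariant Simplicial Nerve Theorem to the cover $\{\Sigma_v : v\in\maxvis(X)\}$ using Lemma~\ref{Lem:visible_simplices_closed_cover} to verify the hypotheses, then compose with the isomorphism $\nerve_\triangle\to\nerve_T$ of Lemma~\ref{Lem:omega_nerve} and the equivalence $\nerve_T\to\tits X$ of Theorem~\ref{Thm:first_HE}. Nothing further is needed.
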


\begin{proof}
Consider the covering $\{\Sigma_v:v\in\maxvis(X)\}$ of $\vispart X$ by the subcomplexes $\Sigma_v$, coming from 
Lemma~\ref{Lem:visible_simplices_closed_cover}.  By Definition~\ref{Def:SimplicialNerveVisi}, the nerve of this cover is  
$\mathcal N_\triangle$.  By Lemma~\ref{Lem:visible_simplices_closed_cover}.\ref{item:contractible}, the intersection of any finite 
collection of the $\Sigma_v$ is either empty or contractible.  
Hence, by the Equivariant Simplicial Nerve Theorem~\ref{Thm:EquivariantSimplicialNerve}, there is an \Authom homotopy equivalence $\vispart X \to \mathcal N_\triangle$.

By Lemma~\ref{Lem:omega_nerve}, there is a simplicial isomorphism $\mathcal N_\triangle \to \mathcal N_T$. Finally, by Theorem~\ref{Thm:first_HE}, there is an \Authom homotopy equivalence $\mathcal N_T \to \tits X$. 
Putting it all together, we obtain a chain of \Authom homotopy equivalences
$$
\vispart X \xrightarrow{ \: \sim \: }   \mathcal N_\triangle \xrightarrow{ \: \cong \: } \mathcal N_T  \xrightarrow{ \: \sim \: } \tits X. \qedhere
$$
\end{proof}

\begin{prop}\label{prop:vis_all_equivalent}
The inclusion $\vispart 
X \hookrightarrow \absimp X$ is an $\Aut(X)$--equivariant homotopy equivalence. Its homotopy inverse is an \Authom deformation retraction $\absimp X \to \vispart X$.
\end{prop}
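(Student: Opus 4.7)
The plan is to construct an $\Aut(X)$--equivariant simplicial retraction $r \from \absimp X \to \vispart X$ and then build an equivariant prism homotopy between $\id_{\absimp X}$ and $i \circ r$, where $i$ denotes the inclusion $\vispart X \hookrightarrow \absimp X$. I define $r$ on the vertex set by $r(v) = M_v$, the unique maximal element of $\psi(Q(v))$ from Lemma~\ref{Lem:m_v}; by Lemma~\ref{Lem:Q_0_characterize} and Definition~\ref{Def:phi_map} we also have $r(v) = \psi(\varphi(v))$, which is visible by construction. Order-preservation of $r$ is a short argument: if $v \leq w$, then $M_v \leq v \leq w$ gives $Q(M_v) \subset Q(w)$ by Lemma~\ref{Lem:containment}, so $\varphi(M_v) \in Q(w)$; since $M_v$ is visible, Lemma~\ref{Lem:L2Visibility} gives $M_v = \psi(\varphi(M_v)) \in \psi(Q(w))$, whence $r(v) = M_v \leq M_w = r(w)$ by maximality of $M_w$. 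Lemma~\ref{Lem:L2Visibility} further shows that $r$ fixes $\visi(X)$ pointwise, so $r$ extends simplicially to a retraction onto $\vispart X$. Equivariance of $r$ follows from $\Aut(X)$--equivariance of $Q$ (Corollary~\ref{Cor:Q_equivariant}) and evident equivariance of $\psi$: $\psi(Q(gv)) = g\psi(Q(v))$, so $M_{gv} = gM_v$.

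For the homotopy $H \from \absimp X \times [0,1] \to \absimp X$ with $H(\cdot, 0) = i \circ r$ and $H(\cdot, 1) = \id$, I use the standard prism triangulation oriented by the chain order. For each simplex $\sigma = [v_0 < \cdots < v_n]$ of $\absimp X$, triangulate the prism $\sigma \times [0, 1]$ by the $n+1$ top-dimensional simplices
$$\tau_j = \big[(v_0, 0), \ldots, (v_j, 0), (v_j, 1), (v_{j+1}, 1), \ldots, (v_n, 1)\big], \qquad j = 0, 1, \ldots, n,$$
and declare $H$ on $\tau_j$ to be the simplicial map determined by $(v_i, 0) \mapsto r(v_i)$ and $(v_i, 1) \mapsto v_i$. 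The image of $\tau_j$ is the set $\{r(v_0), \ldots, r(v_j), v_j, v_{j+1}, \ldots, v_n\}$, and the main obstacle is to check that this set is a chain in $\guralnik X$ (i.e., a simplex of $\absimp X$). This works: the $r$--vertices form a chain $r(v_0) \leq \cdots \leq r(v_j)$ by order-preservation of $r$, the interior-operator inequality $r(v_j) \leq v_j$ bridges the two halves, and $v_j \leq v_{j+1} \leq \cdots \leq v_n$ comes from the original chain. Note that the opposite orientation $H(\cdot, 0) = \id$, $H(\cdot, 1) = i \circ r$ would produce the set $\{v_0, \ldots, v_j, r(v_j), \ldots, r(v_n)\}$, which generally fails to be a chain since $r(v_j)$ and $v_{j-1}$ need not be comparable; so the orientation of the prism triangulation is essential.

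The maps on the various $\tau_j$'s agree on shared faces and restrict correctly on the sub-prisms $\tau \times [0,1]$ for faces $\tau \subset \sigma$ (this is the standard compatibility of prism triangulations), so $H$ assembles into a well-defined continuous map on all of $\absimp X \times [0,1]$. Since the prism triangulation depends only on the order of the vertices --- which is preserved by $\Aut(X)$ --- and both $r$ and the inclusion are equivariant, $H$ is $\Aut(X)$--equivariant. Finally, for each visible vertex $v$ the assignments $(v, 0) \mapsto r(v) = v$ and $(v, 1) \mapsto v$ agree, so $H$ restricts to the identity on $\vispart X \times [0,1]$; this exhibits $r$ as an $\Aut(X)$--equivariant (hence \Authom) deformation retraction, serving as the required homotopy inverse of the inclusion.
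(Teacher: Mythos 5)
Your proof is correct, and it takes a genuinely different route from the paper. You package the geometry into the single observation that $v\mapsto M_v$ is an order-preserving, idempotent, decreasing poset map with image $\visi(X)$ (a ``kernel operator''), and then invoke the standard order-complex fact that such an operator induces a deformation retraction onto the order complex of its image via the prism triangulation. Your order-preservation argument is sound --- the one step worth making explicit is that $M_w$ is not merely a maximal element but the \emph{maximum} of $\psi(Q(w))$, which follows since every element of $\psi(Q(w))$ is $\leq w$ (Lemma~\ref{Lem:psiphi}) and there are only finitely many such classes (Corollary~\ref{Cor:FinitelyManyBelowRoller}); equivalently, $M_w$ is the largest visible class $\leq w$. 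The paper instead stratifies the invisible vertices by the length $n(w)$ of a maximal chain down to a minimal class and collapses open stars one stratum at a time, which requires proving contractibility of each downward link via a nerve argument over stars of minimal classes and, at the key step (Claim~\ref{claim:intersection_is_cone}), a geometric construction of a diagonal CAT(0) ray in a product of convex hulls (Lemma~\ref{Lem:CapraceSageevProduct}). Your argument avoids all of that, reusing only the already-established Lemmas~\ref{Lem:m_v}, \ref{Lem:psiphi}, \ref{Lem:containment}, and~\ref{Lem:L2Visibility}, and it delivers a formally stronger conclusion: the retraction and the homotopy (which is stationary on $\vispart X$) are genuinely $\Aut(X)$--equivariant, not merely equivariant up to homotopy. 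Your remark that the prism must be oriented so that the decreasing map sits at the bottom is exactly right; the opposite orientation would require $v_j\leq M_{v_j}$, which fails.
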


\begin{proof}
First, observe that since $\absimp X$ is $\Aut(X)$--invariant by Definition~\ref{Def:SimplicialRoller}, and the set $\visi (X)$ is $\Aut(X)$--invariant by Definition~\ref{Def:L2visible}, the inclusion  $\vispart 
X \hookrightarrow \absimp X$ is $\Aut(X)$--equivariant.
In the remainder of the proof, we will construct a deformation retraction $\absimp X \to \vispart 
X$ that serves as a homotopy inverse to the inclusion $\vispart 
X \hookrightarrow \absimp X$. 

Let $w$ be an invisible Roller class (viewed as a $0$--simplex of $\absimp X$). 
 By Lemma~\ref{Lem:maximal_visible}, $w$ is not minimal, so there exists a 
minimal Roller class $m$ with $m < w$.  Let $n(w)\ge1$ be the maximum length of a 
chain of the form $m < \cdots < w$ with $m$ a minimal Roller class. 

Let $D$ be the maximal number such that there exists an invisible Roller class $w$ with $n(w)=D$. By 
Remark~\ref{Rem:AbsimpDimension}, every simplex in $\absimp X$ has at most $\dimension X$ vertices, hence $D \leq \dimension X$.

Let $I_n$ be the set of invisible Roller classes $v$ with $n(v)=n$.
Then 
$$(\absimp X)^{(0)}= (\vispart X)^{(0)}\sqcup\bigsqcup_{n=1}^{ D}I_n.$$  
Let $(\thickvispart X)_0=\vispart X$.  For $1\leq N\leq  D$, let $(\thickvispart X)_N$ be the subcomplex spanned by 
$(\vispart X)^{(0)}\sqcup\bigsqcup_{n=1}^{N}I_n$, so
$$\vispart X = (\thickvispart X)_0 \subset(\thickvispart X)_1\subset\cdots\subset(\thickvispart X)_{D}=\absimp X.$$

Given a Roller class $w$ and a positive integer $n \leq D$, define the \emph{open star} $st_n(w)$ to be the union of $\{w\}$ and all of the 
open simplices of $(\thickvispart X)_n$ whose 
closures contain $w$.  Let $St_n(w)$ be the union of all (closed) simplices of 
$(\thickvispart X)_n$ containing $w$.  Define the  \emph{(downward) link} $L_n(w)=St_n(w) \setminus st_n(w)$.

\begin{claim}\label{claim:contained_in_lower}
For $w \in  I_n$, we have $L_n(w) \subset (\thickvispart X)_{n-1}$.
\end{claim}

Let $w\in I_n \subset (\thickvispart X)_n$.  Let $\sigma$ be a maximal simplex of $(\thickvispart X)_n$ 
containing $w$ and lying in $(\thickvispart X)_n$.  Let $v_0<\cdots<v_m$ be the 
corresponding chain, with $w=v_i$ for some $i$.  Since $n(w)=n$, the chain 
$v_0 <\cdots<v_i$ has length at most $n$. 

First, consider $v_j$ for $j<i$.  If $v_j$ is visible, then $v_j\in(\thickvispart X)_0\subset(\thickvispart X)_{n-1}$.  If 
$v_j$ is invisible, then since $v_0 <\cdots < v_j$ is a chain of length less than 
$n$, and $\sigma$ is maximal, $n(v_j)<n$, hence $v_j\in(\thickvispart X)_{n-1}$.  

Next, consider $v_j$ for $j>i$.  If $v_j$ is visible, then $v_j\in(\thickvispart X)_0\subset(\thickvispart X)_{n-1}$.  If 
$v_j$ is invisible, then $v_j\not\in I_n$, since there is a chain 
$v_0 <\cdots< v_j$ of length more than $n = n(v_i)$, which would contradict the containment $\sigma \subset (\thickvispart X)_n$.

Hence the simplex $\sigma'$ corresponding to the chain 
$v_0<\cdots<v_{i-1}<v_{i+1}<\cdots <v_m$ lies in $(\thickvispart X)_{n-1}$.  
This proves the claim.

\smallskip
\textbf{Contractibility of links:}  Now, fix $w \in  I_n$. We will show that $L_n(w)$ is 
contractible.  Establishing this involves several claims, leading up to Claim~\ref{claim:Ln_contractible}.

\begin{claim}\label{claim:full_link}
For a minimal Roller class with $m\in L_n(w)$, let $St(m) = \bigcup_n St_n(m)$ denote the star of $m$ in $\absimp X$.  Then:
\begin{enumerate}[$(1)$ ]
     \item $L_n(w)\cap St(m)$ is topologically a cone with cone-point $m$.
     \item $L_n(w)$ is the union of the subcomplexes $L_n(w)\cap St(m)$, as $m$ varies over  the finitely many minimal Roller classes with 
$m \leq w $.
\end{enumerate}
\end{claim}

Let $L(m)$ denote the link of $m$ in $\absimp X$. Then, by definition, $St(m)$ decomposes as a join: $St(m)=m \join L(m)$.  Moreover, $m\in L_n(w)$, so 
$$L_n(w)\cap St(m)=  L_n(w) \cap (m \join L(m)) = m \join(L_n(w)\cap L(m)),$$
 which proves the first assertion.

By Corollary~\ref{Cor:FinitelyManyBelowRoller}, there are finitely many minimal Roller classes $m \leq w$. For each such $m$, we clearly have $L_n(w)\cap St(m)\subset L_n(w)$.  Conversely, if $v\in L_n(w)$, 
then there exists a minimal class $m$ such that $m \leq v$ and $m \leq w$, hence $v\in L_n(w)\cap St(m)$.  This proves  the claim.

\begin{claim}\label{claim:intersection_is_cone}
Let $m_0, \ldots, m_k$ be distinct minimal Roller classes with $m_i < w$ for all $i$.  Then $\bigcap_{i=0}^k St(m_i) \cap L_n(w)$ is  topologically a cone, and is in particular nonempty.
\end{claim}

By 
Lemma~\ref{Lem:maximal_visible}, each $m_i$ is $\ell^2$--visible, so we can choose CAT(0) geodesic rays $\xi_i:[0,\infty)\to X$ such 
that $\xi_i(0)=\xi_j(0)$ for all $i$, and such that $\xi_i(\infty)=\varphi(m_i)$ for each $i$.  
For each $i$, define $\mathcal M_i=\W(\xi_i)$, a UBS representing $m_i$.
Since $m_i$ is minimal 
for all $i$, Lemma~\ref{Lem:FinManyMinimals} implies the intersection $\mathcal M_i\cap\mathcal M_j$ is finite whenever $i \neq j$.  
Moving the common basepoint $\xi_1(0) = \ldots = \xi_k(0)$ ensures that $\mathcal M_i\cap\mathcal M_j=\emptyset$ for all $i 
\neq j$, and that each $\mathcal M_i$ is the inseparable closure of an infinite descending chain.  Indeed, by moving the 
basepoint across each of the finitely many hyperplanes appearing in some $\mathcal M_i\cap\mathcal M_j$, we arrange the 
first property, and the inseparable closure property holds by Lemma~\ref{Lem:DeepTransverse}, Lemma~\ref{Lem:DeepSetProps}, 
and minimality of the $m_i$.

Let $\mathcal A$ be a UBS representing $w$. Since $m_i \leq w$ for all $i$, we have $\mathcal M_i \preceq \mathcal A$ for each $i$.
Thus it is readily checked that $\bigcup_{i=0}^k\mathcal M_i$ is a UBS. 

Consider a pair of indices $i\neq j$. Then since $\bigcup_{i=0}^k\mathcal M_i$ is unidirectional and $\mathcal M_i \cap \mathcal M_j = \emptyset$,  
any pair of hyperplanes  $\hat h_i \in\mathcal M_i$ and $\hat h_j \in\mathcal M_j$ must cross. Indeed, $\hat h_i \neq \hat h_j$, and $\hat h_i$ is the base of a 
chain in $\M_i$, and $\hat h_j$ is the base of a chain in $\M_j$.  If $\hat h_i, \hat h_j$ did not cross, then the union of this pair of chains 
would violate unidirectionality of $\M_i\cap\M_j$. 

Thus, by Lemma~\ref{Lem:CapraceSageevProduct}, the convex hull of 
$\bigcup_{i=0}^k\xi_i$ is isometric to $\prod_{i=0}^kY_i$, where $Y_i$ is the 
cubical convex hull of $\xi_i$.  For each $t>0$, let $x_t$ be the image of $(\xi_0(t),\ldots,\xi_k(t))$ under the isometric 
embedding $\prod_{i=0}^kY_i\to X$.  Then segments joining the basepoint to the points $x_t$ converge uniformly on compact sets to a geodesic ray representing a point $b\in\tits X$ with the property 
that $\psi(b)=u$ is 
represented by $\U = \bigsqcup_{i=0}^k\mathcal M_i$.  Hence $u$ is visible by Definition~\ref{Def:L2visible}.  Moreover, $m_i \leq u \leq w$ 
for all $i$. To complete the proof of the claim, we will show that $u$ is the promised cone point of $\bigcap_{i=0}^k St(m_i) \cap L_n(w)$.

For each $i$, the fact that  $m_i \leq u$ implies $u\in St(m_i)$.  On the other hand, $u\neq w$ since $u$ is visible but $w$ is 
not.  Thus $u < w$, so $u\in L_n(w)$.  Thus far, we have shown that $\bigcap_{i=0}^k St(m_i) \cap L_n(w) \neq\emptyset$. 
Now, suppose $v\in L_n(w)\cap St(m_i)$ for all $i$.  To complete the claim, we must show that $u$ and $v$ are $\leq$--comparable. By definition, $m_i \leq v $ for all $i$, hence $\mathcal M_i \preceq \V $ for all $i$, where $\V$ is a UBS representing $\V$. Therefore, $\U = \bigsqcup_{i=0}^k\mathcal M_i \preceq \V$, which implies $u \leq v$. Thus $\bigcap_{i=0}^k St(m_i) \cap L_n(w)$ is a cone with cone point $u$, completing Claim~\ref{claim:intersection_is_cone}.

\begin{claim}\label{claim:Ln_contractible}
$L_n(w)$ is contractible. 
\end{claim}

By Claim~\ref{claim:full_link}, $L_n(w)$ is a finite union of subcomplexes $L_n(w)\cap St(m)$, where $m$ varies over the minimal 
Roller classes satisfying $m \leq w$.  By Claim~\ref{claim:intersection_is_cone}, any collection of the $L_n(w)\cap St(m)$ 
intersect in a contractible subcomplex.  Hence, by the Simplicial Nerve Theorem~\ref{Thm:EquivariantSimplicialNerve}, 
$L_n(w)$ is homotopy equivalent to the nerve of this covering. (We do not need to check equivariance at this step.) But, since any collection of the $L_n(w)\cap St(m)$ 
have nonempty intersection, this nerve is a finite 
simplex and thus contractible.  Hence $L_n(w)$ is contractible.

\smallskip
\textbf{Conclusion:}  We have shown that the following hold for $1\le 
n\le D$:
\begin{itemize}
 \item Claim~\ref{claim:contained_in_lower} says that for each vertex $v\in(\thickvispart X)_n \setminus (\thickvispart X)_{n-1}$, the link in $(\thickvispart X)_n$ of 
$v$ is contained in $(\thickvispart X)_{n-1}$;
\item Claim~\ref{claim:Ln_contractible} says that for each vertex $v\in(\thickvispart X)_n \setminus (\thickvispart X)_{n-1}$, the link in $(\thickvispart X)_n$ of 
$v$ is contractible.
\end{itemize}
The first fact says that the open stars $st_n(v)$ of the vertices 
$v\in(\thickvispart X)_n \setminus (\thickvispart X)_{n-1}$ are pairwise disjoint.  Together with the second 
fact, this implies that $(\thickvispart X)_n$ is homotopy equivalent to 
$$
(\thickvispart X)_{n-1}  \: = \: (\thickvispart X)_n \setminus \Bigg( \bigcup_{v\in(\thickvispart X)_n^{(0)} \setminus (\thickvispart X)_{n-1}^{(0)}}st_n(v) \Bigg).
$$
It follows that we can independently deformation retract the various open stars $st_n(v)$ to the corresponding links $L_n(v)$ to get a deformation retraction $(\thickvispart X)_n\to(\thickvispart X)_{n-1}$ that is a homotopy inverse for the inclusion.
Composing these retractions (for $1\leq n\leq D$) gives the desired deformation retraction $\absimp X \to \vispart X$. 

Finally, recall from the discussion after Definition~\ref{Def:G-homotopy} that every homotopy inverse of a $\Aut(X)$--equivariant inclusion $\vispart X \to \absimp X$  is itself an \Authom homotopy equivalence. Thus we have an \Authom deformation retraction $\absimp X \to \vispart X$.
\end{proof}

Combining Propositions~\ref{prop:tits_visible_homotopic} and~\ref{prop:vis_all_equivalent} gives a proof of half of Theorem~\ref{Thm:HE-triangle}.

\begin{cor}\label{Cor:RollerSimpTitsHomotopy}
There is an \Authom homotopy equivalence $\RT \from \absimp X \to \tits X$.
\end{cor}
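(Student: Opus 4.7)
The plan is to deduce this corollary as an essentially immediate consequence of the two preceding propositions. By Proposition~\ref{prop:vis_all_equivalent}, the deformation retraction $r \from \absimp X \to \vispart X$ is an \Authom homotopy equivalence (being a homotopy inverse of the $\Aut(X)$--equivariant inclusion $\vispart X \hookrightarrow \absimp X$). By Proposition~\ref{prop:tits_visible_homotopic}, there is an \Authom homotopy equivalence $f \from \vispart X \to \tits X$. I would simply define $\RT = f \circ r$.

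The only thing left to verify is that the composition of two \Authom homotopy equivalences is again an \Authom homotopy equivalence. This is recorded in the discussion immediately following Definition~\ref{Def:G-homotopy}: the property of being \Authom homotopy equivalent is an equivalence relation on $\Aut(X)$--spaces, and in particular composition is preserved. Explicitly, if $g \in \Aut(X)$, then $r \circ g \simeq g \circ r$ (since $r$ is an \Authom homotopy equivalence) and $f \circ g \simeq g \circ f$ (since $f$ is an \Authom homotopy equivalence), so
\[
(f \circ r) \circ g \: = \: f \circ (r \circ g) \: \simeq \: f \circ (g \circ r) \: = \: (f \circ g) \circ r \: \simeq \: (g \circ f) \circ r \: = \: g \circ (f \circ r),
\]
as required. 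Since $f$ and $r$ are both homotopy equivalences in the ordinary sense, so is $\RT$.

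There is no serious obstacle here; the entire content of the corollary has been absorbed into Propositions~\ref{prop:tits_visible_homotopic} and~\ref{prop:vis_all_equivalent}, and the corollary is just the statement that these can be composed. It is worth flagging, however, that $\RT$ is not expected to be $\Aut(X)$--equivariant on the nose: although $r$ can be arranged to be strictly equivariant (it is an equivariant deformation retraction), the map $f$ produced via the nerve construction is only equivariant up to homotopy, because the circumcenter perturbation $\varphi$ and the Equivariant Open Nerve Theorem~\ref{Thm:EquivariantOpenNerve} only yield \Authom (not strictly equivariant) homotopy equivalences. Hence \Authom homotopy equivalence is the strongest conclusion we can assert.
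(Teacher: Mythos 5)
Your proof is correct and is exactly the paper's argument: the corollary is obtained by composing the \Authom homotopy equivalences of Propositions~\ref{prop:vis_all_equivalent} and~\ref{prop:tits_visible_homotopic}, using the fact (recorded after Definition~\ref{Def:G-homotopy}) that \Authom homotopy equivalences compose. One minor caveat on your closing remark: the paper only asserts that the retraction $\absimp X \to \vispart X$ is an \Authom deformation retraction (it is the inclusion $\vispart X \hookrightarrow \absimp X$ that is equivariant on the nose), so $r$ is not claimed to be strictly equivariant — but this does not affect your argument.
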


\subsection{Homotopy equivalence between $\simp  X$ and $\absimp X$}\label{Sec:simp-roller}

The following proposition proves the homotopy equivalence of the simplicial boundary and the simplicial Roller boundary, completing the proof of Theorem~\ref{Thm:HE-triangle}.

\begin{prop}\label{prop:simplicial_boundary}
There is an \Authom homotopy equivalence $\SR \from \simp  X \to \absimp X$.
\end{prop}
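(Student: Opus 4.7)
The plan is to apply the Equivariant Simplicial Nerve Theorem~\ref{Thm:EquivariantSimplicialNerve} to two compatible $\Aut(X)$--invariant closed covers of $\simp X$ and $\absimp X$, and to identify the resulting nerves via the $\Aut(X)$--equivariant bijection between maximal UBS classes and maximal Roller classes provided by Theorem~\ref{Thm:UBStoRoller}.\ref{Itm:URmax}.

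On the $\simp X$ side, I would cover by the closed maximal simplices $\sigma_{\mathcal{M}}$, one for each maximal UBS class $[\mathcal{M}]$. Each $\sigma_\mathcal{M}$ is a topological simplex and hence contractible, and these cover $\simp X$ $\Aut(X)$--equivariantly. Although $\simp X$ contains some non-maximal faces that are not labeled by UBSes (cf.\ Example~\ref{exmp:weird_simplex}), it is still a genuine simplicial complex, so an intersection $\bigcap_i \sigma_{\mathcal{M}_i}$ is either empty or the common face spanned by the shared minimal-UBS vertices, and is thus contractible. On the $\absimp X$ side, I would cover by the closed stars $\Sigma^R_v = v \star L_v$ of maximal Roller classes $v$, which are cones and therefore contractible. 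Since chains in $\guralnik X$ have length bounded by $\dim X - 1$ (Remark~\ref{Rem:AbsimpDimension}), every Roller class lies below some maximal one, so these stars do cover $\absimp X$. A chain $u_0<\cdots<u_k$ lies in $\Sigma^R_v$ iff $u_k\leq v$ (using maximality of $v$), so $\bigcap_i \Sigma^R_{v_i}$ consists of all chains whose top is a common lower bound of the $v_i$.

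The key step will be to show that any finite collection of maximal Roller classes $\{v_i\}$ admitting a common Roller lower bound admits a \emph{maximum} common lower bound $M$. Indeed, given any $u\leq v_i$ for all $i$, we have $\U_u\preceq \U_{v_i}$ for each $i$, so $\U_u\setminus \bigcap_i \U_{v_i}$ is finite and $\bigcap_i \U_{v_i}$ is infinite; it inherits unidirectionality, inseparability, and absence of facing triples from the $\U_{v_i}$, so it is a UBS. The class $M$ it represents is the required maximum common lower bound, and consequently $\bigcap_i \Sigma^R_{v_i}$ is the full subcomplex of $\absimp X$ on the downward set $\{u\in\guralnik X : u\leq M\}$. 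Since this poset has $M$ as maximum, its simplicial realization is a cone on $M$ and hence contractible. Matching nonempty intersections on the two sides then shows that $\{v_i\}$ spans a simplex of $\nerve_R$ iff $\{\mathcal{M}_i := \RU(v_i)\}$ share a common minimal UBS appearing as a vertex of each $\sigma_{\mathcal{M}_i}$ (via Lemma~\ref{Lem:FinManyMinimals}) iff $\{[\mathcal{M}_i]\}$ spans a simplex of $\nerve_S$; thus $\UR$ induces an $\Aut(X)$--equivariant simplicial isomorphism $\nerve_S\cong\nerve_R$.

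Applying Theorem~\ref{Thm:EquivariantSimplicialNerve} to each cover yields \Authom homotopy equivalences $\simp X\to\nerve_S$ and $\absimp X\to\nerve_R$; composing with the nerve isomorphism $\nerve_S\cong\nerve_R$ and a homotopy inverse of the second produces the desired $\SR$. The main obstacle will be the contractibility of the Roller-side intersections, which requires producing a genuine maximum common lower bound from an intersection of UBSes; this is essentially Lemma~\ref{Lem:visible_simplices_closed_cover}.\ref{item:contractible} without the simplifying $\ell^2$--visibility hypothesis, so one must work with UBS commensurability directly rather than through the geometry of CAT(0) rays.
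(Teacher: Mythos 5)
Your proposal is correct and follows essentially the same route as the paper: cover $\simp X$ by maximal simplices and $\absimp X$ by the downward subcomplexes attached to maximal Roller classes, verify contractibility of intersections on the Roller side by showing the intersected UBSes form a UBS whose class is a maximum common lower bound (so the intersection is a cone), identify the two nerves via the bijection of Theorem~\ref{Thm:UBStoRoller}.\ref{Itm:URmax}, and apply the Equivariant Simplicial Nerve Theorem~\ref{Thm:EquivariantSimplicialNerve} twice. The paper carries out the cone argument by adapting Lemma~\ref{Lem:visible_simplices_closed_cover} and uses Remark~\ref{Rem:L1Intersection} for the nerve-intersection correspondence, exactly as you anticipate.
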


\begin{proof}
Let $\sigma$ be a maximal simplex of $\simp X$, corresponding to a class $[\V_\sigma] \in \mathcal{UBS}(X)$. Then, by Theorem~\ref{Thm:UBStoRoller}, $v_\sigma = \UR([\V_\sigma])$ is a maximal Roller class.

Let $\mathcal A$ be the nerve of the covering of $\simp  X$ by  maximal simplices.  Now, since any 
collection of maximal simplices intersect in $\emptyset$ or a simplex, and simplices are contractible, 
the Equivariant Simplicial Nerve Theorem~\ref{Thm:EquivariantSimplicialNerve}
provides an \Authom homotopy equivalence $\simp X \to \mathcal A$.

Now, for each maximal Roller class $v$, consider the subcomplex $\Upsilon_v$ of $\absimp X$ consisting of all simplices 
corresponding to chains in which $v$ is the maximal element.  Note that the set of such $\Upsilon_v$ is a cover of $\absimp 
X$.  

A simpler version of the proof of Lemma~\ref{Lem:visible_simplices_closed_cover} implies that for all finite collections 
$\{ v_1,\ldots, v_n \}$ of maximal Roller classes, $\bigcap_{i=1}^n\Upsilon_{v_i}$ is empty or contractible.  Indeed, suppose this 
intersection is nonempty.  Exactly as in the proof of Lemma~\ref{Lem:visible_simplices_closed_cover}, each $v_i$ corresponds 
to a commensurability class $[\mathcal V_i]$ of UBSes whose intersection is a UBS $\mathcal V$, which in turn determines a 
$\leq$--maximal Roller class $v$ such that $v\leq v_i$ for all $i$.  As in Lemma~\ref{Lem:visible_simplices_closed_cover}, 
this implies that $\bigcap_{i=1}^n\Upsilon_{v_i}$ is a cone with cone-point $v$.  (In the context of 
Lemma~\ref{Lem:visible_simplices_closed_cover}, there is an additional step to check that $v$ is visible, but that is 
unnecessary here since we are working in all of $\absimp X$ rather than in $\absimp^\sphericalangle X$.)

Let $\mathcal B$ be the nerve of the covering of 
$\absimp X$ by the subcomplexes $\Upsilon_v$, as $v$ varies over the maximal Roller classes. Observe that the assignment $v \mapsto \Upsilon_v$ is a bijection from the set of maximal Roller classes to $\mathcal B^{(0)}$. By Theorem~\ref{Thm:EquivariantSimplicialNerve}, as above, there is an \Authom homotopy equivalence $\absimp X \to \mathcal B$.

To conclude, we will show that $\mathcal A$ and $\mathcal B$ are equivariantly isomorphic.  The vertex set of $\mathcal A$ is the set of maximal simplices 
$\sigma$ of $\simp X$, so we may define a function $f \from \mathcal A^{(0)}\to\mathcal B^{(0)}$ via the composition
$$
\sigma \mapsto [\V_\sigma] \mapsto v_\sigma \mapsto \Upsilon_{v_\sigma},
$$
where the middle arrow is the equivariant map $\UR$ of Theorem~\ref{Thm:UBStoRoller}.  Since both $\sigma$ and $v_\sigma$ are maximal by definition, the map $f$ is a bijection by Theorem~\ref{Thm:UBStoRoller}.\ref{Itm:URmax}. Every arrow is $\Aut(X)$--equivariant by construction.

To extend $f$ to a simplicial isomorphism $f \from \mathcal A\to\mathcal B$, it suffices to check that for all maximal simplices 
$\sigma_0,\ldots,\sigma_n$ of $\simp X$, we have $\bigcap_{i=0}^n\sigma_i\neq \emptyset$ if and only if 
$\bigcap_{i=0}^n\Upsilon_{v_{\sigma_i}}\neq\emptyset$; that this is sufficient follows since $\mathcal A$ is the nerve of the covering of $\simp X$ by maximal simplices and $\mathcal B$ is the nerve 
of the covering of $\absimp X$ by the subcomplexes $\Upsilon_v$.  

Suppose that $\bigcap_{i=0}^n\sigma_i\neq \emptyset$.  Let $\mathcal V_i$ be a UBS representing $\sigma_i$.  So, $\mathcal V_i$ also represents the corresponding Roller class $f(\sigma_i)$.  Let 
$\mathcal V=\bigcap_{i=0}^n\mathcal V_i$, and note that $\mathcal V$ is infinite.  Then by Remark~\ref{Rem:L1Intersection}, $\mathcal V$ is an $\ell^1$--visible UBS. Hence 
$\mathcal V$ represents a Roller class $w$ such that $w\leq v_{\sigma_i}$ for all $i$.  Thus 
$\bigcap_{i=0}^n\Upsilon_{v_{\sigma_{i}}}$ contains $w$.  The converse is similar.  Hence $f$ extends to an isomorphism, and 
it follows that 
$$
\SR \from \simp X \xrightarrow{ \: \sim \: }  \mathcal A \xrightarrow{\: f \:} \mathcal B \xrightarrow{ \: \sim \: }  \absimp X
$$
is an \Authom homotopy equivalence.
\end{proof}

\subsection{Cuboid generalization}\label{Subsec:FinalProofCuboids} We can now conclude the proof of 
Theorem~\ref{Thm:MainCuboids}.

\begin{proof}[Proof of Theorem~\ref{Thm:MainCuboids}]
Let $G$ be a group acting on $X$ by cubical automorphisms, and let $\rho$ be a $G$--admissible rescaling of $X$, with rescaled metric $d_X^\rho$. Recall from Definition~\ref{Def:AutXrho} that the action of $G$ factors through $\Aut^\rho(X)$. In Section~\ref{Sec:TitsCuboid}, we have checked that all of the constructions and results about the Tits boundary $\tits X$ also apply to $\tits^\rho X$, the Tits boundary of the rescaled metric $(X, d_X^\rho)$, in a $G$--equivariant way.

In Section~\ref{Sec:CuboidCover}, we have checked that the constructions of open and closed nerves for $\tits X$ also work for $\tits^\rho X$. In particular, by Theorem~\ref{Thm:First_HE_Cuboid}, there is a \Ghom homotopy equivalence between $\tits^\rho X$ and the nerve
 $\nerve_T^\rho$ of the closed cover  $\{Q^\rho(v) : v \in \maxvis^\rho(X)\}$.
 
Now, we inspect the results of this section. In Lemma~\ref{Lem:visible_simplices_closed_cover}, one simply needs to replace $\maxvis(X)$ by $\maxvis^\rho(X)$ and $\Aut(X)$ by $\Aut^\rho(X)$; the same exact proof then applies. 
In Definition~\ref{Def:SimplicialNerveVisi}, replacing $\maxvis(X)$ by $\maxvis^\rho(X)$ yields a nerve $\nerve_\triangle^\rho$. Then the proof of Lemma~\ref{Lem:omega_nerve} extends verbatim to give an $\Aut^\rho(X)$--equivariant simplicial isomorphism $\nerve_\triangle^\rho \to \nerve_T^\rho$. Proposition~\ref{prop:tits_visible_homotopic} extends verbatim, because its proof is a top-level assembly of previous results. Similarly, Proposition~\ref{prop:vis_all_equivalent} extends immediately to cuboids, because its proof is a topological argument about simplicial complexes. (The proof of Proposition~\ref{prop:vis_all_equivalent} does use several lemmas from Sections~\ref{Sec:Tits} and~\ref{Sec:TitsRollerConnections}, particularly in Claim~\ref{claim:intersection_is_cone}, but all of those lemmas have been extended to cuboids. Compare Remark~\ref{Rem:DaCuboid}.) Combining the cuboid versions of Propositions~\ref{prop:tits_visible_homotopic} and~\ref{prop:vis_all_equivalent} gives a \Ghom homotopy equivalence $\absimp X \to \tits^\rho X$.

Finally, note that the \Authom homotopy equivalence $\simp X \to \absimp X$ established in Proposition~\ref{prop:simplicial_boundary} is also a \Ghom homotopy equivalence, because $G$ acts by cubical automorphisms. Thus we have both of the \Ghom homotopy equivalences claimed in the theorem.
\end{proof}

\bibliographystyle{amsalpha}
\bibliography{guralnik_tits_refs}
\end{document}